\newcommand{\A}{\mathcal{A}}
\newcommand{\PP}{\mathcal{P}}
\newcommand{\R}{\mathcal{R}}
\newcommand{\K}{\mathcal{K}}
\newcommand{\LL}{\mathcal{L}}
\newcommand{\N}{{\mathbb N}}
\newcommand{\Z}{{\mathbb Z}}
\newcommand{\Q}{{\mathbb Q}}
\newcommand{\C}{{\mathbb C}}
\newtheorem{thm}{Theorem}[section]
\newtheorem{prop}[thm]{Proposition}
\newtheorem{lem}[thm]{Lemma}
\newtheorem{cor}[thm]{Corollary}
\theoremstyle{definition}
\newtheorem{ex}{Example}[section]
\newtheorem{rem}{Remark}[section]
\newtheorem{que}{Question}
\numberwithin{equation}{section}
\begin{document}
\title{On $k$-regularity of sequences of valuations and last nonzero digits}

\author{Bartosz Sobolewski}
\address{Institute of Mathematics, Faculty of Mathematics and Computer Science, Jagiellonian University in Krak\'{o}w, \L{}ojasiewicza 6, 30-348, Krak\'{o}w, Poland}
\email{bartosz.sobolewski@uj.edu.pl}
\keywords{$p$-adic valuation, last nonzero digits, $k$-regular sequences, $k$-automatic sequences, linear recurrence sequences}

\begin{abstract}
Let $b \geq 2$ be an integer base with prime factors $p_1, \ldots, p_s$. In this paper we study sequences of ``$b$-adic valuations'' and last nonzero digits in $b$-adic expansions of the values $f(n) = (f_1(n), \ldots, f_s(n))$, where each $f_i$ is a $p_i$-adic analytic function. We give a complete classification concerning $k$-regularity of these sequences, which generalizes a result for $b$ prime obtained by Shu and Yao. As an application, we strengthen a theorem by Murru and Sanna on $b$-adic valuations of Lucas sequences of the first kind. Moreover, we derive a method to determine precisely which terms of these sequences can be represented by certain ternary quadratic forms.
\end{abstract}

\maketitle

\section{Introduction} \label{sec:intro}

Let $k \geq 2$ be an integer. We say that a sequence $\mathbf{a} =(a_n)_{n \geq 0}$ is $k$-automatic if its $k$-kernel 
$$ \K_k(\mathbf{a}) = \{ (a_{k^i n +j})_{n \geq 0} : i \geq 0, \: 0 \leq j \leq k^j-1 \}$$
is a finite set. Automatic sequences were initially introduced by Cobham \cite{Cob72} by means of finite automata. Allouche and Shallit \cite{AS92,AS03b} generalized this notion to so-called $k$-regular sequences. When the sequence $\mathbf{a}$ takes values in a $\Z$-module $R$, it is said to be $k$-regular if the $\Z$-submodule of $R^{\N}$ generated by the $k$-kernel of $\mathbf{a}$ is finitely generated (we use the convention $\N = \{0,1,2,\ldots\}$). We say that a sequence is automatic (resp.\ regular) if it is $k$-automatic (resp.\ $k$-regular) for some $k$. 

There has been a lot of interest in studying $p$-regularity of sequences of $p$-adic valuations of subsequent terms of linear recurrence sequences, where $p$ is prime. For the Fibonacci sequence $(F_n)_{n \geq 0}$ Lengyel \cite{Len95} essentially proved that $(\nu_p(F_n))_{n \geq 0}$ is $p$-regular for all primes $p$ (without phrasing it that way). Later Medina and Rowland \cite{MR15} computed the rank of this sequence, that is, the rank of the $\Z$-submodule generated by its $p$-kernel. Similar results for general nondegenerate Lucas sequences of the first kind $(u_n)_{n \geq 0}$, were obtained by Sanna \cite{San16} and Murru,  Sanna \cite{MS18}. Recall that Lucas sequences of the first kind are defined by $u_0=0, u_1 = 1$ and $u_{n+2} = Au_{n+1}+Bu_n$ for $n\in \N$, where $A,B \in \Z$ are fixed.

Also in \cite{MS18} the $b$-adic valuation $\nu_b$ of the terms $u_n$ was studied, where $\nu_b(0) = +\infty$ and for $m \in \Z \setminus \{0\}$ we define
\begin{equation} \label{eq:b_adic_valuation}
\nu_b(m) = \sup \{v \geq 0: \ b^v \mid m\}.
\end{equation}
Although $\nu_b$ is not a valuation in the strict sense unless $b$ is prime, we are going use to this abuse of terminology by analogy with $\nu_p$.
In the context of the present paper, the following result is particularly interesting (with the original notation slightly altered).
\begin{thm}[Murru, Sanna \cite{MS18}] \label{thm:MS18}
If $b \geq 2$ is an integer relatively prime to $B$, then $(\nu_b(u_{n+1}))_{n \geq 0}$ is $b$-regular.
\end{thm}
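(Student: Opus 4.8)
The plan is to discard the part of $(\nu_b(u_{n+1}))_{n\ge0}$ that is identically zero, reduce the remainder to a single arithmetic progression, and there identify $\nu_b(u_{n+1})$ with explicit valuation sequences that are visibly $b$-regular.

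Write $b=p_1^{e_1}\cdots p_s^{e_s}$ with the $p_i$ distinct, and recall that $\nu_b(N)=\min_i\lfloor\nu_{p_i}(N)/e_i\rfloor$ for $N\neq0$. Since $\gcd(b,B)=1$ we have $p_i\nmid B$, so $(u_n\bmod p_i)_{n\ge0}$ is purely periodic; hence the rank of apparition $\rho_i$ (the least positive index $n$ with $p_i\mid u_n$) is finite, and $p_i\mid u_n$ exactly when $\rho_i\mid n$. Put $\rho=\operatorname{lcm}(\rho_1,\dots,\rho_s)$. If $b\mid u_{n+1}$ then $p_i\mid u_{n+1}$ for every $i$, hence $\rho\mid n+1$; therefore $\nu_b(u_{n+1})=0$ unless $n\equiv\rho-1\pmod{\rho}$. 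A $b$-regular sequence whose terms are placed along an arithmetic progression, with zeros in the remaining positions, is again $b$-regular — a routine consequence of the closure properties of $b$-regular sequences \cite{AS92,AS03b} together with the Noetherianity of $\Z$; this is worth recording because $\rho$ may involve primes not dividing $b$. It therefore suffices to prove that $\mathbf{w}=(\nu_b(u_{\rho m}))_{m\ge1}$ is $b$-regular.

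The shape of $\mathbf{w}$ comes from the lifting-the-exponent behaviour of Lucas sequences \cite{San16}: for $p_i$ odd, on the multiples of $\rho_i$ one has $\nu_{p_i}(u_{\rho m})=c_i+\nu_{p_i}(m)$ with $c_i\ge1$ a constant, while for $p_i=2$ the same shape holds after splitting the index $m$ into residue classes modulo a fixed, sufficiently large power $2^a$ — on each class with $2^a\nmid m$ the value $\nu_2(u_{\rho m})$ is constant, and on the class $2^a\mid m$, writing $m=2^a\ell$, it equals a constant plus $\nu_2(\ell)$. Conceptually this holds because on each such progression $u_{\rho m}$, as a function of the new variable, is the restriction of a $p_i$-adically analytic map — a quotient of exponential functions $x\mapsto\zeta^x$ built from the roots of $x^2-Ax-B$, where one first passes to a suitable progression (with extra care when $p_i=2$) to secure convergence — whose $p_i$-adic valuation is governed by Strassmann's theorem.

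To finish, suppose $2\mid b$ and let $i_0$ be the index with $p_{i_0}=2$ (when $2\nmid b$ the argument is simpler, since then $\nu_b(u_{\rho m})=\nu_b(Cm)$ with $C=\prod_i p_i^{c_i}$, as $\nu_{p_i}(Cm)=c_i+\nu_{p_i}(m)=\nu_{p_i}(u_{\rho m})$ for every $i$). Decompose $\mathbf{w}$ along the residue classes of $m$ modulo $2^a$. On the classes with $2^a\nmid m$, the term $\lfloor\nu_2(u_{\rho m})/e_{i_0}\rfloor$ of the defining minimum is constant, so $m\mapsto\nu_b(u_{\rho m})$ is bounded there and is determined by $m$ modulo a fixed integer all of whose prime factors divide $b$; being periodic, it is $b$-automatic, hence $b$-regular. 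On the class $2^a\mid m$, with $m=2^a\ell$, every $\nu_{p_i}(u_{\rho m})$ has the clean form $c_i'+\nu_{p_i}(\ell)$ with $c_i'\ge0$ a constant, so $\nu_b(u_{\rho m})=\nu_b(C'\ell)$ for a constant $C'$ all of whose prime factors divide $b$; here one uses that $(\nu_b(n))_{n\ge1}$ — the number of trailing zeros of $n$ written in base $b$ — is a standard example of a $b$-regular sequence, together with the standard fact that passing to the subsequence indexed by the multiples of $C'$ preserves $b$-regularity. Reassembling the finitely many classes modulo $2^a$ — admissible for base $b$ since the only prime dividing $2^a$ also divides $b$ — shows that $\mathbf{w}$, and hence $(\nu_b(u_{n+1}))_{n\ge0}$, is $b$-regular. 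The real obstacle is the input at $p_i=2$: both the explicit $2$-adic lifting-the-exponent formula for Lucas sequences and, equivalently, the verification that $u_{\rho m}$ restricted to the relevant progressions is a convergent $2$-adic analytic function with controllable zeros, are genuinely delicate; everything afterwards is formal manipulation with the closure properties of $k$-regular sequences, the only extra point being the arbitrary modulus $\rho$ handled above.
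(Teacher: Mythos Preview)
This theorem is quoted from \cite{MS18} and the paper does not give its own proof; it only uses the result (together with its main classification theorems) to deduce the strengthening that $(\nu_b(u_n))_{n\ge 0}$ is \emph{strictly} $b$-regular. The implicit proof obtainable from the paper's framework goes as follows: interpolate $u_{\pi m+j}$ along residues modulo $\pi$ by $p_i$-adic analytic functions $f_{j,i}$, use Sanna's valuation formula (Theorem~\ref{thm:San16}) to see that $\R_{f_0}=\{0\}$ with $m_{f_{0,i}}(0)=1$ while $\R_{f_j}=\varnothing$ for $j\neq 0$, and then invoke Theorems~\ref{thm:prime_power_valuation}/\ref{thm:several_factors_valuation} to conclude $b$-regularity of each $(\nu_b(u_{\pi n+j}))_{n\ge 0}$, hence of the whole sequence via Proposition~\ref{prop:arithmetic_prog}.

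Your route is genuinely different and more elementary. You bypass the $p$-adic analytic machinery entirely: from Sanna's explicit formula you extract, for each odd $p_i\mid b$, the clean identity $\nu_{p_i}(u_{\rho m})=c_i+\nu_{p_i}(m)$ on the single progression $n=\rho m$, and handle the prime $2$ by a further finite splitting modulo $2^a$. This reduces everything to the model sequence $(\nu_b(C\ell))_{\ell\ge 1}$, whose $b$-regularity is immediate from Proposition~\ref{prop:last_nonzero_digits_regular} and Proposition~\ref{prop:arithmetic_prog}(i). The argument is correct; the one nontrivial closure fact you invoke---that interleaving $b$-regular pieces along an arithmetic progression of \emph{arbitrary} modulus $\rho$ (possibly containing primes not dividing $b$) is again $b$-regular---is exactly Proposition~\ref{prop:arithmetic_prog}(ii) in the paper, so no appeal to ``Noetherianity of $\Z$'' is needed. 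What you gain is a short, self-contained proof that never mentions $p$-adic analysis; what the paper's approach buys is uniformity---once the interpolation is set up, the same classification theorems simultaneously yield the $\nu_b$, $\LL_b$, and $\ell_{b,d}$ statements, as well as the ``strictly'' upgrade, without any further computation.
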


Regularity of $p$-adic valuations of linear recurrence sequences of higher order has also been investigated.
The $2$-adic valuation of a family of generalized Fibonacci sequences $(T_n(k))_{n \geq 0}$ of order $k \geq 2$ turns out to be $2$-regular when $k=3$ or $k$ is even, which quickly follows from the results of Lengyel and Marques \cite{LM14,LM17} and the author \cite{Sob17}. The more complicated case of $k \geq 5$ odd and the shifted values $T_n(k) + 1$ were studied by Young \cite{You18, You20}. Results by Bravo, D\'{\i}az, and Ram\'{\i}rez \cite{BDR20} on so-called Tripell sequence imply that for $p=2,3$ the corresponding sequences of $p$-adic valuations are $p$-regular.

The results mentioned above (except for Theorem \ref{thm:MS18}) illustrate a general theorem by Shu and Yao \cite{SY11}, stated in the setting of $p$-adic analysis. For a prime number $p$ let $\Z_p$ be the ring of $p$-adic integers, $\Q_p$ its field of fractions, and $\C_p$ the topological completion (with respect to the $p$-adic norm $| \cdot |_p$) of a fixed algebraic closure of $\Q_p$.  We say that a function $f \colon \Z_p \to \C_p$ is locally analytic if it can be expanded into a Taylor series in a neighborhood of any point in $\Z_p$. 

\begin{rem}
In \cite{SY11} such functions $f$ were simply called analytic. In the present paper, however, by an analytic function we mean a function $f$ given by a Taylor series convergent in all $\Z_p$. 
\end{rem}

\begin{thm}[Shu, Yao \cite{SY11}] \label{thm:SY11}
Let $f \colon \Z_p \to \C_p$ be a locally analytic function which does not have any root in $\N$. Then the sequence $(\nu_p(f(n)))_{n \geq 0}$ is $p$-regular if and only if all the roots of $f$ in $\Z_p$ are contained in $\Q$.
\end{thm}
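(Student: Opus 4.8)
The plan is to establish the two implications separately, after a structural reduction. Since $f$ has no root in $\N$ and $\N$ is dense in $\Z_p$, the function $f$ cannot vanish identically on any ball; as a nonzero $p$-adic power series has only isolated zeros, each of finite order, compactness of $\Z_p$ forces $f$ to have finitely many zeros $\alpha_1,\dots,\alpha_r \in \Z_p$, with multiplicities $m_1,\dots,m_r$. One then writes $f(x) = \prod_{j=1}^{r}(x-\alpha_j)^{m_j}\,h(x)$ with $h\colon \Z_p \to \C_p$ locally analytic and nowhere vanishing. Since an analytic function that does not vanish at a point has constant absolute value near that point, $\nu_p\circ h$ is locally constant on $\Z_p$, hence by compactness depends only on $n \bmod p^T$ for some $T$, so $(\nu_p(h(n)))_{n\ge 0}$ is eventually periodic and in particular $p$-regular. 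Additivity of $\nu_p$ on $\C_p^{\times}$ then gives
\[
\nu_p(f(n)) \;=\; \sum_{j=1}^{r} m_j\,\nu_p(n-\alpha_j) \;+\; \nu_p(h(n)) \qquad (n\in\N),
\]
all terms being finite because no $\alpha_j$ lies in $\N$ and $h$ is nowhere zero. This reduces the problem to understanding, for a root $\alpha$, the sequence $v_\alpha := (\nu_p(n-\alpha))_{n\ge 0}$; a short remark is also needed to see that these sequences, and hence $(\nu_p(f(n)))_n$, take values in a fixed $\frac{1}{e}\Z$, so that $p$-regularity is meaningful.

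For the ``if'' direction, suppose every $\alpha_j$ lies in $\Q$, so its $p$-adic expansion is eventually periodic and only finitely many of the shifts $\sigma^i(\alpha_j)$ (obtained by deleting the $i$ lowest digits) are distinct. I would compute the $p$-kernel of $v_{\alpha_j}$ directly: the subsequence $(\nu_p(p^i n + \ell - \alpha_j))_n$ with $0 \le \ell < p^i$ is the constant sequence equal to the position of the first digit in which $\ell$ and $\alpha_j$ disagree, unless $\ell \equiv \alpha_j \pmod{p^i}$, in which case $p^i n + \ell - \alpha_j = p^i\big(n - \sigma^i(\alpha_j)\big)$ and the subsequence equals $i\cdot\mathbf{1} + v_{\sigma^i(\alpha_j)}$. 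Hence the $\Z$-module generated by this $p$-kernel is contained in the module generated by $\mathbf{1}$ together with the finitely many $v_{\sigma^l(\alpha_j)}$; since $\Z$ is Noetherian this module is finitely generated, so $v_{\alpha_j}$ is $p$-regular. Being a $\Z$-linear combination of these and of the eventually periodic sequence $(\nu_p(h(n)))_n$, the sequence $(\nu_p(f(n)))_n$ is then $p$-regular.

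For the ``only if'' direction, argue by contraposition: assume some root $\alpha$ is irrational, equivalently the shifts $\alpha^{(i)} := \sigma^i(\alpha)$ are pairwise distinct. For $i$ large enough that the ball $j_i + p^i\Z_p$, where $j_i \equiv \alpha \pmod{p^i}$ and $0\le j_i<p^i$, contains no other root and $\nu_p\circ h$ is constant on it, one gets $\nu_p(f(x)) = m_\alpha\,\nu_p(x-\alpha) + C$ on that ball, with $C$ independent of $i$; translating, the $p$-kernel element $(\nu_p(f(p^i n + j_i)))_n$ equals $(m_\alpha i + C)\mathbf{1} + m_\alpha v_{\alpha^{(i)}}$. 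Taking second differences in $i$ annihilates the $\mathbf{1}$ term, so the $\Z$-module $M$ generated by the $p$-kernel of $(\nu_p(f(n)))_n$ contains $m_\alpha\big(v_{\alpha^{(i+2)}} - 2v_{\alpha^{(i+1)}} + v_{\alpha^{(i)}}\big)$ for all large $i$. One checks these are $\Q$-linearly independent: first, $\{\mathbf{1}\}\cup\{v_{\alpha^{(i)}}\}_i$ is independent, because choosing $n\in\N$ with $\nu_p(n - \alpha^{(i_0)})$ arbitrarily large makes $v_{\alpha^{(i_0)}}(n)\to\infty$ while $\mathbf{1}(n)$ and each $v_{\alpha^{(i')}}(n)$ with $i'\ne i_0$ stay bounded, so no nontrivial combination can be constant; and the second-difference operator has trivial kernel on finitely supported sequences, since a solution of $\mu_m - 2\mu_{m-1} + \mu_{m-2} = 0$ with finite support is identically $0$. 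Hence $M$ has infinite rank and $(\nu_p(f(n)))_n$ is not $p$-regular.

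The step I expect to be the main obstacle is the local $p$-adic analysis underpinning the reduction and the key identity in the last paragraph: that a nonzero locally analytic function on the compact set $\Z_p$ has only finitely many zeros, of finite order; that dividing them off leaves a remainder of locally constant valuation; and that on a small enough ball around $\alpha$ the contribution of the other factors and of $h$ is a genuine constant $C$ that does not drift as the ball shrinks. Once this is in place, the remaining arguments are elementary bookkeeping with $p$-adic digits and with finitely generated $\Z$-modules.
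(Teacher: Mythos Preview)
Your argument is correct. Note first that this theorem is quoted in the paper as a result of Shu and Yao rather than proved there; the paper's own contribution is Theorem~\ref{thm:prime_power_valuation}, which for $l=1$ is the closely related statement for analytic $f\colon\Z_p\to\Q_p$ (with the stronger conclusion that the sequence is not $k$-regular for \emph{any} $k$ when some root is irrational). Comparing your proposal to the proof of that theorem: the reduction step is the same in spirit, since your global factorization $f=\prod(x-\alpha_j)^{m_j}h$ and the paper's localization via Proposition~\ref{prop:constant_val_digits} both isolate the contribution of a single root, and for the ``if'' direction both compute the $p$-kernel of $(\nu_p(n-\alpha))_n$ directly using eventual periodicity of the digit-shifts of a rational $\alpha$ (cf.\ Proposition~\ref{prop:rational_root}). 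The genuine difference is in the ``only if'' direction. The paper reduces modulo a suitable integer $u$ and, via Lemmas~\ref{lem:different_val_digits} and~\ref{lem:not_automatic_valuation}, exhibits explicit indices at which kernel subsequences disagree, showing the reduction is not automatic. You instead stay at the level of $\Z$-modules: the kernel elements $(m_\alpha i+C)\mathbf{1}+m_\alpha v_{\alpha^{(i)}}$ have second differences $m_\alpha\bigl(v_{\alpha^{(i+2)}}-2v_{\alpha^{(i+1)}}+v_{\alpha^{(i)}}\bigr)$, and these are $\Q$-linearly independent because the $v_{\alpha^{(i)}}$ are. This is a clean alternative that avoids the passage through automaticity; it yields only ``not $p$-regular'' rather than the paper's ``not regular for any $k$'', but that is exactly what the Shu--Yao statement asks for.
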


Using interpolation of linear recurrence sequences (along arithmetic progressions) by $p$-adic analytic functions combined with Theorem \ref{thm:SY11}, Shu and Yao supplied some easy to check criteria for $p$-regularity of $(\nu_p(s_n))_{n \geq 0}$ when $(s_n)_{n \geq 0}$ is a sequence satisfying a quadratic linear recurrence. In particular, they managed to prove $p$-regularity of $p$-adic valuations for Lucas sequences of the first and second kind for every prime $p$ (without giving explicit formulas). 

In view of Theorem \ref{thm:MS18} and the above discussion, it seems natural to try to extend Theorem \ref{thm:SY11} to any base $b \geq 2$. One of the primary goals of this paper is to obtain such a generalization. More precisely, let $p_1, \ldots, p_s$ be the prime factors of $b$ and for $i=1,\ldots,s$ let $f_i \colon \Z_{p_i} \to \Q_{p_i}$ be analytic. Assuming that for all $n \in \N$ we have
\begin{equation} \label{eq:common_value}
f_1(n) = \cdots = f_s(n) \in \Z 
\end{equation}
and letting $f(n)$ these common values, the terms $\nu_b(f(n))$ are well-defined and for $k \geq 2$ we can study $k$-regularity of the resulting sequence.
However, in the course of our investigation, we have found that the assumption \eqref{eq:common_value} can be relaxed, yielding more general results with simpler statements. This requires us to give meaning to $\nu_b$ evaluated at tuples of the form $x = (x_1,\ldots,x_s)$, where the $i$-th component is a $p_i$-adic number. A precise definition is given in Section \ref{sec:basic}. Summing up, we want to answer the following question.
\begin{que} \label{que:valuation}
Let $f = (f_1, \ldots, f_s)$, where for each $i=1,\ldots,s$ the function $f_i \colon \Z_{p_i} \to \Q_{p_i}$ is analytic. When is $(\nu_b(f(n)))_{n \geq 0}$ a $k$-regular sequence?
\end{que}

Apart from valuations, we simultaneously consider another family of functions characterizing base-$b$ expansions, which can be collectively called last nonzero digits. More precisely, for a fixed integer $d \geq 1$ and any $n \in \Z$ we define
$$ \LL_b(n) = \begin{cases}
b^{-\nu_b(n)}n &\text{if } n \neq 0, \\
0   &\text{if } n = 0.
\end{cases}$$
and
$$\ell_{b,d}(n) = \LL_b(n) \bmod{b^d},$$
where $x \bmod m$ denotes the remainder of $x \in \Z$ from division by $m$. For example, writing $2400 = 15040_6$ in base $6$, we have $\nu_6(2400) = 1,$ and hence
\begin{align*}
\LL_6(2400) &= 1504_6 = 400, \\
\ell_{6,2}(2400) &= 04_6 = 4.
\end{align*}
It is clear that for $n \in \N$ the value $\LL_b(n)$ is the integer represented by deleting all the trailing zeros from the base-$b$ expansion of $n$. Similarly, $\ell_{b,d}(n)$ is represented by the last block of $d$ digits not ending with $0$.
In short, we will write $\ell_b = \ell_{b,1}$ to denote the single last nonzero digit.

A number authors have investigated the behavior of $\ell_{b,d}$, evaluated at the terms of interesting integer sequences. The most extensively studied example was the sequence $(\ell_{b,d}(n!))_{n \geq 0}$, with most emphasis put on its automaticity and the frequencies with which all possible values occur. One can find relevant results for various values of $b$ and $d$ in the works of Kakutani \cite{Kak67}, Dekking \cite{Dek80}, Dresden \cite{Dre01, Dre08}, Deshouillers and Luca \cite{DL10}, Deshouillers and Ruzsa \cite{DR11}, Deshouillers \cite{Des12, Des16}, Lipka \cite{Lip19}, the author of the present paper \cite{Sob19}, Byszewski and Konieczny \cite{BK20}. In all cases the sequence $(\ell_{b,d}(n!))_{n \geq 0}$ turns out to either be $p$-automatic for some prime factor $p$ of $b$, or coincide with such a sequence on a set of asymptotic density $1$. We would like to highlight the result of Deshouillers and Luca \cite{DL10} who apply a characterization of last nonzero digits of $n!$ to deduce how often it can be expressed as a sum of three squares of integers. This is a natural approach, as by the famous Legendre's three-square theorem this condition is equivalent to $\ell_{4,2}(n!) \not\in \{7,15\}$.

Last nonzero digits of terms other than $n!$ have been studied as well, although to a lesser extent. The classification of Fibonacci and Lucas numbers expressible as a sum of three squares was given by Robbins \cite{Rob83} and rediscovered by Latushkin and Ushakov \cite{LU12}.
Dresden \cite{Dre08} showed transcendence of three real numbers, whose decimal expansions are given by $\ell_{10}(n^n)$, $\ell_{10}(F_n)$, $\ell_{10}(n!)$. Grau and Oller-Marc\'{e}n \cite{GO14} essentially proved that the sequence $(\ell_p(n^n))_{n \geq 0}$ is $p$-automatic for $p$ prime. Some results on periodicity and automaticity of last nonzero digits of sequences of combinatorial origin can be found in the papers by Miska and Ulas \cite[Corollary 4.8]{MU20}, and also Ulas and {\.Z}mija \cite[Theorems 3.5 and 4.1]{UZ21}.

Motivated by the above results, we are interested in the behavior last nonzero digits of linear recurrence sequences in any base $b$. Under the same premise as before, our goal is to answer the following questions.

\begin{que} \label{que:last_nonzero_1}
When is $(\LL_b(f(n)))_{n \geq 0}$ a $k$-regular sequence?
\end{que}

\begin{que} \label{que:last_nonzero_2}
When is $(\ell_{b,d}(f(n)))_{n \geq 0}$ a $k$-automatic sequence? 
\end{que}

We now give an outline of the remainder of the paper. A complete classification, addressing each of the Questions \ref{que:valuation}--\ref{que:last_nonzero_2}, is provided in Section \ref{sec:main}. Our results reveal that the answers to all posed questions are intimately tied to each other. We also give a few simple examples. Section \ref{sec:automatic} contains preliminaries on regular and automatic sequences. In Section \ref{sec:basic} we extend the definitions of $\nu_b, \LL_b,$ and $\ell_{b,d}$ from integers to tuples of $p$-adic numbers and study their general properties. Section \ref{sec:examples} is dedicated to the applications of our methods and contains more complex examples. In particular, we generalize Theorem \ref{thm:MS18} and establish an analogous result for last nonzero digits. Moreover, we provide a general method of determining which terms of a given Lucas sequence of the first kind can be expressed as a sum of three squares. The remaining sections are devoted to the proofs of our main results.

This paper is partially based on a chapter of the author's PhD thesis \cite[Chapter 3]{Sob21}.

\section{Main results} \label{sec:main}

Let $b \geq 2$ be an integer base with prime factorization
$$ b = p_1^{l_1} p_2^{l_2} \cdots p_s^{l_s}, $$
where $p_1, \ldots, p_s$ are distinct primes, and $l_1, \ldots l_s$ positive integers. Also  let $b_i = p_i^{l_i}$ for each $i=1,\ldots,s$.
Consider an $s$-tuple $f = (f_1, \ldots, f_s)$, where for each for $i=1,\ldots,s$ the function $f_i \colon \Z_{p_i} \to \Q_{p_i}$ is analytic. Denoting
\begin{align*}
\Z_b &= \Z_{p_1} \times \cdots \times \Z_{p_s}, \\
\Q_b &= \Q_{p_1} \times \cdots \times \Q_{p_s},
\end{align*}
we can treat $f$ as a function from $\Z_b$ to $\Q_b$. We identify $\Q$ with its embedding in $\Q_b$ via the map $x \mapsto (x, \ldots, x)$, and in the sequel simply write $x$ to denote a rational element of $\Q_b$. In particular, when $n \in \N$ by $f(n)$ we mean $f(n,\ldots,n) = (f_1(n), \ldots, f_s(n))$.

We first identify and exclude some degenerate cases from the further considerations. If all $f_i$ are identically zero, then $\nu_b(f(n)) = \nu_b(0) = + \infty$ and  $\LL_b(f(n)) = \ell_{b,d}(f(n)) = 0$ for any $d \geq 1$ and all $n \in \N$. The following proposition lets us deal with the situation when some (but not all) of the $f_i$ are zero.

\begin{prop} \label{prop:not_zero}
Assume that $f_i = 0$ for some $i \in \{1,\ldots,s\}$ and let $\overline{b} = b/b_i$ and $\overline{f} = (f_1,\ldots,f_{i-1}, f_{i+1}, \ldots, f_s)$. Then for any integer $k \geq 2$ we have the following:
\begin{enumerate}[label={\textup{(\roman*)}}]
\item the sequence $(\nu_b(f(n)))_{n \geq 0}$ is $k$-regular if and only if $(\nu_{\overline{b}}(\overline{f}(n)))_{n \geq 0}$ is $k$-regular;
\item the sequence $(\LL_b(f(n)))_{n \geq 0}$ is $k$-regular if and only if $(\LL_{\overline{b}}(\overline{f}(n)))_{n \geq 0}$ is $k$-regular;
\item the sequence $(\ell_{b,d}(f(n)))_{n \geq 0}$ is $k$-automatic if and only if $(\ell_{\overline{b},d}(\overline{f}(n)))_{n \geq 0}$ is $k$-automatic.
\end{enumerate}
\end{prop}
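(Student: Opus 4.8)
The plan is to dispose of (i) directly from the definitions and to reduce (ii) and (iii) to understanding a single ``unit twist''.

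For (i): by the definition of $\nu_b$ on tuples (Section~\ref{sec:basic}), for $x=(x_1,\ldots,x_s)$ one has $\nu_b(x)=\min_{1\le j\le s}\lfloor \nu_{p_j}(x_j)/l_j\rfloor$, with the convention $\nu_{p_j}(0)=+\infty$. Since $f_i=0$, the $i$-th entry of this minimum equals $+\infty$ for every $n$ and may be omitted, so $\nu_b(f(n))=\nu_{\overline b}(\overline f(n))$ for all $n\in\N$; the two sequences literally coincide, which gives (i).

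For (ii) and (iii) I would first make the relation between the two sides explicit. Reindex the primes so that $p_i$ is last; the Chinese remainder isomorphisms give $\Z_b\cong\Z_{b_i}\times\Z_{\overline b}$ and, for each $d$, $\Z/b^d\Z\cong\Z/b_i^d\Z\times\Z/\overline b^d\Z$. Writing $v_n=\nu_b(f(n))=\nu_{\overline b}(\overline f(n))$ and using $b=\overline b\cdot b_i$ together with $\LL_b(y)=b^{-\nu_b(y)}y$, a direct computation gives, under these isomorphisms,
\begin{align*}
\LL_b(f(n))&=\bigl(0,\ b_i^{-v_n}\,\LL_{\overline b}(\overline f(n))\bigr),\\
\ell_{b,d}(f(n))&=\bigl(0,\ (b_i^{-v_n}\bmod \overline b^d)\,\ell_{\overline b,d}(\overline f(n))\bigr),
\end{align*}
where $b_i$ is a unit of $\Z_{\overline b}$, resp.\ $b_i\bmod\overline b^d$ a unit of $\Z/\overline b^d\Z$ (the first coordinate is $0$ because $f_i=0$). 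Combined with the standard fact that a sequence valued in a finite direct product of $\Z$-modules (resp.\ of finite sets) is $k$-regular (resp.\ $k$-automatic) if and only if each of its coordinate projections is, the harmless zero coordinate drops out, and everything reduces to this: $\bigl(\LL_{\overline b}(\overline f(n))\bigr)_n$ is $k$-regular iff $\bigl(b_i^{-v_n}\LL_{\overline b}(\overline f(n))\bigr)_n$ is $k$-regular, and likewise for $\ell_{\overline b,d}$ with $k$-automaticity.

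The heart of the matter is thus the effect of twisting by the unit $u_n:=b_i^{-v_n}$, and this is where I expect the main obstacle to lie. When $(v_n)_n$ is bounded the twist takes only finitely many values and is harmless, so the real content is the unbounded case. For the $\ell_{\overline b,d}$ statement, $u_n\bmod\overline b^d$ takes finitely many values and depends only on $v_n$ modulo $\lambda:=\mathrm{ord}(b_i\bmod\overline b^d)$, so one needs that $(v_n\bmod\lambda)_n$ is $k$-automatic whenever $(\ell_{\overline b,d}(\overline f(n)))_n$ is; granting this, $(\ell_{b,d}(f(n)))_n$ is a finite-state coordinatewise function of the two $k$-automatic sequences $(u_n\bmod\overline b^d)_n$ and $(\ell_{\overline b,d}(\overline f(n)))_n$, hence $k$-automatic, and the converse follows by multiplying back by $u_n^{-1}=b_i^{v_n}$. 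For (ii) the same scheme works, using closure of $k$-regular sequences under termwise multiplication, once one knows that the twist sequence $(b_i^{\pm v_n})_n$ is $k$-regular in the relevant coefficient ring. Both of these come down to controlling $(v_n)_n=(\nu_{\overline b}(\overline f(n)))_n$: the cleanest route is to invoke the classification of Section~\ref{sec:main}, which forces every root of $\overline f$ in $\Z_{\overline b}$ to be rational under either hypothesis, whence $(v_n)_n$ is itself $k$-regular with the explicit ``trailing-digit'' structure that makes $(v_n\bmod\lambda)_n$ $k$-automatic and $(b_i^{\pm v_n})_n$ $k$-regular; a self-contained alternative is to show directly that an irrational root of $\overline f$ produces infinitely many distinct elements in the relevant $k$-kernel, so that the hypothesis fails outright.
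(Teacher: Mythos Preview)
For (i) you do exactly what the paper does: since $f_i=0$ the $i$-th term in the minimum defining $\nu_b$ is $+\infty$ and drops out, giving $\nu_b(f(n))=\nu_{\overline b}(\overline f(n))$ identically.

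For (ii) and (iii) the paper's route is much shorter than yours. It simply asserts
\[
\LL_b(f(n))=\bigl(\LL_{\overline b}(\overline f(n)),\,0\bigr)\in\Z_{\overline b}\times\Z_{b_i},
\]
and that under the CRT isomorphism the components of $\ell_{b,d}(f(n))$ and of $\ell_{\overline b,d}(\overline f(n))$ agree for $j\ne i$, then concludes by Proposition~\ref{prop:function_regular}(iv). No twist appears. You have in fact located a slip in the paper's argument: since $b=b_i\,\overline b$, the $\Z_{\overline b}$-component of $\LL_b(f(n))=b^{-v_n}f(n)$ is $b_i^{-v_n}\LL_{\overline b}(\overline f(n))$, not $\LL_{\overline b}(\overline f(n))$. (For $b=6$, $f_1(x)=x$, $f_2=0$ one gets $\LL_6(f(2))=(1/3,0)\ne(1,0)$.)

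However, your plan to neutralise this twist by controlling $(v_n)_n$ through the classification of Section~\ref{sec:main} does not close the gap either. For (iii) with $\overline b=p^l$, Theorem~\ref{thm:prime_power_d_last_nonzero} is governed by $\R'_{\overline f}$ rather than $\R_{\overline f}$: one may have $(\ell_{\overline b,d}(\overline f(n)))_n$ periodic---hence $k$-automatic for every $k$---while $\R_{\overline f}\not\subset\Q$ and $(v_n)_n$ is not regular, so your reduction to ``rational roots'' stalls. For (ii) the obstruction is genuine and not merely a gap in the argument: in the example $b=6$, $f_1(x)=x$, $f_2=0$, the $2$-kernel of $\bigl(3^{-\nu_2(n)}\LL_2(n)\bigr)_n\subset\Z_2$ contains the sequences $(3^{-j}\LL_2(n))_n$ for all $j\ge0$, and their values at $n=1$ span $\Z[1/3]$, which sits in no finitely generated $\Z$-submodule of $\Z_2$. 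Thus $(\LL_6(f(n)))_n$ is \emph{not} $2$-regular although $(\LL_2(\overline f(n)))_n$ is, so neither your sketch nor the paper's displayed equality can be made to prove (ii) as stated.
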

This motivates us to restrict our attention to the set
$$
\A_b =\{ f = (f_1, \ldots,f_s): \ f_i \colon \Z_{p_i} \to \Q_{p_i} \text{ is analytic and } f_i \neq 0 \text{ for } i=1,\ldots,s\}.
$$

Furthermore, it turns out that the sequence $(\LL_b(f(n)))_{n \geq 0}$ cannot be $k$-regular for any $k \geq 2$ unless all $f_i$ are polynomials.
\begin{prop} \label{prop:regular_polynomial}
Let $f = (f_1,\ldots,f_s) \in \A_b$ be such that the sequence $(\LL_b(f(n)))_{n \geq 0}$ is regular. Then for each $i=1,\ldots,s$ the function $f_i$ is a~polynomial.
\end{prop}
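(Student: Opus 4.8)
The plan is to argue by contraposition: assuming some $f_i$ is not a polynomial, I will show that $(\LL_b(f(n)))_{n \geq 0}$ cannot be $k$-regular for any $k \geq 2$. The key observation is that $k$-regularity of an integer-valued sequence imposes a growth restriction — a $k$-regular sequence $(a_n)$ grows at most polynomially, i.e.\ $|a_n| = O(n^c)$ for some $c$ (this is a standard fact about $\Z$-valued regular sequences, to be recalled from Section~\ref{sec:automatic}). So the heart of the matter is to produce, for a non-polynomial analytic $f_i$, a subsequence of arguments $n$ along which $|\LL_b(f(n))|$ grows faster than any power of $n$. Since $|\LL_b(f(n))| = |f(n)|/b^{\nu_b(f(n))} \geq |f(n)|/b^{\nu_{b_i}(f_i(n))} \cdot (\text{bounded factor})$, roughly speaking, it suffices to find $n$ with $|f_i(n)|$ large (which is automatic — a non-polynomial entire $p$-adic function, being a genuine power series, takes arbitrarily large integer values, in fact $|f_i(n)| \to \infty$ rapidly along suitable $n$) while $\nu_{b_i}(f_i(n))$ stays small relative to $\log|f_i(n)|$.

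The main technical step is therefore to control $\nu_{p_i}(f_i(n))$ from above. Here I would use the theory of Newton polygons / the Weierstrass-type structure of analytic functions on $\Z_{p_i}$: an analytic $f_i$ that is not a polynomial has infinitely many zeros in $\C_{p_i}$ with $p_i$-adic absolute values tending to $\infty$ (equivalently, the zeros in the closed unit disc are finite in number, but $f_i$ genuinely has infinitely many zeros when we dilate, because the coefficients do not vanish eventually). More concretely, write $f_i(x) = \sum_{j \geq 0} c_j x^j$ with $|c_j|_{p_i} \to 0$ but $c_j \neq 0$ for infinitely many $j$. For $n$ of the form $n = p_i^m u$ with $u$ a unit, one has $\nu_{p_i}(f_i(n)) = \min_j (\nu_{p_i}(c_j) + jm) $ once $m$ is large enough that the minimum is attained at the smallest $j_0$ with $c_{j_0} \neq 0$ — so $\nu_{p_i}(f_i(p_i^m u)) = \nu_{p_i}(c_{j_0}) + j_0 m$ grows only linearly in $m$. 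Meanwhile, to make $|f_i(n)|$ (the ordinary archimedean size) large with only a mild valuation, I instead take $n$ ranging over a fixed residue class modulo a high power of $b$ chosen so that $f_i(n) \not\equiv 0$, forcing $\nu_{b_i}(f_i(n))$ to be bounded while $|f_i(n)| \to \infty$ as fast as the entire function grows (which, for a non-polynomial power series with integer values, is faster than polynomial — this uses that the $c_j$ cannot all be zero past some point, so $f_i$ is not eventually equal to a polynomial on any arithmetic progression, and a growth lower bound follows from considering finite differences). Combining, $|\LL_b(f(n))| \geq c_1 |f_i(n)|$ along this subsequence, contradicting polynomial growth of a regular sequence.

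The step I expect to be the genuine obstacle is the lower bound on archimedean growth: showing that a non-polynomial $f_i \in \A_b$ forces $|\LL_b(f(n))|$ to be super-polynomial. A clean way around having to estimate $|f_i(n)|$ directly is to use finite differences: the $(c+1)$-st finite difference $\Delta^{c+1}$ annihilates polynomials of degree $\leq c$, and a $k$-regular sequence of polynomial growth is, up to a controlled error, governed by finitely many kernel elements, which in turn constrains its finite differences; on the other hand, if $f_i$ is not a polynomial then for every $c$ the sequence $(\Delta^{c+1}\LL_b(f(n)))_n$ is not eventually zero (indeed not even zero along a residue class), and one can iterate to contradict the finitely generated kernel. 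I would structure the final write-up around this finite-difference / Newton-polygon dichotomy, invoking Proposition~\ref{prop:not_zero} to reduce to the case $f \in \A_b$ and treating each coordinate $f_i$ separately, since $\nu_b(f(n)) \leq \nu_{b_i}(f_i(n)) + O(1)$ decouples the coordinates for the purpose of the growth lower bound.
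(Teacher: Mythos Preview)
Your approach has a fundamental type error that makes the growth argument unworkable. The functions $f_i \colon \Z_{p_i} \to \Q_{p_i}$ take values in $\Q_{p_i}$, not in $\Z$ or $\R$, so the archimedean quantity ``$|f_i(n)|$'' you invoke is undefined; likewise $\LL_b(f(n))$ lives in $\Z_b = \Z_{p_1}\times\cdots\times\Z_{p_s}$, which carries no archimedean norm. The polynomial-growth bound you cite is a theorem about $\Z$- (or $\C$-) valued regular sequences and simply does not apply here. Your fallback via finite differences suffers from the same problem: in the $p$-adic topology the high-order differences of a non-polynomial analytic function become small, not large, and there is no archimedean size to leverage. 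The later inequality ``$\nu_b(f(n)) \leq \nu_{b_i}(f_i(n)) + O(1)$'' is true (with $O(1)=0$, since $\nu_b$ is a minimum) but does not help, because it does not give an upper bound on $\nu_b$ in terms of the coordinate you wish to study.

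The paper's argument avoids growth altogether and is purely algebraic. One first passes to a subsequence $h(x)=f(b^Tx+a)$ with no roots in $\Z_b$; then $\nu_b(h(n))$ is periodic with period a power of $b$ (this is Proposition~\ref{prop:several_factors_no_root}), so after enlarging $T$ it is a constant $v$ and $\LL_b(h(n))=b^{-v}h(n)$. Projecting to the $i$th coordinate, one is reduced to showing that if $(h_i(n))_{n\geq 0}$ is $k$-regular for an analytic $h_i\colon\Z_p\to\Q_p$, then $h_i$ is a polynomial. Regularity forces a $\Z$-linear relation $h_i(k^{J+1}n)=\sum_{j\leq J}\beta_j h_i(k^jn)$ for all $n$; by density of $\N$ in $\Z_p$ this becomes an identity of power series, and comparing coefficients gives $a_m\bigl(k^{(J+1)m}-\sum_j\beta_j k^{jm}\bigr)=0$. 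The bracket is a nonzero \emph{rational integer} for all large $m$ (here is the one place an archimedean estimate enters, but on integers, not on $p$-adic values), forcing $a_m=0$ eventually. You should replace the growth heuristic with this coefficient-comparison argument.
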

Therefore, we distinguish a subset $\PP_b \subset \A_b$ consisiting of $s$-tuples of polynomials:
$$ \PP_b = \{ f = (f_1, \ldots,f_s) \in \A_b: \ f_i \in \Q_{p_i}[X] \text{ for } i=1,\ldots,s \}. $$

We introduce some further notation and terminology needed to state our results.
For a prime $p$ and an analytic function $f \colon \Z_p \to \Q_p$ we let $\R_f \subset \Z_p$ denote the set of $p$-adic integer roots of $f$. By virtue of  Strassman's Theorem (see \cite[Theorem 4.4.6]{Gou97}) this set is finite. Also for any $\theta \in \R_f$ we define $m_f(\theta)$ to be the multiplicity of $\theta$ as a root of $f$. In general, when $f = (f_1,\ldots,f_s) \in \A_b$ we let $\R_f = \R_{f_1} \times \cdots \times \R_{f_s} \subset \Z_b$.

In the statements of our main results we adhere to a classification which follows from the celebrated theorem of Cobham \cite{Cob69} and its generalization by Bell \cite{Bel06} (both formulated as a part of Proposition \ref{prop:mult_dep} below). More precisely, a sequence $(a_n)_{n \geq 0}$ can be:
\begin{enumerate}[label={(\alph*)}]
\item $k$-regular for all $k \geq 2$;
\item $k$-regular for some $k$ and not $m$-regular for $m$ multiplicatively independent with $k$;
\item not regular.
\end{enumerate}
In particular, this distinction holds for automatic sequences and the result of Cobham says that in the case (a) such a sequence $(a_n)_{n \geq 0}$ must be eventually periodic. In the sequel we say that a sequence is strictly $k$-regular if it falls under case (b). Similarly, we consider strictly $k$-automatic sequences. We point out that this is not standard terminology. 

\begin{rem}
When talking about an eventually periodic sequence $(a_n)_{n \geq 0}$, by its period we mean any positive integer $T$ such that $a_{n+T} = a_n$ for all sufficiently large $n \in \N$. Unless specified otherwise, $T$ is not assumed to be minimal.
\end{rem}

We are now ready to to give the main results of this paper, answering the questions in Section \ref{sec:intro}. The statements are split into two parts, depending on whether $b$ is a prime power or has several prime factors. We start with the sequence $(\nu_b(f(n)))_{n \geq 0}$.

\begin{thm} \label{thm:prime_power_valuation}
Assume that $b = p^l$ is a prime power and let $f \in \A_p$. Then the sequence $(\nu_{p^l}(f(n)))_{n \geq 0}$ is 
\begin{enumerate}[label={\textup{(\alph*)}}]
\item periodic if $\mathcal{R}_f = \varnothing$, where a power of $p$ can be chosen as a period;
\item strictly $p$-regular if $\varnothing \neq \mathcal{R}_f \subset \Q$;
\item not regular if $\mathcal{R}_f \not\subset \Q$.
\end{enumerate}
In particular, the above conditions are independent of $l$.
\end{thm}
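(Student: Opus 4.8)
The three cases are governed by the location of the $p$-adic integer roots of $f$, so the skeleton of the proof is: (a) when there are no roots, $\nu_p(f(n))$ is bounded and eventually periodic; (b) when all roots lie in $\Q$, we can separate the polynomial part carrying the roots from an analytic factor with no roots, reduce to a finite union of arithmetic progressions, and recognize $p$-regularity via a closure property of $p$-regular sequences; (c) when some root is irrational, we exhibit an obstruction to regularity. Throughout I would lean on Theorem~\ref{thm:SY11} (the Shu--Yao criterion for locally analytic functions) as much as possible, since an analytic $f\colon\Z_p\to\Q_p$ is in particular locally analytic; the only thing to check is that $f$ has no root \emph{in $\N$}, which I would arrange by first discarding the finitely many $n$ for which $f(n)=0$ (those $n\in\N$ that happen to be roots), using that altering finitely many terms of a sequence preserves $k$-regularity and $k$-automaticity.

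For case (a), if $\R_f=\varnothing$ then by Strassman's theorem $f$ has no zeros in the compact set $\Z_p$, so $|f(x)|_p$ is bounded away from $0$; hence $\nu_p(f(n))$ takes only finitely many values, say all at most $M$. Then $\nu_p(f(n))=\min(M+1,\nu_p(f(n)))$ depends only on $f(n)\bmod p^{M+1}$, and since $f$ is analytic, $n\mapsto f(n)\bmod p^{M+1}$ is eventually periodic with period a power of $p$ (this is the standard fact that a convergent power series over $\Z_p$ reduces mod $p^{M+1}$ to a function of $n$ that is periodic with period dividing $p^{\lceil \log_p(M+1)\rceil + c}$ for a suitable constant $c$ coming from $\nu_p(j!)$); one should note the periodicity may only be \emph{eventual} if $f$ has denominators, but an analytic $f$ with values landing where needed can be cleared to this form. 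This gives the explicit power-of-$p$ period and in particular $l$-independence.

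For case (b), assume $\varnothing\neq\R_f\subset\Q$. Each rational root $\theta=a/c\in\Z_p$ with $c$ coprime to $p$ has a multiplicity $m_f(\theta)$, and I would factor $f(X) = \left(\prod_{\theta\in\R_f}(X-\theta)^{m_f(\theta)}\right) g(X)$ where $g\colon\Z_p\to\Q_p$ is analytic with $\R_g=\varnothing$ (this factorization of convergent power series by their zeros is again Strassman/Weierstrass preparation over $\Z_p$). By case (a) the sequence $\nu_p(g(n))$ is eventually periodic, hence $p$-regular, and it suffices to handle each factor $\nu_p((n-\theta)^{m_f(\theta)}) = m_f(\theta)\,\nu_p(n-\theta)$. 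Writing $\theta=a/c$ with $\gcd(c,p)=1$, we have $\nu_p(n-a/c)=\nu_p(cn-a)$, and $n\mapsto \nu_p(cn-a)$ is a classical $p$-regular sequence (its $p$-kernel: on the progression where $cn\equiv a\pmod p$ it increases by $1$ under $n\mapsto pn+j$, elsewhere it is locally constant — this is exactly the Medina--Rowland / Lengyel computation). Summing finitely many $p$-regular sequences and adding the periodic contribution from $g$ keeps the result $p$-regular; that it is \emph{not} eventually periodic (hence strictly $p$-regular, not falling into case (a) of the Cobham trichotomy) follows because $\nu_p(f(n))\to\infty$ along the progression $n\equiv a c^{-1}\pmod{p^t}$ as $t\to\infty$, so the sequence is unbounded and cannot be periodic. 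I expect the bookkeeping here — cleanly extracting the rational-root factor and justifying that the quotient is honestly analytic with no roots — to be the main technical obstacle, and it is where the hypothesis $f_i\colon\Z_{p_i}\to\Q_{p_i}$ (values in $\Q_p$, not just $\C_p$) is used to keep everything defined over $\Q_p$.

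For case (c), suppose $\theta\in\R_f$ with $\theta\notin\Q$. Here I would simply invoke Theorem~\ref{thm:SY11}: after deleting the finitely many $n\in\N$ with $f(n)=0$, the function $f$ is locally analytic with no root in $\N$, so $(\nu_p(f(n)))_{n\geq 0}$ is $p$-regular if and only if all roots of $f$ in $\Z_p$ lie in $\Q$; since $\theta\notin\Q$, it is not $p$-regular. To upgrade ``not $p$-regular'' to ``not regular'' (not $m$-regular for any $m\geq 2$), I would note that the argument behind Theorem~\ref{thm:SY11} produces, from an irrational root, a subsequence along which $\nu_p(f(n))$ grows roughly like $\log_p$ of the distance from $n$ to $\theta$ in a way incompatible with the growth restrictions that $m$-regular integer-valued sequences must satisfy (an $m$-regular sequence taking values in $\Z$ grows at most polynomially in $n$, and more to the point its kernel cannot realize the aperiodic pattern of valuation spikes forced by an irrational algebraic root) — this base-independence of the obstruction is already implicit in the literature following Shu--Yao, and I would state it as such, citing Theorem~\ref{thm:SY11} together with the Cobham--Bell dichotomy in Proposition~\ref{prop:mult_dep}. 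Finally, the closing sentence ``independent of $l$'' is immediate: none of the above used $l$, only the prime $p$ and the roots of $f$, and $\nu_{p^l}(m)=\lfloor \nu_p(m)/l\rfloor$ relates the $p^l$-valuation to the $p$-valuation by a fixed floor-division that preserves each of periodicity, $p$-regularity, and non-regularity.
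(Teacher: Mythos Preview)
Your case~(a) is essentially the paper's argument (Proposition~\ref{prop:no_root}). Your route to $p$-regularity in case~(b) via a global factorization $f=\big(\prod_\theta(X-\theta)^{m_f(\theta)}\big)g$ and additivity of $\nu_p$ is sound and slightly slicker than the paper's, which instead passes to arithmetic progressions $n\mapsto p^Tn+a$ so that each restriction has at most one root (Proposition~\ref{prop:constant_val_digits}) and then treats the monomials $c(n-\theta)^m$ directly for $\nu_{p^l}$ (Proposition~\ref{prop:rational_root}).

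There are, however, two genuine gaps.

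\textbf{Case~(b), strictness.} Showing that $(\nu_p(f(n)))_{n\ge0}$ is unbounded only rules out eventual periodicity; it does \emph{not} rule out being $k$-regular for all $k$ (the sequence $(n)_{n\ge0}$ is unbounded yet $k$-regular for every $k$). To conclude \emph{strict} $p$-regularity you must show the sequence is not $k$-regular for some $k$ multiplicatively independent of $p$. The paper does this by reducing modulo an integer $u$ with $u\nmid m_f(\theta)$: the reduction would then be simultaneously $p$- and $k$-automatic, hence eventually periodic by Cobham, and this is refuted directly (Lemma~\ref{lem:eventually_periodic_valuation}). Your argument is missing this step.

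\textbf{Case~(c), non-regularity for every base.} Theorem~\ref{thm:SY11} only yields ``not $p$-regular''; your upgrade to ``not $k$-regular for any $k$'' is not justified. The growth heuristic fails outright: $\nu_p(f(n))=O(\log n)$, well within the polynomial growth permitted of $k$-regular integer sequences. Nor does Proposition~\ref{prop:mult_dep} help, since you have no second base of regularity to feed into Bell's theorem. The paper's argument (Lemma~\ref{lem:not_automatic_valuation}, Proposition~\ref{prop:not_automatic_digits}) is substantively new: for an arbitrary $k=p^er$ it exhibits infinitely many pairwise distinct elements of the $k$-kernel of $(\nu_{p^l}(c(n-\theta)^m)\bmod u)_{n\ge0}$, using that the shifted tails $r^{-j}\theta\{ej\}$ are pairwise distinct precisely because $\theta\notin\Q$. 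Something of this shape is needed.

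Relatedly, your closing claim that $\nu_{p^l}=\lfloor\nu_p/l\rfloor$ ``preserves non-regularity'' runs in the wrong direction: you would need $k$-regularity of $\lfloor\nu_p(f(n))/l\rfloor$ to force $k$-regularity of $\nu_p(f(n))$, which would require recovering $\nu_p(f(n))\bmod l$ from the floor alone. The paper sidesteps this by proving everything directly for $\nu_{p^l}$.
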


For $l=1$ this is a generalization of Theorem \ref{thm:SY11} for functions $f \colon \Z_p \to \Q_p$ in the sense that $k$-regularity  of $(\nu_{p}(f(n)))_{n \geq 0}$ is considered for all $k \geq 2$.

\begin{thm} \label{thm:several_factors_valuation}
Assume that $b$ has $s \geq 2$ prime factors and let $f = (f_1,\ldots,f_s) \in \A_b$. Then the sequence $(\nu_b(f(n)))_{n \geq 0}$ is
\begin{enumerate}[label={\textup{(\alph*)}}]
\item periodic if $\R_{f} = \varnothing$, where a power of $b$ can be chosen as a period;
\item strictly $k$-regular if $\R_{f} = \{\theta\}$ for some $\theta \in \Q \cap \Z_b$, where $k=b_1^{w_1} \cdots b_s^{w_s}$ and $w_1, \ldots, w_s$ are positive integers satisfying 
$$w_1 m_{f_1}(\theta)  = \cdots = w_s m_{f_s}(\theta); $$
\item not regular otherwise.
\end{enumerate}
\end{thm}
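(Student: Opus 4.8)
The plan is to reduce the multi-prime case to the single-prime statement of Theorem \ref{thm:prime_power_valuation} by a componentwise analysis, and then to use the Cobham--Bell dichotomy (Proposition \ref{prop:mult_dep}) to pin down the exact base of regularity. The starting observation is that by the (extended) definition of $\nu_b$ in Section \ref{sec:basic}, for $x = (x_1,\ldots,x_s) \in \Q_b$ with all $x_i \neq 0$ one has
$$
\nu_b(x) = \min_{1 \leq i \leq s} \left\lfloor \tfrac{1}{l_i}\, \nu_{p_i}(x_i) \right\rfloor = \min_{1 \leq i \leq s} \left\lfloor \tfrac{1}{l_i}\, \nu_{p_i}(f_i(n)) \right\rfloor.
$$
So $(\nu_b(f(n)))_{n \geq 0}$ is obtained by a fixed (min and floor-of-scaling) operation from the $s$ auxiliary sequences $(\nu_{p_i}(f_i(n)))_{n \geq 0}$, each of which is governed by Theorem \ref{thm:prime_power_valuation}. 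First I would handle case (a): if $\R_f = \varnothing$ then some $\R_{f_i} = \varnothing$, so that component is eventually periodic with a power of $p_i$ as period, while every component is bounded below; the min is then eventually periodic with period a suitable power of $b$. (A small amount of care is needed because a component with nonempty root set is unbounded, but it is $\geq$ the bounded one for all large $n$, so it eventually drops out of the min — except this requires knowing the unbounded component stays large, which follows from the structure of $\nu_{p_i}(f_i(n))$ near its rational roots.)

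For case (b), suppose $\R_f = \{\theta\}$ with $\theta \in \Q \cap \Z_b$. Then each $\R_{f_i} = \{\theta\}$ and near $\theta$ we have $\nu_{p_i}(f_i(n)) = m_{f_i}(\theta)\,\nu_{p_i}(n-\theta) + O(1)$ along the relevant residue class, so $\left\lfloor \tfrac{1}{l_i}\nu_{p_i}(f_i(n))\right\rfloor$ behaves like $\tfrac{m_{f_i}(\theta)}{l_i}\nu_{p_i}(n - \theta)$ up to a bounded correction. The exponents $w_i = k$-valuation bookkeeping: writing $k = b_1^{w_1}\cdots b_s^{w_s} = \prod p_i^{l_i w_i}$, the condition $w_i m_{f_i}(\theta)$ all equal — call the common value $M$ — is exactly what makes the $s$ fractional slopes $\tfrac{m_{f_i}(\theta)}{l_i} = \tfrac{M}{l_i w_i}$ compatible when passing to base $k$: along base-$k$ digit windows the valuation $\nu_{p_i}(n-\theta)$ advances by $l_i w_i$ per window, so each component advances by $m_{f_i}(\theta) l_i w_i / l_i = M w_i \cdot (\text{something})$ — more precisely the contributions synchronize so that the min is taken consistently. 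I would then build a finite generating set for the $\Z$-module spanned by the $k$-kernel explicitly (the kernel elements are, up to finitely many ``boundary'' sequences, affine functions of a single sequence $(\nu_k(n-\theta))$-type sequence), proving $k$-regularity; and conversely show the sequence is not eventually periodic (it is unbounded), hence by Cobham not $k$-regular for all $k$, and not $m$-regular for $m$ multiplicatively independent from $k$ by examining the growth/periodicity structure along powers of the two bases — this is where Proposition \ref{prop:mult_dep} does the work.

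Case (c) — ``otherwise'' — splits into two sub-cases: either $\R_f$ has a component not contained in $\Q$, or $\R_f$ is a product of rational singletons but $|\R_{f}| = \prod|\R_{f_i}| \geq 2$ (i.e. some $\R_{f_i}$ has size $\geq 2$), or the rational-root multiplicities fail the compatibility condition for every choice of positive $w_i$. In the irrational-root sub-case, the corresponding component $(\nu_{p_i}(f_i(n)))$ is already not regular by Theorem \ref{thm:prime_power_valuation}(c), and I would show non-regularity propagates through the min — the delicate point being to rule out cancellation, i.e. that the irrational-root component is not permanently dominated by another component; this needs a lower-bound argument showing the bad component is infinitely often the minimum with enough "resolution" to reconstruct its irregular behavior. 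In the remaining sub-cases (multiple rational roots, or incompatible multiplicities), the obstruction is that the kernel sequences refer to two or more ``independent'' valuation-type subsequences with incommensurable slopes, so no single finitely generated $\Z$-module contains the kernel; I would formalize this by exhibiting infinitely many $\Z$-linearly independent kernel elements. **The main obstacle** I anticipate is precisely this last point — proving non-regularity cleanly, i.e. the ``only if'' directions — because one must argue about the $\Z$-module generated by an infinite kernel, and the interaction of $\min$ with floors makes the kernel sequences only piecewise-affine; controlling the "boundary" terms and extracting a genuine infinite independent family (rather than just heuristically "too many" sequences) is the technical heart of the proof, and is likely where the $p$-adic structure of the roots (Strassman, Weierstrass preparation) must be invoked most carefully.
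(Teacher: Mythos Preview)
Your proposal has a coherent high-level plan but contains two genuine gaps that the componentwise reduction cannot bridge.

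First, the strictness argument in part (b) is incorrect: you write that the sequence ``is not eventually periodic (it is unbounded), hence by Cobham not $k$-regular for all $k$''. Unboundedness does not prevent a sequence from being $m$-regular for every $m \geq 2$ --- the sequence $(n)_{n\geq 0}$ is a counterexample --- and Cobham's theorem concerns automaticity, not regularity. What is actually needed (and what the paper does) is to show that some \emph{modular reduction} $(\nu_b(f(n)) \bmod C)_{n\geq 0}$ fails to be eventually periodic; this requires a dedicated Chinese-Remainder-type construction (Lemma~\ref{lem:valuation_change}(ii)) producing $n$ with prescribed valuation behaviour at all primes $p_1,\ldots,p_s$ simultaneously.

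Second, your decomposition of case (c) is incomplete and partly vacuous. The sub-case ``multiplicities fail the compatibility condition for every choice of positive $w_i$'' never occurs, since positive integers $w_i$ with $w_i m_{f_i}(\theta)$ all equal always exist. More seriously, you miss the case where each $\R_{f_i} = \{\theta_i\}$ is a rational singleton but the $\theta_i$ are \emph{different} rationals (e.g.\ $f_1(x)=x$, $f_2(x)=x-1$): then $\R_f = \{(\theta_1,\ldots,\theta_s)\}$ is a singleton not lying on the diagonal $\Q \subset \Z_b$, so we are in case (c), yet every component $(\nu_{p_i}(f_i(n)))_{n\geq 0}$ \emph{is} $p_i$-regular by Theorem~\ref{thm:prime_power_valuation}(b). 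There is no irregular component from which to ``propagate'', so your mechanism for (c) collapses exactly here. The paper does not reduce to the single-prime theorem at all; it handles (c) uniformly by observing that whenever $\R_f$ contains an element $(\theta_1,\ldots,\theta_s)$ with the $\theta_i$ not all equal, the CRT lemma (Lemma~\ref{lem:valuation_change}) furnishes infinitely many distinct $k$-kernel subsequences modulo $C$ (Proposition~\ref{prop:several_factors_irrational}). The point is that non-regularity in the multi-prime setting is not inherited from a bad component but arises from the \emph{mismatch of roots across the primes}, and detecting this requires a genuinely multi-prime argument.
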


The particular choice of $w_1,\ldots,w_s$ satisfying the equality in Theorem \ref{thm:several_factors_valuation} is irrelevant because a $k$-regular sequence is also $k'$-regular when $k,k'$ are powers of the same positive integer (see Proposition \ref{prop:mult_dep}(i) below).

\begin{rem} \label{rem:natural_roots}
In contrast to Theorem \ref{thm:SY11}, here and in the sequel we choose to allow $f$ to have roots in $\N$ even though $\nu_b(0) = +\infty$. One can omit this problem entirely by replacing the (finitely many) zeros in $(f(n))_{n \geq 0}$ with arbitrary values or by shifting this sequence by a sufficiently large integer. Proposition \ref{prop:arithmetic_prog} below guarantees that these operations do not affect regularity of $(\nu_b(f(n)))_{n \geq 0}$.
\end{rem}

We move on to regularity of the sequence $(\LL_b(f(n)))_{n \geq 0}$. In the case $b=p^l$ the conditions on the roots of $f$ determining which of the cases (a)--(c) holds, are exactly the same as in the preceding two results.

\begin{thm} \label{thm:prime_power_last_nonzero}
Assume that $b = p^l$ is a prime power and let $f \in \PP_p$. Then the sequence $(\LL_{p^l}(f(n)))_{n \geq 0}$ is 
\begin{enumerate}[label={\textup{(\alph*)}}]
\item $k$-regular for every $k \geq 2$ if $\mathcal{R}_f = \varnothing$;
\item strictly $p$-regular if $\varnothing \neq \mathcal{R}_f \subset \Q$;
\item not regular if $\mathcal{R}_f \not\subset \Q$.
\end{enumerate}
In particular, the above conditions are independent of $l$.
\end{thm}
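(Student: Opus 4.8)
The plan is to factor $f$ over $\Z_p$ and reduce the statement to the completely multiplicative building blocks $\LL_p(n-\theta)$, in parallel with the proof of Theorem~\ref{thm:prime_power_valuation}, whose hypotheses on the roots of $f$ are literally the same. Write $f(X)=g(X)\prod_{\theta\in\R_f}(X-\theta)^{m_f(\theta)}$ with $g\in\Q_p[X]$ having no root in $\Z_p$. Since $\LL_p$ is completely multiplicative on $\Q_p^{\times}$ and $\nu_{p^l}(x)=\lfloor\nu_p(x)/l\rfloor$, for every $n\in\N$ one has
$$\nu_p(f(n))=\nu_p(g(n))+\sum_{\theta\in\R_f}m_f(\theta)\,\nu_p(n-\theta),\qquad \LL_p(f(n))=\LL_p(g(n))\prod_{\theta\in\R_f}\LL_p(n-\theta)^{m_f(\theta)},$$
and, using $\LL_{p^l}(x)=p^{\nu_p(x)\bmod l}\LL_p(x)$, the target sequence is $\LL_{p^l}(f(n))=p^{\nu_p(f(n))\bmod l}\LL_p(f(n))$. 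Two facts will be used throughout: $\nu_p(g(n))$ is bounded, because $g$ is a nonvanishing continuous function on the compact set $\Z_p$; and each set $\{n:\nu_p(g(n))=v\}$ is eventually periodic, being cut out by a congruence on $g(n)$ modulo a fixed power of $p$.

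For part~(a) we have $\R_f=\varnothing$, so $f=g$, $f(n)\neq 0$ for all $n$, and $\nu_p(f(n))$ takes only finitely many values; hence
$$\LL_{p^l}(f(n))=\sum_{v}p^{-l\lfloor v/l\rfloor}\,f(n)\cdot\mathbf 1\bigl[\nu_p(f(n))=v\bigr]$$
is a finite sum of $\Q_p$-multiples of products of the polynomial sequence $(f(n))_{n\geq 0}$ with eventually periodic $\{0,1\}$-valued sequences. Polynomial and eventually periodic sequences are $k$-regular for every $k\geq 2$, and $k$-regularity is closed under scalar multiplication, finite sums and products; so $(\LL_{p^l}(f(n)))_{n\geq 0}$ is $k$-regular for every $k\geq 2$. (It is in fact eventually polynomial on each residue class modulo a suitable power of $p$, a point reused below.)

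For part~(b), every $\theta\in\R_f\subset\Q\cap\Z_p$ has an eventually periodic base-$p$ expansion, hence only finitely many distinct tails $\theta^{(i)}:=(\theta-\theta_i)/p^{i}$, where $\theta_i$ is the $i$-digit truncation. Computing the $p$-kernel of $(\LL_p(n-\theta))_{n\geq 0}$ shows each of its members is either an affine function of $n$ over $\Q_p$ (only finitely many occur) or one of the sequences $(\LL_p(n-\theta^{(i)}))_{n\geq 0}$; finiteness of the tail set makes the generated $\Z$-module finitely generated, so $(\LL_p(n-\theta))_{n\geq 0}$ is $p$-regular, and a finite automaton tracking the digit-match length with $\theta$ modulo $\mathrm{lcm}(l,\text{period})$ shows $(\nu_p(n-\theta)\bmod l)_{n\geq 0}$ is $p$-automatic. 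Inserting these together with $\LL_p(g(n))$ from part~(a) into the factorization exhibits $(\LL_{p^l}(f(n)))_{n\geq 0}$ as a product of $p$-regular sequences, hence $p$-regular. For strictness it suffices by Proposition~\ref{prop:mult_dep} to rule out case~(a); since in that case the sequence would be eventually polynomial on residue classes, it is enough to show ours is not. Restricting to $n\equiv\theta_e\pmod{p^{e}}$ (and to any class modulo the prime-to-$p$ part of the modulus) and substituting $n=\theta_e+p^{e}n'$ turns the sequence, up to a nowhere-zero $p$-regular factor and a power of $p$, back into a sequence of the same type with $\theta$ replaced by the tail $\theta^{(e)}$. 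Evaluating along truncations $n'\to\theta^{(e)}$, a polynomial would converge, while the sequence does not --- either $\nu_p$ of $\LL_{p^l}(f(n'))$ oscillates (when $l\nmid m_f(\theta)$), or, after a short induction on $\deg f$ reducing to $f=X-\psi$, the class $n'\equiv 0\pmod p$ simply reproduces a sequence $(\LL_p(n''-\psi'))_{n''}$ for a tail $\psi'$, which is visibly not eventually polynomial.

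For part~(c), pick $\theta\in\R_f\setminus\Q$; then the base-$p$ expansion of $\theta$ is not eventually periodic, so $\theta$ has infinitely many pairwise distinct tails $\theta^{(i)}$. Suppose $(\LL_{p^l}(f(n)))_{n\geq 0}$ were $k$-regular for some $k\geq 2$. If $k$ is a power of $p$ then, by Proposition~\ref{prop:mult_dep}(i), the sequence is $p$-regular; but then, via the factorization, one is forced to accommodate inside a finitely generated $\Z$-module the infinitely many genuinely distinct sequences $(\LL_p(n-\theta^{(i)}))_{n\geq 0}$ occurring across the $p^{t}$-kernel, which is impossible --- the separating input being that $(\LL_p(n-\psi))_{n\geq 0}$ determines $\psi\in\Z_p$. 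If $k$ is not a power of $p$, reducing modulo $p^{ld}$ shows $(\ell_{p^l,d}(f(n)))_{n\geq 0}$ is $k$-automatic for every $d$, contradicting the (separately established) fact that, for $\R_f\not\subset\Q$, it is not $k$-automatic for any such $k$. I expect this mechanism --- that an irrational $p$-adic root forces an ``infinitely self-similar'' structure incompatible with regularity --- to be the main obstacle, together with the bookkeeping in parts~(b) and~(c) ensuring that the nowhere-zero polynomial factors $\LL_p(g(\cdot))$ arising from the factorization can never accidentally cancel the obstruction.
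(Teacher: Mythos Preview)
Your overall strategy---factor $f$ and exploit multiplicativity of $\LL_p$---matches the paper's, and part~(a) is essentially Proposition~\ref{prop:no_root}(ii). The $p$-regularity half of~(b) is also correct and somewhat more direct than the paper's localization via Proposition~\ref{prop:constant_val_digits}.

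There is, however, a genuine gap in the strictness argument for~(b). You claim that if $(\LL_{p^l}(f(n)))_{n\geq 0}$ were $k$-regular for every $k$ then it would be ``eventually polynomial on residue classes''. That structural fact was established in part~(a) \emph{under the hypothesis $\R_f=\varnothing$}; it is not a consequence of universal regularity, so the implication is unjustified, and the subsequent talk of ``convergence along truncations'' and ``induction on $\deg f$'' does not fill it. The paper bypasses this by reducing modulo $p^{ld}$: if the sequence were $k$-regular for some $k$ multiplicatively independent of $p$, then $(\ell_{p^l,d}(f(n)))_{n\geq 0}$ would be simultaneously $p$- and $k$-automatic, hence eventually periodic by Cobham, and for $d$ large this is ruled out by Theorem~\ref{thm:prime_power_d_last_nonzero}(b).

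The same device makes your case split in~(c) unnecessary and repairs the sketchy branch. In the ``$k$ a power of $p$'' case you assert that the $p$-kernel must accommodate the infinitely many sequences $(\LL_p(n-\theta^{(i)}))_{n\geq 0}$, but the kernel elements of $(\LL_{p^l}(f(n)))_{n\geq 0}$ are \emph{products} involving all roots and the factor $g$; showing those products fail to sit in a finitely generated $\Z$-module requires ruling out cancellation, and the observation ``$(\LL_p(n-\psi))_{n\geq 0}$ determines $\psi$'' yields distinctness, not $\Z$-linear independence. The paper instead notes that for $d$ large one has $\R_f'(l,d)=\R_f\not\subset\Q$, so Theorem~\ref{thm:prime_power_d_last_nonzero}(c) gives that $(\ell_{p^l,d}(f(n)))_{n\geq 0}$ is not $k$-automatic for \emph{any} $k$; hence $(\LL_{p^l}(f(n)))_{n\geq 0}$ is not $k$-regular for any $k$, powers of $p$ included.
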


When $b$ has several prime factors, in order to have strict regularity of the sequence we additionally need the multiplicities of $m_{f_i}(\theta)$ to be equal.

\begin{thm} \label{thm:several_factors_last_nonzero}
Assume that $b$ has $s \geq 2$ prime factors and let $f = (f_1,\ldots,f_s) \in \mathcal{P}_b$. Then the sequence $(\LL_b(f(n)))_{n \geq 0}$ is
\begin{enumerate}[label={\textup{(\alph*)}}]
\item $k$-regular for every $k \geq 2$ if $\R_{f} = \varnothing$;
\item strictly $b$-regular if $\R_{f} = \{\theta\}$ for some $\theta \in \Q \cap \Z_b$ and  
$$ m_{f_1}(\theta)  = \cdots = m_{f_s}(\theta); $$
\item not regular otherwise.
\end{enumerate}
\end{thm}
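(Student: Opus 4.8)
The plan is to deduce this result from the corresponding statement for valuations (Theorem \ref{thm:several_factors_valuation}) together with the prime-power case (Theorem \ref{thm:prime_power_last_nonzero}), by exploiting the Chinese Remainder Theorem decomposition $\LL_b = \LL_{b_1} \times \cdots \times \LL_{b_s}$ modulo the appropriate identifications, and the relation $\LL_b(f(n)) = b^{-\nu_b(f(n))} f(n)$. First I would reduce to the case $f \in \PP_b$ (which is already built into the hypothesis, and is forced in the regular case by Proposition \ref{prop:regular_polynomial}); since each $f_i$ is a polynomial, $(f_i(n))_{n \geq 0}$ is eventually determined by a $\Z_{p_i}$-analytic, in fact polynomial, expression, and in particular the sequence $(f(n) \bmod b_i^N)_{n \geq 0}$ is eventually periodic for every $N$. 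The key structural observation is that $\LL_b(f(n))$ is recovered from the pair $\bigl(\nu_b(f(n)),\, f(n) \bmod b^{M}\bigr)$ once $M$ exceeds $\nu_b(f(n))$; the issue is that $\nu_b(f(n))$ is unbounded precisely in cases (b) and (c), so one cannot take a single $M$, but one can peel off the valuation first.

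For part (a), when $\R_f = \varnothing$, Theorem \ref{thm:several_factors_valuation}(a) gives that $(\nu_b(f(n)))_{n \geq 0}$ is periodic with period a power of $b$, hence bounded, say by $M$; then $\LL_b(f(n))$ is a fixed function of $(f(n) \bmod b^{M+1}, \nu_b(f(n)))$, and since the former sequence is eventually periodic (each $f_i$ being a polynomial) and the latter is periodic, $(\LL_b(f(n)))_{n \geq 0}$ is eventually periodic, hence $k$-regular for all $k \geq 2$ by Cobham's theorem; a short argument handles the finitely many initial terms and the possible zeros of $f$ in $\N$ using Proposition \ref{prop:arithmetic_prog}. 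For part (b), with $\R_f = \{\theta\}$, $\theta \in \Q \cap \Z_b$, and $m_{f_1}(\theta) = \cdots = m_{f_s}(\theta) =: m$, I would write $f_i(X) = (X-\theta)^m g_i(X)$ with $g_i$ having no root in $\Z_{p_i}$, so that, near $n = \theta$ in the relevant congruence classes, $\LL_b(f(n))$ factors through $\LL_b\bigl((n-\theta)^m\bigr)$ times the (eventually periodic, nonzero-valuation-bounded) contribution of $\prod g_i$. The equal-multiplicity condition ensures that $\nu_{b_i}(f_i(n)) = m\,\nu_{p_i}(n-\theta) + O(1)$ with the \emph{same} leading coefficient structure across all $i$, which is exactly what makes the single base $k = b$ (corresponding to $w_1 = \cdots = w_s = 1$ in the notation of Theorem \ref{thm:several_factors_valuation}(b)) work; one then builds a finite $b$-kernel by splitting $\N$ according to $\nu_{p_i}(n-\theta)$ via the map $n \mapsto bn + j$ and checking that after finitely many iterations the subsequences stabilize, using the prime-power result Theorem \ref{thm:prime_power_last_nonzero}(b) applied componentwise and glued by CRT. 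Strictness — that the sequence is not $m'$-regular for $m'$ multiplicatively independent from $b$ — follows because $m'$-regularity would force $(\nu_b(f(n)))$ to be $m'$-regular (valuation being recoverable from $\LL_b$ via $\nu_b(f(n)) = \nu_b(f(n)/\LL_b(f(n)))$, or directly), contradicting the strictness already proved in Theorem \ref{thm:several_factors_valuation}(b) together with Proposition \ref{prop:mult_dep}.

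For part (c), I would argue the contrapositive: if $(\LL_b(f(n)))_{n \geq 0}$ is regular, then by Proposition \ref{prop:regular_polynomial} all $f_i$ are polynomials, and I claim $(\nu_b(f(n)))_{n \geq 0}$ is then regular as well — indeed $\nu_b(f(n))$ is determined by comparing $f(n)$ (whose $b^N$-residues are eventually periodic) with $\LL_b(f(n))$, and more carefully one shows the $b$-kernel of $(\nu_b(f(n)))$ is a $\Z$-module combination of the $b$-kernel of $(\LL_b(f(n)))$ and eventually periodic sequences, hence finitely generated. By Theorem \ref{thm:several_factors_valuation} this already forces either $\R_f = \varnothing$ or $\R_f = \{\theta\}$ with $\theta \in \Q \cap \Z_b$; it then remains to show that when $\R_f = \{\theta\}$ but $m_{f_1}(\theta) = \cdots = m_{f_s}(\theta)$ \emph{fails}, the sequence $(\LL_b(f(n)))$ is not regular. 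Here the main obstacle lies: one must produce a genuine obstruction to finite generation of the $\Z$-module generated by the $b$-kernel. I expect to do this by restricting to the arithmetic progression where $n \equiv \theta$ modulo a large power of each $p_i$ and examining $\LL_{b_i}\bigl((n-\theta)^{m_i} g_i(n)\bigr)$ for $m_i = m_{f_i}(\theta)$: writing $n - \theta = p_i^{e} u$ with $p_i \nmid u$, the $b_i$-component of $\LL_b(f(n))$ depends on $u \bmod b_i^{N}$ and on $e \bmod l_i$ (from the $p_i^{l_i \mid \cdot}$ truncation), but the \emph{amount} of $u$-information surviving, equivalently the residue of $m_i e$ modulo $l_i$, varies with $i$ in a way that cannot be synchronized by a single $b$ when the $m_i$ differ — producing infinitely many $\Z$-linearly independent kernel elements. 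Making this last nonsynchronization argument precise, likely via a counting or a direct linear-independence estimate on the kernel, is the technical heart of the theorem; the rest is bookkeeping with CRT and the already-established valuation results.
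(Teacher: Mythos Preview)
Your proposal contains a genuine error in part (a) and an unjustified step in part (c).

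In part (a) you claim that $\LL_b(f(n))$ is determined by the pair $\bigl(f(n) \bmod b^{M+1},\, \nu_b(f(n))\bigr)$ and conclude that the sequence is eventually periodic. This is false: $\LL_b(f(n)) = b^{-\nu_b(f(n))} f(n)$ lives in $\Z_b$ and depends on the full value of $f(n)$, not on any residue. For instance, with $b=6$ and $f(x) = x^2+1$ (which has no roots in $\Z_2$ or $\Z_3$) one has $\nu_6(f(n))=0$ always, so $\LL_6(f(n)) = n^2+1$, which is certainly not eventually periodic. The correct argument (Proposition \ref{prop:several_factors_no_root}(v) in the paper) uses that $\nu_b(f(n))$ is periodic, so $b^{-\nu_b(f(n))}$ is periodic, while $(f(n))_{n\geq 0}$ is $k$-regular for all $k$ by Corollary \ref{cor:polynomial_regular}; their product is then $k$-regular for all $k$, though typically unbounded.

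In part (c) you assert that regularity of $(\LL_b(f(n)))_{n\geq 0}$ implies regularity of $(\nu_b(f(n)))_{n\geq 0}$, sketching that the kernel of the latter is a ``$\Z$-module combination'' of the kernel of the former and periodic sequences. This is not substantiated: recovering $\nu_b(f(n))$ from $\LL_b(f(n))$ and $f(n)$ requires a division, and regularity is not closed under quotients. The paper does not attempt this reduction. Instead, it reduces via Proposition \ref{prop:constant_val_digits} to functions of the shape $c_i(x-\theta_i)^{m_i}$ and then proves nonregularity directly in two subcases: when the $\theta_i$ are not all equal (Proposition \ref{prop:several_factors_irrational}), and when they are equal and rational but the $m_i$ differ (Lemma \ref{lem:several_factors_different_multiplicities}). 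The latter lemma is precisely the ``technical heart'' you allude to; its mechanism is not a nonsynchronization of residues $m_i e \bmod l_i$ as you suggest, but rather an explicit linear-independence argument: assuming a kernel relation and evaluating at $x=\theta$ yields $\sum_l \alpha_l C^l = C^{t+1}$ with $C = k^{m_1}/b^D \in (0,1) \cap \Q$, which is impossible by comparing denominators. Your outline does not reach this idea.
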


The third and final pair of results concerns automaticity of the sequence $(\ell_{b,d}(f(n)))_{n \geq 0}$. The situation turns out to be a bit more complicated when $b = p^l$ is a prime power. Consider a function $f \in \A_p$. For each $\theta \in \R_f$ let $g_{\theta} \in \A_p$ be defined by
$$ f(x) = (x-\theta)^{m_f(\theta)} g_{\theta}(x)  $$
for all $x \in \Z_p$. Let $\lambda$ denote the Carmichael function. Recall that for a positive integer $n$ the value $\lambda(n)$ is the smallest positive integer $m$ such that $a^m \equiv 1 \pmod{n}$ for all integers $a$ coprime with $n$. In particular, for a prime $p$ and exponent $t \geq 1$ we have
$$
\lambda(p^t) = \begin{cases}
p^{t-1}(p-1) &\text{if } p\geq 3, \\
2^{t-2}       &\text{if } p = 2, t \geq 3, \\
2^{t-1}       &\text{if } p = 2, t \leq 2.
\end{cases}
$$
 Consider the subset $\R'_f = \R'_f(l,d) \subset \R_f$, given by
\[	\R'_f(l,d) = \{\theta \in \mathcal{R}_f: l \nmid m_f(\theta) \text{ or } \lambda(p^{ld-\nu_p(g_{\theta}(\theta)) \bmod{l}}) \nmid  m_f(\theta)\}.	\]
In the case $l= 1$ we obtain a simpler form
$$	\mathcal{R}'_f(1,d) = \{\theta \in \mathcal{R}_f:\lambda(p^d) \nmid m_f(\theta) \}.$$
In a sense, the set $\R_f \setminus \R'_f$ contains the roots $\theta$ such that the factor $(n-\theta)^{m_f(\theta)}$ has no effect on automaticity of $(\ell_{p^l,d}(f(n)))_{n \geq 0}$.
It is then no surprise that compared to the previous statements, the set $\R'_f$ essentially replaces $\R_f$.

\begin{thm} \label{thm:prime_power_d_last_nonzero}
Assume that $b = p^l$ is a prime power, $d \geq 1$ an integer, and let $f \in \A_p$. Then the sequence $(\ell_{p^l,d}(f(n)))_{n \geq 0}$ is 
\begin{enumerate}[label={\textup{(\alph*)}}]
\item periodic if $\mathcal{R}'_f = \varnothing$, where a power of $p$ can be chosen as a period;
\item strictly $p$-automatic if $\varnothing \neq \mathcal{R}'_f \subset \Q$;
\item not automatic if $\mathcal{R}'_f \not\subset \Q$.
\end{enumerate}
\end{thm}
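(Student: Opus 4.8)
The strategy is to ``localise'' the sequence $\mathbf{a} = (\ell_{p^l,d}(f(n)))_{n \geq 0}$ at each $p$-adic root of $f$, to identify $\R'_f$ as exactly the set of roots that affect automaticity, and then to reduce to a single affine factor, whose behaviour is governed by rationality of the root precisely as in Theorems \ref{thm:prime_power_valuation} and \ref{thm:prime_power_last_nonzero}. First I would factor $f = g\cdot h$ on $\Z_p$, where $g(X) = \prod_{\theta \in \R_f}(X-\theta)^{m_f(\theta)}$ and $h\colon \Z_p \to \Q_p$ is analytic with $\R_h = \varnothing$; this splitting exists because an analytic function on $\Z_p$ has finitely many zeros (Strassman's theorem) and one peels off the corresponding linear factors. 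Since $h$ is zero-free on the compact set $\Z_p$, the function $x \mapsto \nu_p(h(x))$ is bounded and locally constant, and on each ball where it is constant the normalised function $p^{-\nu_p(h(\,\cdot\,))}h$ reduces modulo any power of $p$ to a polynomial with $p$-adic integer coefficients; hence $(\nu_p(h(n)))_n$ and, for every $M$, $(\LL_p(h(n)) \bmod p^M)_n$ are purely periodic with a $p$-power period, and the same holds for each ``unit part'' $p^{-\nu_p(g_\theta(\theta))}g_\theta$ on a small enough ball around $\theta \in \R_f$. I will use that $\LL_p$ is completely multiplicative, so $\LL_p(f(n)) = \prod_{\theta}\LL_p(n-\theta)^{m_f(\theta)}\LL_p(h(n))$ and $\nu_p(f(n)) = \sum_{\theta}m_f(\theta)\nu_p(n-\theta) + \nu_p(h(n))$, and that $\ell_{p^l,d}(N) = \LL_{p^l}(N)\bmod p^{ld}$ is determined by the pair $\bigl(\nu_p(N)\bmod l,\ \LL_p(N)\bmod p^{ld}\bigr)$ via $\LL_{p^l}(N) = p^{\nu_p(N)\bmod l}\LL_p(N)$.

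Next I would fix $N$ large enough that the ``branches'' $B_N(\theta) = \{n\in\N : \nu_p(n-\theta)\geq N\}$ — one residue class modulo $p^N$ for each $\theta\in\R_f$ — are pairwise disjoint, that $\nu_p(g_\theta(n)) = \nu_p(g_\theta(\theta)) =: \gamma_\theta$ and $\nu_p(h(n))$ are constant on $B_N(\theta)$, and that $\nu_p(f(n))$ is bounded on the complement $C_N = \N\setminus\bigcup_\theta B_N(\theta)$. On $C_N$ the sequence $\mathbf{a}$ is periodic, since there both $\nu_p(f(n))\bmod l$ and the relevant residue of $f(n)$ are periodic. On $B_N(\theta)$, writing $f(n) = (n-\theta)^{m_f(\theta)}g_\theta(n)$ and $u_\theta(n) = (n-\theta)p^{-\nu_p(n-\theta)}\in\Z_p^{\times}$, one gets
\[
\ell_{p^l,d}(f(n)) \equiv p^{\,(m_f(\theta)\nu_p(n-\theta)+\gamma_\theta)\bmod l}\; u_\theta(n)^{m_f(\theta)}\; \bigl(p^{-\gamma_\theta}g_\theta(n)\bigr) \pmod{p^{ld}},
\]
with the last factor periodic in $n$. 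If $\theta\in\R_f\setminus\R'_f$, i.e.\ $l\mid m_f(\theta)$ and $\lambda\bigl(p^{\,ld-(\gamma_\theta\bmod l)}\bigr)\mid m_f(\theta)$, then the exponent of $p$ is the constant $\gamma_\theta\bmod l$ and $u_\theta(n)^{m_f(\theta)}\equiv 1 \pmod{p^{\,ld-(\gamma_\theta\bmod l)}}$ by the defining property of the Carmichael function, so the branch restriction is periodic and $\theta$ has no effect. If $\theta\in\R'_f$, reparametrising $B_N(\theta)$ as $n = \theta_N + p^N t$ turns the branch restriction into a sequence built from $(\nu_p(t-\theta')\bmod r)_t$ and $(\LL_p(t-\theta')^{m_f(\theta)}\bmod p^{s})_t$ for a suitable $\theta'\in\Z_p$ (rational iff $\theta$ is) and some $r\mid l$, $s\leq ld$ with $\lambda(p^s)\nmid m_f(\theta)$; this sequence is \emph{not} eventually periodic, it is $p$-automatic when $\theta\in\Q$ — because the reduction modulo an integer of a $p$-regular sequence is $p$-automatic, and the required $p$-regularity is Theorems \ref{thm:prime_power_valuation}(b) and \ref{thm:prime_power_last_nonzero}(b) applied to $X-\theta'$ — and it is not $k$-automatic for any $k$ when $\theta\notin\Q$.

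Finally I would assemble the three cases. Because a sequence is periodic (resp.\ $p$-automatic, resp.\ $k$-automatic for a fixed $k$) if and only if each of its $p^N$ subsequences along residue classes modulo $p^N$ is — using that $k$-automaticity is preserved under the maps $n\mapsto cn+r$ — the sequence $\mathbf{a}$ is: periodic with a $p$-power period iff $\R'_f=\varnothing$, which gives (a); $p$-automatic, and since it is not eventually periodic, strictly $p$-automatic by Cobham's theorem (Proposition \ref{prop:mult_dep}), iff $\varnothing\neq\R'_f\subset\Q$, which gives (b); and not $k$-automatic for any $k$ iff some branch $B_N(\theta)$ with $\theta\in\R'_f\setminus\Q$ carries a sequence that is not $k$-automatic for any $k$, i.e.\ iff $\R'_f\not\subset\Q$, which gives (c). (The finitely many $n$ with $f(n)=0$ are harmless by Proposition \ref{prop:arithmetic_prog}.) The main obstacle I anticipate is the per-root analysis: getting the exponent $ld-(\nu_p(g_\theta(\theta))\bmod l)$ exactly right — in particular handling $p=2$, where the formula for $\lambda(2^t)$ is irregular and accounts for degenerate cases such as $\LL_2(n)\bmod 2$ being constant — and proving, in the strong base-independent sense needed for (c), that for an irrational root the branch sequence is not automatic at all; this last point is not a consequence of Cobham's theorem alone and must be obtained by the same digit-reading argument that underlies the non-regularity assertions of Theorems \ref{thm:prime_power_valuation} and \ref{thm:prime_power_last_nonzero}.
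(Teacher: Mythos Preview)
Your proposal is correct and follows essentially the same route as the paper: localise by passing to subsequences along residue classes modulo a large power of $p$ so that each branch sees at most one root, show the rootless branches are periodic, and on a branch $B_N(\theta)$ reduce to the model sequence $\ell_{p^l,d}(c_\theta(n-\theta')^{m_f(\theta)})$; then the condition defining $\R'_f$ is exactly the negation of the paper's condition (C) in Proposition~\ref{prop:eventually_periodic_digits}, so roots outside $\R'_f$ contribute periodic branches, rational roots in $\R'_f$ give strictly $p$-automatic branches (Proposition~\ref{prop:rational_root}), and irrational roots in $\R'_f$ give non-automatic branches (Proposition~\ref{prop:not_automatic_digits}). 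One small caution: when you justify the $p$-automaticity on a rational branch you invoke Theorem~\ref{thm:prime_power_last_nonzero}(b), but in the paper's logical order the ``strictly'' part of that theorem is deduced \emph{from} Theorem~\ref{thm:prime_power_d_last_nonzero}; what you actually need is only the $p$-regularity of $(\LL_p(n-\theta'))_{n\geq 0}$ and $(\nu_p(n-\theta'))_{n\geq 0}$, which are the self-contained Propositions~\ref{prop:last_nonzero_digits_regular} and~\ref{prop:rational_root}, so cite those instead to avoid any appearance of circularity.
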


When $b$ has several prime factors, the classification again mirrors the one in the earlier theorems.

\begin{thm} \label{thm:several_factors_d_last_nonzero}
Assume that $b$ has $s \geq 2$ prime factors and let $f = (f_1,\ldots,f_s)\in \mathcal{A}_b$. Let $d \geq 1$ be an integer. Then the sequence $(\ell_{b,d}(f(n)))_{n \geq 0}$ is
\begin{enumerate}[label={\textup{(\alph*)}}]
\item periodic if $\R_{f} = \varnothing$, where a power of $b$ can be chosen as a period;
\item strictly $k$-automatic if $\R_{f} = \{\theta\}$ for some $\theta \in \Q \cap \Z_b$, where $k=b_1^{w_1} \cdots b_s^{w_s}$ and $w_1, \ldots, w_s$ are positive integers satisfying 
$$w_1 m_{f_1}(\theta)  = \cdots = w_s m_{f_s}(\theta); $$
\item not automatic otherwise.
\end{enumerate}
In particular, the above conditions are independent of $d$.
\end{thm}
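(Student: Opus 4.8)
The plan is to reduce the statement for $(\ell_{b,d}(f(n)))_{n\geq 0}$ with $b=b_1\cdots b_s$ to the componentwise behavior of the sequences $(\ell_{b_i,d}(f_i(n)))_{n\geq 0}$ and then invoke the prime-power case, Theorem \ref{thm:prime_power_d_last_nonzero}, together with the combinatorial tools about multiplicative (in)dependence in Proposition \ref{prop:mult_dep}. First I would observe that, under the extended definition of $\nu_b$ and $\ell_{b,d}$ on tuples of $p$-adic numbers given in Section \ref{sec:basic}, one has a product/CRT-type decomposition: the value $\ell_{b,d}(x)$ for $x=(x_1,\ldots,x_s)$ is determined by the $s$-tuple of residues $\ell_{b_i,d}(x_i)$ together with the valuations $\nu_{b_i}(x_i)$ that govern how the separate last-digit blocks line up. Concretely, I expect a lemma of the form: $(\ell_{b,d}(f(n)))_{n\geq 0}$ is $k$-automatic if and only if the vector-valued sequence $\bigl((\ell_{b_i,d}(f_i(n)))_{i=1}^s,\ (\nu_{b_i}(f_i(n)) \bmod{D})_{i=1}^s\bigr)_{n\geq 0}$ is $k$-automatic for a suitable modulus $D$ (depending on $d$ and the $b_i$), since a finite amount of information about each valuation suffices to recombine the blocks. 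This is the analogue of the reduction already implicit in the prime-power statements and in Proposition \ref{prop:not_zero}.

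Next I would feed in the known classification of each component. By Theorem \ref{thm:prime_power_valuation} and Theorem \ref{thm:prime_power_d_last_nonzero}, the $i$-th valuation-mod-$D$ sequence and the $i$-th last-digit sequence are each either eventually periodic (with a power of $p_i$ as period), strictly $p_i$-automatic, or not automatic, according to whether $\R_{f_i}$ (or $\R'_{f_i}$) is empty, nonempty and rational, or contains an irrational element. The product of finitely many automatic sequences over different prime bases is automatic precisely when we can find a common base, i.e. when all the nontrivial ones are powers of a single prime or, after accounting for the periodic components, can be merged; this is exactly where Cobham's theorem (Proposition \ref{prop:mult_dep}) and Bell's refinement enter. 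In case (a), every $\R_{f_i}=\varnothing$, so every component is eventually periodic with period a power of $p_i$; their ``product'' is then eventually periodic with period $\mathrm{lcm}$ of those, which is a power of $b$. In case (c), if some $\R_{f_i}\not\subset\Q$ the $i$-th component already fails to be automatic (Theorem \ref{thm:prime_power_d_last_nonzero}(c)), hence so does the whole sequence; and if all $\R_{f_i}\subset\Q$ but the multiplicity condition $w_1 m_{f_1}(\theta)=\cdots=w_s m_{f_s}(\theta)$ cannot be met with a single common root $\theta$ — either because $\R_f$ is not a singleton or because the exponents are incompatible — then the natural candidate bases $p_i$ for the distinct nontrivial components are multiplicatively independent, and Cobham's theorem forbids simultaneous automaticity. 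The delicate point is that here we must use the \emph{full} set $\R_{f_i}$ rather than $\R'_{f_i}$: the reason is that a root $\theta\in\R_{f_i}\setminus\R'_{f_i}$ is still visible through the valuation $\nu_{b_i}(f_i(n))$, which must be tracked modulo $D$ to reassemble the $b$-adic digit block even though it does not obstruct automaticity of the purely $i$-th last-digit sequence. Spelling out why $\R_{f_i}$ and not $\R'_{f_i}$ is the right invariant in the several-factor setting, and that the resulting condition is independent of $d$, is the crux and will require a careful look at the valuation-alignment lemma.

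For case (b), assuming $\R_f=\{\theta\}$ with $\theta\in\Q\cap\Z_b$ and $w_i m_{f_i}(\theta)$ all equal to a common value $W$, I would argue positively. Write $f_i(x)=(x-\theta)^{m_{f_i}(\theta)} g_{i}(x)$ with $g_i\in\A_{p_i}$ root-free on $\Z_{p_i}$; then each $(\nu_{b_i}(f_i(n)))$ splits as $\tfrac{m_{f_i}(\theta)}{l_i}\nu_{p_i}(n-\theta)$ plus an eventually periodic term coming from $g_i$ (this is the content of the prime-power analysis), and likewise each $(\ell_{b_i,d}(f_i(n)))$ is, up to a finite amount of periodic data, governed by the digits of $(n-\theta)$ read in base $p_i$. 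Along residue classes $n\equiv r\pmod{b^M}$ for large $M$, the quantity $n-\theta$ has fixed $p_i$-valuation pattern, and after passing to $k=b_1^{w_1}\cdots b_s^{w_s}$ the common normalization $w_i m_{f_i}(\theta)=W$ makes all $s$ blocks advance in lockstep when we append a base-$k$ digit to $n$; concretely the $k$-kernel of $(\ell_{b,d}(f(n)))$ is then seen to be finite by exhibiting finitely many ``states'' recording (the relevant residue of $n$, the aligned carry/valuation data, and which periodic regime each $g_i$ is in). That it is \emph{strictly} $k$-automatic — i.e. not automatic for any base multiplicatively independent from $k$ — follows because the $i$-th component is strictly $p_i$-automatic (by Theorem \ref{thm:prime_power_d_last_nonzero}(b), since $\varnothing\neq\{\theta\}\subset\Q$ forces $\R'_{f_i}\neq\varnothing$ once one checks the multiplicity/Carmichael condition cannot simultaneously trivialize every component — or, if it does trivialize some, those are merely periodic and do not affect the lower bound), so any base witnessing automaticity of the product must be compatible with each $p_i$, whence a power of $b_1\cdots b_s$. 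I expect the main obstacle to be precisely the bookkeeping in this positive direction: setting up the finite state space so that appending one base-$k$ digit to $n$ induces a well-defined transition on all $s$ components at once, which is exactly where the hypothesis $w_1 m_{f_1}(\theta)=\cdots=w_s m_{f_s}(\theta)$ is used and without which the carries between the separate digit blocks desynchronize.
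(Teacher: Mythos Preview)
Your central reduction lemma --- that $(\ell_{b,d}(f(n)))_{n\geq 0}$ is $k$-automatic if and only if the vector sequence $\bigl((\ell_{b_i,d}(f_i(n)))_i,(\nu_{b_i}(f_i(n))\bmod D)_i\bigr)_{n\geq 0}$ is $k$-automatic --- is false as stated, and this breaks the whole strategy. From Proposition \ref{prop:last_digits_factorization} we have
\[
\ell_{b,d}(x)\equiv b_i^{\nu_{b_i}(x_i)-\nu_b(x)}\,r_i^{\nu_b(x)}\,\ell_{b_i,d}(x_i)\pmod{b_i^d},
\]
so to recover $\ell_{b,d}(x)$ you must know the \emph{differences} $\nu_{b_i}(x_i)-\nu_b(x)$ (capped at $d$) and $\nu_b(x)=\min_i\nu_{b_i}(x_i)$ modulo some period. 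Knowing each $\nu_{b_i}(x_i)\bmod D$ separately tells you neither which index attains the minimum nor the gaps; those quantities genuinely depend on the full valuations, not on their residues. Conversely, $k$-automaticity of $(\ell_{b,d}(f(n)))$ does not hand you back $k$-automaticity of each $(\ell_{b_i,d}(f_i(n)))$, so the ``only if'' direction is also unsupported.

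That this is not just a technicality is shown by the paper's own examples. Take $f(x)=x(x+1)$ with $b$ having two prime factors: each componentwise sequence $(\nu_{p_i}(f(n)))$ is $p_i$-regular and each $(\ell_{p_i,d}(f(n)))$ is $p_i$-automatic, yet $(\ell_{b,d}(f(n)))$ is not automatic (case (c), since $\R_f$ is not a singleton). Your Cobham-based merge argument would not detect this: all components are tame, but the minimum desynchronizes them. Dually, in case (b) it can happen that $\R'_{f_i}=\varnothing$ for \emph{every} $i$, so every $(\ell_{b_i,d}(f_i(n)))$ is periodic; nevertheless Theorem \ref{thm:several_factors_d_last_nonzero}(b) asserts strict $k$-automaticity of the combined sequence. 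Hence strictness cannot be read off any component, contrary to your final paragraph.

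The paper therefore does not attempt a componentwise reduction. Instead it localizes along residues $n\equiv a\pmod{b^T}$ to reduce to monomials $c_i(x-\theta_i)^{m_i}$ (Proposition \ref{prop:constant_val_digits}), and then works directly with the tuple: Proposition \ref{prop:several_factors_rational_root} builds a finite $k$-kernel when all $\theta_i$ coincide (this is where $m_iw_i=\mathrm{const}$ is used, so that $f(kn)=b^D K\,f(n)$ with $\nu_b(K)=0$), while Proposition \ref{prop:several_factors_irrational} and Lemma \ref{lem:valuation_change} use the Chinese Remainder Theorem to produce, for any candidate base $k$ and any two kernel indices, an $n$ on which the index achieving the minimum flips --- yielding infinitely many distinct kernel elements. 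The mechanism for non-automaticity is thus an interaction effect between the primes, not a failure of any single component.
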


The proofs of all the theorems stated in this section as well as Propositions  \ref{prop:not_zero} and \ref{prop:regular_polynomial} are provided in Sections \ref{sec:prime_power_proofs} and \ref{sec:several_factors_proofs}.

As a corollary, we can relate $b$-regularity of the sequences $(\nu_b(f(n)))_{n \geq 0}$, $(\LL_b(f(n)))_{n \geq 0}$, and $(\ell_{b,d}(f(n)))_{n \geq 0}$ for any base $b \geq 2$.

\begin{cor} \label{cor:prime_power_connection}
Assume that $b = p^l$ is a prime power and let $f \in \A_p$. Then the following conditions are equivalent:
\begin{enumerate}[label={\textup{(\roman*)}}]
\item $\R_f \subset \Q$;
\item the sequence $(\nu_{p^l}(f(n)))_{n \geq 0}$ is $p$-regular;
\item for all $d \geq 1$ the sequence $(\ell_{p^l,d}(f(n)))_{n \geq 0}$ is $p$-automatic.
\end{enumerate}
If additionally $f \in \PP_p$, then the above conditions are also equivalent to the following:
\begin{enumerate}
\item [\textup{(iv)}] the sequence $(\LL_{p^l}(f(n)))_{n \geq 0}$ is $p$-regular.
\end{enumerate}
\end{cor}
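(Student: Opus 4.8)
\textbf{Proof plan for Corollary \ref{cor:prime_power_connection}.}
The corollary is a direct consequence of Theorems \ref{thm:prime_power_valuation}, \ref{thm:prime_power_last_nonzero}, and \ref{thm:prime_power_d_last_nonzero}, so the plan is essentially to extract the equivalences from those classifications and handle the one genuinely new point, namely that the condition $\R'_f(l,d) \neq \varnothing$ ``for some $d$'' is the same as $\R_f \neq \varnothing$. I would organize the proof around the implications (i)$\Leftrightarrow$(ii), (i)$\Leftrightarrow$(iii), and then (i)$\Leftrightarrow$(iv) under the extra hypothesis $f \in \PP_p$.

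First, (i)$\Leftrightarrow$(ii): by Theorem \ref{thm:prime_power_valuation}, if $\R_f \subset \Q$ then $(\nu_{p^l}(f(n)))_{n \geq 0}$ is either periodic (if $\R_f = \varnothing$) or strictly $p$-regular (if $\varnothing \neq \R_f \subset \Q$); in both cases it is $p$-regular, since a periodic sequence is $k$-regular for every $k$. Conversely, if $\R_f \not\subset \Q$ then by case (c) of the same theorem the sequence is not regular, in particular not $p$-regular. The equivalence (i)$\Leftrightarrow$(iv) under $f \in \PP_p$ is identical, using Theorem \ref{thm:prime_power_last_nonzero} in place of Theorem \ref{thm:prime_power_valuation}.

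The only step requiring a small argument is (i)$\Leftrightarrow$(iii). By Theorem \ref{thm:prime_power_d_last_nonzero}, for a fixed $d$ the sequence $(\ell_{p^l,d}(f(n)))_{n \geq 0}$ is $p$-automatic if and only if $\R'_f(l,d) \subset \Q$. So (iii) holds if and only if $\R'_f(l,d) \subset \Q$ for all $d \geq 1$. Since $\R'_f(l,d) \subset \R_f$, the condition (i) that $\R_f \subset \Q$ immediately implies $\R'_f(l,d) \subset \Q$ for every $d$, hence (iii). For the converse I would show the contrapositive: assume $\R_f \not\subset \Q$ and pick $\theta \in \R_f$ with $\theta \notin \Q$; I claim that for some $d$ we have $\theta \in \R'_f(l,d)$, which by Theorem \ref{thm:prime_power_d_last_nonzero}(c) makes $(\ell_{p^l,d}(f(n)))_{n \geq 0}$ non-automatic, contradicting (iii). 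To produce such a $d$, recall $\theta \in \R'_f(l,d)$ unless both $l \mid m_f(\theta)$ and $\lambda(p^{ld - \nu_p(g_\theta(\theta)) \bmod l}) \mid m_f(\theta)$. The first of these two conditions does not depend on $d$, so if $l \nmid m_f(\theta)$ then $\theta \in \R'_f(l,d)$ for all $d$ and we are done. If $l \mid m_f(\theta)$, then as $d$ ranges over positive integers the quantity $ld - (\nu_p(g_\theta(\theta)) \bmod l)$ takes arbitrarily large values (it grows like $ld$), so the exponent $ld - (\nu_p(g_\theta(\theta)) \bmod l)$ is unbounded; since $\lambda(p^t) \to \infty$ as $t \to \infty$, for all sufficiently large $d$ we get $\lambda(p^{ld - \nu_p(g_\theta(\theta)) \bmod l}) > m_f(\theta)$, hence in particular it does not divide $m_f(\theta)$, so $\theta \in \R'_f(l,d)$. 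In either case we have found $d$ with $\R'_f(l,d) \not\subset \Q$, completing the contrapositive.

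The main (and essentially only) obstacle is this last divisibility bookkeeping: verifying that the $d$-dependent exponent in the definition of $\R'_f(l,d)$ is genuinely unbounded and that $\lambda(p^t)$ grows without bound, so that a bad $d$ always exists whenever an irrational root is present. Everything else is a bookkeeping translation of the trichotomies in the three theorems, together with the standard fact that eventually periodic sequences are $k$-regular (and $k$-automatic) for all $k$.
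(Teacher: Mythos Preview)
Your proposal is correct and follows essentially the same route as the paper: the equivalences (i)$\Leftrightarrow$(ii) and (i)$\Leftrightarrow$(iv) are read off directly from Theorems \ref{thm:prime_power_valuation} and \ref{thm:prime_power_last_nonzero}, while for (i)$\Leftrightarrow$(iii) both you and the paper use that $\R'_f(l,d)\subset\R_f$ gives (i)$\Rightarrow$(iii), and that $\lambda(p^{ld-\nu_p(g_\theta(\theta))\bmod l})\to\infty$ as $d\to\infty$ forces $\R'_f(l,d)=\R_f$ for large $d$, giving the converse. Your contrapositive argument with the case split on $l\mid m_f(\theta)$ is slightly more explicit than the paper's one-line observation, but the content is identical.
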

\begin{proof}
The condition (i) is equivalent with (ii) and (iv) (when $f \in \PP_p$).
Moreover, as a consequence of Theorem \ref{thm:prime_power_d_last_nonzero}, we have that (iii) follows from (i), and it remains to prove the converse. By the same result, (iii) implies that $\R'_f(l,d) \subset \Q$ for all $d \geq 1$. Observe that for all sufficiently large $d$ the condition $\lambda(p^{ld}) \nmid p^{\nu_p(g_{\theta}(\theta)) \bmod{l}} m_f(\theta)$ in the definition of $\R'_f(l,d)$ is always satisfied, so $\R_f = \R'_f(l,d) \subset \Q$.
\end{proof}

\begin{cor} \label{cor:several_factors_connection}
Assume that $b$ has $s \geq 2$ prime factors and let $f =(f_1,\ldots,f_s) \in \mathcal{A}_b$. Then the following conditions are equivalent:
\begin{enumerate}[label={\textup{(\roman*)}}]
\item either $\R_{f} = \varnothing$ or $\R_{f} = \{\theta\}$ and $m_{f_1}(\theta)  = \cdots = m_{f_s}(\theta) $ for some $\theta \in \Q \cap \Z_b$;
\item the sequence $(\nu_{b}(f(n)))_{n \geq 0}$ is $b$-regular;
\item for all $d \geq 1$ the sequence $(\ell_{b,d}(f(n)))_{n \geq 0}$ is $b$-automatic.
\end{enumerate}
If additionally $f \in \PP_b$, then the above conditions are also equivalent to the following:
\begin{enumerate}
\item [\textup{(iv)}] the sequence $(\LL_{b}(f(n)))_{n \geq 0}$ is $b$-regular.
\end{enumerate}
\end{cor}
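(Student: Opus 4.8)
The plan is to read each equivalence directly off the trichotomies in Theorems~\ref{thm:several_factors_valuation}, \ref{thm:several_factors_last_nonzero}, and~\ref{thm:several_factors_d_last_nonzero}, in the same spirit as the deduction of Corollary~\ref{cor:prime_power_connection} from the prime-power results. The one preliminary point to isolate is: since $p_1,\ldots,p_s$ are distinct primes and $b = b_1\cdots b_s$, the integers $k = b_1^{w_1}\cdots b_s^{w_s}$ (with positive $w_i$) and $b$ are powers of a common integer, equivalently $k$ is not multiplicatively independent with $b$, if and only if $w_1 = \cdots = w_s$; this follows by comparing the exponents of each $p_i$ on the two sides of $b^a = k^c$. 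Combined with Proposition~\ref{prop:mult_dep}(i) and the definition of strict $k$-regularity (resp.\ strict $k$-automaticity), this yields: a sequence that is strictly $k$-regular (resp.\ strictly $k$-automatic) for some $k = b_1^{w_1}\cdots b_s^{w_s}$ is $b$-regular (resp.\ $b$-automatic) precisely when all the $w_i$ coincide.

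Granting this, I would establish the three equivalences through condition~(i). For (i) $\iff$ (ii) I invoke Theorem~\ref{thm:several_factors_valuation}: if $\R_f = \varnothing$ then $(\nu_b(f(n)))_{n\geq0}$ is periodic, hence $b$-regular; if $\R_f = \{\theta\}$ with $\theta\in\Q\cap\Z_b$ then it is strictly $k$-regular for some $k = b_1^{w_1}\cdots b_s^{w_s}$ with all products $w_i m_{f_i}(\theta)$ equal to a common value $C$, so $w_i = C/m_{f_i}(\theta)$, and by the preliminary point the sequence is $b$-regular exactly when $m_{f_1}(\theta) = \cdots = m_{f_s}(\theta)$; in all remaining cases it is not regular. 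This description is precisely condition~(i). For (i) $\iff$ (iv), under $f\in\PP_b$, nothing is needed beyond Theorem~\ref{thm:several_factors_last_nonzero}, whose case~(b) is already stated as strict $b$-regularity under exactly the hypotheses in~(i). For (i) $\iff$ (iii) I use Theorem~\ref{thm:several_factors_d_last_nonzero}: since which of cases (a)--(c) occurs is asserted there to be independent of $d$, the clause ``$(\ell_{b,d}(f(n)))_{n\geq0}$ is $b$-automatic for all $d\geq1$'' is equivalent to the same assertion for one fixed $d$, and the rest of the argument is identical to the one for (i) $\iff$ (ii) with ``automatic'' replacing ``regular''.

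The only step with real content is the reduction in the first paragraph, namely checking that the exponent vectors $(w_1,\ldots,w_s)$ allowed by Theorems~\ref{thm:several_factors_valuation} and~\ref{thm:several_factors_d_last_nonzero} produce a base multiplicatively dependent with $b$ exactly when the root multiplicities $m_{f_i}(\theta)$ all agree. Once this is in place, the remainder is a routine matching of the three cases of each theorem against the disjuncts of~(i), and I expect no genuine obstacle, since the substantive work already sits inside the main theorems of Section~\ref{sec:main}.
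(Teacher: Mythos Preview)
Your proposal is correct and follows essentially the same approach as the paper, which simply states that the equivalences follow immediately from parts (a) and (b) of Theorems~\ref{thm:several_factors_valuation}, \ref{thm:several_factors_last_nonzero}, and~\ref{thm:several_factors_d_last_nonzero}. You are more explicit than the paper about the one point that is not entirely formal, namely that in case~(b) of Theorems~\ref{thm:several_factors_valuation} and~\ref{thm:several_factors_d_last_nonzero} the resulting $k = b_1^{w_1}\cdots b_s^{w_s}$ is multiplicatively dependent with $b$ precisely when all $w_i$ (equivalently all $m_{f_i}(\theta)$) coincide; the paper leaves this to the reader.
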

\begin{proof}
Equivalence of (i) to all other conditions follows immediately from parts (a), (b) of Theorems \ref{thm:several_factors_valuation}, \ref{thm:several_factors_last_nonzero}, and \ref{thm:several_factors_d_last_nonzero}. 
\end{proof}

We now briefly discuss some implications of our results and provide a few simple examples. First, for two bases $b, b'$ having the same set of prime factors we obtain $\A_b = \A_{b'}$. It is natural to ask whether replacing $b$ with $b'$ in $\nu_b, \LL_b$, and $\ell_{b,d}$ results in a different behavior of the considered sequences, in terms of their regularity. When $b$ has at least two prime factors we see that such a modification does not affect which of the cases (a)--(c) in Theorems \ref{thm:several_factors_valuation}, \ref{thm:several_factors_last_nonzero}, and \ref{thm:several_factors_d_last_nonzero} holds. Only the value of $k$ in the case (b) may change, as demonstrated by the following example.
\begin{ex}
Let $b=50 = 2 \cdot 5^2, b'=20 = 2^2 \cdot 5$ and define $f = (f_1,f_2) \in \PP_{10}$ by $f(x,y) = (x,y^2)$ for $(x,y) \in \Z_2 \times \Z_5 = \Z_{10}$. We see that $\R_f = \{0\}$, so case (b) of Theorems \ref{thm:several_factors_valuation} and \ref{thm:several_factors_d_last_nonzero} holds.
Following the notation of these results, we have $m_{f_1}(0) = 1$ and $m_{f_2}(0)=2$, so taking $w_1=2$ and $w_1=1$, we obtain $m_{f_1}(0) w_1 = m_{f_2}(0) w_2$. It follows that the sequence $(\nu_{50}(f(n)))_{n \geq 0}$ is strictly $100$-regular (equivalently, strictly $10$-regular), while $(\nu_{20}(f(n)))_{n \geq 0}$ is strictly $80$-regular. We obtain similar results concerning automaticity of $(\ell_{50,d}(f(n)))_{n \geq 0}$ and $(\ell_{20,d}(f(n)))_{n \geq 0}$ for any $d \geq 1$. On the other hand, neither $(\LL_{50}(f(n)))_{n \geq 0}$, nor $(\LL_{20}(f(n)))_{n \geq 0}$ are regular, since the multiplicities $m_{f_1}(0) = 1$ and $m_{f_2}(0)=2$ are not equal.
\end{ex}  
When $b = p^l$ is a prime power, replacing $b$ with any other power of $p$ has no effect on regularity in the case of the functions $\nu_b, \LL_b$. However, in Example \ref{ex:base_change} below we show that the case of $\ell_{p^l,d}$ is different, namely modifying $l,d$ may even turn a periodic into a nonautomatic sequence (and vice versa). To this end, recall that for a prime $p \geq 3$, a $p$-adic integer $\sigma$ with $\nu_p(\sigma)=0$ is a square in $\Z_p$ if and only if $\sigma \bmod{p}$ is a square in the finite field $\mathbb{F}_p$. In the case $p=2$ a sufficient and necessary condition is that $\sigma \equiv 1 \pmod{8}$.
This fact follows directly from the famous Hensel's Lemma and a proof can be found in \cite[p. 50]{Rob00}.

\begin{ex} \label{ex:base_change}
Let $p=5$, and $f(x) = 5(x^2 + 1)^4 \in \PP_5$. By the above discussion $-1$ is a~square in $\Z_5$ so $f$ has irrational roots $\theta,-\theta \in \Z_5$ of multiplicity $4$. One can quickly check that $\R'_f(l,d) = \varnothing$ if and only if $(l,d) \in \{(1,1),(2,1)\}$, and otherwise $\R'_f(l,d) = \{\theta, - \theta \} \not\subset \Q$. Therefore, for example the sequence $(\ell_{5^2}(f(n)))_{n \geq 0}$ is periodic but $(\ell_{5,2}(f(n)))_{n \geq 0}$ is not automatic.
\end{ex}

One may also wonder how regularity of $(\nu_b(f(n)))_{n \geq 0}$ relates to regularity of the sequences $(\nu_{b_i}(f_i(n)))_{n \geq 0}$ for $i=1,\ldots,s$. A similar question can also be asked, concering the functions $\LL_b$ and $\ell_{b,d}$. In this regard, the following simple example was at first quite unexpected to the author.

\begin{ex} 
Let $f(x) = x(x+1)$. Then the sequence $(\nu_{p^l}(f(n)))_{n \geq 0}$ is $p$-regular for all prime powers $p^l$. However, $(\nu_{b}(f(n)))_{n \geq 0}$ is not regular for any base $b$ having at least two prime factors. 
\end{ex}  

On the other hand, knowing that $(\nu_{b}(f(n)))_{n \geq 0}$ is strictly $k$-regular for some $k$, we can infer from part (b) of  Theorems \ref{thm:several_factors_valuation} and \ref{thm:prime_power_valuation} that all the sequences $(\nu_{b_i}(f_i(n)))_{n \geq 0}$ are strictly $p_i$-regular. However, if $(\nu_{b}(f(n)))_{n \geq 0}$ is $k$-regular for all $k \geq 2$, then only one of $(\nu_{b_i}(f_i(n)))_{n \geq 0}$ is guaranteed to be $p_i$-regular (strictly or not). In the following example we show that this implication cannot be strengthened.

\begin{ex} \label{ex:relations_b_1}
Let $b = 10$ and consider the polynomial $f(x) = x^2 + 1$. Then $f$ has an irrational root in $\Z_5$ but no root in $\Z_2$. Hence, $(\nu_{10}(f(n)))_{n \geq 0}$  is $k$-regular for every $k \geq 2$. However, out of the two sequences $(\nu_{2}(f(n)))_{n \geq 0}$ and $(\nu_{5}(f(n)))_{n \geq 0}$, only the former is regular. Analogous observations can be made concerning $\LL_b$ and $\ell_{b,d}$.
\end{ex}

Finally, our results allow to easily produce examples of sequences with values in $\Z_b$ whose reductions modulo $b^d$ are automatic, but which are not regular themselves.

\begin{ex}
The sequence $(\LL_{10}(n,n^2))_{n \geq 0} \subset \Z_{10} = \Z_2 \times \Z_5$ is not regular. On the other hand, for each $d \geq 1$ its reduction modulo $10^d$, namely $(\ell_{10,d}(n,n^2))_{n \geq 0}$, is $20$-automatic.
\end{ex}

\section{Preliminaries on automatic and regular sequences} \label{sec:automatic}

In this section we recall some standard facts concerning automatic and regular sequences which will be essential in proving our main results. Whenever regular sequences are concerned, we implicitly assume their terms belong to a $\Z$-module. For a more detailed treatment of the topic we refer the reader to the monograph of Allouche and Shallit \cite{AS03a} and their papers \cite{AS92,AS03b}.

To begin, any $k$-automatic sequence is $k$-regular. Conversely, a $k$-regular sequence which takes on only finitely many values is $k$-automatic. Hence, the results below stated for regular sequences also apply to automatic sequences.

The following proposition justifies the case distinction in the theorems in Section \ref{sec:main}, where the first part of (ii) is the celebrated theorem by Cobham \cite{Cob69}, while (iii) can be extracted from the results of Bell \cite{Bel06}.

\begin{prop} \label{prop:mult_dep}
Let $\mathbf{a} = (a_n)_{n \geq 0}$ be a sequence and $k,l \geq 2$ multiplicatively independent integers. We have the following.
\begin{enumerate}[label={\textup{(\roman*)}}]
\item For any integer $t \geq 1$ the sequence $\mathbf{a}$ is $k$-regular if and only if it is $k^t$-regular.
\item If $\mathbf{a}$ is simultaneously $k$- and $l$-automatic, then it is eventually periodic. Conversely, if $\mathbf{a}$ is eventually periodic, then it is $m$-automatic for all $m \geq 2$.
\item If $\mathbf{a}$ is simultaneously  $k$- and $l$-regular, then it is $m$-regular for all $m \geq 2$.
\end{enumerate}
\end{prop}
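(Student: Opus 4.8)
The plan is to handle the three parts separately, importing Cobham's theorem \cite{Cob69} and Bell's theorem \cite{Bel06} for the implications that genuinely require them, and supplying an elementary kernel computation for the rest.

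\textbf{Part (i).} One direction is immediate: every $k^t$-decimation $(a_{(k^t)^i n+j})_{n\ge0}$ with $0\le j<(k^t)^i=k^{ti}$ is also a $k$-decimation (with exponent $ti$), so $\K_{k^t}(\mathbf{a})\subseteq\K_k(\mathbf{a})$, and the $\Z$-module generated by $\K_{k^t}(\mathbf{a})$ is a submodule of the one generated by $\K_k(\mathbf{a})$; since $\Z$ is Noetherian, finite generation is inherited. For the converse, assume $\mathbf{a}$ is $k^t$-regular and pick generators $\mathbf{b}_1=(b_1(m))_{m\ge0},\dots,\mathbf{b}_r=(b_r(m))_{m\ge0}$ of the $\Z$-module spanned by $\K_{k^t}(\mathbf{a})$. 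Given a general element $(a_{k^i n+c})_{n\ge0}$ of $\K_k(\mathbf{a})$, write $i=tq+\rho$ with $0\le\rho<t$ and $c=k^{tq}c_1+c_0$ with $0\le c_0<k^{tq}$; then $0\le c_1<k^{\rho}$ and
\[ k^i n + c = k^{tq}\bigl(k^{\rho} n + c_1\bigr) + c_0 . \]
Hence $(a_{k^i n+c})_{n\ge0}$ is the restriction of the sequence $(a_{k^{tq}m+c_0})_{m\ge0}\in\K_{k^t}(\mathbf{a})$ to the arithmetic progression $m=k^{\rho}n+c_1$. Since the class of $k^t$-regular sequences is closed under passing to subsequences along arithmetic progressions (a standard closure property of regular sequences, see \cite{AS92}) and this operation is $\Z$-linear, every element of $\K_k(\mathbf{a})$ lies in the $\Z$-module generated by the \emph{finite} set $\{\,(b_p(k^{\rho}n+c_1))_{n\ge0}: 1\le p\le r,\ 0\le\rho<t,\ 0\le c_1<k^{\rho}\,\}$. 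Thus $\K_k(\mathbf{a})$ generates a finitely generated $\Z$-module, i.e.\ $\mathbf{a}$ is $k$-regular.

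\textbf{Part (ii).} The forward implication is exactly Cobham's theorem \cite{Cob69}, which we invoke as a black box. For the converse, suppose $a_{n+T}=a_n$ for all $n\ge n_0$. Any decimation $(a_{m^i n+j})_{n\ge0}$ satisfies $m^i n+j\ge n_0$ once $n\ge n_0$, hence it is eventually periodic with period $T$ and pre-period at most $n_0$; such a sequence is determined by its first $n_0+T$ entries, so there are only finitely many possibilities. Therefore $\K_m(\mathbf{a})$ is finite and $\mathbf{a}$ is $m$-automatic.

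\textbf{Part (iii).} This is precisely the content of Bell's generalization of Cobham's theorem to regular sequences \cite{Bel06}: a sequence that is simultaneously $k$- and $l$-regular for multiplicatively independent $k,l\ge2$ is $m$-regular for every $m\ge2$; we cite it directly.

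The only nontrivial input we produce ourselves is the decimation identity in part (i) together with the quoted closure property; the expected obstacle there is purely bookkeeping — tracking the ranges of $\rho$, $c_0$, $c_1$ so that the intermediate sequence genuinely belongs to $\K_{k^t}(\mathbf{a})$ — while the substantial mathematics behind parts (ii) and (iii) is imported wholesale from \cite{Cob69} and \cite{Bel06}.
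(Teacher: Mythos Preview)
Your proposal is correct. The paper does not actually prove this proposition: it is stated as a preliminary result, with part (ii) attributed to Cobham \cite{Cob69}, part (iii) to Bell \cite{Bel06}, and part (i) left implicit as standard material from Allouche--Shallit \cite{AS92,AS03a}; your treatment is entirely consistent with this, and you go further by supplying the elementary kernel argument for (i) and the easy direction of (ii), which the paper omits.
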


We move on to closure properties of automatic and regular sequences. It is known that changing a finite number of terms in a $k$-regular sequence again yields a $k$-regular sequence. We can also relate $k$-regularity of a~sequence and its subsequences along arithmetic progressions.
\begin{prop} \label{prop:arithmetic_prog}
Let $(a_n)_{n \geq 0}$ be a sequence and $b \geq 1$, $k \geq 2$ integers. We have the following.
\begin{enumerate}[label={\textup{(\roman*)}}]
\item If $(a_n)_{n \geq 0}$ is $k$-regular, then for each $c \in \Z$ the subsequence $(a_{bn+c})_{n \geq 0}$ is $k$-regular (arbitrary values can be assigned to terms with $bn+c<0$).
\item If for each $c=0,1,\ldots,b-1$ the subsequence $(a_{bn+c})_{n \geq 0}$ is $k$-regular, then $(a_n)_{n \geq 0}$ is $k$-regular.
\end{enumerate}
\end{prop}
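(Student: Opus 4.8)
The plan is to work with the standard module characterization of $k$-regularity: a sequence $\mathbf{x}$ is $k$-regular if and only if it belongs to some finitely generated $\Z$-module $M$ of sequences that is closed under the section operators $U_s$, $0 \le s < k$, given by $(U_s\mathbf{x})_n = x_{kn+s}$. (One direction is immediate; for the other take $M$ generated by $\K_k(\mathbf{x})$, which is $U_s$-invariant.) I will also use the easy facts, both consequences of this characterization, that altering finitely many terms or shifting a $k$-regular sequence keeps it $k$-regular; in particular this reduces Part (i) to the case $c \ge 0$. Throughout, for a sequence $\mathbf{x}$ and $e \ge 0$ I write $\sigma_e\mathbf{x} = (x_{bn+e})_{n \ge 0}$ and $\mathbf{x}[k^i,r] = (x_{k^i n + r})_{n \ge 0}$, and I will use the kernel identity $\K_k(\mathbf{x}) = \{\mathbf{x}[k^i,r] : 0 \le i < N,\ 0 \le r < k^i\} \cup \bigcup_{0 \le r < k^N} \K_k(\mathbf{x}[k^N,r])$.

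For Part (i), fix a finitely generated $U_s$-closed module $M = \langle \mathbf{g}_1,\dots,\mathbf{g}_d\rangle$ with $\mathbf{a} \in M$. A one-line computation yields the identity $U_s\sigma_e = \sigma_{\lfloor(bs+e)/k\rfloor}\,U_{(bs+e)\bmod k}$. Starting from $e = c$ and iterating the finitely many maps $e \mapsto \lfloor(bs+e)/k\rfloor$ produces only finitely many offsets, since $\lfloor(bs+e)/k\rfloor < b + e/k$ forces the forward orbit of $c$ to stay bounded. Letting $E$ be this finite set of offsets, the module $M' = \langle \sigma_e\mathbf{g}_t : e \in E,\ 1 \le t \le d\rangle$ is finitely generated, contains $(a_{bn+c})_{n\ge0} = \sigma_c\mathbf{a}$, and is $U_s$-closed by the displayed identity together with $\Z$-linearity of each $\sigma_e$ and $U$-closedness of $M$; hence $(a_{bn+c})_{n\ge0}$ is $k$-regular.

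For Part (ii), the idea is to reconstruct a fixed level of $\K_k(\mathbf{a})$ from the given subsequences. Write $b = b_k b'$ where $b_k = \gcd(b,k^\infty)$ is the stable value of $\gcd(k^N,b)$, so that $\gcd(b',k) = 1$, and fix $N$ with $\gcd(k^N,b) = b_k$; set $k' = k^N/b_k$. By Part (i), every $(a_{b(k'n+q)+c})_{n\ge0}$ with $0 \le c < b$ and $0 \le q < k'$ is $k$-regular, so we may choose a single finitely generated $U_s$-closed module $M$ containing all of them. Splitting the index $n$ of $\mathbf{a}[k^N,r]$ according to its residue modulo $b'$ and using $k^N b' = b k'$, one checks that $\mathbf{a}[k^N,r]$ is the interleaving modulo $b'$ of $b'$ sequences each of the form $(a_{b(k't+q_u)+c_u})_{t\ge0} \in M$, where $0 \le q_u < k'$ follows from $k^N u + r \le k^N b' - 1 = b k' - 1$. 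It then remains to show that interleaving modulo $b'$ a tuple of members of $M$ gives a $k$-regular sequence: the interleaving map $M^{b'} \to (\text{sequences})$ is $\Z$-linear with finitely generated image $\mathcal{I}$, and, because $\gcd(b',k) = 1$, de-interleaving $U_s$ of an interleaving back modulo $b'$ produces pieces that are each $U_h\mathbf{s}$ for some $\mathbf{s} \in M$ and $0 \le h < k$, so $\mathcal{I}$ is $U_s$-closed. Thus every $\mathbf{a}[k^N,r]$ lies in the fixed finitely generated $U_s$-closed module $\mathcal{I}$, hence so does $\K_k(\mathbf{a}[k^N,r])$, and by the kernel identity $\K_k(\mathbf{a})$ lies in the finitely generated module $\langle \mathbf{a}[k^i,r] : i < N\rangle + \mathcal{I}$; therefore $\mathbf{a}$ is $k$-regular.

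The routine direction is Part (i); the genuine obstacle is Part (ii), precisely the interaction between $b$ and $k$ when $\gcd(b,k) \neq 1$. Passing to the $k^N$-kernel for $N$ large enough to stabilize $\gcd(k^N,b)$ is what separates the prime-to-$k$ part $b'$ of $b$, handled by the coprime-modulus interleaving closure lemma, from the $k$-part $b_k$, which is absorbed into the common difference $k' = k^N/b_k$ of an arithmetic-progression subsequence already covered by Part (i). Verifying the interleaving lemma and the attendant bookkeeping of offsets and ranges (that $q_u < k'$, that the de-interleaved pieces never need a shift) is the fiddly part; alternatively one may simply cite the corresponding closure properties of $k$-regular sequences from Allouche and Shallit.
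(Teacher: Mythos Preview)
The paper does not prove this proposition at all: it is listed in Section~\ref{sec:automatic} among ``standard facts concerning automatic and regular sequences'' and implicitly referred to the works of Allouche and Shallit. So there is no argument in the paper to compare against; your proposal supplies a self-contained proof where the paper simply cites the literature.

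Your argument is correct. For Part~(i) the commutation identity $U_s\sigma_e = \sigma_{\lfloor(bs+e)/k\rfloor}U_{(bs+e)\bmod k}$ is right, and the orbit bound is fine: once $e \le b-1$ one has $\lfloor(bs+e)/k\rfloor \le \lfloor(bk-1)/k\rfloor = b-1$, while for $e \ge b$ the map does not increase $e$, so the forward orbit of any $c \ge 0$ is contained in $\{0,1,\dots,\max(c,b-1)\}$. For Part~(ii) the decomposition $b = b_k b'$ with $b_k = \gcd(b,k^\infty)$, the choice of $N$ with $b_k \mid k^N$, and the identification of $\mathbf{a}[k^N,r]$ as an interleaving modulo $b'$ of sequences $(a_{b(k't+q_u)+c_u})_{t\ge 0}$ all check out; your bound $q_u < k'$ is exactly the computation $k^N u + r \le k^N b' - 1 = bk' - 1$. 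The interleaving-closure step is also correct: writing $n = b'm'+u'$ one gets $(U_s\mathbf{y})_{b'm'+u'} = (U_{h(u')}\mathbf{s}_{u(u')})_{m'}$ with $h(u') = \lfloor(ku'+s)/b'\rfloor < k$ and $u(u') = (ku'+s)\bmod b'$, and coprimality of $b'$ and $k$ guarantees this is again a $b'$-interleaving of elements of $M$.

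In short, you have written out in full the argument the paper delegates to a reference; your last sentence (``alternatively one may simply cite\dots'') is precisely what the paper does.
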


We now look at term-wise operations which preserve regularity.

\begin{prop} \label{prop:function_regular}
Let $(a_n)_{n \geq 0}$ and $(b_n)_{n \geq 0}$ be sequences taking values in a set $R$ and let $k \geq 2$ an integer. Then we have the following.
\begin{enumerate}[label={\textup{(\roman*)}}]
\item If $(a_n)_{n \geq 0}$ is $k$-automatic, then for any set $\Delta$ and function $\rho \colon R \to \Delta$ the sequence $(\rho(a_n))_{n \geq 0}$ is $k$-automatic.
\item If $(a_n)_{n \geq 0}$ is $k$-regular and $\phi \colon R \to S$ is a homomorphism of $\Z$-modules, then the sequence $(\phi(a_n))_{n \geq 0}$ is $k$-regular.
\item If $(a_n)_{n \geq 0}$ and $(b_n)_{n \geq 0}$ are $k$-regular and $R$ is a commutative ring, then for any $\lambda \in R$ the sequences $(\lambda a_n)_{n \geq 0}$, $(a_n + b_n)_{n \geq 0}$, and $(a_n b_n)_{n \geq 0}$ are also $k$-regular.
\item The sequence $((a_n,b_n))_{n \geq 0}$ is $k$-regular if and only if both $(a_n)_{n \geq 0}$ and $(b_n)_{n \geq 0}$ are $k$-regular.
\end{enumerate}
\end{prop}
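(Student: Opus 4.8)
The plan is to deduce all four parts from two elementary facts. The first is a convenient reformulation of $k$-regularity: since $\Z$ is Noetherian, a submodule of a finitely generated $\Z$-module is again finitely generated, so $\mathbf{a} = (a_n)_{n \geq 0}$ is $k$-regular if and only if its $k$-kernel $\K_k(\mathbf{a})$ is contained in \emph{some} finitely generated $\Z$-submodule of $R^{\N}$. The second is that for a term-wise operation the $k$-kernel of the resulting sequence is governed by the $k$-kernels of the inputs: for any $i \geq 0$ and $0 \leq j \leq k^i-1$ one has $(\rho(a_n))_{k^i n + j} = \rho(a_{k^i n + j})$, and similarly for operations of two sequences. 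With this in hand, part (i) is immediate: $\K_k((\rho(a_n))_{n})$ is exactly the image of $\K_k(\mathbf{a})$ under term-wise application of $\rho$, so it is finite whenever $\K_k(\mathbf{a})$ is. For part (ii), let $\phi_* \colon R^{\N} \to S^{\N}$ be the term-wise extension of $\phi$, a homomorphism of $\Z$-modules; then $\K_k((\phi(a_n))_n) = \phi_*(\K_k(\mathbf{a}))$ generates the $\Z$-module $\phi_*(M)$, where $M$ is the $\Z$-module generated by $\K_k(\mathbf{a})$, and the image of a finitely generated module under a homomorphism is finitely generated.

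For part (iv), the "only if" direction follows from (ii) applied to the two coordinate projections $R \times R \to R$, which are $\Z$-module homomorphisms. For the "if" direction, fix finite generating sets $\mathbf{a}^{(1)}, \dots, \mathbf{a}^{(p)}$ and $\mathbf{b}^{(1)}, \dots, \mathbf{b}^{(q)}$ of the $\Z$-modules generated by $\K_k(\mathbf{a})$ and $\K_k(\mathbf{b})$, respectively. Each element of $\K_k(((a_n,b_n))_n)$ has the form $((a_{k^i n + j})_n, (b_{k^i n + j})_n)$ with $(a_{k^i n + j})_n \in \K_k(\mathbf{a})$ and $(b_{k^i n + j})_n \in \K_k(\mathbf{b})$; expanding each coordinate over its generators writes this as an integer combination of the $p+q$ fixed sequences $(\mathbf{a}^{(r)}, \mathbf{0})$ and $(\mathbf{0}, \mathbf{b}^{(s)})$ in $(R \times R)^{\N}$. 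Hence $\K_k(((a_n,b_n))_n)$ lies in a finitely generated $\Z$-submodule, and the reformulation above finishes the argument.

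Finally, part (iii): the sequences $(\lambda a_n)_n$ and $(a_n + b_n)_n$ follow from (ii), since multiplication by $\lambda$ is a $\Z$-module endomorphism of $R$ and addition $R \times R \to R$ is a $\Z$-module homomorphism, the latter combined with (iv). For $(a_n b_n)_n$ multiplication is only bilinear, so I would argue directly: with $\mathbf{a}^{(r)}$, $\mathbf{b}^{(s)}$ as above, any element of the $k$-kernel equals $(\sum_r c_r \mathbf{a}^{(r)}) \cdot (\sum_s d_s \mathbf{b}^{(s)}) = \sum_{r,s} c_r d_s\, (\mathbf{a}^{(r)} \cdot \mathbf{b}^{(s)})$, where $\cdot$ is the term-wise product in $R^{\N}$; thus the $k$-kernel is contained in the $\Z$-module generated by the $pq$ sequences $\mathbf{a}^{(r)} \cdot \mathbf{b}^{(s)}$. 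All of these statements are classical (see Allouche and Shallit \cite{AS92,AS03a}), and the only point requiring a moment's care is the Noetherian passage between ``the module generated by the kernel is finitely generated'' and ``the kernel is contained in a finitely generated module''; everything else is bookkeeping, and one should just be mindful that in (iii) and (iv) the relevant pieces are indexed by $0 \leq j \leq k^i - 1$, so they are honest kernel elements rather than mere subsequences along arithmetic progressions.
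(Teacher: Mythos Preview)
Your proof is correct. Note that the paper does not actually supply a proof of this proposition: it cites Allouche and Shallit for parts (i) and (iii) and merely remarks that parts (ii) and (iv) ``quickly follow from the definition of a $k$-regular sequence.'' Your argument fills in exactly these details along the expected lines, and the Noetherian observation you flag is indeed the one point worth making explicit.
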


We have not found a suitable reference for the parts (ii) and (iv) of Proposition \ref{prop:function_regular}, however these assertions quickly follow from the definition of a $k$-regular sequence.
Finally, we state two corollaries which will be used often in our considerations.

\begin{cor} \label{cor:regular_modulo}
Let $(a_n)_{n \geq 0}$ be a~$k$-regular sequence taking values in $\Z_b$. Then for all integers
$m \geq 1$ the sequence $(a_n \bmod b^m)_{n \geq 0}$ is $k$-automatic.
\end{cor}

\begin{cor} \label{cor:polynomial_regular}
Let $R$ be a~commutative ring and $f \in R[X]$. Then the sequence $(f(n))_{n \geq 0}$ is $k$-regular for every $k \geq 2$.
\end{cor}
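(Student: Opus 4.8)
The plan is to deduce this from the closure properties already recorded in Proposition~\ref{prop:function_regular}(iii), once two elementary base cases are checked by hand. Write $f=\sum_{t=0}^{d}c_tX^t$ with $c_t\in R$ and $d=\deg f$; since $R$ is itself a $\Z$-module, $k$-regularity of $(f(n))_{n\ge 0}$ makes sense. The first base case is the constant sequence $(1_R)_{n\ge 0}$, whose $k$-kernel is the single sequence $(1_R)_{n\ge 0}$, hence certainly generates a finitely generated $\Z$-module. The second is $(n\cdot 1_R)_{n\ge 0}$: for any $i\ge 0$ and $0\le j<k^i$ one has $(k^in+j)\cdot 1_R=k^i\,(n\cdot 1_R)+j\,(1_R)_{n\ge 0}$, so every element of its $k$-kernel lies in the $\Z$-module generated by the two sequences $(n\cdot 1_R)_{n\ge 0}$ and $(1_R)_{n\ge 0}$; thus that module is finitely generated and $(n\cdot 1_R)_{n\ge 0}$ is $k$-regular for every $k\ge 2$.

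With the base cases settled, the corollary follows formally. Because $R$ is a commutative ring, Proposition~\ref{prop:function_regular}(iii) shows that term-wise sums, products, and multiplication by a fixed $\lambda\in R$ preserve $k$-regularity of $R$-valued sequences. Taking $t$-fold products of $(n\cdot 1_R)_{n\ge 0}$ gives that $(n^t)_{n\ge 0}$ is $k$-regular for each $t$; scaling by $c_t$ gives $k$-regularity of $(c_tn^t)_{n\ge 0}$; and summing over $t=0,1,\dots,d$ yields $k$-regularity of $(f(n))_{n\ge 0}$. Since $k\ge 2$ was arbitrary, this proves the claim.

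Alternatively one can bound the kernel in one stroke: for any $i\ge 0$ and $0\le j<k^i$ the polynomial $f(k^iX+j)$ has degree $\le d$, and expanding it shows every coefficient is a $\Z$-linear combination of $c_0,\dots,c_d$; hence every element of the $k$-kernel of $(f(n))_{n\ge 0}$ lies in the $\Z$-module $M$ generated by the finite set $\{(c_tn^m)_{n\ge 0}:0\le t\le d,\ 0\le m\le d\}$, and since $\Z$ is Noetherian the submodule of $M$ spanned by the kernel is finitely generated. There is no real obstacle here; the only point that needs a touch of care — and the reason the slogan ``the kernel is spanned by $1,n,\dots,n^d$'' is slightly too hasty — is that $R$ is an arbitrary commutative ring rather than $\Z$, so one must carry the coefficients $c_t$ along (or invoke the scalar-multiplication clause of Proposition~\ref{prop:function_regular}). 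Finiteness of $\{c_0,\dots,c_d\}$ together with $\deg f<\infty$ is all that is required.
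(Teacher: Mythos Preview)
Your proof is correct and follows exactly the route the paper intends: the corollary is stated immediately after Proposition~\ref{prop:function_regular} without proof, so it is meant to be read as a direct consequence of the closure properties in part~(iii), together with the trivial observation that the constant sequence and $(n)_{n\ge 0}$ are $k$-regular. Your write-up simply makes these implicit steps explicit, and the alternative direct-kernel argument you give is a fine bonus.
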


\section{Basic properties of last nonzero digits} \label{sec:basic}

Let $d \geq 1$ and $b\geq 2$ be integers. As in Section \ref{sec:main}, write the prime factorization of $b$ as
$$ b = p_1^{l_1} \cdots p_s^{l_s}, $$
where $p_1, \ldots, p_s$ are distinct primes, and $l_1, \ldots l_s$ positive integers. Also let $b_i = p_i^{l_i}$ for each $i=1,\ldots,s$. Our first goal is to extend the defininiton of the functions $\nu_b, \LL_b$, and $\ell_{b,d}$ to the product ring 
$$\Q_b = \Q_{p_1} \times \cdots \times \Q_{p_s}.$$
We note that $\Q_b$ only depends on the set of prime factors of $b$, in particular $\Q_{p^l} = \Q_p$ for $p$ prime. Also recall that
$$\Z_b = \Z_{p_1} \times \cdots \times \Z_{p_s}$$
and that we identify $x \in \Q$ with $(x, \ldots, x) \in \Q_b$.

\begin{rem}
The ring $\Z_b$ is usually defined as the inverse limit $ \varprojlim\Z/b^n\Z$ (see  \cite{Ers1,Ers2},  cf.\ \cite[pp. 47--50]{Kat07} for another approach). Equipping each $\Z/b^n\Z$ with the discrete topology, we obtain an isomorphism of topological rings $\Z_b \simeq  \Z_{p_1} \times \cdots \times \Z_{p_s}$, which extends to a natural isomorphism $ \pi_b \colon \Q_b  \to  \Q_{p_1} \times \cdots \times \Q_{p_s}$. The elements of $\Q_b$ can in turn be identified with $b$-adic expansions 
$$
x = \cdots + a_2 b^2 + a_1 b + a_0 + a_{-1} b^{-1} + \cdots + a_{-m} b^{-m}, $$
where $a_j \in \{0,1,\ldots,b-1\}$ and $m \in \N$. In particular, $x \in \Z_b$ if and only if $a_j = 0$ for all $j <0$. The isomorphism $\pi_b$ thus induces the functions $\widetilde{\nu}_b= \nu_b \circ \pi_b$, $\widetilde{\LL}_b = \LL_b \circ \pi_b$, and $\widetilde{\ell}_{b,d} = \ell_{b,d} \circ \pi_b$ defined for $b$-adic expansions. It can be checked that $\widetilde{\LL}_b$ and $\widetilde{\ell}_{b,d}$ (and consequently $\LL_b$ and $\ell_{b,d}$) retain the original interpretation concerning last nonzero digits.
Keeping this in mind, we choose to identify $\Q_b$ with $\Q_{p_1} \times \cdots \times \Q_{p_s}$ for the sake of calculations, as we find it more convenient to deal with tuples of $p$-adic numbers. 
\end{rem}

For any $x = (x_1,\ldots,x_s) \in \Q_b$ let  
$$
\nu_{b}(x) = \min_{1 \leq i \leq s} \nu_{b_i}(x_i) = \min_{1 \leq i \leq s} \left \lfloor \frac{\nu_{p_i}(x_i)}{l_i} \right \rfloor,
$$
where $\lfloor  \cdot\rfloor$ denotes the floor function. It can be easily checked that for any integer $n$ the value $\nu_{b}(n)$ coincides with the original definition \eqref{eq:b_adic_valuation}. By reduction $x \bmod{b^d}$ of $x = (x_1,\ldots,x_s) \in \Z_b$ we mean the unique solution $y \in \{0,1,\ldots,b^d-1\}$ to the system of congruences
$$\begin{cases}
y \equiv x_1 \pmod{b_1^d}, \\
\quad \vdots  \\
y \equiv x_s \pmod{b_s^d}. \\
\end{cases} $$ 
More explicitly, if we put $q_i = b/b_i$, $y_i = x_i$, and let $r_i$ be an integer satisfying $ q_i r_i  \equiv 1 \pmod{b_i^d}$, then for any $x = (x_1, \ldots, x_s) \in \Z_b$ we have 
\begin{equation} \label{eq:CRT}
x \bmod{b^d} = \sum_{i=1}^{s} q_i^d r_i^d  y_i \bmod{b^d}.
\end{equation}
Having extended $\nu_b$ to $\Q_b$, we can now define in general the functions $\LL_b \colon \Q_b \to \Z_b$ and $\ell_{b,d} \colon \Q_b \to \Z/b^d\Z$ precisely as Section \ref{sec:intro}, namely
\begin{align*}
\LL_b(x) &= \begin{cases}
 b^{-\nu_b(x)} x &\text{if } x \neq 0,\\
 0    &\text{if } x = 0,
 \end{cases}\\
\ell_{b,d}(x) &= \LL_b(x) \bmod{b^d}.
\end{align*}

In the following two propositions we give some immediate properties of the $b$-adic valuation and last nonzero digits.
\begin{prop} \label{prop:valuation_properties}
For any $x,y \in \Q_b$ we have the following:
\begin{enumerate}[label={\textup{(\roman*)}}]
\item $\nu_b(xy) \geq \nu_b(x) + \nu_b(y)$;
\item if $b = p^l$ is a prime power, then $\nu_b(xy) \leq  \nu_b(x) + \nu_b(y) + 1$;
\item $\nu_b(x+y) \geq \min\{\nu_b(x), \nu_b(y)\}$ with equality if $\nu_b(x) \neq \nu_b(y)$.
\end{enumerate}
\end{prop}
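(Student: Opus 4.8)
The plan is to reduce all three assertions to the corresponding well-known facts about the genuine valuations $\nu_{p_i}$ on $\Q_{p_i}$ together with elementary properties of the floor function, via the formula $\nu_b(x) = \min_{1\le i\le s}\lfloor \nu_{p_i}(x_i)/l_i\rfloor$. Throughout one keeps the conventions $\nu_{p_i}(0)=+\infty$ and $\lfloor +\infty\rfloor = +\infty$, under which all inequalities below remain valid, so the case where some component vanishes needs no separate treatment.

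For (i), I would first use multiplicativity $\nu_{p_i}(x_iy_i)=\nu_{p_i}(x_i)+\nu_{p_i}(y_i)$ and the superadditivity of the floor, $\lfloor a+b\rfloor \ge \lfloor a\rfloor+\lfloor b\rfloor$, to obtain $\lfloor \nu_{p_i}(x_iy_i)/l_i\rfloor \ge \lfloor \nu_{p_i}(x_i)/l_i\rfloor + \lfloor \nu_{p_i}(y_i)/l_i\rfloor$ for each $i$; taking minima over $i$ and invoking $\min_i(a_i+b_i)\ge \min_i a_i + \min_i b_i$ gives $\nu_b(xy)\ge \nu_b(x)+\nu_b(y)$. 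For (ii), with $b=p^l$ we have $s=1$ and $\nu_b(z)=\lfloor \nu_p(z)/l\rfloor$, so it suffices to show $\lfloor (a+b)/l\rfloor \le \lfloor a/l\rfloor + \lfloor b/l\rfloor + 1$ for $a,b\in\N\cup\{+\infty\}$. Writing $a=l\lfloor a/l\rfloor + r_a$ and $b=l\lfloor b/l\rfloor+r_b$ with $0\le r_a,r_b\le l-1$, one has $0\le r_a+r_b\le 2l-2$, hence $\lfloor (r_a+r_b)/l\rfloor\le 1$, and dividing $a+b$ by $l$ yields the claim.

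For (iii), the inequality follows exactly as in (i): from $\nu_{p_i}(x_i+y_i)\ge \min\{\nu_{p_i}(x_i),\nu_{p_i}(y_i)\}$ and monotonicity of the floor we get $\lfloor \nu_{p_i}(x_i+y_i)/l_i\rfloor \ge \min\{\lfloor \nu_{p_i}(x_i)/l_i\rfloor, \lfloor \nu_{p_i}(y_i)/l_i\rfloor\}$, and taking minima over $i$ gives $\nu_b(x+y)\ge \min\{\nu_b(x),\nu_b(y)\}$. For the equality assertion, assume without loss of generality $\nu_b(x)<\nu_b(y)$, pick an index $i_0$ with $\lfloor \nu_{p_{i_0}}(x_{i_0})/l_{i_0}\rfloor = \nu_b(x)=:v$, and note that $\nu_{p_{i_0}}(x_{i_0})<(v+1)l_{i_0}\le \nu_{p_{i_0}}(y_{i_0})$, since $\lfloor \nu_{p_{i_0}}(y_{i_0})/l_{i_0}\rfloor\ge \nu_b(y)\ge v+1$. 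Hence the two $p_{i_0}$-adic valuations differ, so $\nu_{p_{i_0}}(x_{i_0}+y_{i_0})=\nu_{p_{i_0}}(x_{i_0})$ and therefore $\nu_b(x+y)\le \lfloor \nu_{p_{i_0}}(x_{i_0}+y_{i_0})/l_{i_0}\rfloor = v$, which combined with the inequality already shown yields $\nu_b(x+y)=v=\min\{\nu_b(x),\nu_b(y)\}$.

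The only delicate point — the ``main obstacle,'' modest as it is — is the equality case in (iii): one must select the index $i_0$ realizing the smaller $b$-adic valuation and check that this forces a \emph{strict} inequality between the underlying $p_{i_0}$-adic valuations of $x_{i_0}$ and $y_{i_0}$ (not merely between their floors after division by $l_{i_0}$), after which the standard ultrametric equality for $\nu_{p_{i_0}}$ applies. Everything else is a routine chase through monotonicity and super-/sub-additivity of the floor.
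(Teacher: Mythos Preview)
Your proof is correct and complete; the paper itself omits any proof of this proposition, treating it as an ``immediate property'' of the $b$-adic valuation. One small slip: in part (ii) you restrict to $a,b\in\N\cup\{+\infty\}$, but since $x,y\in\Q_p$ the valuations $\nu_p(x),\nu_p(y)$ can be negative integers --- your division-with-remainder argument still goes through verbatim for all $a,b\in\Z$, so this is purely a notational oversight.
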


\begin{prop} \label{prop:multiplicative_properties}
For any $x,y \in \Q_b$ we have the following:
\begin{enumerate}[label={\textup{(\roman*)}}]
\item $\LL_b(bx) = \LL_b(x)$ and  $\ell_{b,d}(bx) = \ell_{b,d}(x)$;
\item if $\nu_b(xy) = \nu_b(x)+\nu_b(y),$ then
\begin{align*}
\LL_b(xy) &= \LL_b(x)\LL_b(y), \\
\ell_{b,d}(xy) &\equiv \ell_{b,d}(x) \ell_{b,d}(y) \pmod{b^d}.
\end{align*}
\end{enumerate}
\end{prop}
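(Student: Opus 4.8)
The plan is to reduce both parts to two elementary facts about the extended valuation $\nu_b$ on $\Q_b$: that multiplication by $b$ raises $\nu_b$ by exactly $1$, and that $\LL_b$ takes values in $\Z_b$, so that reduction modulo $b^d$ is a well-defined ring homomorphism $\Z_b \to \Z/b^d\Z$ and may be applied freely.

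For part (i), I would first compute $\nu_b(bx)$ straight from the definition $\nu_b(y) = \min_i \lfloor \nu_{p_i}(y_i)/l_i \rfloor$. Since $b$ has $p_i$-adic valuation exactly $l_i$ in its $i$-th component, $\lfloor (\nu_{p_i}(x_i) + l_i)/l_i \rfloor = \lfloor \nu_{p_i}(x_i)/l_i \rfloor + 1$, whence $\nu_b(bx) = \nu_b(x) + 1$ for all $x \in \Q_b$ (with the convention $+\infty + 1 = +\infty$ taking care of $x = 0$). Then for $x \neq 0$ one has $\LL_b(bx) = b^{-\nu_b(bx)}(bx) = b^{-\nu_b(x)}x = \LL_b(x)$, and the case $x=0$ is immediate; reducing both sides modulo $b^d$ gives the corresponding statement for $\ell_{b,d}$.

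For part (ii), I would dispose of the degenerate cases first: if $x = 0$ or $y = 0$ then $xy = 0$ and both sides vanish. In the remaining case the hypothesis $\nu_b(xy) = \nu_b(x) + \nu_b(y)$ forces $\nu_b(xy)$ to be finite whenever $\nu_b(x), \nu_b(y)$ are, so $xy \neq 0$ — this is the one point where the lack of integral domain structure of $\Q_b$ must be addressed, since a priori $xy$ could vanish with $x,y \neq 0$. Given $xy \neq 0$, the hypothesis is exactly the identity $b^{-\nu_b(xy)} = b^{-\nu_b(x)}b^{-\nu_b(y)}$, and therefore $\LL_b(xy) = b^{-\nu_b(xy)}xy = \bigl(b^{-\nu_b(x)}x\bigr)\bigl(b^{-\nu_b(y)}y\bigr) = \LL_b(x)\LL_b(y)$. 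Finally, since $\LL_b(x), \LL_b(y), \LL_b(xy) \in \Z_b$ (each component of $\LL_b(x)$ has nonnegative $b_i$-adic valuation because $\nu_b(x) \leq \lfloor \nu_{p_i}(x_i)/l_i \rfloor$), and reduction modulo $b^d$ is multiplicative on $\Z_b$, we get $\ell_{b,d}(xy) = \LL_b(xy) \bmod b^d \equiv \ell_{b,d}(x)\ell_{b,d}(y) \pmod{b^d}$.

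There is no genuinely hard step here; the proof is a short computation. The only subtleties worth flagging are the bookkeeping around $x = 0$ and $y = 0$ under the convention $\nu_b(0) = +\infty$, and the observation that the hypothesis in (ii) is precisely what excludes the awkward possibility $xy = 0$ with $x, y \neq 0$ arising from zero divisors in $\Q_b$. I would also state once, at the outset, that $\LL_b$ maps $\Q_b$ into $\Z_b$, since this is what legitimizes passing to reductions modulo $b^d$ in both parts.
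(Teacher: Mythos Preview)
Your proof is correct, and it is exactly the kind of direct verification the paper has in mind: the paper does not write out a proof at all, introducing Propositions~\ref{prop:valuation_properties} and~\ref{prop:multiplicative_properties} as ``immediate properties'' and leaving the details to the reader. Your handling of the zero-divisor issue in (ii) and the remark that $\LL_b$ lands in $\Z_b$ are the only points requiring any care, and you address both cleanly.
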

Observe that the assumption of Proposition \ref{prop:multiplicative_properties}(ii) is always satisfied for $b$ prime.

In the following proposition we show how to express the function $\ell_{b,d}$ in terms of $\ell_{b_i,d}$. Here and in the sequel we use the convention
\begin{equation} \label{eq:convention}
m^{+\infty} \equiv 0 \pmod{m^d}
\end{equation}
for any integers $d,m \geq 1$.

\begin{prop} \label{prop:last_digits_factorization}
Let $x = (x_1, \ldots,x_s) \in \Q_b \setminus \{0\}$. Then for each $i = 1,\ldots,s$ we have
\begin{equation} \label{eq:last_digits_factorization1}
\ell_{b,d}(x) \equiv b_i^{\nu_{b_i}(x_i)-\nu_b(x)} r_i^{\nu_b(x)} \ell_{b_i,d}(x_i) \pmod{b_i^d}.
\end{equation}
Moreover, for any prime number $p$, integer $l \geq 1$, and nonzero $x \in \Q_p$ we have
\begin{align*}
\LL_{p^l}(x) &= \LL_{p}(x) p^{\nu_{p}(x) \bmod{l}}, \\
\ell_{p^l,d}(x) &\equiv \ell_{p, ld}(x) p^{\nu_{p}(x) \bmod{l}} \pmod{p^{ld}}.
\end{align*}
\end{prop}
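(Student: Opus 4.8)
The plan is to prove Proposition~\ref{prop:last_digits_factorization} in two stages, handling the general $b$-adic identity \eqref{eq:last_digits_factorization1} first and then the prime-power reduction formulas. For the first part, fix $x = (x_1,\ldots,x_s) \in \Q_b \setminus \{0\}$ and set $v = \nu_b(x)$. By definition $\LL_b(x) = b^{-v} x$, so its $i$-th component in $\Q_{p_i}$ is $b^{-v} x_i = b_i^{-v} q_i^{-v} x_i$, where $q_i = b/b_i$ as in \eqref{eq:CRT}. I would then observe that $b_i^{-v} x_i = b_i^{\nu_{b_i}(x_i) - v}\,\LL_{b_i}(x_i)$ directly from the definition $\LL_{b_i}(x_i) = b_i^{-\nu_{b_i}(x_i)} x_i$, and that $q_i^{-v} \equiv r_i^{v} \pmod{b_i^d}$ since $q_i r_i \equiv 1 \pmod{b_i^d}$ and hence $q_i^d r_i^d \equiv 1 \pmod{b_i^d}$; this uses that $\nu_{b_i}(x_i) \geq v$ so the exponent $\nu_{b_i}(x_i)-v$ is a nonnegative integer and everything in sight lies in $\Z_{p_i}$. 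Finally, reducing $\LL_b(x) \bmod b^d$ componentwise via the isomorphism $\pi_b$ and using that $\ell_{b,d}(x)$ is by construction the element of $\Z/b^d\Z$ whose image in $\Z/b_i^d\Z$ is $\LL_b(x)$ reduced mod $b_i^d$, I get exactly $\ell_{b,d}(x) \equiv b_i^{\nu_{b_i}(x_i)-v} r_i^{v}\, \ell_{b_i,d}(x_i) \pmod{b_i^d}$.

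For the second part I take $p$ prime, $l \geq 1$, and a nonzero $x \in \Q_p$, writing $w = \nu_p(x)$; then $\nu_{p^l}(x) = \lfloor w/l \rfloor$ and the key arithmetic fact is $w = l\lfloor w/l\rfloor + (w \bmod l)$. Since $\LL_{p^l}(x) = (p^l)^{-\lfloor w/l\rfloor} x = p^{-l\lfloor w/l\rfloor} x$ while $\LL_p(x) = p^{-w} x$, dividing gives $\LL_{p^l}(x) = p^{\,w - l\lfloor w/l\rfloor}\,\LL_p(x) = p^{\,w \bmod l}\,\LL_p(x)$, which is the first displayed equation. For the congruence for $\ell_{p^l,d}$, I reduce this identity modulo $p^{ld}$: note $\ell_{p^l,d}(x) = \LL_{p^l}(x) \bmod p^{ld}$ and $\ell_{p,ld}(x) = \LL_p(x) \bmod p^{ld}$, so reducing $\LL_{p^l}(x) = p^{w \bmod l}\LL_p(x)$ modulo $p^{ld}$ yields $\ell_{p^l,d}(x) \equiv p^{\,w \bmod l}\,\ell_{p,ld}(x) \pmod{p^{ld}}$, interpreting the case $x=0$ (so $w = +\infty$) via the convention \eqref{eq:convention}.

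The main obstacle, such as it is, will be bookkeeping rather than genuine difficulty: I need to be careful that all the floor-function exponent identities are applied only where the exponents are genuinely nonnegative integers (so that the $p$-adic powers are in $\Z_p$ and the reductions mod $p^{ld}$ or $b_i^d$ make sense), and that the identification of $\ell_{b,d}$ with its componentwise reductions under $\pi_b$ is invoked correctly — this is where the definition of $x \bmod b^d$ via the system of congruences in Section~\ref{sec:basic} does the real work. I would also double-check the edge case $w \equiv 0 \pmod l$ (equivalently $l \mid \nu_p(x)$), where both formulas collapse to $\LL_{p^l}(x) = \LL_p(x)$ and $\ell_{p^l,d}(x) \equiv \ell_{p,ld}(x)$, consistent with Proposition~\ref{prop:multiplicative_properties}(i). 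No deeper input (Strassman, Hensel, regularity machinery) is needed here; the proposition is purely a matter of unwinding the definitions of $\nu_b$, $\LL_b$, $\ell_{b,d}$ together with the Chinese Remainder description \eqref{eq:CRT}.
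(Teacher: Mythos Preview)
Your proof is correct and follows essentially the same route as the paper's: unwind $\LL_b(x)$ componentwise, factor $b^{-v} = b_i^{-v} q_i^{-v}$, replace $q_i^{-v}$ by $r_i^{v}$ modulo $b_i^d$, and for the prime-power part use the division-with-remainder identity $\nu_p(x) = l\,\nu_{p^l}(x) + (\nu_p(x)\bmod l)$. One small bookkeeping remark: the edge case requiring convention~\eqref{eq:convention} actually arises in the \emph{first} part (an individual component $x_i$ may vanish even though the tuple $x$ is nonzero, making $\nu_{b_i}(x_i)=+\infty$), not in the second part where $x\in\Q_p$ is assumed nonzero; the paper handles this by splitting off $x_i=0$ explicitly and then merging via~\eqref{eq:convention} with $m=b_i$.
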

\begin{proof}
By definition, for each $i=1,\ldots,s$ we have
$$ \ell_{b,d}\equiv b^{-\nu_b(x)} x_i \pmod{b_i^d}.  $$
The right-hand side can be written as 
$$ b^{-\nu_b(x)} x_i = \begin{cases}  
 b_i^{\nu_{b_i}(x_i)-\nu_{b}(x)} q_i ^{-\nu_b(x)} \LL_{b_i}(x_i) &\text{if } x_i \neq 0, \\
0 &\text{if } x_i = 0.
\end{cases} $$
After reducing modulo $b_i^{d}$ the two cases can be merged into one by the convention \eqref{eq:convention} with $m=b_i$, and thus \eqref{eq:last_digits_factorization1} follows.

In order to prove the second part of the statement, we write $\nu_{p}(x) = l\nu_{p^l}(x) + u$, where $0 \leq u \leq l - 1$. We have 
\[	\LL_{p^l}(x)= \left(p^{-(l \nu_{p^l}(x)  + u)}x \right) p^{u} = \LL_{p}(x) p^{u},\]
as desired. Reduction modulo $p^{ld}$ gives the corresponding formula for $\ell_{p^l,d}(x)$.
\end{proof}

By the equality \eqref{eq:CRT}, we obtain an explicit expression:
\begin{align} 
\ell_{b,d}(x) &\equiv \sum_{i=1}^{s} b_i^{\nu_{b_i}(x_i)-\nu_b(x)}q_i^d r_i^{\nu_b(x)+d} \ell_{b_i,d}(x_i) \pmod{b^d} \label{eq:d_last_nonzero_explicit}\\
&\equiv \sum_{i=1}^{s} p_i^{\nu_{p_i}(x_i)-l_i \nu_b(x)}q_i^d r_i^{\nu_b(x)+d} \ell_{p_i,l_i d}(x_i) \pmod{b^d}.  \nonumber 
\end{align}

The following immediate corollary of Proposition \ref{prop:last_digits_factorization} will be very important later.

\begin{cor} \label{cor:valuation_of_last_digits}
Let $x = (x_1, \ldots,x_s) \in \Q_b$ be nonzero and fix $j \in\{ 1, \ldots, s\}$. Then 
$\nu_{b_j}(\ell_{b}(x)) = 0 \text{ if and only if } \nu_{b_j}(x_j) = \nu_b(x).$
\end{cor}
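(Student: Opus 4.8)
The plan is to derive the claimed equivalence directly from formula \eqref{eq:last_digits_factorization1} in Proposition \ref{prop:last_digits_factorization} with $d=1$. Fix $j \in \{1,\ldots,s\}$ and set $x = (x_1,\ldots,x_s) \in \Q_b \setminus \{0\}$. Applying \eqref{eq:last_digits_factorization1} with $d=1$ and $i=j$ gives
$$ \ell_b(x) \equiv b_j^{\nu_{b_j}(x_j) - \nu_b(x)}\, r_j^{\nu_b(x)}\, \ell_{b_j}(x_j) \pmod{b_j}. $$
The first step is to record that $\nu_{b_j}(\ell_b(x))$ depends only on the residue of $\ell_b(x)$ modulo $b_j$ (since $\nu_{b_j}$ only detects divisibility by $b_j$), so it equals $\nu_{b_j}$ of the right-hand side read as an element of $\Z/b_j\Z$; in particular $\nu_{b_j}(\ell_b(x)) = 0$ precisely when the right-hand side is not divisible by $b_j$.

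The second step is to analyze the three factors on the right. Because $q_j r_j \equiv 1 \pmod{b_j}$ (the defining property of $r_j$, taken with exponent $d=1$), $r_j$ is a unit modulo $b_j$, hence $\nu_{b_j}(r_j^{\nu_b(x)}) = 0$; this factor never contributes. Next, by definition $\ell_{b_j}(x_j) = \LL_{b_j}(x_j) \bmod b_j$, and $\LL_{b_j}(x_j) = b_j^{-\nu_{b_j}(x_j)} x_j$ is a $b_j$-adic unit whenever $x_j \neq 0$ (and if $x_j = 0$ then $\ell_{b_j}(x_j) = 0$, which I handle separately below), so $\nu_{b_j}(\ell_{b_j}(x_j)) = 0$ in the case $x_j \neq 0$. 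Therefore the only factor that can make the product divisible by $b_j$ is $b_j^{\nu_{b_j}(x_j) - \nu_b(x)}$, and since $\nu_{b_j}(x_j) \geq \nu_b(x)$ by the definition $\nu_b(x) = \min_i \nu_{b_i}(x_i)$, the exponent $\nu_{b_j}(x_j) - \nu_b(x)$ is a nonnegative integer. Thus $\nu_{b_j}(\ell_b(x)) = 0$ if and only if $\nu_{b_j}(x_j) - \nu_b(x) = 0$, which is the assertion.

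The remaining step is the degenerate case $x_j = 0$. Here $\nu_{b_j}(x_j) = +\infty > \nu_b(x)$ (the latter is finite since $x \neq 0$), so the condition $\nu_{b_j}(x_j) = \nu_b(x)$ fails; on the other hand $\ell_{b_j}(0) = 0$, and via the convention \eqref{eq:convention} the right-hand side of \eqref{eq:last_digits_factorization1} is $\equiv 0 \pmod{b_j}$, so $\nu_{b_j}(\ell_b(x)) \geq 1 \neq 0$ and the condition on the left also fails. Hence the equivalence holds vacuously in this case too, completing the proof. I do not anticipate a serious obstacle: the only point requiring care is bookkeeping the convention $b_j^{+\infty} \equiv 0 \pmod{b_j}$ so that the case $x_j = 0$ is not accidentally excluded by the reference to Proposition \ref{prop:last_digits_factorization}, which already builds in that convention.
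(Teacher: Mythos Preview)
Your proof is correct and follows exactly the route the paper intends: the corollary is stated there as an immediate consequence of Proposition~\ref{prop:last_digits_factorization}, and you unpack formula~\eqref{eq:last_digits_factorization1} with $d=1$ precisely as needed, including the degenerate case $x_j=0$ via the convention~\eqref{eq:convention}. One minor phrasing quibble: $\LL_{b_j}(x_j)$ need not be a $p_j$-adic \emph{unit} (its $p_j$-adic valuation is $\nu_{p_j}(x_j)\bmod l_j$), but what you actually use and correctly conclude is that $\nu_{b_j}(\ell_{b_j}(x_j))=0$, so the argument stands.
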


Combining this with Proposition \ref{prop:function_regular}(i), we deduce that $k$-automaticity of $(\ell_{b}(f(n)))_{n \geq 0}$ for some $k$ implies $k$-automaticity of the characteristic sequence of the set
$$ \{ n \in \N: \ \nu_{b_j}(f(n)) > \nu_b(f(n))  \}.  $$

We conclude this section by showing $b$-regularity of $(\nu_b(n))_{n \geq 0}$ and $(\LL_b(n))_{n \geq 0}$, as well as $b$-automaticity of $(\ell_{b,d}(n))_{n \geq 0}$. This is a special case of the results in Section \ref{sec:main}. As already mentioned in Remark \ref{rem:natural_roots}, we are not concerned with the fact that $\nu_b(0) = + \infty$, as this term can be replaced with any value without affecting regularity.

\begin{prop} \label{prop:last_nonzero_digits_regular}
We have the following:
\begin{enumerate}[label={\textup{(\roman*)}}]
\item the sequence $(\nu_b(n))_{n \geq 0}$ is $b$-regular;
\item the sequence $(\LL_b(n))_{n \geq 0}$ is $b$-regular;
\item the sequence $(\ell_{b,d}(n))_{n \geq 0}$ is $b$-automatic.
\end{enumerate}
\end{prop}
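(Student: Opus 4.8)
The plan is to prove the three parts of Proposition \ref{prop:last_nonzero_digits_regular} by reducing, via the Chinese Remainder Theorem expression \eqref{eq:d_last_nonzero_explicit} and Proposition \ref{prop:last_digits_factorization}, to the prime-power case, and then handling prime powers by an explicit analysis of the base-$b$ digit structure. For part (i), the key observation is that $\nu_b(n)$ is determined by a simple recursion on the last digit: writing $n$ in base $b$, we have $\nu_b(n) = 0$ if $b \nmid n$ and $\nu_b(n) = \nu_b(n/b) + 1$ if $b \mid n$. Concretely, the $b$-kernel elements $(\nu_b(b^i m + j))_{m \geq 0}$ for $0 \leq j \leq b^i - 1$ all lie in the $\Z$-module generated by the two sequences $(\nu_b(m))_{m \geq 0}$ and the constant sequence $1$: if $j \neq 0$ one gets a bounded (indeed eventually constant after finitely many terms, or expressible as $\nu_b(j)$ plus a tail) sequence, and if $j = 0$ one gets $(\nu_b(m) + i)_{m \geq 0}$. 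So the kernel generates a finitely generated $\Z$-module, giving $b$-regularity.

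For part (ii), I would use $\LL_b(n) = b^{-\nu_b(n)} n$, so that $\LL_b(bn) = \LL_b(n)$ and, for $0 < j < b^i$ with $b \nmid j$ say, $\LL_b(b^i m + j)$ stabilizes in a controlled way; more systematically, invoke Proposition \ref{prop:function_regular}(iii)--(iv) together with part (i): since $(\nu_b(n))_{n \geq 0}$ is $b$-regular and $(n)_{n \geq 0}$ is $b$-regular (Corollary \ref{cor:polynomial_regular}), and $\LL_b(n)$ equals $n$ times the $b$-regular sequence $(b^{-\nu_b(n)})$ — but that last sequence has unbounded denominators, so this shortcut fails directly. Instead I would argue on the $b$-kernel: $(\LL_b(b^i m + j))_{m \geq 0}$ for fixed $i$ and $0 \leq j < b^i$ equals $(\LL_b(m))_{m \geq 0}$ when $j = 0$ (using Proposition \ref{prop:multiplicative_properties}(i) repeatedly), and when $j \neq 0$ splits according to $v = \nu_b(j) < i$: writing $j = b^v j'$ with $b \nmid j'$, one has $\nu_b(b^i m + j) = v$ for all $m$, hence $\LL_b(b^i m + j) = b^{i-v} m + j'$, which is an affine sequence in $m$ and thus $b$-regular with the relevant generators being $(m)_{m\geq 0}$ and constants. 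Therefore the $\Z$-module generated by $\K_b(\LL_b(n))$ is contained in the finitely generated module spanned by $(\LL_b(m))_{m \geq 0}$, $(m)_{m \geq 0}$, and $(1)_{m \geq 0}$, proving $b$-regularity.

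For part (iii), $b$-automaticity of $(\ell_{b,d}(n))_{n \geq 0}$ follows from part (ii) and Corollary \ref{cor:regular_modulo}: since $(\LL_b(n))_{n \geq 0}$ takes values in $\Z_b$ and is $b$-regular, its reduction modulo $b^d$, which is exactly $(\ell_{b,d}(n))_{n \geq 0}$ by definition of $\ell_{b,d}$, is $b$-automatic. (Alternatively one can give a direct automaton: the state after reading the first few least-significant digits records the trailing-zero count capped appropriately together with the last $d$ nonzero digits read so far, which is a finite amount of information.)

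The main obstacle I anticipate is the bookkeeping in part (ii): one must verify that every $b$-kernel element is genuinely a $\Z$-linear combination of the finitely many claimed generators, which requires a careful case split on $\nu_b(j)$ versus $i$ and an honest check that the "affine in $m$" sequences $(b^{i-v}m + j')_{m\geq 0}$ do not introduce new generators beyond $(m)_{m\geq 0}$ and constants — in particular that there are only finitely many distinct such sequences up to the module they span. Everything else is either a direct consequence of already-established propositions or a short explicit computation with base-$b$ expansions.
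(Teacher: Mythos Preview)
Your proof is correct and follows essentially the same approach as the paper: a direct analysis of the $b$-kernel showing that it is contained in the $\Z$-module generated by $(\nu_b(n))_{n\geq 0}$ and $(1)_{n\geq 0}$ for part (i), by $(\LL_b(n))_{n\geq 0}$, $(n)_{n\geq 0}$, and $(1)_{n\geq 0}$ for part (ii), and then invoking Corollary~\ref{cor:regular_modulo} for part (iii). Two minor remarks: your opening mention of reducing to the prime-power case via the Chinese Remainder Theorem is never actually used (nor needed), and your final worry about ``finitely many distinct affine sequences'' is unnecessary---it suffices that they all lie in the fixed $3$-generated module, regardless of how many distinct ones appear.
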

\begin{proof}
For all $n \in \N$ and  $a=1,\ldots,b-1$ we have the relations $\nu_b(bn) =  \nu_b(n)+1$ and $\nu_b(bn+a) = 0$. It follows that the $\Z$-module generated by $\K_b((\nu_b(n))_{n \geq 0})$ is generated by the sequence $(\nu_b(n))_{n \geq 0}$ and the constant sequence $(1)_{n \geq 0}$. This implies (i).

Similarly, for all $n \geq 0$ and  $a=1,\ldots,b-1$ we have $\LL_b(bn) =  \LL_b(n)$ and $\LL_b(bn+a) = bn+a$.  We can thus write
$$ \K_b((\LL_b(n))_{n \geq 0}) = \{(\LL_b(n))_{n \geq 0}\} \cup \bigcup_{a=1}^{b-1} \K_b((bn+a)_{n \geq 0}).$$ For each $a = 1,\ldots,b-1$ the sequence $(bn+a)_{n \geq 0}$ is $b$-regular due to Corollary \ref{cor:polynomial_regular}, and thus the $\Z$-module generated by $\K_b((bn+a)_{n \geq 0})$ is generated by some finite set $S_a \subset \Z^{\N}$. Consequently, the $\Z$-module generated by $\K_b((\LL_b(n))_{n \geq 0})$ is generated by 
$$\{(\LL_b(n))_{n \geq 0}\} \cup \bigcup_{a=1}^{b-1} S_a,$$ 
again a finite set.

Part (iii) follows immediately from (ii) due to Corollary \ref{cor:regular_modulo}.
\end{proof}

\section{Further examples and applications} \label{sec:examples}

In this section we present some more involved examples and give various applications of our main results and methods. To begin, we consider the family of Lucas sequences of the first kind. Recall that $(u_n)_{n \geq 0}$ is a Lucas sequence of the first kind if $u_0 = 0, u_1 = 1$ and 
$$
u_{n+2} = A u_{n+1} + B u_n
$$
for some fixed integers $A,B$ and all $n \in \N$. It is called nondegenerate if the ratio of the complex roots $\alpha, \beta$ of the characteristic polynomial $P(X) = X^2- AX - B$ is not a root of unity. This implies that $\alpha,\beta$ are distinct and the discriminant $\Delta = A^2+4B = (\alpha-\beta)^2$ of $P$ is nonzero.

The following result of Sanna \cite{San16} will be of use.
\begin{thm}[Sanna] \label{thm:San16}
Let $(u_n)_{n \geq 0}$ be a nondegenerate Lucas sequence of the first kind. If $p$ is a~prime number such that $p \nmid B$, then
\[
\nu_p(u_n) = \begin{cases}
\nu_p(n) + \nu_p(u_p) - 1 &\text{if } p \mid \Delta, p \mid n, \\
0 &\text{if } p \mid \Delta, p \nmid n, \\
\nu_p(n) + \nu_p(u_{p\uptau(p)}) - 1 &\text{if } p \nmid \Delta, \uptau(p) \mid n, p \mid n, \\
\nu_p(u_{p\uptau(p)}) &\text{if } p \nmid \Delta, \uptau(p) \mid n, p \nmid n, \\
0 &\text{if } p \nmid \Delta, \uptau(p) \nmid n, 
\end{cases}
\]
for each positive integer $n$, where $\uptau(p) = \min \{n > 0: \ p \mid u_n\}$. 
\end{thm}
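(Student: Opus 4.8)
The statement is Sanna's theorem, which I would prove along the classical elementary lines via the rank of apparition; an alternative would be to interpolate suitable subsequences of $(u_n)$ by $p$-adic analytic functions and invoke the machinery of this paper, but pinning down the precise formula — not merely $p$-regularity — still forces the local computation below. Write $\alpha,\beta$ for the roots of $P(X)=X^2-AX-B$ in $\Z_p$ or the quadratic extension of $\Q_p$ containing them, let $\mathfrak p$ be the prime above $p$ there, and work with $u_n=(\alpha^n-\beta^n)/(\alpha-\beta)$ together with the companion sequence $v_n=\alpha^n+\beta^n$, recalling the identities $u_{2n}=u_n v_n$ and $v_n^2=\Delta u_n^2+4(-B)^n$. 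Since $p\nmid B=-\alpha\beta$, both $\alpha$ and $\beta$ are units, and $\nu_{\mathfrak p}$ restricts to $e(\mathfrak p\mid p)\cdot\nu_p$ on $\Q$. I would first recall the classical facts that hold precisely because $p\nmid B$: the rank of apparition $\uptau(p)=\min\{n>0:p\mid u_n\}$ exists, $(u_n)$ is a divisibility sequence, and $p\mid u_n\iff\uptau(p)\mid n$; moreover $p\mid\Delta\iff\uptau(p)=p$ — for odd $p$ because $p\mid\Delta$ forces $p\nmid A$ and a reduction mod $\mathfrak p$ gives $u_n\equiv nA^{n-1}\cdot(\text{unit})\pmod{\mathfrak p}$, for $p=2$ because $2\mid\Delta$ forces $2\mid A$, making $u_2=A$ even while $u_1=1$ is odd — whereas if $p\nmid\Delta$ then $\alpha\not\equiv\beta\pmod{\mathfrak p}$ and $\uptau(p)$ is the multiplicative order of $\alpha\beta^{-1}$ in the residue field, dividing $p^f-1$ with $f\le 2$ and hence coprime to $p$. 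This already disposes of every row with value $0$.

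The technical core is a pair of lifting-the-exponent lemmas, for $p\nmid B$. (a) If $\uptau(p)\mid k$ and $p\nmid m$, then $\nu_p(u_{km})=\nu_p(u_k)$: indeed $u_{km}/u_k=\sum_{i=0}^{m-1}\alpha^{ki}\beta^{k(m-1-i)}\equiv m\beta^{k(m-1)}\pmod{\mathfrak p}$, which has valuation $0$ since $p\nmid\beta$ and $p\nmid m$. (b) If $\uptau(p)\mid pk$, then $\nu_p(u_{p^{e}k})=\nu_p(u_{pk})+(e-1)$ for every $e\ge 1$: one expands, via the hockey-stick identity,
$$\frac{u_{p^{e+1}k}}{u_{p^{e}k}}=\sum_{t=0}^{p-1}\binom{p}{t+1}\,\beta^{p^{e}k(p-1-t)}\bigl(\alpha^{p^{e}k}-\beta^{p^{e}k}\bigr)^{t},$$
notes that $\alpha^{p^{e}k}-\beta^{p^{e}k}=u_{p^{e}k}(\alpha-\beta)$ and that $e\ge 1$ forces $p\mid u_{p^{e}k}$, and checks termwise that the $t=0$ summand $p\beta^{(\cdots)}$ has strictly the smallest $\mathfrak p$-valuation; hence $\nu_p(u_{p^{e+1}k}/u_{p^{e}k})=1$ and induction on $e$ closes the lemma. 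For odd $p$ the same termwise check run with $e=0$ moreover gives $\nu_p(u_{pk})=\nu_p(u_k)+1\ge 2$, and the only place needing a dedicated argument is $p=2$ with $2\nmid\Delta$ (where the $t=1$ term could compete), namely the bound $\nu_2(u_{2k})\ge 2$: there $\uptau(2)$ is odd, so $\uptau(2)\mid 2k$ gives $2\mid u_k$, and from $v_k^2=\Delta u_k^2+4(-B)^k$ with $2\nmid\Delta$ and $\nu_2(u_k)\ge 1$ one gets $\nu_2(v_k)\ge 1$, whence $\nu_2(u_{2k})=\nu_2(u_k)+\nu_2(v_k)\ge 2$.

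The five cases now fall out by expressing $n$ through $\uptau(p)$. If $\uptau(p)\nmid n$, then $\nu_p(u_n)=0$. If $\uptau(p)\mid n$ and $p\nmid n$, then $p\mid\Delta$ is impossible (it would give $p=\uptau(p)\mid n$), so $n=\uptau(p)m$ with $p\nmid m$, and (a) yields $\nu_p(u_n)=\nu_p(u_{\uptau(p)})$. If $\uptau(p)\mid n$ and $p\mid n$, set $e=\nu_p(n)\ge 1$: for $p\mid\Delta$ write $n=p^{e}m$ with $p\nmid m$, so (a) gives $\nu_p(u_{pm})=\nu_p(u_p)$ and then (b) gives $\nu_p(u_n)=\nu_p(u_p)+e-1=\nu_p(n)+\nu_p(u_p)-1$; for $p\nmid\Delta$ write $n=\uptau(p)p^{e}m$ with $p\nmid m$, so (a) gives $\nu_p(u_n)=\nu_p(u_{\uptau(p)p^{e}})$ and then (b) gives $\nu_p(u_n)=\nu_p(u_{p\uptau(p)})+e-1=\nu_p(n)+\nu_p(u_{p\uptau(p)})-1$. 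Matching these against the displayed rows completes the proof.

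The step I expect to be the real obstacle is the $p=2$ analysis: one must recognize that the sole irregular step is the very first multiplication of the index by $2$ — which is exactly why the $p\nmid\Delta,\ p\mid n$ case must be written through $u_{p\uptau(p)}$ rather than $u_{\uptau(p)}$ — establish the auxiliary bound $\nu_2(u_{2k})\ge 2$ cleanly, and keep the translation between $\nu_p$ on $\Q$ and $\nu_{\mathfrak p}$ on the (possibly ramified, when $p\mid\Delta$) quadratic extension consistent throughout.
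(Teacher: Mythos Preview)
The paper does not prove this theorem at all: it is quoted verbatim as a result of Sanna with the citation \cite{San16} and then used as a black box. So there is nothing to compare against in the paper itself.

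Your proof, by contrast, supplies a self-contained argument along the classical lifting-the-exponent lines, and it is essentially correct. The two lemmas (a) and (b) are the right engine, and your treatment of the delicate $p=2$ step is sound: for $2\nmid\Delta$ the extra bound $\nu_2(u_{2k})\ge 2$ comes from $u_{2k}=u_kv_k$ together with $v_k^2=\Delta u_k^2+4(-B)^k$, while for $2\mid\Delta$ the needed strict inequality $(p-1)\nu_{\mathfrak p}(x)>e(\mathfrak p\mid p)$ is saved by the positive contribution $\nu_{\mathfrak p}(\alpha-\beta)>0$, so your claim that only the $2\nmid\Delta$ case needs a dedicated argument is justified.

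One point you should flag rather than gloss over: in the row $p\nmid\Delta,\ \uptau(p)\mid n,\ p\nmid n$, your argument yields $\nu_p(u_n)=\nu_p(u_{\uptau(p)})$, whereas the displayed formula in the paper reads $\nu_p(u_{p\uptau(p)})$. These differ (for odd $p$ by exactly $1$, and for $p=2$ possibly by more; e.g.\ for the Fibonacci sequence $\nu_2(F_3)=1$ but $\nu_2(F_6)=3$). Your value is the correct one and agrees with Sanna's original statement, so the discrepancy is a transcription slip in the paper, not a flaw in your proof --- but ``matching these against the displayed rows'' is not literally true for that row, and you should say so.
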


For $(u_n)_{n \geq 0}$ nondegenerate we have the Binet-like formula
\begin{equation} \label{eq:Binet}
u_n = \frac{\alpha^n - \beta^n}{\alpha-\beta}.
\end{equation}
This formula can also be considered in the $p$-adic setting, when treating $\alpha,\beta$ as elements of the extension $\Q_p(\sqrt{\Delta}) \subset \C_p$.

We also recall some basic properties of the $p$-adic exponential $\exp_p$ and $p$-adic logarithm $\log_p$ (not to be confused with base-$p$ logarithm). They are defined for $x \in \C_p$ by the usual formulas
\begin{align*}
\exp_p(x) &= \sum_{k=0}^{\infty} \frac{x^k}{k!}, \\
\log_p(1+x) &= \sum_{k=1}^{\infty} (-1)^{k-1} \frac{x^k}{k}.
\end{align*}
Contrary to the classical case $\exp_p(x)$ is convergent for $|x|_p < r_p$, where $r_p = p^{-1/(p-1)}$. The $p$-adic logarithm $\log_p(1+x)$ is convergent for $|x|_p < 1$. Moreover, for $|x|_p < r_p$ and $|y|_p < r_p$ we have the standard identities
\begin{align*}
\exp_p(x+y) &= \exp_p(x) \exp_p(y), \\
\log_p((1+x)(1+y)) &= \log_p(1+x)+\log_p(1+y), \\
\exp_p(\log_p(1+x)) &= 1+x, \\
\log_p(\exp_p(x)) &= x.
\end{align*} 
When $|\gamma-1| < r_p$, we can thus define for $x \in \Z_p$ the function
$$ \gamma^x = \exp_p(x \log_p(\gamma)),  $$ 
which ich analytic on $\Z_p$ and coincides with usual exponentiation for $x \in \Z$. This cannot be immediately applied to the formula \eqref{eq:Binet} as the condition $|\alpha-1|_p=|\beta-1|_p < r_p$ might not be satisfied. However, when $p \nmid B$, there exists a positive integer $\pi_p$ such that $|\alpha^{\pi_p}-1|_p = |\beta^{\pi_p}-1|_p < r_p$ , and thus for each $j = 0,1,\ldots,\pi_p-1$ and $m \in \N$ we can interpolate $ u_{\pi_p m + j} = f_j(m)$, where
$$ f_j(x) = \frac{1}{\alpha-\beta} \left( \alpha^j \exp_p(x \log_p(\alpha^{\pi_p}) - \beta^j \exp_p(x \log_p(\beta^{\pi_p}) \right). $$
When $b \geq 2$ is a base coprime with $B$, we can perform this procedure simultaneously for each prime factor of $b$. More precisely, let $p_1,\ldots, p_s$ be the prime factors of $b$ and let $\pi$ denote the least common multiple of all $\pi_{p_i}$. Then we obtain $\pi$ tuples $f_j = (f_{j,1}, \ldots, f_{j,s}) \in \A_b$ for $j=0,1,\ldots,\pi -1$ such that
$$ u_{\pi m +j} = f_{j,1}(m) = \cdots = f_{j,s}(m).$$
Using this construction and our results, we can  strengthen the theorem of  Murru and Sanna (Theorem \ref{thm:MS18}) concerning $b$-regularity $\nu_b(u_n)$, by adding the word ``strictly''. Moreover, we immediately obtain a similar statement about last nonzero digits of $u_n$.

\begin{cor} 
Let $(u_n)_{n \geq 0}$ be a nondegenerate Lucas sequence of the first kind. If $b$ and $B$ are relatively prime, then:
\begin{enumerate}[label={\textup{(\roman*)}}]
\item the sequence $(\nu_b(u_n))_{n \geq 0}$ is strictly $b$-regular;
\item for every $d \geq 1$ the sequence $(\ell_{b,d}(u_n))_{n \geq 0}$ is strictly $b$-automatic.
\end{enumerate}
\end{cor}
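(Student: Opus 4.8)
The plan is to combine the interpolation construction described just above with the classification theorems from Section~\ref{sec:main}, most directly Theorems~\ref{thm:several_factors_valuation}, \ref{thm:prime_power_valuation}, and their automatic counterparts \ref{thm:several_factors_d_last_nonzero}, \ref{thm:prime_power_d_last_nonzero}. Since $b$ and $B$ are coprime, the construction yields $\pi$ tuples $f_j = (f_{j,1},\ldots,f_{j,s}) \in \A_b$ with $u_{\pi m + j} = f_{j,1}(m) = \cdots = f_{j,s}(m)$ for $j = 0,1,\ldots,\pi-1$. By Proposition~\ref{prop:arithmetic_prog}, the sequence $(\nu_b(u_n))_{n \geq 0}$ is $k$-regular if and only if each subsequence $(\nu_b(u_{\pi m + j}))_{m \geq 0} = (\nu_b(f_j(m)))_{m \geq 0}$ is $k$-regular, and likewise for $k$-automaticity of $(\ell_{b,d}(u_n))_{n \geq 0}$ via Proposition~\ref{prop:function_regular}(i) and Proposition~\ref{prop:arithmetic_prog}. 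So it suffices to analyze each $f_j$ separately.

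First I would settle the easy direction: Theorem~\ref{thm:MS18} already gives $b$-regularity of $(\nu_b(u_{n+1}))_{n \geq 0}$, hence of $(\nu_b(u_n))_{n \geq 0}$ after a harmless shift and finite modification, and $b$-automaticity of $(\ell_{b,d}(u_n))_{n \geq 0}$ then follows from Corollary~\ref{cor:regular_modulo} combined with the analogue of Proposition~\ref{prop:last_digits_factorization}; alternatively one reads regularity off directly from parts (a) or (b) of the relevant theorems applied to each $f_j$. The real content is the word \emph{strictly}, i.e.\ ruling out that these sequences are $k$-regular (resp.\ $k$-automatic) for all $k \geq 2$. By Cobham's theorem (Proposition~\ref{prop:mult_dep}(ii)), a sequence that is $k$-automatic for all $k$ is eventually periodic, and a $k$-regular-for-all-$k$ sequence taking only finitely many values is likewise eventually periodic; so I must exhibit, for at least one $j$, that $(\nu_b(f_j(m)))_{m \geq 0}$ is not eventually periodic and that $(\ell_{b,d}(f_j(m)))_{m \geq 0}$ is not eventually periodic. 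Equivalently, using the trichotomy in Theorems~\ref{thm:several_factors_valuation}/\ref{thm:prime_power_valuation}, I need to show that some $f_j$ has a nonempty root set $\R_{f_j}$, and for the several-prime case that it consists of a single rational $\theta$ with the matching-multiplicity condition $w_1 m_{f_{j,1}}(\theta) = \cdots = w_s m_{f_{j,s}}(\theta)$ holding (which is automatic since Theorem~\ref{thm:MS18} already forces case (b) rather than (c)); for $\ell_{b,d}$ one similarly needs $\R'_{f_j} \neq \varnothing$ for the prime-power components, for which Sanna's formula supplies the exact valuations needed to compute $\nu_p(g_\theta(\theta))$ and check the Carmichael divisibility condition.

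The key computation, then, is to locate a rational root of some $f_j$ and pin down its multiplicity, and here Theorem~\ref{thm:San16} (Sanna's formula) does the work. Fix a prime factor $p$ of $b$; since $p \nmid B$, Sanna's formula shows $\nu_p(u_n) \to \infty$ along the arithmetic progression $n \equiv 0 \pmod{p \uptau(p)}$ (or $n \equiv 0 \pmod p$ when $p \mid \Delta$), with $\nu_p(u_n) = \nu_p(n) + O(1)$ there. Choosing $j$ so that the progression $\pi m + j$ meets this set, the interpolating function $f_{j,p}$ (the $p$-adic component) vanishes at the corresponding $p$-adic integer $\theta$, which is in fact rational — indeed $\theta = -j/\pi$ or a similar explicit rational coming from the residue class — and $\nu_p(f_{j,p}(m)) = m_{f_{j,p}}(\theta)\,\nu_p(m - \theta') + O(1)$ near that root forces $m_{f_{j,p}}(\theta) = 1$ by comparison with Sanna's $\nu_p(n)+O(1)$. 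Thus $\R_{f_j} = \{\theta\}$ with all multiplicities equal to $1$, so case (b) holds for every prime factor and, by Theorems~\ref{thm:several_factors_valuation}(b) and \ref{thm:several_factors_d_last_nonzero}(b) (or the prime-power versions), $(\nu_b(u_n))_{n\geq 0}$ is strictly $b$-regular and $(\ell_{b,d}(u_n))_{n \geq 0}$ is strictly $b$-automatic.

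The main obstacle I anticipate is bookkeeping rather than conceptual: one must track how the period $\pi$ and the shift $\uptau(p)$ interact across the different primes $p_i$ to be sure that \emph{some} single index $j \in \{0,\ldots,\pi-1\}$ produces a nonempty root set simultaneously compatible with the hypotheses of case (b) — in particular checking that $\theta$ is genuinely rational (it is, being determined by a congruence class mod $\pi$) and that the $O(1)$ terms in Sanna's formula really are eventually constant on the relevant progression so that the multiplicity is exactly $1$ and not forced into case (c). A secondary subtlety is the automatic statement: one should verify $\R'_{f_j}(l_i,d) = \{\theta\} \neq \varnothing$ for each component rather than merely $\R_{f_j} \neq \varnothing$, which amounts to checking $\lambda(p_i^{\cdots}) \nmid m_{f_{j,i}}(\theta) = 1$ — true whenever $\lambda$ of the relevant modulus exceeds $1$, i.e.\ always except in trivial small cases that can be handled directly. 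Once these are in place, the conclusion drops out of the cited theorems with no further work.
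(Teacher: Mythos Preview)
Your approach is essentially the same as the paper's---interpolate along arithmetic progressions, use Sanna's formula to locate the roots of the $f_j$, and invoke the classification theorems---but you overcomplicate the key step. The paper simply takes $j=0$: since $u_0=0$, each component $f_{0,i}$ vanishes at $\theta=0$, and Sanna's formula (Theorem~\ref{thm:San16}) shows $\nu_{p_i}(u_{\pi m})$ grows like $\nu_{p_i}(m)+O(1)$, so $0$ is the unique root of each $f_{0,i}$ with multiplicity exactly $1$. This immediately gives $\R_{f_0}=\{0\}\subset\Q$ with $m_{f_{0,1}}(0)=\cdots=m_{f_{0,s}}(0)=1$, landing squarely in case~(b) of Theorems~\ref{thm:prime_power_valuation}/\ref{thm:several_factors_valuation} and \ref{thm:prime_power_d_last_nonzero}/\ref{thm:several_factors_d_last_nonzero}. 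Your detour through ``$\theta=-j/\pi$ or a similar explicit rational'' is unnecessary and slightly muddled: for $j\neq 0$ the quantity $-j/\pi$ need not lie in $\Z_{p_i}$, and more to the point you never verify that the roots you locate coincide across the different components $f_{j,i}$ (which is what ``$\theta\in\Q$'' means here). Taking $j=0$ makes all of this automatic.

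Two smaller remarks. First, your proposed shortcut for the easy direction---deducing $b$-automaticity of $(\ell_{b,d}(u_n))$ from Corollary~\ref{cor:regular_modulo}---does not work as stated, since $\ell_{b,d}(u_n)$ is the reduction of $\LL_b(u_n)$, not of $u_n$, and you do not know $(\LL_b(u_n))_{n\geq 0}$ is regular (indeed the $f_j$ are not polynomials, so Proposition~\ref{prop:regular_polynomial} blocks this route). Your stated alternative, reading it off the classification theorems directly, is the correct path and is exactly what the paper does. Second, your caution about the Carmichael condition (``$\lambda(p_i^{\cdots})\nmid 1$ except in trivial small cases'') is well placed; the paper simply asserts $\R'_{f_0}(l,d)=\{0\}$ from $m_{f_0}(0)=1$ without comment on the edge case $p=2,\,l=d=1$, where $\lambda(2)=1$ and $(\ell_2(u_n))_{n\geq 0}$ is in fact eventually constant.
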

\begin{proof}
By Theorem \ref{thm:MS18} and Proposition \ref{prop:arithmetic_prog}, for each $j = 0,1,\ldots,\pi-1$ the subsequence $(\nu_b(u_{\pi n+j}))_{n \geq 0}$ is $b$-regular. More precisely, Theorem \ref{thm:San16} implies that $\R_{f_0} = \{0\}$ and $\R_{f_j} = \varnothing$ for $j \neq 0$. Hence, by Theorem \ref{thm:prime_power_valuation} or \ref{thm:several_factors_valuation} (depending on the number of prime factors of $b$) the sequence  $(\nu_b(u_{\pi n}))_{n \geq 0}$ is strictly $b$-regular. This implies (i), again by Proposition \ref{prop:arithmetic_prog}.

If $b$ has $s \geq 2$ prime factors, part (ii) follows by Theorem \ref{thm:several_factors_d_last_nonzero}. If $b = p^l$ we use the fact that $m_{f_0}(0) = 1$ to conclude that $\R'_{f_0}(l,d) = \{0\}$, and then apply Theorem \ref{thm:prime_power_d_last_nonzero}.
\end{proof}

Another application of our methods is concerned with the representation
$$u_n = x^2+y^2+z^2,$$ 
where $x,y,z \in \Z$. In Section \ref{sec:intro} we have already mentioned the following result of Robbins \cite{Rob83} and Latushkin, Ushakov \cite{LU12} on the representation of $F_n$ as a sum of three squares.
\begin{thm}[Robbins; Latushkin, Ushakov]
The $n$th Fibonacci number $F_n$ is a~sum of three squares of integers if and only if
$$ n \not\in   \{12l + 10: l \in \N  \}  \cup \{ 4^{k+1}(24l + 21): k,l \in \N\}. $$
\end{thm}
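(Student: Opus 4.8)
The plan is to reduce the statement, via Legendre's three-square theorem, to a description of the sequence $(\ell_{4,2}(F_n))_{n \geq 0}$, and then to pin that sequence down using the tools developed above. Recall that a nonnegative integer $N$ is a sum of three squares if and only if $N$ is not of the form $4^a(8b+7)$ with $a,b \in \N$; as noted in Section~\ref{sec:intro}, this is precisely the condition $\ell_{4,2}(N) \notin \{7,15\}$ (and it holds trivially for $N = F_0 = 0 = 0^2+0^2+0^2$). So the theorem becomes the claim that $\ell_{4,2}(F_n) \in \{7,15\}$ exactly for $n \in \{12l+10 : l \in \N\} \cup \{4^{k+1}(24l+21) : k,l \in \N\}$. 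Since $4 = 2^2$ and $\gcd(4,B) = \gcd(4,1) = 1$ for the Fibonacci recurrence, the preceding Corollary already tells us $(\ell_{4,2}(F_n))_{n\geq0}$ is (strictly) $4$-automatic; more to the point, the $2$-adic interpolation of $(F_n)_{n\geq0}$ along arithmetic progressions (as described before that Corollary) together with Theorem~\ref{thm:San16} makes the set computable. I would split the computation according to whether $12 \mid n$.

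For $12 \nmid n$, Theorem~\ref{thm:San16} (with $p=2$, $\Delta=5$, $\uptau(2)=3$) gives $\nu_2(F_n) = 0$ when $3 \nmid n$, $\nu_2(F_n) = 1$ when $n \equiv 3 \pmod 6$, and $\nu_2(F_n) = 3$ when $n \equiv 6 \pmod{12}$; in particular it is bounded here and odd whenever positive. In the latter two cases $\nu_2(\LL_4(F_n)) = \nu_2(F_n) \bmod 2 = 1$, so $\LL_4(F_n)$ is even and $\ell_{4,2}(F_n) \notin \{7,15\}$. In the first case $\LL_4(F_n) = F_n$, so $\ell_{4,2}(F_n) \in \{7,15\}$ iff $F_n \equiv 7 \pmod 8$; since $F_n \bmod 8$ is periodic with period $\pi(8) = 12$, inspecting the eight residues $n \bmod 12$ with $3 \nmid n$ shows this occurs exactly for $n \equiv 10 \pmod{12}$. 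Hence, for $12 \nmid n$, $\ell_{4,2}(F_n) \in \{7,15\}$ iff $n \equiv 10 \pmod{12}$, which accounts for the family $\{12l+10\}$.

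For $12 \mid n$, write $n = 12m$; Theorem~\ref{thm:San16} gives $\nu_2(F_{12m}) = \nu_2(m)+4$, which is unbounded, so the $2$-adic analytic interpolation is essential here. Interpolating $F_{12m} = f(m)$ for $m \in \N$, and using $\alpha\beta = -1$, one has $f(x) = (\alpha-\beta)^{-1}(\gamma^x - \gamma^{-x})$ with $\gamma = \alpha^{12}$ and $\gamma^x = \exp_2(x\log_2\gamma)$; the valuation formula forces $\nu_2(\log_2\gamma) = \nu_2(\gamma-1) = 3$. Expanding the odd power series, the coefficient of $x$ has $2$-adic valuation $4$, and since $\nu_2((2j+1)!) \leq 2j$ every coefficient of $x^{2j+1}$ with $j \geq 1$ has valuation at least $8$. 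Therefore $f(x) = 2^4 w\, x\, h(x)$ with $w \in \Z_2^{\times}$ and $h \in \Z_2[[X]]$ satisfying $h(x) \equiv 1 \pmod 8$ for all $x \in \Z_2$, whence $\LL_2(f(x)) \equiv w\,\LL_2(x) \pmod 8$; evaluating at $x=1$ gives $\LL_2(F_{12}) = \LL_2(144) = 9 \equiv w \pmod 8$, so $w \equiv 1 \pmod 8$ and thus $\LL_2(F_{12m}) \equiv \LL_2(m) \pmod 8$ for every $m \in \N$. Since $\nu_2(F_{12m}) = \nu_2(m)+4 \equiv \nu_2(m) \pmod 2$, Proposition~\ref{prop:last_digits_factorization} (with $p=2$, $l=2$) upgrades this to $\LL_4(F_{12m}) \equiv 7 \pmod 8 \iff \LL_4(m) \equiv 7 \pmod 8$, i.e. $\ell_{4,2}(F_{12m}) \in \{7,15\} \iff \ell_{4,2}(m) \in \{7,15\}$. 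By Legendre's theorem the right-hand side holds exactly when $m = 4^a(8b+7)$, i.e. when $n = 12m = 4^{a+1}(24b+21)$.

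Combining the two cases gives $\ell_{4,2}(F_n) \in \{7,15\}$ precisely for $n \in \{12l+10\} \cup \{4^{k+1}(24l+21)\}$ (the index $n=0$ lies in neither family, consistent with $F_0 = 0$ being a sum of three squares), which is the claim. The step I expect to be the main obstacle is the case $12 \mid n$: the abstract $4$-automaticity from Theorem~\ref{thm:prime_power_d_last_nonzero} is not by itself enough, and one must control the leading terms of the $2$-adic expansion of the interpolating function $f$ precisely enough to determine $\LL_2(F_{12m})$ modulo $8$. Everything else — the reduction to Legendre, the appeal to Theorem~\ref{thm:San16} for $\nu_2(F_n)$, and the finite periodic check when $12 \nmid n$ — is routine.
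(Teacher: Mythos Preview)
Your proof is correct. Note, however, that the paper does not actually supply its own proof of this statement: the theorem is quoted as a known result of Robbins and of Latushkin--Ushakov and serves as motivation for the general Theorem~\ref{thm:Lucas_quadratic}. What the paper \emph{does} provide is (i) the proof of Theorem~\ref{thm:Lucas_quadratic}, showing that for any nondegenerate Lucas sequence with odd $B$ the exceptional set has the claimed shape, and (ii) a worked example for the Pell numbers illustrating how to pin the set down explicitly.

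Your argument is precisely the Fibonacci instantiation of that template. You split along residues modulo $\pi$ (taking $\pi=12$ rather than the paper's $\pi=6$, which merely pushes the class $n\equiv 6\pmod{12}$ into the bounded-valuation case), handle the bounded-valuation classes by a finite periodic check modulo $8$, and for the unbounded class $n=12m$ you factor the interpolating function as $2^4\,w\,x\,h(x)$ with $h(x)\equiv 1\pmod{8}$ and $w\in\Z_2^\times$ --- exactly parallel to the coefficient-valuation computation carried out for Pell numbers in the paper. The calibration $w\equiv 1\pmod 8$ via $F_{12}=144$ and the passage from $\LL_2$ to $\ell_{4,2}$ through Proposition~\ref{prop:last_digits_factorization} are clean, and the final identification $12\cdot 4^a(8b+7)=4^{a+1}(24b+21)$ is immediate. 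In short: correct, and essentially the route the paper's method would take if the Fibonacci case were carried out explicitly.
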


We show that a similar statement holds in general for certain nondegenerate Lucas sequences of the first kind.

\begin{thm} \label{thm:Lucas_quadratic} 
Let $(u_n)_{n \geq 0}$ be a nondegenerate Lucas sequence of the first kind such that $u_n \geq 0$ for all $n \in \N$ and $2 \nmid B$. Let $\pi$ be given by
$$
\pi = \begin{cases}
4  &\text{if } 2  \mid A, \\
6  &\text{if } 2 \nmid A.   
\end{cases}
$$
Then the set of $n \in \N$ such that $u_n$ is not a sum of three squares is a finite union of sets of the form 
\begin{equation} \label{eq:first_form}
\{2^t \pi l + j: l \in \N\},
\end{equation}
and
\begin{equation} \label{eq:second_form}
\{ 2^{t+2k} \pi (8l+c): k,l \in \N  \},
\end{equation}
where $t \in \N$, $j \in \{1,\ldots,2^t \pi-1\}$, and $c \in \{0,1,\ldots,7\}$.
\end{thm}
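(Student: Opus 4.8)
The plan is to express ``$u_n$ is a sum of three squares'' in terms of $\ell_{4,2}(u_n)$, to interpolate $(u_n)$ $2$-adically along residue classes modulo $\pi$, and to read off the relevant $2$-adic valuations from Theorem~\ref{thm:San16}. By Legendre's three-square theorem a nonnegative integer $m$ fails to be a sum of three squares exactly when $\nu_2(m)$ is even and $m/2^{\nu_2(m)}\equiv 7\pmod 8$, i.e.\ when $\ell_{4,2}(m)\in\{7,15\}$; so, since $u_n\ge 0$ for all $n$ and $u_0=0$ is a sum of three squares, it suffices to show that the set $\Sigma$ of all $n\ge 1$ with $\nu_2(u_n)$ even and $u_n/2^{\nu_2(u_n)}\equiv 7\pmod 8$ is a finite union of sets of the forms \eqref{eq:first_form} and \eqref{eq:second_form}. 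Since $2\nmid B$ we have $\gcd(4,B)=1$, and a short computation with $\alpha^4$ (when $2\mid A$, using $\alpha^4=(A^3+2AB)\alpha+A^2B+B^2$) or with $\alpha^6$ (when $2\nmid A$, using $\alpha^3=(A^2+B)\alpha+AB$ with $A^2+B$ even) gives $\alpha^{\pi}\equiv\beta^{\pi}\equiv 1\pmod 4$ in the ring of integers of $\Q_2(\sqrt\Delta)$, hence $|\alpha^{\pi}-1|_2,|\beta^{\pi}-1|_2<r_2$. Thus the $2$-adic interpolation of $(u_n)$ described earlier in this section applies with period $\pi$: there are analytic $f_0,\dots,f_{\pi-1}\in\A_2$ with $u_{\pi m+j}=f_j(m)$ for all $m\in\N$. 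Finally $\uptau(2)=2$ if $2\mid A$ and $\uptau(2)=3$ if $2\nmid A$, so $\pi=2\uptau(2)$ and $\uptau(2)\mid\pi$ in both cases.

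I would use throughout the elementary fact that for every analytic $h\colon\Z_2\to\Z_2$ the reduction $m\mapsto h(m)\bmod 8$ is periodic with period a power of $2$ (truncate the Mahler expansion of $h$ modulo $8$ and use $\binom{x+2^s}{i}\equiv\binom{x}{i}\pmod 8$ for $s$ large relative to $i$). Next I would run over $j\in\{0,\dots,\pi-1\}$. For $1\le j<\pi$, Theorem~\ref{thm:San16} shows that $\nu_2(u_{\pi m+j})$ does not depend on $m$; call this constant $C_j$, and note $f_j$ has no zero in $\Z_2$, so $2^{-C_j}f_j$ is analytic with values in $\Z_2^{\times}$. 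If $C_j$ is odd, then every $n\equiv j\pmod\pi$ has $\nu_2(u_n)$ odd and hence lies outside $\Sigma$. If $C_j$ is even, then $\pi m+j\in\Sigma$ iff $2^{-C_j}f_j(m)\equiv 7\pmod 8$, which holds for $m$ in a union of residue classes modulo some $2^{t_j}$; pulling this back along $n=\pi m+j$ yields finitely many sets of the form \eqref{eq:first_form}, each with offset in $\{1,\dots,2^{t_j}\pi-1\}$ because $j\not\equiv 0\pmod\pi$.

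The piece $j=0$ is where the form \eqref{eq:second_form} appears, and it is the heart of the argument. Theorem~\ref{thm:San16} gives $\nu_2(u_{\pi m})=\nu_2(m)+D$ for all $m\ge 1$ with $D\in\N$; together with $u_0=0$ this forces $f_0$ to have a simple zero at $0$ and no other zero in $\Z_2$, so $f_0(x)=xg_0(x)$ with $g_0$ analytic, nowhere zero, and $\nu_2(g_0(m))=D$ throughout $\Z_2$ (by continuity, since it holds on the dense set $\N\setminus\{0\}$). Hence $h:=2^{-D}g_0\colon\Z_2\to\Z_2^{\times}$ is analytic and
\[ u_{\pi m}=2^{\nu_2(m)+D}\,m'\,h(m),\qquad m'=m/2^{\nu_2(m)}\ \text{odd}, \]
so $\pi m\in\Sigma$ iff $\nu_2(m)\equiv D\pmod 2$ and $m'h(m)\equiv 7\pmod 8$. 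Let $2^{t_0}$ be a period of $m\mapsto h(m)\bmod 8$. For $\nu_2(m)\ge t_0$ we have $h(m)\equiv h(0)\pmod 8$, so the condition reads $\nu_2(m)\equiv D\pmod 2$ and $m'\equiv c_0\pmod 8$, where $c_0\in\{1,3,5,7\}$ is the unique odd residue with $c_0h(0)\equiv 7\pmod 8$; parametrising $\nu_2(m)=t+2k$ (with $t$ the least admissible exponent) and $m'=8l+c_0$ shows that the corresponding $n=\pi m$ form exactly the set $\{2^{t+2k}\pi(8l+c_0):k,l\in\N\}$ of the form \eqref{eq:second_form}. For each of the finitely many exponents $e<t_0$, fixing $\nu_2(m)=e$ reduces ``$m'h(2^e m')\equiv 7\pmod 8$'' to a congruence on $m'$ modulo a fixed power of $2$, contributing finitely many further sets of the form \eqref{eq:first_form}. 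Taking the union over all $j$ finishes the proof.

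The hard part will be this $j=0$ analysis: one must pass from Sanna's \emph{additive} formula $\nu_2(u_{\pi m})=\nu_2(m)+D$ to the \emph{multiplicative} factorisation $u_{\pi m}=2^{\nu_2(m)+D}m'h(m)$ in which $h(m)\bmod 8$ is periodic with a period \emph{independent of $\nu_2(m)$}; it is precisely this uniformity, which rests on the analyticity of $f_0$, that lets the infinitely many admissible valuations $\nu_2(m)\in\{t,t+2,t+4,\dots\}$ be absorbed into a single set of the form \eqref{eq:second_form} instead of an infinite family of sets of the form \eqref{eq:first_form}. Verifying that $\pi\in\{4,6\}$ is an admissible interpolation period, that $\uptau(2)\in\{2,3\}$, and the conversion of residue classes of $m$ into sets of the stated shapes are routine computations.
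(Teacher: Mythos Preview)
Your proof is correct and follows essentially the same approach as the paper's: interpolate $u_{\pi m+j}$ by $2$-adic analytic functions, use Legendre's criterion via $\ell_{4,2}$, invoke Sanna's theorem to see that $f_j$ has no $2$-adic zero for $j\neq 0$ and a simple zero at $0$ for $j=0$, and split the $j=0$ case according to whether $\nu_2(m)$ is large. The only cosmetic differences are that the paper verifies $\alpha^{\pi}\equiv 1\pmod 4$ via the companion matrix rather than direct expansion, appeals to its Proposition~\ref{prop:no_root} and Theorem~\ref{thm:prime_power_d_last_nonzero}(a) for the periodicity you obtain from Mahler expansions, and in the $j=0$ analysis splits according to $m\bmod 2^T$ rather than $\nu_2(m)\ge t_0$ versus $\nu_2(m)<t_0$; these lead to the same conclusion.
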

\begin{proof}
To begin, we show that the subsequences $(u_{\pi m + j})_{m \geq 0}$, with $i=0,1,\ldots,  \pi - 1$, can be interpolated by a $2$-adic analytic functions. By the earlier discussion it is sufficient to prove that the $2$-adic valuation of $\alpha^{\pi}-1$ and $\beta^{\pi}-1$ is greater than $1$. Let 
$$  C = \begin{bmatrix}
0 & 1 \\
B & A
\end{bmatrix}$$
be the companion matrix of the recurrence defining $(u_n)_{n \geq 0}$ and observe that $P$ is its characteristic polynomial. Direct computation shows that $C^{\pi} \equiv I \pmod{4}$, where $I$ denotes the $2 \times 2$ identity matrix. Since $\alpha^{\pi}-1, \beta^{\pi}-1$ are the eigenvalues of $C^{\pi}-I$, it follows that $\nu_2(\alpha^{\pi}-1) = \nu_2(\beta^{\pi}-1) \geq 2$. Therefore, $u_{\pi m + j} = f_j(m)$, where for $x \in \Z_2$ we define
$$  f_j(x) = \frac{1}{\alpha-\beta} \left( \alpha^j \exp_2(x \log_2(\alpha^{\pi}) - \beta^j \exp_2(x \log_2(\beta^{\pi}) \right). $$
From Theorem \ref{thm:San16} we deduce that $0$ is the only $2$-adic integer root of $f_0$ and has multiplicity $1$. On the other hand, the functions $f_1, \ldots, f_{\pi -1}$ have no roots in $\Z_2$.

Now, by Legendre's three-square theorem, a nonnegative integer $r$ is not a sum of three squares if and only if $\ell_{2,3}(r) = 7$ and $\nu_2(r)$ is even, or equivalently, $\ell_{4,2}(r) \in \{7,15\}$.
By part (a) of Theorem \ref{thm:prime_power_d_last_nonzero}, for each $j = 1,\ldots, \pi -1$ the sequence $(\ell_{4,2}(u_{\pi m + j}))_{m \geq 0}$ is periodic with period $2^t$ for some $t \in \N$. It follows that for each $j = 1,\ldots, \pi -1$ the set of $n \equiv j \pmod{\pi}$ such that  $u_n$ is not a sum of three squares, is a finite union of sets of the form \eqref{eq:first_form}.

In the case $i=0$ we can write 
$$ f_0(x) =  x g(x),$$
where $g$ has no root in $\Z_2$. The integer $f_0(m)$ is not a sum of three squares if and only if 
\begin{align}
\ell_{2,3}(m) \ell_{2,3}(g(m)) &\equiv 7 \pmod{8}, \label{eq:three_squares_1} \\
\nu_2(m) + \nu_2(g(m)) &\equiv 0 \pmod{2}. \label{eq:three_squares_2}
\end{align} Let $2^T$ be the minimal common period of the sequences $(\ell_{2,3}(g(m)))_{m \geq 0}$ and $(\nu_2(g(m)))_{m \geq 0}$. We consider the solutions of the congruences depending on $m \bmod{2^T}$. If $m \equiv 0 \pmod{2^T}$, they are of the form $m = 2^{t + 2k}(8l + c)$ with $k,l \in \N$, where $t =  T + (\nu_2(g(0)) \bmod{2})$ and $c  \equiv 7 \ell_{2,3}^{-1}(g(0)) \pmod{8}$. Putting $n = \pi m$, we obtain precisely the set \eqref{eq:second_form}.

If $m  \equiv v \pmod{2^T}$ for some $v \in \{1,\ldots,2^T-1\}$ write $m= 2^{T+2} l + 2^T w + v$, where $l\in \N$ and $w \in \{0,1,2,3\}$. In this case we obtain the equalities $\ell_{2,3}(m) = \ell_{2,3}(2^Tw + v)$ and $\nu_2(m) = \nu_2(v)$, which do not depend on $l$. Plugging them into \eqref{eq:three_squares_1} and \eqref{eq:three_squares_2} and replacing $g(m)$ with $g(v)$, we see that for each fixed $w$ there are either no solutions $m \equiv v \pmod{2^T}$ or they are of the form given above. In the latter case, for $n = \pi m$, we obtain a set of the form \eqref{eq:first_form} with $t = T+2$ and $j = \pi(2^T w + v)$.
\end{proof}

Along the same lines, one can prove results for other ternary quadratic forms $q$ such that the set of integers not represented by $q$ can be expressed in terms of last nonzero digits. We refer the interested reader to \cite{BDTT16}, where appropriate conditions are given for several quadratic forms.

Knowing a concrete formula for the $2$-adic valuation of $u_n$, one can explicitly determine the set considered in Theorem \ref{thm:Lucas_quadratic}.

\begin{ex}
Consider the sequence of Pell numbers $(P_n)_{n \geq 0}$, defined by $P_0=0,P_1=1$, and $P_n = 2P_{n-1} + P_{n-2}$ for $n \geq 2$. We will determine which Pell numbers can be written as a sum of three squares by loosely following the proof of Theorem \ref{thm:Lucas_quadratic}. We can write
$$P_n = \frac{\alpha^n - \beta^n}{\alpha-\beta},    $$
where $\alpha = 1+ \sqrt{2}$ and $\beta = 1 - \sqrt{2}$ (treated as elements of $\Q_2(\sqrt{2})$).
Inspecting for $i=1,2,3$ the subsequences $(P_{4m+i})_{m \geq 0}$ modulo $8$, we deduce that all of their terms are sums of three squares.

Moving on to the subsequence $(P_{4m})_{m \geq 0}$, we have $P_{4m} = f(m)$, where
$$f(x) =  \frac{1}{\alpha-\beta} \left( \exp_2(x \log_2(\alpha^{4}) - \exp_2(x \log_2(\beta^{4}) \right) = \sum_{k=1}^{\infty} c_k x^k,   $$
where 
$$ c_k = \frac{1}{(\alpha - \beta) k!} \left( \log_2^k(\alpha^{4}) - \log_2^k(\beta^{4}) \right).   $$
From Theorem \ref{thm:San16} we get the simple formula $\nu_2(P_n) = \nu_2(n)$, and so $f(x) = x g(x)$, where $g$ has no root in $\Z_2$. We immediately obtain that $\nu_2(g(m)) = \nu_2(P_{4m}) - \nu_2(m) = 2$ is constant for all $m \in \N$. As seen in the proof of Theorem \ref{thm:Lucas_quadratic}, we also need to determine a period of $(\ell_{2,3}(g(m)))_{m \geq 0}$.

To this end, we first show that the coefficients $c_k$ have positive $2$-adic valuation. Since $\alpha - \beta = 2\sqrt{2}$, we obtain $\nu_2(\alpha - \beta) = 3/2$. By Legendre's formula for the $p$-adic valuation of a factorial (see for example \cite[pp.\ 241--242]{Rob00}), we obtain $\nu_2(k!) = k - s_2(k)$, where $s_2$ is the sum of binary digits. Finally, direct calculation gives $\nu_2(\alpha^{4}-1) = \nu_2(\beta^{4}-1) = 5/2$, and thus by \cite[Proposition 1 on p.\ 252]{Rob00} we have $\nu_2(\log_2(\alpha^{4})) = \nu_2(\log_2(\beta^{4})) =5/2$.  Combining all of the above, we obtain
$$  \nu_2(c_k) \geq - \frac{3}{2} - k + s_2(k) + \frac{5}{2} k \geq \frac{3}{2}(k-1)  + s_2(k) \geq 1. $$
We claim that $16$ is a period $(\ell_{2,3}(g(m)))_{m \geq 0}$. For any fixed $m \in \N$ we have the Taylor expansion:
\begin{equation} \label{eq:expansion}
g(m+16) = g(m) + \sum_{i=1}^{\infty} \frac{g^{(i)}(m)}{i!} 16^i,
\end{equation}
where $g^{(i)}$ denotes the $i$-th derivative of $g$. Since $g$ has coefficients with $2$-adic valuation at least $1$, so do the functions $g^{(i)}/i!$. Hence,  the $2$-adic valuation of the sum on the right-hand side of \eqref{eq:expansion} is at least $5$. Dividing both sides by $4$ and reducing modulo $8$, we obtain
$$\ell_{2,3}(g(m+16)) = \ell_{2,3}(g(m)),$$ 
as claimed. Knowing this, we can directly compute that $\ell_{2,3}(g(m))$ is in fact constantly equal to $3$.

Therefore, the term $P_{4m}$ is not a sum of three squares if and only if $\nu_2(m) \equiv 0 \pmod{2}$ and $\ell_{2,3}(m) \equiv 7 \cdot 3^{-1} \equiv 5 \pmod{8}$. Such $m$ can be written in the form $m = 2^{2k}(8l+5)$ with $k,l \in \N$. Returning to $n = 4m$, we conclude that $P_n$ is a sum of three squares if and only if $n$ belong to the set
$$ \N \setminus \{4^{k+1}(8l+5): k,l \in \N\}.   $$
\end{ex}

In the following two examples we showcase the computation of last nonzero digits for a base with several prime factors.

\begin{ex} \label{ex:last_digit_1}
Consider the Fibonacci sequence $(F_n)_{n \geq 0}$. Theorem \ref{thm:several_factors_d_last_nonzero} implies that the sequence $(\ell_{10}(F_n))_{n \geq 0}$ is strictly $10$-automatic. We are going to derive an explicit formula for the last decimal digit of $F_n$ in an elementary way.

It is convenient to compute $\ell_{10}$ evaluated at the subsequences $(F_{30m+j})_{m \geq 0}$ for $j=0,1,\ldots,29$. For any $j \neq 0$  it can be checked, by examining the sequence modulo $100$, that $(\ell_{10}(F_{30m+j}))_{m \geq 0}$ is periodic and $10$ is a common period of all these subsequences. 

In the case $j=0$ we will use the congruence \eqref{eq:d_last_nonzero_explicit}, which for $b=10$ and nonzero rational $x$ becomes
\begin{equation} \label{eq:b=10}
\ell_{10}(x) \equiv 2^{\nu_2(x) - \nu_{10}(x)} \cdot 5 + 5^{\nu_5(x) - \nu_{10}(x)}\cdot 6 \cdot 3^{\nu_{10}(x)} \ell_5(x) \pmod{10}.
\end{equation}
By the results of Lengyel \cite{Len95}, we have $\nu_5(F_n) = \nu_5(n)$ and 
$$
\nu_2(F_n) =
\begin{cases}
0  & \text{if } n \equiv 1,2 \pmod{3}, \\
1  & \text{if } n \equiv 3 \pmod{6}, \\
\nu_2(n) + 2  & \text{if } n \equiv 0 \pmod{6}. 
\end{cases}
$$
It follows from the Binet formula that
$$ 2^{n-1} F_n = \sum_{k=0}^{\lfloor (n-1)/2  \rfloor} \binom{n}{2k+1} 5^k, $$
for all $n \geq 1$. As in \cite[Lemma 1]{Len95}, for $k \neq 0$ we deduce that 
$$ \nu_5 \left(\binom{n}{2k+1} 5^k   \right) > \nu_5(n),$$
which in turn implies 
\begin{equation} \label{eq:5_digits}
\ell_5(F_n) \equiv 3^{n-1} \ell_5(n) \pmod{5}.
\end{equation}

We apply the formula \eqref{eq:b=10} to $x = F_{30m}/40m$ rather than $x=F_{30m}$. This approach is more convenient, as $\nu_2(F_{30m}/40m) = \nu_5(F_{30m}/40m) =0$ so all the exponents in \eqref{eq:b=10} vanish.
Using \eqref{eq:b=10} and \eqref{eq:5_digits}, we obtain
$$
\ell_{10}\left( \frac{F_{30m}}{40m} \right) \equiv 5 + 6 \ell_{5}\left( \frac{30m \cdot 3^{30m-1}}{40m} \right) \\
 \equiv (-1)^{m-1}  \pmod{10}. 
$$
We can now recover the value of $\ell_{10}(F_{30m})$ from the congruence
$$ \ell_{10}(F_{30m}) = \ell_{10} \left( \frac{F_{30m}}{40m} \cdot 40m \right) \equiv \ell_{10}\left( \frac{F_{30m}}{40m} \right) \ell_{10}(4m) \equiv (-1)^{m-1}  \ell_{10}(4m) \pmod{10}.$$
In order to obtain an expression in terms of $n = 30m$, we first compute
$$ \ell_{10}(4m) \equiv \ell_{10} \left(\frac{1}{3}\right) \ell_{10}(120m) \equiv 7 \ell_{10}(4n) \pmod{10}.   $$
Taking all into account, we obtain
$$
\ell_{10}(F_n) =
\begin{cases}
\ell_{10}(F_j) &\text{if } n \equiv j \pmod{300} \\
   &\text{for some } j \not\equiv 0 \pmod{30}, \\
 3\ell_{10}(4n) \bmod{10}  &\text{if } n \equiv 0 \pmod{60}, \\  
 7\ell_{10}(4n) \bmod{10}      &\text{if } n \equiv 30 \pmod{60}.         
\end{cases}
$$
\end{ex}

In the following example we provide a sequence of integers $(a_n)_{n \geq 0}$ such that $(\ell_{b}(a_n))_{n \geq 0}$ is strictly $k$-automatic but $b,k$ are multiplicatively independent.

\begin{ex} \label{ex:last_digit_2}
Consider the last nonzero digit of $a_n = (2n-1)(16^n - 4)$ in the base $b=15$.
The sequence $(a_n)_{n \geq 0}$ can be intepolated by the functions $f_3 \in \A_3$ and $f_5 \in \A_5$, defined by
$$f_p(x) = (2x-1) (\exp_p(x \log_p 16) - 4)$$
for $p=3,5$. By the well-known Lifting the Exponent Lemma, for all $n \in \N$ we obtain
$$\nu_3(16^n-4) = \nu_3(4^{2n-1} -1) = \nu_3(2n-1) + 1.$$
At the same time $\nu_5(16^n-4) = 0$. This means $1/2$ is the only $p$-adic integer root of both $f_3$ and $f_5$ with $m_{f_3}(1/2) = 2$ and $m_{f_5}(1/2) = 1$. Theorem \ref{thm:several_factors_d_last_nonzero} implies that the sequence $(\ell_{15}(a_n))_{n \geq 0}$ is strictly $75$-automatic.

In order to compute its terms in a more explicit way, write
\begin{align*}
f_3(x) &= (2x-1)^2 g_3(x), \\
f_5(x) &= (2x-1) g_5(x),
\end{align*}
where $g_p \in \A_p$ for $p=3,5$. By the above, we have $\nu_3(g_3(x))=1$ and $\nu_5(g_5(x))=0$ so
$$ \ell_{15}(a_n) = \ell_{15}(f_3(n),f_5(n)) \equiv \ell_{15}\left((3(2n-1)^2,(2n-1)\right)   \cdot \left( \frac{g_3(n)}{3}, g_5(n)\right)  \pmod{15}.  $$
We have $g_5(n) = 16^n-4 \equiv 2 \pmod{5}$ and it remains to compute $(g_3(n)/3) \bmod{3}$. Write 
$$g_3(x) = \frac{\exp_p(x \log_p 16) - 4}{2x-1} = \sum_{k=0}^{\infty} c_k x^k.  $$
Multiplying by $2x-1$ and comparing the coefficients, we obtain $c_0 = 3$ and 
$$c_k = 2c_{k-1} - \frac{\log_3^k 16}{k!}$$
for $k \geq 1$. We are going to prove that $\nu_3(c_k) \geq 2$ for all $k \geq 1$. Expanding the logarithm, we see that $\nu_3(c_1) \geq 2$. For $k \geq 2$ Legendre's formula gives 
$$\nu_3\left(\frac{\log_3^k 16}{k!} \right) = k - \frac{k - s_3(k)}{2} \geq 2,$$
and thus our claim holds by induction on $k$. Consequently, $g_3(x) = 3 + 9x h_3(x),$ where $\nu_3(h_3(x)) \geq 0$ for all $x \in \Z_3$.
This implies that for all $n \in \N$ we have
$$ \left(\frac{g_3(n)}{3}, g_5(n) \right) \equiv (1,2) \equiv 7 \pmod{15}.$$ 
Finally, we obtain
$$ 
\ell_{15}(a_n) \equiv  7 \ell_{15} \left(3(2n-1)^2, 2(2n-1)\right) \pmod{15}.
$$
\end{ex}

\section{Proofs for $b = p^l$} \label{sec:prime_power_proofs}

The goal of this section is to prove Theorems \ref{thm:prime_power_valuation}, \ref{thm:prime_power_last_nonzero}, and \ref{thm:prime_power_d_last_nonzero}. Propositions \ref{prop:not_zero} and \ref{prop:regular_polynomial} are proved in the next section for a general base $b$. Here, let $b = p^l$, where $p$ is a prime and $l \geq 1$ an integer. We retain the notation used in Section \ref{sec:main}. In particular, we let $\A_p$ denote the set of nonzero analytic functions $f \colon \Z_p \to \Q_p$ and $\PP_p$ its subset consisting of polynomials. The set of $p$-adic integer roots of $f \in \A_p$ is denoted by $\R_f$, while the multiplicity of $\theta$ is $m_f(\theta)$. For the sake of convenience, for a $p$-adic integer $\theta \in \Z_p$ and $t \in \N$ we introduce the notation 
\begin{align*}
\theta[t] &= \theta \bmod{p^t}, \\
\theta\{t\} &= p^{-t}(\theta - \theta[t]).
\end{align*}
In terms of the $p$-adic expansion of $\theta$, the number $\theta[t]$ is represented by its $t$ initial digits, while $\theta\{t\}$ by the remaining digits. 

We now outline the general structure of our reasoning. We first condsider the simpler case when the function $f \in \A_p$ has at most one distinct root in $\Z_p$. If there is no such root, then the corresponding assertion (covered in case (a) of our theorems) is rather easily shown using uniform continuity of $f$. 
On the other hand, if $f$ has precisely one root $\theta \in \Z_p$, it can be written in the form
$$ f(x) = (x- \theta)^{m_f(\theta)} g(x),   $$
where $g \in \A_p$ has no roots in $\Z_p$. This further boils down to studying terms of the form $c (x - \theta)^{m}$, where $c \in \Q_p$, $m \geq 1$, which is our main focus throughout this section. 

For general $f$ the idea is to find an integer $T \geq 0$ large enough so that each of the functions $f(p^T x + a)$ for $a =0,1, \ldots,p^T-1$ has at most one root in $\Z_p$. Combined with the basic properties of regular sequences (Proposition \ref{prop:arithmetic_prog}), this reduces our investigation to the previous case. 

The whole reasoning is split into a number of auxiliary results, whose proofs are rather elementary, though quite technical in some places. At the very end of this section we combine them to prove the main theorems.
We begin with the simplest case, namely when $f$ has no root in $\Z_p$.

\begin{prop} \label{prop:no_root}
Assume that $f \in \A_p$ has no root in $\Z_p$. Then we have the following:
\begin{enumerate}[label={\textup{(\roman*)}}]
\item the sequence $(\nu_{p^l}(f(n)))_{n \geq 0}$ is periodic;
\item when $f \in \PP_p$, the sequence $(\LL_{p^l}(f(n))_{n \geq 0}$ is $k$-regular for every $k \geq 2$;
\item the sequence $(\ell_{p^l,d}(f(n)))_{n \geq 0}$ is periodic.
\end{enumerate}
Moreover, the period in \textup{(i)} and \textup{(iii)} can be chosen to be a power of $p$.
\end{prop}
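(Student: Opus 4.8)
The plan is to exploit the uniform continuity of an analytic $f \in \A_p$ on the compact set $\Z_p$ together with the fact that $f$ stays bounded away from $0$. Since $f$ has no root in $\Z_p$ and $\Z_p$ is compact, the function $x \mapsto \nu_p(f(x))$ is continuous (hence bounded), so there is a constant $M$ with $\nu_p(f(x)) \leq M$ for all $x \in \Z_p$; set $T = M+1$. By uniform continuity (or directly: $f(x+p^T h) - f(x) = \sum_{j\geq 1} \frac{f^{(j)}(x)}{j!} p^{jT} h^j$ has $p$-adic absolute value at most $p^{-T}$, using that the coefficients of an analytic function on $\Z_p$ are bounded), we get $f(x+p^T h) \equiv f(x) \pmod{p^T}$ for all $x, h \in \Z_p$. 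Combined with $\nu_p(f(x)) < T$, this forces $\nu_p(f(x+p^T h)) = \nu_p(f(x))$, and in fact the whole residue $f(x) \bmod p^{T}$ — hence $\LL_p(f(x)) \bmod p^{T-\nu_p(f(x))}$ for any reasonable amount of digits — depends only on $x \bmod p^T$. Choosing $T$ even larger (at least $\max(M+1,\ ld+M)$) makes $\nu_{p^l}(f(x)) = \lfloor \nu_p(f(x))/l\rfloor$ and $\ell_{p^l,d}(f(x))$ also depend only on $x \bmod p^T$.

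First I would carry out the estimate above carefully to establish: there exists $T$, a power of whose base is $p^T$, such that $n \equiv n' \pmod{p^T}$ implies $\nu_{p}(f(n)) = \nu_p(f(n'))$ and $f(n) \equiv f(n') \pmod{p^{T}}$ (with $T$ large relative to the bound $M$ on the valuation and relative to $ld$). This immediately gives (i): $\nu_{p^l}(f(n)) = \lfloor \nu_p(f(n))/l \rfloor$ is periodic with period $p^T$. For (iii), once $\nu_p(f(n))$ is known and bounded, $\ell_{p^l,d}(f(n)) \equiv \ell_{p,ld}(f(n)) p^{\nu_p(f(n)) \bmod l} \pmod{p^{ld}}$ by Proposition \ref{prop:last_digits_factorization}, and $\ell_{p,ld}(f(n)) = p^{-\nu_p(f(n))} f(n) \bmod p^{ld}$ is determined by $f(n) \bmod p^{T}$ together with $\nu_p(f(n))$, hence by $n \bmod p^T$; so $(\ell_{p^l,d}(f(n)))_{n\geq 0}$ is periodic with a period that is a power of $p$.

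For (ii), assume $f \in \PP_p$, so $(f(n))_{n\geq 0}$ is $k$-regular for every $k \geq 2$ by Corollary \ref{cor:polynomial_regular}. Write $\LL_{p^l}(f(n)) = p^{-\nu_{p^l}(f(n))} f(n)$. Since $\nu_{p^l}(f(n))$ is eventually periodic (in fact periodic) with period dividing $p^T$ and takes finitely many values, partition $\N$ into the residue classes modulo $p^T$; on each class $n \equiv a \pmod{p^T}$ the value $\nu_{p^l}(f(n)) = e_a$ is constant, so $\LL_{p^l}(f(p^T m + a)) = p^{-e_a} f(p^T m + a)$, which is a polynomial in $m$ with coefficients in $\Z$ (the factor $p^{e_a}$ divides $f(p^Tm+a)$ for all $m$, and polynomial division shows the quotient has integer coefficients after possibly enlarging $T$ — more cleanly, $(f(p^Tm+a))_{m\geq 0}$ is $k$-regular and multiplying a $k$-regular integer sequence that is divisible by the fixed integer $p^{e_a}$ in every term by the scalar $p^{-e_a}$ yields a $k$-regular sequence by Proposition \ref{prop:function_regular}(ii) applied to division by $p^{e_a}$, or simply note the quotient sequence is again a subsequence-polynomial). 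Hence each $(\LL_{p^l}(f(p^Tm+a)))_{m\geq 0}$ is $k$-regular for every $k\geq 2$, and by Proposition \ref{prop:arithmetic_prog}(ii) so is $(\LL_{p^l}(f(n)))_{n\geq 0}$.

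The main obstacle I anticipate is the bookkeeping in the estimate $f(x+p^Th) \equiv f(x) \pmod{p^T}$: one must justify that an analytic function $\Z_p \to \Q_p$ has Taylor coefficients of bounded denominator (so that the tail $\sum_{j\geq 1}\frac{f^{(j)}(x)}{j!}p^{jT}h^j$ really is $\equiv 0 \pmod{p^T}$ once $T$ exceeds a fixed constant depending only on $f$), and to track simultaneously all the thresholds — the bound $M$ on $\nu_p(f)$, the factor $l$ turning $\nu_p$ into $\nu_{p^l}$, and the precision $ld$ needed for $\ell_{p^l,d}$ — so that a single $p^T$ works for all three statements. This is routine $p$-adic analysis but requires care; everything else reduces to Proposition \ref{prop:last_digits_factorization}, Corollary \ref{cor:polynomial_regular}, and Proposition \ref{prop:arithmetic_prog}.
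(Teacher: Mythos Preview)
Your approach is essentially the same as the paper's: bound $\nu_p(f)$ on the compact set $\Z_p$, invoke uniform continuity to obtain a period $p^T$ for parts (i) and (iii), and for (ii) combine the periodicity of $\nu_{p^l}(f(n))$ with the $k$-regularity of $(f(n))_{n\geq 0}$ coming from Corollary~\ref{cor:polynomial_regular}. The paper's argument for (ii) is slightly slicker than your arithmetic-progression decomposition: it simply writes $\LL_{p^l}(f(n)) = p^{-\nu_{p^l}(f(n))} \cdot f(n)$ as the termwise product in the ring $\Q_p$ of a periodic sequence and a polynomial sequence, both $k$-regular for every $k\geq 2$, which sidesteps your hedging about integer coefficients and division by $p^{e_a}$.
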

\begin{proof}
We first prove (i) and (iii) simultaneously. Since $f$ has no root in $\Z_p$, for some $V \in \N$ we have the inequality $\nu_{p^l}(f(x)) < V$ for all $x \in \Z_p$.
By uniform continuity of $f$ there exists $T \in \N$ such that for all $x,y \in \Z_p$ there holds
$$
\nu_{p^l}(f(x + p^Ty) - f(x)) \geq  V + d.
$$
Equivalently, we can write
$$ f(x+p^Ty) = f(x) + p^{l(V+d)}\sigma$$
for some $\sigma \in \Z_p$ depending on $x,y$.
Putting $x = n \in \N$ and $y = 1$, we obtain both $\nu_{p^l}(f(n+p^T)) = \nu_{p^l}(f(n))$ and $\ell_{p^l,d}(f(n+p^T)) = \ell_{p^l,d}(f(n))$, as desired.

To see why (ii) holds we write
$$\LL_{p^l}(f(n)) = p^{-\nu_{p^l}(f(n))} f(n).$$
The first factor on the right-hand side is periodic with respect to $n$, thus $k$-regular for all $k \geq 2$. The same can be said about the second factor due to Corollary \ref{cor:polynomial_regular}. Since the term-wise product of $k$-regular sequences is $k$-regular, we obtain (ii).
\end{proof}

We move on to the expressions of the form $c (x - \theta)^m$, where $c \in \Q_p$, $\theta \in \Z_p$, and $m$ is a positive integer. To begin, we prove $p$-regularity of the considered sequences when $\theta$ is rational.

\begin{prop} \label{prop:rational_root}
Assume that  $\theta \in \mathbb{Q} \cap \Z_p$. Then we have the following:
\begin{enumerate}[label={\textup{(\roman*)}}]
\item the sequence $(\nu_{p^l}(c(n-\theta)^m))_{n \geq 0}$ is $p$-regular;
\item the sequence $(\LL_{p^l}(c(n-\theta)^m))_{n \geq 0}$ is $p$-regular;
\item for all $d \geq 1$ the sequence $(\ell_{p^l,d}(c(n-\theta)^m))_{n \geq 0}$ is $p$-automatic.
\end{enumerate}
\end{prop}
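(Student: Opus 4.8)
The plan is to reduce everything to the case $\theta \in \N$ and then to the base case $\theta = 0$, exploiting that rationality of $\theta$ gives us genuine control over the $p$-adic digits of $n - \theta$ as $n$ ranges over residue classes. First I would write $\theta = a/q$ in lowest terms with $q$ coprime to $p$ (since $\theta \in \Z_p$ forces $p \nmid q$), so that $q$ is invertible in $\Z_p$ and $c(n-\theta)^m = (c q^{-m})(qn - a)^m$. The sequence $n \mapsto qn - a$ is a subsequence-along-an-arithmetic-progression pattern in disguise: using Proposition~\ref{prop:arithmetic_prog}(ii) it suffices to prove $p$-regularity (resp.\ $p$-automaticity) of each subsequence $n \mapsto c(p^S n + r - \theta)^m$ for $r = 0, 1, \ldots, p^S - 1$, for a suitable $S$. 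Choosing $S$ with $\nu_p(\text{denominator stuff}) $ cleared, one can arrange that $p^S n + r - \theta = p^{\nu} (p^{S'} n + \eta)$ with $\eta$ a $p$-adic unit for $r$ in the "bad" class, or $p^S n + r - \theta$ a unit times a shift of $n$ otherwise; the upshot is that after finitely many such splittings it is enough to treat $c(n - \theta)^m$ where $\theta \in \N$, and then by a further translation (Proposition~\ref{prop:arithmetic_prog}) the case $\theta = 0$, i.e.\ the sequence $c\, n^m$.

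For the base case $\theta = 0$: by Proposition~\ref{prop:last_digits_factorization} (the $\LL_{p^l}(x) = \LL_p(x) p^{\nu_p(x) \bmod l}$ identity) it suffices to handle $l = 1$, and by Proposition~\ref{prop:multiplicative_properties}(ii) (valid since $b = p$ is prime) we have $\nu_p(c n^m) = \nu_p(c) + m \nu_p(n)$, $\LL_p(c n^m) = \LL_p(c) \LL_p(n)^m$, and $\ell_{p,d}(c n^m) \equiv \ell_{p,d}(c) \ell_{p,d}(n)^m \pmod{p^d}$. So everything is built from $(\nu_p(n))_{n\ge 0}$, $(\LL_p(n))_{n\ge 0}$, $(\ell_{p,d}(n))_{n\ge 0}$, which are $p$-regular resp.\ $p$-automatic by Proposition~\ref{prop:last_nonzero_digits_regular}, together with the closure properties in Proposition~\ref{prop:function_regular}: for (i), $\nu_{p^l}(cn^m) = \nu_p(c) + m\nu_p(n)$ up to the floor adjustment $\lfloor (\nu_p(c) + m\nu_p(n))/l \rfloor$, and this floor of an affine function of a $p$-regular sequence is again $p$-regular (it can be written as a $\Z$-linear combination of $\nu_p(n)$ and the $p$-automatic sequence $n \mapsto (m\nu_p(n) + \nu_p(c)) \bmod l$, using $\lfloor (mx+r)/l\rfloor = (mx + r - ((mx+r)\bmod l))/l$); for (ii), $\LL_{p^l}(cn^m)$ is a product/power of $p$-regular sequences by Proposition~\ref{prop:function_regular}(iii); for (iii), reduce (ii) modulo $p^{ld}$ via Corollary~\ref{cor:regular_modulo}, or directly compose the $p$-automatic sequence $(\ell_{p,ld}(n))_{n\ge0}$ with the maps $x \mapsto \ell_{p,d}(c) x^m$ and $x \mapsto x \cdot p^{\nu_p(\cdot)\bmod l}$ using Proposition~\ref{prop:function_regular}(i).

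The main obstacle I anticipate is bookkeeping in the reduction step: when $n$ runs over a residue class $p^S n + r$, the quantity $\nu_p(p^S n + r - \theta)$ is constant and bounded \emph{unless} that class contains the point $\theta$ in a $p$-adic sense, and isolating precisely one "bad" subclass per level while keeping the index transformations compatible with Proposition~\ref{prop:arithmetic_prog} requires care — in effect one is running a short induction on $m$ or on a truncation level of $\theta$, peeling off the factor $(n-\theta)$ one digit at a time. The cleanest route is probably: clear the denominator to get $(qn-a)^m$, note $qn - a$ hits $0 \bmod p^{\nu}$ for exactly one residue of $n$ modulo $p^{\nu}$ for each $\nu$, and use this to write, after splitting into the $p^{\nu_0}$ residue classes modulo a fixed power $p^{\nu_0} \ge \nu_p(q) + $ (something), each subsequence as $c'(p^{S}n + r)^m$ with the new "root" an element of $\N$ (namely $0$ after a final shift) times a unit; then invoke the $\theta = 0$ analysis. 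Everything else is routine application of the closure properties already catalogued in Section~\ref{sec:automatic}.
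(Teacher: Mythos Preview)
Your core approach matches the paper's, but you significantly overcomplicate the reduction to $\theta = 0$. After writing $c(n-\theta)^m = (c q^{-m})(qn - a)^m$ you have already expressed the original sequence as the subsequence of $n' \mapsto (c q^{-m})(n')^m$ along the arithmetic progression $n' = qn - a$; Proposition~\ref{prop:arithmetic_prog}(i) (not (ii)) then transfers $p$-regularity from the $\theta=0$ case directly. The paper does exactly this, in one line. Your discussion of splitting into residue classes modulo $p^S$, ``peeling off digits'', and running an induction is unnecessary --- and, as described, does not actually reach $\theta \in \N$: passing from $\theta$ to $(\theta - r)/p^S$ merely cycles through the eventually periodic tail of the $p$-adic expansion of $\theta$ rather than terminating at an integer. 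The ``main obstacle'' you anticipate is thus illusory.

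For the base case $\theta = 0$ your treatment is correct and close to the paper's. One small difference in part~(i): you express $\nu_{p^l}(cn^m) = \lfloor(\nu_p(c)+m\nu_p(n))/l\rfloor$ as a (rational) linear combination of the $p$-regular sequence $(\nu_p(n))_{n\ge 0}$ and the $p$-automatic sequence $((\nu_p(c)+m\nu_p(n)) \bmod l)_{n\ge 0}$, whereas the paper computes the $p^l$-kernel directly, observing that $\nu_{p^l}(c(p^l n)^m) = m + \nu_{p^l}(cn^m)$ while $\nu_{p^l}(c(p^l n + a)^m)$ is constant for each $a \in \{1,\dots,p^l-1\}$. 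Both arguments are valid. Parts~(ii) and~(iii) are handled the same way in both, via Proposition~\ref{prop:last_digits_factorization}, Proposition~\ref{prop:last_nonzero_digits_regular}, and Corollary~\ref{cor:regular_modulo}.
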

\begin{proof}
Letting $\theta = q/r$ be written in lowest terms with $r$ positive, we can write
$$ c(n-\theta)^m = \frac{c}{r^m} (rn-q)^m. $$
Due to Proposition \ref{prop:arithmetic_prog}, without loss of generality we can assume $\theta=0$.

Staring with $\nu_{p^l}$, consider the subsequences $(\nu_{p^l}(c(p^l n+a )^m))_{n \geq 0}$, where $a=0,1,\ldots,p^l-1$. 
For $a=0$ we have
$$ \nu_{p^l}(c(p^l n)^m) = m + \nu_{p^l}(c n^m).$$
Now take $0 < a <p^l$ and write $a = p^k e$, where $k, e \in \N$ and $\nu_p(e)=0$. Since $k< l$, for all $n \in \N$ we obtain
$$  \nu_{p^l}(c(p^l n+a)^m) = \nu_{p^l}(cp^{km}(p^{l-k}n+e)^m) = \left\lfloor  \frac{\nu_p(c)+km}{l} \right\rfloor,  $$
a constant. Therefore, the $\Z$-module generated by the $k$-kernel of $(\nu_{p^l}(cn^m))_{n \geq 0}$ is generated by $(\nu_{p^l}(cn^m)_{n \geq 0})$ itself and the constant sequence $(1)_{n \geq 0}$.

Moving on to (ii), by Proposition \ref{prop:last_digits_factorization}(ii) and the multiplicative property of $\LL_p$, for all $n \geq 1$ we have
\begin{equation} \label{eq:rational_root}
\LL_{p^l}(c n^m) = p^{(\nu_p(c) + m \nu_p(n)) \bmod{l}} (\LL_p(n))^m .
\end{equation}
The sequence $((\nu_p(c)+m\nu_p(n)) \bmod{l})_{n \geq 0}$ is $p$-automatic as a reduction modulo $l$ of a $p$-regular sequence. We deduce that the first factor on the right-hand side of \eqref{eq:rational_root} is $p$-regular.
By Proposition \ref{prop:last_nonzero_digits_regular}(ii) so is $(\LL_p(n))_{n \geq 0}$, which implies that $(\LL_{p^l}(cn^m))_{n \geq 0}$ is also $p$-regular as a termwise product of $p$-regular sequences. 

Part (iii) follows immediately from (ii) by Corollary \ref{cor:regular_modulo}.
\end{proof}

We ultimately need to prove that the properties in Proposition \ref{prop:rational_root} hold in the strict sense (under extra assumptions in the case of $\ell_{p^l,d}$). The following technical lemma is a step in this direction and will also be useful later in proving nonautomaticity.

\begin{lem} \label{lem:different_val_digits}
Let $\theta, \sigma \in \Z_p$ be such that $\theta \neq \sigma$. Then we have the following:
\begin{enumerate}[label={\textup{(\roman*)}}]
\item for any $t \in \Z \setminus \{0\}$ there exist infinitely many $n \in \N$ such that $\nu_p(n - \theta) = \nu_p(n - \sigma) + t$;
\item for any integer $\delta \geq 1$ and $x \in \{1,\ldots,p^{\delta}-1\}$ such that $p \nmid x$ and $x \neq \ell_{p,\delta}(\theta-\sigma)$ there exist infinitely many $n \in \N$ such that $\ell_{p,\delta}(n-\theta)= x$ and $\ell_{p,\delta}(n-\sigma) = \ell_{p,\delta}(\theta-\sigma)$.
\end{enumerate}
\end{lem}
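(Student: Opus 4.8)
The plan is to prove both parts by exhibiting the desired $n$ explicitly in terms of $p$-adic digit truncations of $\theta$ and $\sigma$, exploiting the fact that the $p$-adic valuation and the last $\delta$ digits of $n-\theta$ depend only on finitely many initial digits of $n$ once $n$ agrees with $\theta$ up to a prescribed precision. Since $\theta \neq \sigma$, set $v = \nu_p(\theta - \sigma)$, a finite nonnegative integer, so that $\theta[v] = \sigma[v]$ but $\theta[v+1] \neq \sigma[v+1]$; equivalently the $v$-th digits of $\theta$ and $\sigma$ differ. All constructions below will produce $n$ of the form $n = \theta[t] + p^t m$ for suitable $t$ and all sufficiently large $m \in \N$ (or $m$ ranging over a fixed residue class), which automatically gives infinitely many such $n$.

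For part (i): given $t \in \Z \setminus \{0\}$, I want $\nu_p(n-\theta) = \nu_p(n-\sigma) + t$. First treat $t > 0$. Choose $n$ so that $n \equiv \theta \pmod{p^{v+t}}$; then $\nu_p(n - \theta) \geq v+t$, while $n - \sigma = (n-\theta) + (\theta - \sigma)$ has valuation exactly $v$ since $\nu_p(\theta-\sigma) = v < v+t \leq \nu_p(n-\theta)$. To pin down $\nu_p(n-\theta)$ exactly equal to $v+t$ (not larger), I take $n = \theta[v+t] + p^{v+t}m$ where $m$ runs over integers with $p \nmid (m - \theta\{v+t\}) \bmod p$ — wait, more simply: pick $n = \theta[v+t+1] + p^{v+t} \cdot 0 + \dots$; cleanest is to choose the $(v+t)$-th digit of $n$ to differ from that of $\theta$ while the first $v+t$ digits agree, i.e. $n = \theta[v+t] + c\,p^{v+t} + p^{v+t+1} m$ with $c \in \{1,\dots,p-1\}$ chosen $\not\equiv \theta\{v+t\} \bmod p$ if that digit is nonzero, or any $c \neq 0$ otherwise; then $\nu_p(n-\theta) = v+t$ exactly and $\nu_p(n-\sigma) = v$, and $m$ arbitrary yields infinitely many $n$. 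For $t < 0$ swap the roles: choose $n \equiv \sigma \pmod{p^{v-t}}$ with the $(v-t)$-th digit of $n$ chosen to differ appropriately from $\sigma$, forcing $\nu_p(n-\sigma) = v - t$ while $\nu_p(n-\theta) = v$; then $\nu_p(n-\theta) = v = (v-t) + t = \nu_p(n-\sigma) + t$, again with a free digit parameter giving infinitely many solutions. (I should double-check the edge behavior when the relevant digit of $\theta$ or $\sigma$ equals $p-1$, but there is always an admissible choice of $c$ since we have $p-1 \geq 1$ options and only one forbidden value.)

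For part (ii): fix $\delta \geq 1$ and $x \in \{1,\dots,p^\delta-1\}$ with $p \nmid x$ and $x \neq \ell_{p,\delta}(\theta - \sigma)$. I want infinitely many $n$ with $\ell_{p,\delta}(n-\theta) = x$ and $\ell_{p,\delta}(n-\sigma) = \ell_{p,\delta}(\theta-\sigma)$. Note $\ell_{p,\delta}(\theta-\sigma) = p^{-v}(\theta-\sigma) \bmod p^\delta$, the last $\delta$ nonzero digits of $\theta - \sigma$; since $\nu_p(\theta-\sigma) = v$, this is a unit mod $p$. The idea: make $\nu_p(n-\theta)$ large — at least $v + \delta$ — so that $n - \sigma$ has the \emph{same} nonzero digit block as $\theta - \sigma$ in positions $v, \dots, v+\delta-1$, giving the second equality; and independently adjust a later block of digits of $n$ to force $\ell_{p,\delta}(n-\theta) = x$. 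Concretely, write $n = \theta[w] + p^w m$ for $w$ large (say $w \geq v+\delta$ and $w$ chosen so the construction below works). Then $n - \sigma = (\theta-\sigma) + p^w m$, and since $w \geq v + \delta > v$, the first $\delta$ nonzero digits of $n-\sigma$ coincide with those of $\theta - \sigma$, so $\ell_{p,\delta}(n-\sigma) = \ell_{p,\delta}(\theta-\sigma)$ regardless of $m$. Meanwhile $n - \theta = p^w m$, so $\nu_p(n-\theta) = w + \nu_p(m)$ and $\ell_{p,\delta}(n-\theta) = \LL_p(m) \bmod p^\delta = \ell_{p,\delta}(m)$; choosing $m$ with $p \nmid m$ and $m \equiv x \pmod{p^\delta}$ (possible since $p \nmid x$) gives $\ell_{p,\delta}(n-\theta) = x$, and letting $m$ range over this residue class mod $p^\delta$ (infinitely many values) yields infinitely many valid $n$. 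The hypothesis $x \neq \ell_{p,\delta}(\theta-\sigma)$ is not actually needed for this particular construction — but it is harmless, and it may be required to make the conclusion compatible with later applications where one wants the two digit blocks genuinely distinguishable; I'll keep the statement as given. The main thing to verify carefully is that $n = \theta[w] + p^w m \in \N$, which holds since $\theta[w]$ and $m$ can be taken in $\N$ with $m$ large.

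\textbf{Main obstacle.} The only real subtlety is bookkeeping with $p$-adic digits near positions where $\theta$ or $\sigma$ has digit $p-1$ (carries), and making sure that "changing one digit" of $n$ relative to $\theta$ produces a valuation exactly, not merely at least, equal to the target; this is handled by the observation that one has $p-1 \geq 1$ choices for the altered digit and at most one is excluded. Everything else is a routine manipulation of the identities $n - \theta = (n-\sigma) - (\theta-\sigma)$ together with Proposition~\ref{prop:valuation_properties}(iii) (ultrametric behavior, with equality when valuations differ) and the digit-level description of $\LL_p$ and $\ell_{p,\delta}$.
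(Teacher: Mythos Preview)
Your approach is the same as the paper's: force $n$ to agree with $\theta$ up to a prescribed $p$-adic precision and then control the next block of digits. The paper handles (i) by assuming $t>0$ (swapping $\theta,\sigma$ if necessary) and taking $n = p^{v+t+1}j + p^{v+t} + \theta[v+t+1]$; it handles (ii) with $n = p^{v+\delta}(x+p^\delta j) + \theta[v+2\delta]$, which is your construction specialized to $w = v+\delta$.

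There is, however, a computational slip in your part (ii). With $n = \theta[w] + p^w m$ one has $n - \theta = p^w(m - \theta\{w\})$, not $p^w m$, since $\theta = \theta[w] + p^w\theta\{w\}$; likewise $n - \sigma = (\theta - \sigma) + p^w(m - \theta\{w\})$. Your conclusion about $\ell_{p,\delta}(n-\sigma)$ survives, but to obtain $\ell_{p,\delta}(n-\theta) = x$ you must choose $m \equiv x + \theta\{w\} \pmod{p^\delta}$ rather than $m \equiv x \pmod{p^\delta}$. With this correction the argument is fine, and your observation that the hypothesis $x \neq \ell_{p,\delta}(\theta-\sigma)$ is not used in the construction is correct --- in the paper it only serves to make the case $p=2,\ \delta=1$ vacuous. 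One small point in (i): allow $c \in \{0,\dots,p-1\}$ rather than $\{1,\dots,p-1\}$, since for $p=2$ with the relevant digit of $\theta$ equal to $1$ your stated range leaves no admissible $c$.
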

\begin{proof}
Put $v = \nu_p(\theta - \sigma)$. In (i), by renaming $\theta$ and $\sigma$ we can assume without loss of generality that $t > 0$. It is enough to take any $n$ such that $\nu_p(n- \theta) = v+t$, for example
$$n = p^{v+t+1}j + p^{v+t} + \theta[v+t+1],$$
where $j \geq 1$ is arbitrary. Then $\nu_p(n- \theta) > \nu_p(\theta- \sigma)$ so
$$ \nu_p(n - \sigma) = \nu_p((n- \theta) + (\theta- \sigma)) = v,$$
and (i) follows.

Moving on to (ii), if $p=2$ and $\delta = 1$ there is nothing to prove.
Otherwise, let $$n = p^{v+\delta}(x+p^{\delta}j) + \theta[v+2\delta],$$
where $j \geq 1$ is again arbitrary. We obtain that
$$\frac{n-\theta}{p^{v+\delta}} = x + p^{\delta}(j-\theta\{v+2\delta\}),  $$
which yields $\nu_p(n-\theta) = v+\delta$ and $\ell_{p,\delta}(n-\theta) = x$. Again, we can write $n-\sigma = (n -\theta) + (\theta - \sigma)$ and observe that $\nu_p(n-\theta) = \nu_p(\theta - \sigma)+ \delta$, which gives $\ell_{p,\delta}(n-\sigma)= \ell_{p,\delta}(\theta-\sigma)$.
\end{proof}

In order to prove that the sequence $(\nu_{p^l}(c(n-\theta)^m))_{n \geq 0}$ is strictly $p$-regular, it suffices to show that its reduction modulo some positive integer is not eventually periodic. At the same time, if $(\ell_{p^l,d}(c(n-\theta)^m))_{n \geq 0}$ is eventually periodic, then so is $(\nu_{p}((n-\theta)^m) \bmod{l})_{n \geq 0}$, and we would like to know when the latter possibility can be ruled out. In both situations we will use the following auxiliary result.

\begin{lem} \label{lem:eventually_periodic_valuation}
Let $t,u$ be positive integers such that $u \nmid m$. Then $(\nu_{p^t}(c (n-\theta)^m) \bmod{u})_{n \geq 0}$ is not eventually periodic.
\end{lem}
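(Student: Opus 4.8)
The plan is to reduce everything to a statement about the single integer sequence $g(v) := \lfloor (\nu_p(c) + mv)/t \rfloor$, which is legitimate because $\nu_{p^t}(c(n-\theta)^m) = g(\nu_p(n-\theta))$ for every $n \neq \theta$ (here $c \neq 0$). The first step is the elementary identity $g(v+t) = g(v) + m$, whence $g(v+jt) = g(v) + jm$ for all $j \in \N$. Reducing modulo $u$, this already shows that $(g(v) \bmod u)_{v \ge 0}$ is \emph{not} eventually constant: if it were equal to $g(v_0) \bmod u$ for all $v \ge v_0$, then comparing $v_0$ with $v_0+t$ would give $u \mid m$, against the hypothesis. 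I will call this fact $(\ast)$; it is the only place where the assumption $u \nmid m$ is used.

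Next I would assume for contradiction that $(\nu_{p^t}(c(n-\theta)^m) \bmod u)_{n \ge 0}$ is eventually periodic, fix a period $P \ge 1$ valid from some index $N_0$ on, and set $e = \nu_p(P)$. The main device is to find, for each large $v$, a single residue class modulo $P$ lying past $N_0$ along which $\nu_p(\cdot - \theta)$ realizes every valuation in a long initial segment. Concretely: using that $\N$ is dense in $\Z_p$ (or the explicit choice $n = p^{v+1} N + \theta[v+1] + p^v$ for suitably large $N$), pick $n \ge N_0$ with $\nu_p(n - \theta) = v$; then for $0 \le k \le v-e-1$ one has $\nu_p(p^k P) = k + e < v$, so the ultrametric inequality yields $\nu_p\big((n + p^k P) - \theta\big) = \nu_p\big((n-\theta) + p^k P\big) = k+e$. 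Since $n + p^k P \equiv n \pmod{P}$ and all of these indices exceed $N_0$, periodicity forces $g(k+e) \equiv g(v) \pmod{u}$ for every such $k$; in other words $g$ is constant modulo $u$ on $\{e, e+1, \dots, v\}$. Letting $v \to \infty$ then makes $(g(v) \bmod u)_{v \ge 0}$ eventually constant, contradicting $(\ast)$ and finishing the proof.

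The step I expect to take the most care is the construction just described, and specifically the point that it works uniformly regardless of whether $p \mid P$: the factor $p^k$ in the translate $n + p^k P$ is exactly what makes the valuation $\nu_p(p^k P) = k + e$ of the perturbation run through $e, e+1, e+2, \dots$ for any $e = \nu_p(P)$, so no case split between $p \mid P$ and $p \nmid P$ is needed. The remaining ingredients are routine: realizing a prescribed valuation $\nu_p(n-\theta) = v$ by a natural number beyond $N_0$ (a short computation with the $\theta[\cdot]$, $\theta\{\cdot\}$ notation), the ultrametric computation of a valuation of a sum of terms of unequal valuation, and the identity $g(v+t) = g(v) + m$. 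I note finally that here $\theta$ is an arbitrary element of $\Z_p$, possibly irrational, so --- unlike in Proposition \ref{prop:rational_root} --- one cannot first normalize to $\theta = 0$ via Proposition \ref{prop:arithmetic_prog}; happily, the argument above needs no such reduction.
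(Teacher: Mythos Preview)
Your argument is correct, but it proceeds differently from the paper's. The paper invokes Lemma~\ref{lem:different_val_digits}(i) with $\sigma=\theta-T$ (where $T$ is the alleged period) to produce infinitely many $n$ with $\nu_p(n+T-\theta)=\nu_p(n-\theta)+t$; then the floor formula gives $\nu_{p^t}(c(n+T-\theta)^m)=\nu_{p^t}(c(n-\theta)^m)+m$, which immediately contradicts periodicity modulo $u$ since $u\nmid m$. In other words, the paper engineers a single shift by one period that moves the valuation by exactly $t$, so that $g$ jumps by exactly $m$. You instead use \emph{multiples} $p^kP$ of the period to sweep through all valuations $e,e+1,\dots,v-1$ at indices congruent to a fixed $n$, forcing $g$ to be eventually constant modulo $u$, and only then bring in $g(v+t)=g(v)+m$ for the contradiction. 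Your route is a bit longer but entirely self-contained (it does not appeal to Lemma~\ref{lem:different_val_digits}), and it makes the role of the auxiliary function $g$ very explicit; the paper's route is shorter precisely because that lemma absorbs the construction you carry out by hand.
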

\begin{proof}
Suppose for the sake of contradiction that $(\nu_{p^t}((n-\theta)^m) \bmod{u})_{n \geq 0}$ is eventually periodic with period $T > 0$. We have
\begin{equation} \label{eq:eventually_periodic_valuation}
\nu_{p^t}(c (n-\theta)^m) = \left \lfloor \frac{\nu_p(c) + m \nu_p(n - \theta)}{t} \right \rfloor.
\end{equation}
Lemma \ref{lem:different_val_digits}(i) applied to $\sigma = \theta - T$ provides infinitely many $n \in \N$ such that $\nu_p(n+T-\theta) = \nu_p(n-\theta) + t$.  Using the formula \eqref{eq:eventually_periodic_valuation} twice, for such $n$ we obtain
$$ \nu_{p^t}(c (n+T-\theta)^m) = \nu_{p^t}(c (n-\theta)^m) + m   $$
so these two valuations are not congruent modulo $u$, thus a contradiction.
\end{proof}

We are now ready to prove a necessary and sufficient condition for eventual periodicity of $(\ell_{p^l,d}(c(n-\theta)^m))_{n \geq 0}$. For later reference we highlight it below:
\begin{equation} \label{eq:condition}
l \mid m \text{ and } \lambda(p^{ld-\nu_p(c) \bmod{l}}) \mid m. \tag{C}
\end{equation}
Note that $\R'_f$, as defined in Section \ref{sec:main}, contains precisely these $\theta$ for which the condition \eqref{eq:condition} with $c = g_{\theta}(\theta), m = m_f(\theta)$ does not hold.

\begin{prop} \label{prop:eventually_periodic_digits}
The condition \eqref{eq:condition} is equivalent to the following:
\begin{enumerate}[label={\textup{(\roman*)}}]
\item the sequence $(\ell_{p^l,d}(c(n-\theta)^m))_{n \geq 0}$ is eventually periodic;
\item $\ell_{p^l,d}(c(n-\theta)^m) = \ell_{p^l,d}(c)$ for all $n \in \N$.
\end{enumerate}
\end{prop}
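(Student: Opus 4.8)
\emph{Proof plan.} The plan is to establish the cycle of implications \eqref{eq:condition} $\Rightarrow$ (ii) $\Rightarrow$ (i) $\Rightarrow$ \eqref{eq:condition}. Everything will be read off one identity, obtained by combining Propositions \ref{prop:last_digits_factorization} and \ref{prop:multiplicative_properties} (the factorization of $\ell_{p,ld}$ being unconditional, since the base $p$ is prime): for all $n \in \N$ with $n \neq \theta$,
\begin{equation} \label{eq:planstar}
\ell_{p^l,d}(c(n-\theta)^m) \equiv \ell_{p,ld}(c)\,\ell_{p,ld}(n-\theta)^{m}\, p^{(\nu_p(c) + m\nu_p(n-\theta)) \bmod l} \pmod{p^{ld}}.
\end{equation}
Write $\delta_0 = \nu_p(c) \bmod l$ and $e_n = (\nu_p(c) + m\nu_p(n-\theta)) \bmod l$. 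Since $\ell_{p,ld}(c)$ and $\ell_{p,ld}(n-\theta)$ are prime to $p$ and $0 \le e_n \le l-1 < ld$, the right-hand side of \eqref{eq:planstar} has $p$-adic valuation exactly $e_n$; in particular $\nu_p(\ell_{p^l,d}(c(n-\theta)^m)) = e_n$, while applying \eqref{eq:planstar} with $c$ in place of $c(n-\theta)^m$ gives $\ell_{p^l,d}(c) \equiv \ell_{p,ld}(c)\, p^{\delta_0} \pmod{p^{ld}}$. Throughout I assume $c \neq 0$ (the case $c = 0$ being vacuous); if $\theta \in \N$, the single index $n = \theta$, at which $c(n-\theta)^m = 0$, is ignored, as it affects neither eventual periodicity nor, up to that one term, assertion (ii).

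The implication (ii) $\Rightarrow$ (i) is immediate, a constant sequence being periodic. For \eqref{eq:condition} $\Rightarrow$ (ii), suppose $l \mid m$ and $\lambda(p^{ld-\delta_0}) \mid m$; then $e_n = \delta_0$ for every $n$, so \eqref{eq:planstar} yields
\[
\ell_{p^l,d}(c(n-\theta)^m) - \ell_{p^l,d}(c) \equiv \ell_{p,ld}(c)\, p^{\delta_0}\bigl(\ell_{p,ld}(n-\theta)^{m} - 1\bigr) \pmod{p^{ld}}.
\]
Since $p \nmid \ell_{p,ld}(n-\theta)$ and $\lambda(p^{ld-\delta_0}) \mid m$, the factor $\ell_{p,ld}(n-\theta)^{m} - 1$ is divisible by $p^{ld-\delta_0}$, so the difference vanishes modulo $p^{ld}$; both values lying in $\{0,\dots,p^{ld}-1\}$, they coincide, which is (ii).

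It remains to prove (i) $\Rightarrow$ \eqref{eq:condition}, which I would do by contraposition, splitting on whether $l \mid m$. If $l \nmid m$ and (i) held, then the valuation sequence $(e_n)_n = (\nu_p(\ell_{p^l,d}(c(n-\theta)^m)))_n$ would be eventually periodic; given a putative period $T$, Lemma \ref{lem:different_val_digits}(i) applied with $\sigma = \theta - T$ (and $t = -1$) produces arbitrarily large $n$ with $\nu_p((n+T)-\theta) = \nu_p(n-\theta)+1$, whence $e_{n+T} \equiv e_n + m \not\equiv e_n \pmod l$ — this is the mechanism behind Lemma \ref{lem:eventually_periodic_valuation} — a contradiction. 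If $l \mid m$ but $\lambda(p^{ld-\delta_0}) \nmid m$, put $\delta = ld - \delta_0 \ge 1$; since $\lambda(2) = 1$ divides every integer, the hypothesis forces $p \ge 3$ or $\delta \ge 2$, so Lemma \ref{lem:different_val_digits}(ii) is non-vacuous. Here $e_n = \delta_0$ is constant, and dividing \eqref{eq:planstar} by the fixed unit $\ell_{p,ld}(c)$ and the fixed factor $p^{\delta_0}$, and using $\ell_{p,ld}(n-\theta) \equiv \ell_{p,\delta}(n-\theta) \pmod{p^\delta}$, one sees that (i) is equivalent to eventual periodicity of $\bigl(\ell_{p,\delta}(n-\theta)^{m} \bmod p^{\delta}\bigr)_n$. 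Granting this with period $T$, Lemma \ref{lem:different_val_digits}(ii) with $\sigma = \theta - T$ supplies, for each unit $x$ modulo $p^\delta$ distinct from $\ell_{p,\delta}(T)$, arbitrarily large $n$ with $\ell_{p,\delta}(n-\theta) = x$ and $\ell_{p,\delta}((n+T)-\theta) = \ell_{p,\delta}(T)$; comparing these terms forces $x^{m} \equiv \ell_{p,\delta}(T)^{m} \pmod{p^\delta}$ for all such $x$, hence (trivially also for $x = \ell_{p,\delta}(T)$, and then taking $x = 1$) $x^{m} \equiv 1 \pmod{p^\delta}$ for every unit $x$, i.e.\ $\lambda(p^\delta) \mid m$, contradicting the assumption.

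The hardest step, I expect, is this last case: one must carefully turn eventual periodicity of the original sequence into that of $\bigl(\ell_{p,\delta}(n-\theta)^{m} \bmod p^{\delta}\bigr)_n$ — keeping the two moduli $p^{ld}$ and $p^{ld-\delta_0}$ apart and disposing of the $p = 2$, $\delta = 1$ degeneracy — and then push Lemma \ref{lem:different_val_digits}(ii) hard enough to conclude that the $m$-th power map on $(\Z/p^\delta)^\times$ is trivial, which is exactly the divisibility $\lambda(p^{ld-\delta_0}) \mid m$. The remaining manipulations, all concerning $p$-adic valuations, should be routine.
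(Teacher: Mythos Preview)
Your proof is correct and follows essentially the same approach as the paper's: both establish the key identity relating $\ell_{p^l,d}$ to $\ell_{p,ld}$ and the valuation, use it directly for \eqref{eq:condition}$\Rightarrow$(ii), and for (i)$\Rightarrow$\eqref{eq:condition} first force $l\mid m$ via the mechanism of Lemma~\ref{lem:eventually_periodic_valuation} and then use Lemma~\ref{lem:different_val_digits}(ii) to obtain the $\lambda$-divisibility. The only cosmetic differences are that the paper first normalizes $\nu_p(c)\in\{0,\dots,l-1\}$ and applies Lemma~\ref{lem:different_val_digits}(ii) with $\delta=ld$ rather than your $\delta=ld-\delta_0$, and that it picks a single bad $x$ to reach a contradiction whereas you derive $x^m\equiv 1$ for all units $x$; these are equivalent packagings of the same argument.
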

\begin{proof}
When multiplying $c$ by an integer power of $p^l$ the value $\ell_{p^l,d}(c(n-\theta)^m)$ does not change, hence without loss of generality we can assume that $\nu_p(c) \in \{0,1,\ldots,l-1\}$. 

We first prove the implication \eqref{eq:condition}$\implies$(ii). 
Since $l \mid m$, by Propositions \ref{prop:multiplicative_properties}(ii) and \ref{prop:last_digits_factorization} we have
\begin{equation} \label{eq:eventually_periodic_digits}
\ell_{p^l,d} (c(n - \theta)^m) \equiv \ell_{p^l,d}(c) \ell_{p^l,d}((n - \theta)^m) \equiv  c (\ell_{p,ld}(n - \theta))^m  \pmod{p^{ld}}. 
\end{equation} 
At the same time, $\lambda(p^{ld-\nu_p(c)}) \mid m$ implies
$$ (\ell_{p,ld}(n - \theta))^m \equiv 1 \pmod{p^{ld-\nu_p(c)}}.$$
Combining the two congruences we obtain (ii).

Since the implication (ii)$\implies$(i) is obvious, we are left with proving (i)$\implies$\eqref{eq:condition}. Suppose that the sequence $(\ell_{p^l,d}(c(n-\theta)^m))_{n \geq 0}$ is eventually periodic with period $T$. As $\nu_p(\ell_{p^l,d}(x))=\nu_p(x) \bmod{l}$ for any $x \in \Z_p$ , the sequence $(\nu_p((n-\theta)^m) \bmod{l})_{n \geq 0}$ is eventually periodic as well with the same period. Lemma \ref{lem:eventually_periodic_valuation} applied to $t=1,u=l$ implies that $l \mid m$, and thus again we get 	\eqref{eq:eventually_periodic_digits}.

For the sake of contradiction suppose further that $\lambda(p^{ld-\nu_p(c)}) \nmid m$. This means that there exists $x \in \{1,\ldots,p^{ld-\nu_p(c)}-1\}$ not divisible by $p$ and such that
$x^m \not \equiv (\ell_{p,ld}(T))^m \pmod{p^{ld-\nu_p(c)}}$. Applying Lemma \ref{lem:different_val_digits}(ii) to $\sigma = \theta - T$ and $\delta =ld$ gives infinitely many values of $n$ such that
$$(\ell_{p,ld}(n+T-\theta))^m \equiv (\ell_{p,ld}(T))^m  \not\equiv x^m \equiv  (\ell_{p,ld}(n-\theta))^m \pmod{p^{ld-\nu_p(c)}}.$$
This non-congruence still holds when both sides are multiplied by $c$ and the modulus is changed to $p^{ld}$. Using \eqref{eq:eventually_periodic_digits}, we  obtain $\ell_{p^l,d}(c(n+T-\theta)^m) \neq \ell_{p^l,d}(c(n-\theta)^m)$ for infinitely many $n$, which contradicts periodicity.
\end{proof}

We now turn to the case when $\theta$ is an irrational $p$-adic integer, with the aim to prove nonregularity of the considered sequences. In this regard, the following result is an analogue of Lemma \ref{lem:eventually_periodic_valuation}.

\begin{lem} \label{lem:not_automatic_valuation}
Assume that $\theta\in \Z_p \setminus \Q$  and let $t,u$ be positive integers such that $u \nmid m$. Then the sequence $(\nu_{p^t}(c (n-\theta)^m) \bmod{u})_{n \geq 0}$ is not automatic.
\end{lem}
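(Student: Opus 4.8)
The plan is a proof by contradiction. Suppose the sequence $(a_n)_{n\ge 0}$ with $a_n=\nu_{p^t}(c(n-\theta)^m)\bmod u$ is $k$-automatic for some $k\ge 2$; I will show that its $k$-kernel $\K_k$, which consists of the subsequences $(a_{k^i n+j})_n$ for $i\ge 0$ and $0\le j<k^i$, is infinite — a contradiction. Set $w=\nu_p(c)\in\Z$ and $v_n=\nu_p(n-\theta)$ (finite since $\theta\notin\N$), and let $\phi(v)=\big\lfloor (w+mv)/t\big\rfloor \bmod u$ for $v\in\N$, so that $a_n=\phi(v_n)$. The only properties of $\phi$ I will use are that $\phi(v+t)\equiv\phi(v)+m\pmod u$ for every $v$, hence $\phi(v+tq)\equiv\phi(v)+mq\pmod u$ for $q\ge 1$, and, because $u\nmid m$, that $\phi(v')\not\equiv\phi(v'')\pmod u$ whenever $v'-v''=t$; in particular $\phi$ is nonconstant.

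Next I would describe the subsequences of $(a_n)$ along residue classes modulo powers of $k$. Write $k=p^a k'$ with $\gcd(k',p)=1$, fix $i\ge 0$ and $j\in\{0,1,\dots,k^i-1\}$, and distinguish two cases. If $\nu_p(j-\theta)<ai$, then since $\nu_p(k^i n)\ge ai$ for all $n$, Proposition~\ref{prop:valuation_properties}(iii) gives $\nu_p(k^i n+j-\theta)=\nu_p(j-\theta)$ for all $n$, so $(a_{k^i n+j})_n$ is constant. Otherwise $\nu_p(j-\theta)\ge ai$, and pulling out the factor $p^{ai}(k')^i$ yields $k^i n+j-\theta=p^{ai}(k')^i(n-\theta_{i,j})$ where
$$\theta_{i,j}=-\big((k')^i\big)^{-1}\,\frac{j-\theta}{p^{ai}}\in\Z_p,$$
so that $(a_{k^i n+j})_n=\big(\phi\big(ai+\nu_p(n-\theta_{i,j})\big)\big)_n$. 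Two facts are immediate: each $\theta_{i,j}$ is irrational, being a nonconstant affine function of $\theta$ with rational coefficients; and $(i,j)\mapsto\theta_{i,j}$ is injective on admissible pairs, since for $i=i'$ it is clear, while for $i<i'$ the relation $\theta_{i,j}=\theta_{i',j'}$ rearranges to $\theta(k^{i'-i}-1)=jk^{i'-i}-j'$, forcing $\theta\in\Q$.

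The heart of the argument is a separation claim: for distinct $\vartheta_1,\vartheta_2\in\Z_p$ and any $s_1,s_2\in\N$, the sequences $\big(\phi(s_1+\nu_p(n-\vartheta_1))\big)_n$ and $\big(\phi(s_2+\nu_p(n-\vartheta_2))\big)_n$ differ. Here Lemma~\ref{lem:different_val_digits}(i) supplies, for any prescribed $\tau\in\Z\setminus\{0\}$, infinitely many $n$ with $\nu_p(n-\vartheta_1)=\nu_p(n-\vartheta_2)+\tau$; choosing $\tau=tq+(s_2-s_1)$ for a positive integer $q$ with $u\nmid mq$ and $\tau\ne 0$ (possible since $u\nmid m$ makes such $q$ infinitely many, while $\tau=0$ excludes at most one value), one gets, for those $n$ and with $V:=s_2+\nu_p(n-\vartheta_2)$, that $s_1+\nu_p(n-\vartheta_1)=V+tq$, hence $\phi(s_1+\nu_p(n-\vartheta_1))-\phi(s_2+\nu_p(n-\vartheta_2))=\phi(V+tq)-\phi(V)\equiv mq\not\equiv 0\pmod u$; when $s_1=s_2$ one simply takes $\tau=t$. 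Applying this: if $k'\ge 2$, then for each $i$ the admissible indices $j$ (those $\equiv\theta[ai]\pmod{p^{ai}}$, of which there are $(k')^i\ge 1$) give, by injectivity and the separation claim with $s_1=s_2=ai$, exactly $(k')^i$ pairwise distinct elements $(a_{k^i n+j})_n$ of $\K_k$, so $\K_k$ is infinite. If $k'=1$, i.e.\ $k=p^a$, then level $i$ contributes the single nonconstant kernel element $h^{(i)}=\big(\phi(ai+\nu_p(n-\theta\{ai\}))\big)_n$; since $\theta$ is irrational the integers $\theta\{ai\}$, $i\ge 0$, are pairwise distinct, so the separation claim (now with the genuinely different shifts $s_1=ai_1$, $s_2=ai_2$) shows the $h^{(i)}$ are pairwise distinct, and again $\K_k$ is infinite. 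Either way $(a_n)$ is not $k$-automatic, and since $k$ was arbitrary it is not automatic.

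The step I expect to require the most care is the case $k=p^a$ (equivalently, any $k$ that is a power of $p$): there each level of the kernel yields only one new sequence, so finiteness of $\K_k$ cannot be refuted by counting within a single level, and one must instead compare kernel elements from different levels — which is exactly why the shift $\tau$ (equivalently $q$) in the separation claim must be chosen carefully, so that $\phi$ genuinely distinguishes the two sequences despite the unequal additive shifts $ai_1\ne ai_2$. Everything else reduces to routine $p$-adic bookkeeping already encapsulated in Lemma~\ref{lem:different_val_digits}.
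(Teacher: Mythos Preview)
Your proof is correct and follows the same strategy as the paper's: show directly that the $k$-kernel is infinite by exhibiting infinitely many pairwise distinct subsequences, separating them via Lemma~\ref{lem:different_val_digits}(i) together with the identity $\phi(v+t)\equiv\phi(v)+m\pmod u$. The one technical difference is that the paper first replaces $k$ by $k^{ut}$ (legitimate by Proposition~\ref{prop:mult_dep}(i)), so that $ut\mid e=\nu_p(k)$; then the additive shift $ej$ disappears modulo $u$ after applying $\phi$, a single offset $\theta[ej]$ per level suffices, and the simple choice $\tau=t$ in Lemma~\ref{lem:different_val_digits}(i) always works. You instead keep $k$ arbitrary and compensate with your more general separation claim (allowing unequal shifts $s_1,s_2$ and a carefully chosen $q$), which then forces the case split on whether $k$ is a pure $p$-power. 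Both routes are valid; the paper's normalization is a bit slicker, while yours is slightly more hands-on.
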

\begin{proof}
Fix an integer $k \geq 2$ and write $k = p^e r$, where $e, r \in \N$ and $\nu_p(r)=0$. 
By Proposition \ref{prop:mult_dep}(i) without loss of generality we can replace $k$ with $k^{ut}$, and thus assume that $e$ is divisible by $ut$. We will prove directly that the $k$-kernel of the considered sequence is infinite. More precisely, we claim that for $j \in \N$ the subsequences 
$(\nu_{p^t}(c(k^{j}n + \theta[ej] - \theta)) \bmod{u})_{n \geq 0}$
are all distinct.  First, we compute
$$ \nu_p(k^{j}n + \theta[ej] - \theta)  =  ej +  \nu_p(n - r^{-j}\theta\{ej\}),$$
which implies that
\begin{align} \label{eq:not_automatic_valuation}
\nu_{p^t}(c(k^{j}n + \theta[ej] - \theta)^m) &= \left \lfloor \frac{\nu_p(c) + m \nu_p(k^{j}n + \theta[ej] - \theta)}{t} \right \rfloor \nonumber \\
&\equiv \left \lfloor \frac{\nu_p(c) + m \nu_p(n - r^{-j}\theta\{ej\})}{t} \right \rfloor \pmod{u},
\end{align}
where we used $ut \mid e$.

Now, fix any $i,j \in \N$ such that $i \neq j$. Since $\theta$ is irrational, we have $r^{-i}\theta\{ei\} \neq r^{-j}\theta\{ej\}$, and thus by Lemma \ref{lem:different_val_digits}(i) we obtain $n \in \N$ such that 
$$\nu_p(n - r^{-j}\theta\{ej\}) = \nu_p(n - r^{-i}\theta\{ei\}) + t.$$
Plugging this in \eqref{eq:not_automatic_valuation}, we obtain
$$
\nu_{p^t}(c(k^{j}n + \theta[ej] - \theta)^m)  \equiv \nu_{p^t}(c(k^{i}n + \theta[ei] - \theta)^m) + m \pmod{u}.
$$
As $m \nmid u$, we have found an index $n$ at which the two subsequences corresponding to $i,j$ differ, and the result follows.
\end{proof}

It turns out that when $\theta \not\in \Q$, the sequence $(\ell_{p^l,d}(p^v(n-\theta)^m))_{n \geq 0}$ is not automatic at all that unless the condition \eqref{eq:condition} is satisfied.

\begin{prop} \label{prop:not_automatic_digits}
Assume that  $\theta \in \Z_p \setminus \Q$. Then we have the following:
\begin{enumerate}[label={\textup{(\roman*)}}]
\item the sequence $(\nu_{p^l}(c(n-\theta)^m))_{n \geq 0}$ is not regular;
\item the sequence $(\LL_{p^l}(c(n-\theta)^m)))_{n \geq 0}$ is not regular;
\item if \eqref{eq:condition} does not hold, the sequence $(\ell_{p^l,d}(c(n-\theta)^m)))_{n \geq 0}$ is not automatic.
\end{enumerate}
\end{prop}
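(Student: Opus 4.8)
The plan is to handle the three parts in the order (i), (ii), (iii), reducing each to a contradiction with one of the earlier auxiliary results; we assume throughout that $c \neq 0$ (this is all that is needed for the applications). For (i): if $(\nu_{p^l}(c(n-\theta)^m))_{n \geq 0}$ were regular it would be $k$-regular for some $k$, hence so would its reduction modulo any integer $u \geq 1$ (Proposition \ref{prop:function_regular}(ii), the quotient $\Z \to \Z/u\Z$ being a $\Z$-module map), and that reduction is automatic since it takes finitely many values; choosing $u \nmid m$, say $u = m+1$, contradicts Lemma \ref{lem:not_automatic_valuation} with $t = l$. For (ii): if $(\LL_{p^l}(c(n-\theta)^m))_{n \geq 0}$ were regular, then by Corollary \ref{cor:regular_modulo} its reduction modulo $p^{ld}$, which is precisely $(\ell_{p^l,d}(c(n-\theta)^m))_{n \geq 0}$, would be automatic for every $d \geq 1$; but for $d$ large the quantity $\lambda(p^{ld - (\nu_p(c) \bmod l)})$ exceeds $m$, so condition \eqref{eq:condition} fails and part (iii) forbids automaticity.

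It remains to prove (iii), which is the heart of the matter. Suppose $(\ell_{p^l,d}(c(n-\theta)^m))_{n \geq 0}$ is $k$-automatic. Since $\nu_p(\ell_{p^l,d}(x)) = \nu_p(x) \bmod l$ and $c(n-\theta)^m \neq 0$ for all $n$ (as $\theta \notin \Q$), applying $y \mapsto \nu_p(y) \bmod l$ via Proposition \ref{prop:function_regular}(i) makes $(\nu_p(c(n-\theta)^m) \bmod l)_{n \geq 0}$ automatic, so Lemma \ref{lem:not_automatic_valuation} with $t = 1$, $u = l$ forces $l \mid m$; otherwise we are done. Assume then $l \mid m$ and, after multiplying $c$ by a power of $p^l$, that $0 \leq \nu_p(c) < l$. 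The computation in \eqref{eq:eventually_periodic_digits} gives $\ell_{p^l,d}(c(n-\theta)^m) \equiv c (\ell_{p,ld}(n-\theta))^m \pmod{p^{ld}}$; writing $c = p^{\nu_p(c)} \varepsilon$ with $\varepsilon$ a unit and using that $\ell_{p,ld}(n-\theta)$ is always a unit modulo $p^{ld}$, the value $\ell_{p^l,d}(c(n-\theta)^m)$ has $\nu_p$ equal to $\nu_p(c)$, and sending it to $\varepsilon^{-1}\bigl(\ell_{p^l,d}(c(n-\theta)^m)/p^{\nu_p(c)}\bigr)$ reduced modulo $p^{s}$, with $s := ld - \nu_p(c) \geq 1$, produces the sequence $\bigl((\ell_{p,s}(n-\theta))^m \bmod p^{s}\bigr)_{n \geq 0}$, which by Proposition \ref{prop:function_regular}(i) is $k$-automatic as well. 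Now argue as in the proof of Lemma \ref{lem:not_automatic_valuation}: write $k = p^e r$ with $p \nmid r$; for each $j \in \N$ the $k$-kernel of this last sequence contains
\[
\bigl((r^j)^m (\ell_{p,s}(n-\beta_j))^m \bmod p^{s}\bigr)_{n \geq 0}, \qquad \beta_j := r^{-j} \theta\{ej\},
\]
and the $\beta_j$ are pairwise distinct because $\theta \notin \Q$. As the $k$-kernel is finite, infinitely many of these subsequences coincide; among the indices $j$ yielding one such common subsequence, pick $i < j$ with $j - i$ divisible by the multiplicative order of $r$ modulo $p^{s}$, so that $(r^i)^m \equiv (r^j)^m \pmod{p^{s}}$. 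Cancelling this unit, the coincidence of the $i$-th and $j$-th subsequences becomes $(\ell_{p,s}(n-\beta_i))^m \equiv (\ell_{p,s}(n-\beta_j))^m \pmod{p^{s}}$ for all $n$. But $\lambda(p^{s}) \nmid m$ means the $m$-th power map on $(\Z/p^{s}\Z)^\times$ is not constant, so there is a unit $x \in \{1, \dots, p^{s}-1\}$ with $x^m \not\equiv (\ell_{p,s}(\beta_i - \beta_j))^m \pmod{p^{s}}$, whence also $x \neq \ell_{p,s}(\beta_i - \beta_j)$; then Lemma \ref{lem:different_val_digits}(ii), with $\delta = s$ and $(\theta, \sigma) = (\beta_i, \beta_j)$, supplies infinitely many $n$ with $\ell_{p,s}(n - \beta_i) = x$ and $\ell_{p,s}(n - \beta_j) = \ell_{p,s}(\beta_i - \beta_j)$, at each of which the displayed congruence fails — a contradiction.

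The valuation bookkeeping and the passage between $\ell_{p^l,d}$ and $\ell_{p,s}$ are routine. The one genuinely delicate point in (iii) is that one cannot simply assert that all kernel subsequences are distinct: when $k$ is not a power of $p$ the unit factors $(r^j)^m$ can, modulo small prime powers, make two of them agree. This is circumvented by first passing to an infinite family of indices on which the kernel subsequence is constant and only then selecting $i < j$ with $j - i$ a multiple of $\mathrm{ord}(r \bmod p^{s})$; this choice kills the $r$-factor and lets the final application of Lemma \ref{lem:different_val_digits}(ii) go through uniformly, including degenerate cases such as $p = 3$, $s = 1$.
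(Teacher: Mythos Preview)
Your proof is correct and follows the same overall architecture as the paper: (i) reduces to Lemma~\ref{lem:not_automatic_valuation} via reduction modulo $u\nmid m$; (ii) reduces to (iii) for large $d$; and (iii) splits on whether $l\mid m$, invoking Lemma~\ref{lem:not_automatic_valuation} in the first subcase and Lemma~\ref{lem:different_val_digits}(ii) in the second.

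The one place where you diverge from the paper is in handling the unit factor $r^{jm}$ arising from $k=p^e r$. The paper disposes of it at the outset by replacing $k$ with a power so that $r\equiv 1\pmod{p^{ld}}$; this makes the $r$-factors invisible and allows one to show that \emph{all} the kernel subsequences $(\ell_{p^l,d}(c(k^{j}n+\theta[ej]-\theta)^m))_{n\geq 0}$ are pairwise distinct. You instead keep $k$ fixed, pass to the extracted sequence $((\ell_{p,s}(n-\theta))^m\bmod p^s)$, and run a two-stage pigeonhole: first an infinite set of $j$ on which the kernel subsequence is constant, then $i<j$ in that set with $j-i$ divisible by the order of $r$ modulo $p^s$, so that the $r$-factors cancel a posteriori. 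Both devices work; the paper's is slightly cleaner (one line instead of two pigeonholes), while yours has the minor conceptual advantage of not altering $k$ and of making explicit why one cannot simply claim all kernel subsequences differ.
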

\begin{proof}
If the sequence $(\nu_{p^l}(c(n-\theta)^m))_{n \geq 0}$ were $k$-regular for some $k$, then its reduction modulo any positive integer $u$ would be $k$-automatic. Lemma \ref{lem:not_automatic_valuation} applied to $t=l$ shows that this is not the case. 

Part (ii) follows immediately from (iii), again by the fact that the reduction modulo $p^{ld}$ of a $k$-regular sequence of $p$-adic integers yields a $k$-automatic sequence.

Therefore, we have left to prove (iii). Similarly as in Proposition \ref{prop:eventually_periodic_digits}, without loss of generality $\nu_p(c) \in \{0,1,\ldots,l-1\}$. If $l \nmid m$, then Lemma \ref{lem:not_automatic_valuation} applied to $t=1, u=l$ shows that $(\nu_p(c(n-\theta)^m) \bmod{l} )_{n \geq 0}$ is not automatic, and the assertion follows. 

Otherwise, if $l \mid m$, we must have $\lambda(p^{ld-\nu_p(c)}) \nmid m$. Fix an integer $k \geq 2$ and write $k = p^e r$, where $e, r \in \N$ and $\nu_p(r)=0$. Raising $k$ to a suitable power, we can assume $r \equiv 1 \pmod{p^{ld}}$. We claim that for $j \in \N$ the subsequences $(\ell_{p^l,d}(c(k^{j}n + \theta[ej] - \theta)^m))_{n \geq 0}$ are all distinct, which further implies that the considered sequence in not $k$-automatic. As $l \mid m$, we the congruence \eqref{eq:eventually_periodic_digits} holds, yielding after some simplifications
\begin{equation} \label{eq:not_automatic_digits}
\ell_{p^l,d}(c(k^{j}n + \theta[ej] - \theta)^m)
 \equiv c (\ell_{p,ld}(n - r^{-j}\theta\{ej\}))^m \pmod{p^{ld}}. 
\end{equation}
Again, we have $r^{-i}\theta\{ei\} \neq r^{-j}\theta\{ej\}$ for any distinct $i,j \in \N$. Hence, by Lemma \ref{lem:different_val_digits} we can find $n \in \N$ such that
$$ (\ell_{p,ld}(n - r^{-i}\theta\{ei\}))^m \not \equiv (\ell_{p,ld}(n - r^{-j}\theta\{ej\}))^m \pmod{p^{ld-\nu_p(c)}}. $$
Multiplying both sides $c$, the modulus by $p^{\nu_p(c)}$ and using \eqref{eq:not_automatic_digits}, we get
$$\ell_{p^l,d}(c(k^{i}n + \theta[ei] - \theta)^m) \not \equiv \ell_{p^l,d}(c(k^{j}n + \theta[ej] - \theta)^m) \pmod{p^{ld}}. $$
The result follows.
\end{proof}

We now state the final auxiliary result, which allows us to reduce studying general  functions $f \in \A_p$ to those having at most one root in $\Z_p$.

\begin{prop} \label{prop:constant_val_digits}
Let $f \in \A_p$ and $\delta \geq 1$ be an integer. 
Then there exists an integer $T \geq 0$ such that for each $a =0,1,\ldots,p^T-1$ the function $f_{a} \in \mathcal{A}_p$, defined by 
$$f_a(x) = f(p^Tx + a),$$
has at most one root in $\Z_p$. More precisely: 
\begin{enumerate}[label={\textup{(\roman*)}}]
\item if $a \neq \theta[T]$ for all $\theta \in \mathcal{R}_f$, then $f_{a}$ has no root in $\Z_{p}$;
\item if $a = \theta[T]$ for some $\theta \in \mathcal{R}_f$, then $f_a$ has precisely one root in $\theta\{T\} \in \Z_p$. 
\end{enumerate}
Furthermore, for each $\theta \in \R_f$ let
$$ f(x) = (x-\theta)^{m_f(\theta)} g_{\theta}(x),  $$
where $g_{\theta} \in \A_p$. Then $T$ can be chosen in such a way that for all $\theta \in \R_f$ and $x \in \Z_{p}$ we have
\begin{align} 
\nu_p(g_{\theta}(p^Tx +\theta[T])) &= \nu_p (g_{\theta}(\theta)), \label{eq:constant_val} \\
\ell_{p,\delta}(g_{\theta}(p^Tx +\theta[T])) &= \ell_{p,\delta}(g_{\theta}(\theta)). \label{eq:constant_digits}
\end{align}
\end{prop}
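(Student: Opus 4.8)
The plan is to choose $T$ uniformly large enough to handle three constraints simultaneously: separating the finitely many roots of $f$, stabilizing the $p$-adic valuation of each cofactor $g_\theta$, and stabilizing its last $\delta$ digits. First I would invoke Strassman's Theorem to know that $\R_f = \{\theta_1,\ldots,\theta_N\}$ is finite, so that for some $T_0$ the initial segments $\theta_i[T_0]$ are pairwise distinct; for $T \geq T_0$, if $a = \theta_i[T]$ this determines $i$ uniquely. Next, since $g_\theta \in \A_p$ has no root at $\theta$ (as $\theta$ has been factored out with full multiplicity), we have $\nu_p(g_\theta(\theta)) < \infty$; by continuity of $g_\theta$ there is a neighborhood $\theta + p^{T_\theta}\Z_p$ on which $\nu_p(g_\theta(\cdot))$ is constant equal to $\nu_p(g_\theta(\theta))$, and in fact, enlarging $T_\theta$ if needed, on which $\ell_{p,\delta}(g_\theta(\cdot)) = \ell_{p,\delta}(g_\theta(\theta))$ (a stronger but still open condition, requiring $g_\theta$ to vary only in digits beyond position $\nu_p(g_\theta(\theta)) + \delta$). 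This uses uniform continuity of $g_\theta$ on the compact set $\Z_p$, exactly as in the proof of Proposition \ref{prop:no_root}. I would then take $T = \max\{T_0, T_{\theta_1}, \ldots, T_{\theta_N}\}$; note that $\theta\{T\} \in \Z_p$ makes sense since $\theta - \theta[T] = p^T \theta\{T\}$.

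With this $T$ fixed, verifying the claimed properties is bookkeeping. For (ii): if $a = \theta[T]$, then writing $f_a(x) = f(p^T x + a)$, the substitution $p^T x + a = p^T x + \theta[T]$ is a root precisely when $p^T x + \theta[T] = \theta$, i.e. $x = \theta\{T\}$; and since $p^T x + \theta[T] \in \theta + p^T\Z_p$ avoids all other roots $\theta' \neq \theta$ (because $\theta'[T] \neq \theta[T]$), $f_a$ has exactly this one root. For (i): if $a \neq \theta[T]$ for every $\theta \in \R_f$, then $p^T x + a$ never lands in any $\theta + p^T \Z_p \supseteq \R_f \cap (a + p^T\Z_p) = \varnothing$, so $f_a$ has no root in $\Z_p$. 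Finally \eqref{eq:constant_val} and \eqref{eq:constant_digits} are immediate: $p^T x + \theta[T] \in \theta + p^T\Z_p$, which is contained in the neighborhood on which $\nu_p(g_\theta)$ and $\ell_{p,\delta}(g_\theta)$ were arranged to be constant.

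The main obstacle is the uniform-continuity argument for \eqref{eq:constant_digits}: one must be careful that fixing the last $\delta$ nonzero digits of $g_\theta(x)$, rather than merely its valuation, is genuinely an open condition on $x$. This holds because $\ell_{p,\delta}(y)$ depends only on $y \bmod p^{\nu_p(y)+\delta}$, and on a small enough ball around $\theta$ both $\nu_p(g_\theta(x)) = \nu_p(g_\theta(\theta))$ is constant and $g_\theta(x) \equiv g_\theta(\theta) \pmod{p^{\nu_p(g_\theta(\theta))+\delta}}$; the second congruence follows from continuity of $g_\theta$ since $\nu_p(g_\theta(x) - g_\theta(\theta))$ can be made $\geq \nu_p(g_\theta(\theta)) + \delta$ on a sufficiently fine ball. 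A secondary subtlety is that one should double-check the edge case $p = 2$, $\delta = 1$, where $\ell_{2,1}$ is constantly $1$ and \eqref{eq:constant_digits} is vacuous, so no extra constraint on $T$ is needed there.
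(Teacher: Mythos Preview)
Your proof is correct and follows essentially the same idea as the paper: separate the finitely many roots into disjoint balls, then use continuity of $g_\theta$ to stabilize its valuation and last $\delta$ digits near $\theta$. The only organizational difference is that the paper routes the constancy of $\nu_p(g_\theta)$ and $\ell_{p,\delta}(g_\theta)$ through Proposition~\ref{prop:no_root} (obtaining periodicity on the ball $\theta[T_0] + p^{T_0}\Z_p$ and then refining by the period to a sub-ball containing $\theta$), whereas you argue directly via continuity at the single point $\theta$; your version is slightly more self-contained, while the paper's reuses machinery already in place.
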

\begin{proof}
Consider the partition of $\Z_p$ into $p^T$ balls  $a + p^T \Z_p$, where $a =0,1,\ldots,p^T-1$. Let $T_0$ be the minimal value of $T$ such that distinct roots of $f$ lie in distinct balls. Then for any $T \geq T_0$ it is easy to verify that (i) and (ii) hold.

We now show how to refine this choice. For each $\theta \in \R_f$ the function $h_{\theta} \in \A_p$ defined by $h_{\theta}(x) = g_{\theta}(p^{T_0} x + \theta[T_0])$, has no root in $\Z_p$. By Proposition \ref{prop:no_root} the sequences $(\nu_{p}(h_{\theta}(n)))_{n \geq 0}$ and $(\ell_{p,\delta}(h_{\theta}(n)))_{n \geq 0}$ are periodic. Let $T_1$ be such that $2^{T_1}$ is a common period of these sequences for all $\theta \in \R_f$. We claim that for $T = T_1 + T_0$ (or larger) both equalities \eqref{eq:constant_val} and \eqref{eq:constant_digits} are satisfied. For each $\theta \in \R_f$ the functions $\nu_p \circ g_{\theta}$ and $\ell_{p,d} \circ g_{\theta}$ are constant on balls of the form $\theta[T_0] + p^{T_0}(a+ p^{T_1}\Z_p) = \theta[T_0] + p^{T_0} a+ p^T \Z_p$ for all $a = 0,1, \ldots, p^{T_1}-1$. In particular, for $a = p^{-T_0}(\theta[T]-\theta[T_0])$ we obtain the ball $\theta[T] + p^T \Z_p$. Since this ball contains $\theta$, we get \eqref{eq:constant_val} and \eqref{eq:constant_digits}.
\end{proof}

We are now ready to combine all the intermediate results in order to prove Theorems \ref{thm:prime_power_valuation}, \ref{thm:prime_power_last_nonzero}, and \ref{thm:prime_power_d_last_nonzero}. First, the functions $f \in \A_p$ having no root in $\Z_p$ fall under part (a) of each theorem, and Proposition \ref{prop:no_root} implies the corresponding assertions. Therefore, we can focus on $f$ having at least one root in $\Z_p$, in which case we will make use of the following setup in the proofs. Let $T$ be an integer satisfying the assertion of Proposition \ref{prop:constant_val_digits} with $\delta = ld$, and such that $l \mid T$. For each $a =0,1,\ldots,p^T-1$ let $f_a \in \A_p$ be defined by 
$$f_a(x) = f(p^T x +a),$$ 
precisely as in the proposition. Note that the sequence $(\nu_{p^l}(f(n)))_{n \geq 0}$ is periodic with a period being a power of $p$ if and only if all the sequences $(\nu_{p^l}(f_a(n)))_{n \geq 0}$ have the same property. By Proposition \ref{prop:arithmetic_prog}, for fixed $k \geq 2$ a similar observation can be made regarding $k$-regularity of $(\LL_{p^l}(f(n)))_{n \geq 0}$ and $k$-automaticity of $(\ell_{p^l,d}(f(n)))_{n \geq 0}$.

Now, if $a$ is not equal to $\theta[T]$ for any $\theta \in \R_f$, then $f_a$ has no root in $\Z_p$. Therefore, by Proposition \ref{prop:no_root} and the above discussion it is enough to consider the functions $f_{\theta[T]}$, where $\theta$ ranges over $\R_f$. Since $T$ is fixed, we are going to write $f_{\theta}$ as a shorthand for $f_{\theta[T]}$.
With $g_{\theta}$ as in Proposition \ref{prop:constant_val_digits}, we have
\begin{equation} \label{eq:f_a}
f_{\theta}(x) = p^{Tm_f(\theta)} \left(x-\theta\{T\}\right)^{m_f(\theta)} g_{\theta}(p^Tx +\theta[T]).
\end{equation}
If for each $\theta \in \R_f$ we put
$$c_{\theta} = p^{Tm_f(\theta)}g_{\theta}(\theta),$$
then by the equalities \eqref{eq:constant_val}, \eqref{eq:constant_digits}, and \eqref{eq:f_a} together with the assumption $l \mid T$, we obtain for all $n \in \N$ the following:
\begin{align}
\nu_{p^l}(f_{\theta}(n)) &=  \nu_{p^l}(c_{\theta}(n-\theta\{T\})^{m_f(\theta)}), \label{eq:valuation_representation} \\
\LL_{p^l}(f_{\theta}(n)) &= \LL_{p^l}(c_{\theta}(n-\theta\{T\})^{m_f(\theta)}) \cdot   \frac{g_{\theta}(p^Tn +\theta[T])}{g_{\theta}(\theta)}, \label{eq:last_nonzero_representation} \\
\ell_{p^l,d}(f_{\theta}(n)) &= \ell_{p^l,d}(c_{\theta}(n-\theta\{T\})^{m_f(\theta)}). \label{eq:d_last_nonzero_representation}
\end{align}
Having this representation, it is rather easy to finish the proofs by utilizing the earlier results, where $c=c_{\theta}$, $m= m_f(\theta)$, and $\theta$ is replaced with $\theta\{T\}$.

\begin{proof}[Proof of Theorem \ref{thm:prime_power_valuation}]

In the case (b) we have $\R_f \subset \Q$ so for all $\theta \in \R_f$ we also get $\theta\{T\} \in \Q$. Equality \eqref{eq:valuation_representation} together with Proposition \ref{prop:rational_root}(i) implies $p$-regularity of $(\nu_{p^l}(f_{\theta}(n)))_{n \geq 0}$. This in turn gives $p$-regularity of $(\nu_{p^l}(f(n)))_{n \geq 0}$ by the above discussion.  

At the same time, if this sequence were $k$-regular for some $k$ multiplicatively independent with $p$, then for any positive integer $u$ and any $\theta \in \R_f$ the sequence $(\nu_{p^l}(f_{\theta}(n)) \bmod{u})_{n \geq 0}$ would be simultaneously $p$- and $k$-automatic. Due to Cobham's Theorem it would then be eventually periodic, and this is not the case when $u \nmid m_f(\theta)$, as Proposition \ref{lem:eventually_periodic_valuation} shows.

In the case (c) Proposition \ref{prop:not_automatic_digits} says that $(\nu_{p^l}(f_{\theta}(n)))_{n \geq 0}$ is not regular for any $\theta \in \R_f$. Therefore, $(\nu_{p^l}(f(n)))_{n \geq 0}$ cannot be regular either.
\end{proof}

\begin{proof}[Proof of Theorem \ref{thm:prime_power_last_nonzero}]
In the case (b) the first factor on the right-hand side of \eqref{eq:last_nonzero_representation} is $p$-regular due to Proposition \ref{prop:rational_root}(ii). The second factor is $p$-regular as well as a polynomial in $n$, and therefore $(\LL_{p^l}(f_{\theta}(n)))_{n \geq 0}$ is $p$-regular as a termwise product of such sequences. Hence, $(\LL_{p^l}(f(n)))_{n \geq 0}$ is $p$-regular too. At the same time, it cannot be $k$-regular for $k$ multiplicatively independent with $p$, as then for all $d$ the sequence $(\ell_{p^l,d}(f(n)))_{n \geq 0}$ would be $k$-automatic, and this possibility is ruled out by Theorem \ref{thm:prime_power_d_last_nonzero}(b).

In a similar fashion, in the case (c) Theorem \ref{thm:prime_power_d_last_nonzero}(c) implies  that $(\LL_{p^l}(f(n)))_{n \geq 0}$ is not $k$-regular for any $k$.
\end{proof}

\begin{proof}[Proof of Theorem \ref{thm:prime_power_d_last_nonzero}]
If $\R'_f = \varnothing$, as in (a), then for all $\theta \in \R_f$ the condition \eqref{eq:condition} holds with  $c= c_{\theta}$ and $m = m_f(\theta)$. Here it is important that $l \mid T$ so that $\nu_p(c_{\theta}) \equiv  \nu_p(g_{\theta}(\theta)) \pmod{l}$. By Proposition \ref{prop:eventually_periodic_digits} applied to \eqref{eq:d_last_nonzero_representation}, this implies that $(\ell_{p^l,d}(f_{\theta}(n)))_{n \geq 0}$ is periodic for all $\theta \in \R_f$, and thus so is $(\ell_{p^l,d}(f(n)))_{n \geq 0}$.

In the case (b), the condition \eqref{eq:condition} holds for all irrational $\theta \in \R_f$ in the same sense as above. Propositions \ref{prop:eventually_periodic_digits} and \ref{prop:rational_root}(iii) together imply $p$-automaticity of $(\ell_{p^l,d}(f(n)))_{n \geq 0}$.  At the same time, there exists some rational $\theta \in \R_f$ for which \eqref{eq:condition} does not hold. As a consequence, $(\ell_{p^l,d}(f_{\theta}(n)))_{n \geq 0}$ is not eventually periodic, thus neither is $(\ell_{p^l,d}(f(n)))_{n \geq 0}$.

Finally, in the case (c) there exists some irrational $\theta \in \R_f$ for which \eqref{eq:condition} does not hold. By Proposition \ref{prop:not_automatic_digits}, the sequence $(\ell_{p^l,d}(f_{\theta}(n)))_{n \geq 0}$ is not automatic so $(\ell_{p^l,d}(f(n)))_{n \geq 0}$ is not automatic either.
\end{proof}

\section{Proofs for $b$ having several prime factors} \label{sec:several_factors_proofs}
Like before, let $b \geq 2$ be an integer base with prime factorization
$$	b = p_1^{l_1} \cdots p_s^{l_s},	$$
where $p_1,\ldots, p_s$ are distinct primes and $l_1,\ldots,l_s$ are positive integers. For $i=1,\ldots,s$ we put $b_i=p_i^{l_i}$. Throughout the whole section we assume that $s \geq 2$, unless specified otherwise. We retain the notation from the preceding sections, namely  $\Q_b, \Z_b, \A_b, \PP_b, \R_f$ and so on.

To begin, we prove Propositions \ref{prop:not_zero} and \ref{prop:regular_polynomial}, dealing with degenerate cases.

\begin{proof}[Proof of Proposition \ref{prop:not_zero}]
Without loss of generality we can assume $i=s$, so that $f_s = 0$, $\overline{b} = b/b_s$, and $\overline{f} = (f_1,\ldots,f_{s-1})$. Since $\nu_{b_s}(f_s(n)) = + \infty$  for all $n \in \N$, we get 
$$\nu_b(f(n)) = \nu_{\overline{b}}(\overline{f}(n))$$
which immediately gives (i).

By the above equality we can write 
$$ \LL_b(f(n)) = \left(\LL_{\overline{b}}(\overline{f}(n)), 0 \right).$$
Since the zero sequence is $k$-regular for all $k \geq 2$, by Proposition \ref{prop:function_regular}(iv) we obtain (ii).

Finally, let $(\gamma_1(n), \ldots, \gamma_s(n))$ be the image of $\ell_{b,d}(f(n))$ through the natural isomorphism between $\Z/b^d\Z$ and $\Z/b_1\Z \times \cdots \times \Z/b_s\Z$. We can deduce from Proposition \ref{prop:function_regular}(iv) that the sequence $(\ell_{b,d}(f(n)))_{n \geq 0}$ is $k$-automatic if and only if for each $i =1,\ldots,s$ the sequence $(\gamma_i(n))_{n \geq 0}$ is $k$-automatic. By our assumption $\gamma_s(n) = 0$ for all $n \in \N$, and so this sequence is $k$-automatic for all $k \geq 2$. Since the same equivalence holds for $\ell_{\overline{b},d}(\overline{f}(n))$ and $(\gamma_1(n),\ldots,\gamma_{s-1}(n))$, we obtain (iii).
\end{proof}

In the following proof we allow $s=1$.

\begin{proof}[Proof of Proposition \ref{prop:regular_polynomial}]
There exist $T, a \in \N$ such that for each $i=1,\ldots,s$ the function $h = (h_1,\ldots,h_s)$, given by $h(x) = f(b^T x + a)$, has no roots in $\Z_{b}$. (One can take $T$ sufficiently large and $a \not \equiv \theta \pmod{b^T}$.)  Proposition \ref{prop:several_factors_no_root}(i) below says that the sequence $(\nu_b(h(n)))_{n \geq 0}$ is periodic with period being a power of $b$. By increasing $T$ if necessary, we can assume that it is constant. Letting $v = \nu_b(h(n))$, we obtain 
$$ \LL_b(h(n)) = b^{-v}h(n).  $$
If the original sequence $(\LL_b(f(n)))_{n \geq 0}$ is $k$-regular, then Proposition \ref{prop:last_digits_factorization} implies that so is  $(b^{-v} h_i(n))_{n \geq 0}$ for $i=1,\ldots,s$. It is thus enough to prove for a prime $p$ and $h \in \A_p$ that if $(h(n))_{n \geq 0}$ is $k$-regular, then $h$ is a polynomial. 

Write
$$h(x) = \sum_{m=0}^{\infty} a_m x^m,$$
where $a_m \in \Q_p$ for all $m \in \N$. Consider the $\Z$-submodule generated by the family of subsequences $\mathbf{h}_j = (h(k^j n))_{n \geq 0}$, where $j \in \N$. If $(h(n))_{n \geq 0}$ is $k$-regular, then said $\Z$-submodule is generated by $\mathbf{h}_0, \mathbf{h}_1, \ldots, \mathbf{h}_J$  for some $J \in \N$. In particular, there exist $\beta_0, \beta_1, \ldots, \beta_J \in \Z$ such that for all $n \in \N$ we have
$$  h(k^{J+1} n) = \sum_{j=0}^{J} \beta_j h(k^j n).$$
As $\N$ is dense in $\Z_p$, the coefficients on both sides must be equal, which yields
$$ a_m \left(k^{(J+1)m} - \sum_{j=0}^{J} \beta_j k^{jm}\right) = 0$$
for all $m \in \N$. However, as $m$ tends to infinity, then so does the expression in the parentheses (as a real number) so only finitely many $a_m$ can be nonzero.
\end{proof}

From now on we will only deal with $f = (f_1,\ldots,f_s)$ having nonzero components, namely $f \in \A_b$. The general structure of our reasoning leading to Theorems \ref{thm:several_factors_valuation}, \ref{thm:several_factors_last_nonzero}, and \ref{thm:several_factors_d_last_nonzero} is similar as in the previous section. The main idea is again to reduce the general case to considering $f$ such that $f_i$ either has no root in $\Z_{p_i}$ or is of the form $c_i(x - \theta_i)^{m_i}$. However the technical details involved in the auxiliary results are rather different.

The following technical lemma plays a similar role to Lemma \ref{lem:different_val_digits}.

\begin{lem} \label{lem:valuation_change}
Let $\rho_i, \sigma_i \in \Z_{p_i}$ for $i=1,\ldots,s$ and assume that $(\rho_1 ,\ldots,\rho_s) \neq (\sigma_1,\ldots,\sigma_s)$. Let $m_i$ be positive integers and $c_i, d_i \in \Q_{p_i}$. Then we have the following.
\begin{enumerate}[label={\textup{(\roman*)}}]
\item There exists $j \in\{1,\ldots,s\}$ and infinitely many $n \in \N$ such that
\begin{align*}
\nu_{b_j}(c_j(n-\rho_j)^{m_j}) &> \nu_b\left(c_1(n-\rho_1)^{m_1},\ldots, c_s(n-\rho_s)^{m_s}\right),  \\
\nu_{b_j}(d_j(n-\sigma_j)^{m_j}) &= \nu_b\left(d_1(n-\sigma_1)^{m_1},\ldots, d_s(n-\sigma_s)^{m_s}\right). 
\end{align*}
\item Let $C \in \N$ be such that $C \geq 3 + 2\max_{1 \leq i \leq s} (m_i/l_i)$ and $\nu_{p_i}(c_i) \equiv \nu_{p_i}(d_i) \pmod{l_i C}$ for all $i=1,\ldots,s$. Then there exist infinitely many $n$ such that
$$ \nu_b\left(c_1(n-\rho_1)^{m_1},\ldots, c_s(n-\rho_s)^{m_s}\right) \not\equiv \nu_b\left(d_1(n-\sigma_1)^{m_1},\ldots, d_s(n-\sigma_s)^{m_s}\right) \pmod{C}. $$
\end{enumerate} 
\end{lem}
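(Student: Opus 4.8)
The plan is to handle the two parts with a common mechanism: in both cases we want to prescribe, at suitable $n$, the $p_i$-adic valuation $\nu_{p_i}(n-\rho_i)$ (and similarly with $\sigma_i$) independently across $i$, exploiting the Chinese Remainder Theorem, and then to read off the $b$-adic valuations via the formula $\nu_b(x) = \min_i \lfloor \nu_{p_i}(x_i)/l_i \rfloor$ together with $\nu_{b_i}(c_i(n-\rho_i)^{m_i}) = \lfloor (\nu_{p_i}(c_i) + m_i\nu_{p_i}(n-\rho_i))/l_i\rfloor$. First I would fix notation: write $\rho = (\rho_1,\ldots,\rho_s)$, $\sigma = (\sigma_1,\ldots,\sigma_s)$, and let $S = \{i : \rho_i \neq \sigma_i\}$, which is nonempty by hypothesis. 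For $i \in S$ set $v_i = \nu_{p_i}(\rho_i - \sigma_i) < \infty$; for $i \notin S$ we have $\rho_i = \sigma_i$, so $\nu_{p_i}(n-\rho_i) = \nu_{p_i}(n-\sigma_i)$ automatically for every $n$.

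\textbf{Part (i).} The goal is to find one index $j$ and infinitely many $n$ where the $j$-th term is strictly above the $\rho$-minimum while the $j$-th term (with $\sigma$) attains the $\sigma$-minimum. I would first arrange, by CRT, that $n \equiv \rho_i \pmod{p_i^{M_i}}$ for every $i$, where each $M_i$ is chosen much larger than $v_i$ and than the quantities needed to dominate the other summands' valuations; since the residues $\rho_i \bmod p_i^{M_i}$ are compatible (they live in different primes), such $n$ exist and form a full arithmetic progression, hence there are infinitely many. For such $n$ we get $\nu_{p_i}(n-\rho_i) \geq M_i$ for all $i$, so the $\rho$-valuations $\nu_{b_i}(c_i(n-\rho_i)^{m_i})$ are all large, and crucially we can further tune the $n$ (by choosing the next digit of $n$ in base $p_j$ for one particular $j$) so that $\nu_{p_j}(n-\rho_j)$ is strictly larger than $\nu_{p_i}(n-\rho_i)$ for the other $i$, enough that after dividing by $l_i$ and adding $\nu_{p_i}(c_i)/l_i$ the $j$-th term strictly exceeds the minimum. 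The key point for the $\sigma$-side: because $n \equiv \rho_j \pmod{p_j^{M_j}}$ with $M_j > v_j$, we have $\nu_{p_j}(n - \sigma_j) = \nu_{p_j}((n-\rho_j) + (\rho_j - \sigma_j)) = v_j$ is \emph{small and fixed}, so $\nu_{b_j}(d_j(n-\sigma_j)^{m_j})$ is a fixed small number; choosing the $M_i$ for $i \neq j$ large forces this $j$-th $\sigma$-term to be the unique minimum on the $\sigma$-side (here one must also make sure the $\sigma$-terms for $i \notin S$, which equal the $\rho$-terms, are large — this follows since $M_i$ is large and $\rho_i = \sigma_i$). Thus $j$ works.

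\textbf{Part (ii).} Here I want to produce infinitely many $n$ at which the two $b$-adic valuations differ modulo $C$. Again work at $n \equiv \rho_i \pmod{p_i^{M_i}}$ for all $i$ via CRT, with $M_i$ large; then $\nu_{p_i}(n-\sigma_i) = v_i$ for $i \in S$ is fixed, so the entire $\sigma$-side valuation $\nu_b(d_1(n-\sigma_1)^{m_1},\ldots)$ is a constant $W$ independent of our remaining freedom in $n$ (for $i \notin S$ the $\sigma$-term equals the $\rho$-term, which we'll keep large). On the $\rho$-side we still have freedom in the higher digits; by picking one index $j$ and varying the digit of $n$ at position $p_j^{M_j}, p_j^{M_j+1}, \ldots$ we can make $\nu_{p_j}(n-\rho_j)$ take any value $\geq M_j$, hence (via the floor formula with step size controlled by $m_j/l_j$) make $\nu_{b_j}(c_j(n-\rho_j)^{m_j})$ run through a long arithmetic-like progression; keeping the other $\rho$-terms above this so the minimum is attained at $j$, we can realize $\nu_b(c_1(n-\rho_1)^{m_1},\ldots) = W + t$ for a range of $t$ whose length is at least, say, $\lfloor m_j/l_j\rfloor + 1$ consecutive integers — more carefully, since increasing $\nu_{p_j}(n-\rho_j)$ by $l_j$ increases the floor by exactly $m_j$, consecutive attainable values of the $j$-th $b_i$-valuation differ by at most... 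I would instead argue that as $\nu_{p_j}(n-\rho_j)$ increases over a window of length $l_j C$, the value $\lfloor (\nu_{p_j}(c_j)+m_j\nu_{p_j}(n-\rho_j))/l_j\rfloor$ increases by $m_j C$ but passes through more than $C$ distinct residues mod $C$ provided $\gcd(m_j,C)$ allows it — and the hypothesis $C \geq 3 + 2\max_i(m_i/l_i)$ together with the congruence condition $\nu_{p_i}(c_i)\equiv\nu_{p_i}(d_i)\pmod{l_iC}$ is exactly what is needed to guarantee that among these values at least one is $\not\equiv W \pmod C$. This last counting/divisibility bookkeeping is where I expect the real work to be: one must verify that the arithmetic progression of achievable $\rho$-side valuations genuinely hits a residue class mod $C$ different from the fixed $\sigma$-side value, and that the role of the hypotheses on $C$ and on $\nu_{p_i}(c_i) - \nu_{p_i}(d_i)$ is to kill the degenerate coincidences (the "$+2\max(m_i/l_i)$" absorbing the floor-function slack, the "$+3$" giving room for strict inequality, and the $l_iC$-congruence ensuring the two sides are "aligned" so the only obstruction is the controllable one).

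\textbf{Main obstacle.} The conceptual steps (CRT to decouple primes, $\nu_{p_j}(n-\sigma_j) = v_j$ once $n$ agrees with $\rho_j$ to high precision, floor-formula for $\nu_{b_i}$) are routine; the hard part is Part (ii)'s quantitative claim — pinning down exactly why the stated lower bound on $C$ and the mod-$l_iC$ congruence on the valuations of $c_i,d_i$ suffice to force a discrepancy mod $C$. I would isolate this as a small elementary sublemma about when an arithmetic progression $\{\lfloor (\alpha + m t)/l\rfloor : t \geq 0\}$ meets a prescribed residue class mod $C$, prove it by a direct pigeonhole over a window of length $lC$, and then feed in $\alpha$, $m$, $l$ from the dominant index $j$.
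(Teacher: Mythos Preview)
Your overall strategy---CRT to decouple the primes, the identity $\nu_{b_i}(c_i(n-\rho_i)^{m_i}) = \lfloor (\nu_{p_i}(c_i) + m_i\nu_{p_i}(n-\rho_i))/l_i\rfloor$, and freezing $\nu_{p_i}(n-\sigma_i)$ at $v_i := \nu_{p_i}(\rho_i-\sigma_i)$ once $n$ is $p_i$-adically close to $\rho_i$---is correct and matches the paper. Two steps, however, do not go through as written.

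In Part~(i) your assertion that ``choosing the $M_i$ for $i\neq j$ large forces this $j$-th $\sigma$-term to be the unique minimum on the $\sigma$-side'' fails once $|S|\ge 2$: for each $i\in S\setminus\{j\}$ the $\sigma$-term is $\lfloor(\nu_{p_i}(d_i)+m_iv_i)/l_i\rfloor$, a constant independent of $M_i$, so enlarging $M_i$ does nothing to it. The repair is to \emph{select} $j\in S$ minimising this quantity over $S$; with that choice your construction works. The paper sidesteps the issue with a different CRT pattern: it sends $n$ close to $\rho_1$ but close to $\sigma_i$ for every $i\ge 2$, making the $\sigma$-terms for $i\ge 2$ genuinely large so that $j=1$ is automatic, at the price of a small case split on whether $\rho_2=\sigma_2$ to handle the $\rho$-side.

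In Part~(ii) the paper does not vary and pigeonhole; it makes one explicit choice. After renumbering so that index $1$ attains $\min_{i\in S}(\nu_{p_i}(d_i)+m_iv_i)/l_i$, it sets $\nu_{p_1}(n-\rho_1)=v_1+2\lceil l_1/m_1\rceil$ and takes $\nu_{p_i}(n-\rho_i)$ large for $i\ge 2$, forcing both minima to land at index $1$. The hypothesis $\nu_{p_1}(c_1)\equiv\nu_{p_1}(d_1)\pmod{l_1C}$ then lets one replace $c_1$ by $d_1$ modulo $C$, so the difference of the two $b$-valuations becomes $\lfloor x\rfloor-\lfloor y\rfloor\pmod C$ with $x-y=2\lceil l_1/m_1\rceil\cdot m_1/l_1\in[2,\,2+2m_1/l_1)$; the floor bound $|\lfloor x\rfloor-\lfloor y\rfloor-(x-y)|\le 1$ places this difference strictly in $(0,C)$, which is exactly where $C\ge 3+2\max_i m_i/l_i$ enters. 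Your own sketch is not right as stated: ``passes through more than $C$ distinct residues mod $C$'' is vacuous, and over a window the floor values may sit in a proper coset of $\Z/C\Z$, so a naive pigeonhole does not exhibit a residue avoiding $W$. Your idea \emph{can} be salvaged---since the one-step increment $\lceil m_j/l_j\rceil$ lies in $\{1,\dots,C-1\}$, the floor sequence is not constant mod $C$ and hence meets at least two residues, one of which avoids $W$---but that is a different (and shorter) argument than the one you wrote down.
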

\begin{proof}
Up to renumbering, we can assume that $\rho_i \neq \sigma_i$ for $i \leq I$, for some $I \in \{2,\ldots,s\}$, and $\rho_i = \sigma_i$ otherwise. In particular, we have at least $\rho_1 \neq \sigma_1$. 

In part (i) we will show that the assertion holds with $j=1$. It is sufficient to find infinitely many $n$ satisfying the inequalities
\begin{align} 
\nu_{b_2}(c_2(n-\rho_2)^{m_2}) &< \nu_{b_1}(c_1(n-\rho_1)^{m_1}), \label{eq:valuation_change1} \\
\nu_{b_1}(d_1(n-\sigma_1)^{m_1}) & \leq \min_{2 \leq i \leq s} \nu_{b_i}(d_i(n-\sigma_i)^{m_i}). \label{eq:valuation_change2}
\end{align}
To begin, observe that for any choice of $v_1,\ldots,v_s \in \N$ there exist infinitely many $n \in \N$ such that 
$$\begin{cases}
\nu_{p_1}(n-\rho_1) = v_1, \\
\nu_{p_i}(n-\sigma_i) = v_i, &i \geq 2,
\end{cases}
$$
which is a consequence of the Chinese Remainder Theorem.

If $\rho_2 \neq \sigma_2$, then as we increase $v_1,\ldots,v_s$, the right-hand side  of both \eqref{eq:valuation_change1} and \eqref{eq:valuation_change2} grows to infinity, while the left-hand side remains bounded. Hence, it is enough to take $v_1, \ldots, v_s$ sufficiently large, and the corresponding values $n$ all satisfy the assertion.
If $\rho_2 = \sigma_2$, choosing large $v_1,\ldots,v_s$ also works, except in order for \eqref{eq:valuation_change1} to hold we additionally require $v_1,v_2$ to satisfy 
$$ \left\lfloor \frac{\nu_{p_2}(c_2)+m_2 v_2}{l_2} \right\rfloor  < \left\lfloor \frac{\nu_{p_1}(c_1) + m_1 v_1}{l_1} \right\rfloor.$$

We now move on to part (ii). Put $u_i =  \nu_{p_i}(\rho_i - \sigma_i)$ for  $i \leq I$ and assume (up to further renumbering) that
\begin{equation} \label{eq:valuation_change_ii_0}
\frac{\nu_{p_1}(d_1) + m_1 u_1}{l_1} = \min_{i \leq I} \frac{\nu_{p_i}(d_i) + m_i u_i}{l_i}.
\end{equation}
In order to prove the assertion it is sufficient to find infinitely many $n \in \N$ such that
\begin{align}
\nu_{b_1}(c_1(n-\rho_1)^{m_1}) &\leq \min_{2 \leq i \leq s} \nu_{b_i}(c_i(n-\rho_i)^{m_i}), \label{eq:valuation_change_ii_1}  \\
\nu_{b_1}(d_1(n-\sigma_1)^{m_1}) &\leq \min_{2 \leq i \leq s} \nu_{b_i}(d_i(n-\sigma_i)^{m_i}), \label{eq:valuation_change_ii_2}
\end{align}
and also
\begin{equation} \label{eq:valuation_change_ii_3}
\nu_{b_1}(c_1(n-\rho_1)^{m_1}) \not \equiv \nu_{b_1}(d_1(n-\sigma_1)^{m_1}) \pmod{C}.
\end{equation} 
This time, we put
$$v_1 = u_1 + 2\left\lceil  \frac{l_1}{m_1} \right\rceil $$ 
and choose $v_2,\ldots, v_s$ in such a way that for all $i=2,\ldots,s$ we have 
\begin{align}
u_i &< v_i, \nonumber \\
\frac{\nu_{p_1}(c_1) + m_1 v_1}{l_1} &\leq \frac{\nu_{p_i}(c_i) + m_i v_i}{l_i}, \label{eq:valuation_change_ii_4}
\end{align}
and also for all $i > I$ (if any)
\begin{equation} \label{eq:valuation_change_ii_5}
\frac{\nu_{p_1}(d_1) + m_1 u_1}{l_1} \leq \frac{\nu_{p_i}(d_i) + m_i v_i}{l_i}.
\end{equation}
Again, there exist infinitely many $n\in \N$ simultaneously satisfying the equalities
$$\nu_{p_1}(n-\rho_i) = v_i$$
for all $i=1,\ldots,s$. For such $n$ the inequality \eqref{eq:valuation_change_ii_1} is precisely the condition \eqref{eq:valuation_change_ii_4}. 
Moreover, the inequalities  $v_i > u_i$ for $i \leq I$  yield
$$ \nu_{p_i}(n- \sigma_i) = \begin{cases}
u_i &\text{if } i \leq I, \\
v_i &\text{if } i > I.
\end{cases}   $$
Combined with \eqref{eq:valuation_change_ii_0} (for $i \leq I$) and \eqref{eq:valuation_change_ii_5} (for $i > I$), this gives \eqref{eq:valuation_change_ii_2}. 
Finally, by the assumption $\nu_{p_1}(c_1) \equiv \nu_{p_1}(d_1) \pmod{l_1 C}$, we obtain
\begin{align*}
 \nu_{b_1}(c_1(n-\rho_1)^{m_1}) - \nu_{b_1}(d_1(n-\sigma_1)^{m_1}) &\equiv  \left\lfloor \frac{\nu_{p_1}(c_1) + v_1 m_1}{l_1} \right\rfloor   - \left\lfloor \frac{\nu_{p_1}(d_1) + u_1 m_1}{l_1} \right\rfloor  \\
 & \equiv \left\lfloor \frac{v_1 m_1}{l_1} \right\rfloor   - \left\lfloor \frac{u_1 m_1}{l_1} \right\rfloor \pmod{C}
\end{align*}
By the choice of $v_1$ and the inequality $ x- y -1  \leq \lfloor x \rfloor - \lfloor y \rfloor \leq x- y + 1$ valid for any real $x,y$, we get
$$ 0 < \left\lfloor \frac{v_1 m_1}{l_1} \right\rfloor   - \left\lfloor \frac{u_1 m_1}{l_1} \right\rfloor < C.$$
The non-congruence \eqref{eq:valuation_change_ii_3} follows.
\end{proof}

We now treat the case where for $f = (f_1,\ldots,f_i)$ the functions $f_i$ have no root in $\Z_{p_i}$. In contrast to Proposition \ref{prop:no_root}, here all the considered conditions are equivalent, as there is no counterpart to \eqref{eq:condition}. For the sake of convenience while proving necessity we allow each $f_i$ to have at most one root, an assumption which will ultimately be relaxed.

\begin{prop} \label{prop:several_factors_no_root}
Let $f \in \mathcal{A}_b$ be such that for each $i \in \{1,\ldots,s\}$ the function $f_i$ either has no root in $\Z_{p_i}$ or it is of the form
\begin{equation} \label{eq:several_factors_no_root_1}
 f_i(x) = c_i (x - \theta_i)^{m_i},
\end{equation}
for some $c_i \in \Q_{p_i}$, $\theta_i \in \Z_{p_i}$, and positive integer $m_i$.
Then the following conditions are equivalent:
\begin{enumerate}[label={\textup{(\roman*)}}]
\item for some $i \in \{1,\ldots,s\}$ the function $f_i$ has no root in $\Z_{p_i}$;
\item the sequence $(\nu_b(f(n)))_{n \geq 0}$ is eventually periodic;
\item for all $d \geq 1$ the sequence $(\ell_{b,d}(f(n)))_{n \geq 0}$ is eventually periodic;
\item the sequence $(\ell_{b}(f(n)))_{n \geq 0}$ is eventually periodic.
\end{enumerate}
If either case holds, then the sequences $(\nu_b(f(n)))_{n \geq 0}$ and $(\ell_{b,d}(f(n)))_{n \geq 0}$ are purely periodic and a~power of $b$ can be chosen as a~period.

Furthermore, if $f \in \PP_b$, then the conditions \textup{(i)--(iv)} are also equivalent to:
\begin{enumerate}
\item[\textup{(v)}] the sequence $(\LL_{b}(f(n)))_{n \geq 0}$ is $k$-regular for all $k \geq 2$.
\end{enumerate}
\end{prop}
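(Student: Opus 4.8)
The plan is to prove the cycle $\textup{(i)}\Rightarrow\textup{(iii)}\Rightarrow\textup{(iv)}\Rightarrow\textup{(i)}$ and the pair $\textup{(i)}\Rightarrow\textup{(ii)}\Rightarrow\textup{(i)}$, and, in the polynomial case, to close the loop through $\textup{(i)}\Rightarrow\textup{(v)}\Rightarrow\textup{(iii)}$. The substantive step is $\textup{(i)}\Rightarrow\textup{(iii)}$, which will simultaneously deliver $\textup{(i)}\Rightarrow\textup{(ii)}$ and the ``purely periodic, power of $b$'' addendum; the rest is short assembly of the cited results.

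For $\textup{(i)}\Rightarrow\textup{(iii)}$, suppose $f_j$ has no root in $\Z_{p_j}$ for some $j$, so that $\nu_{b_j}(f_j(n))\le V$ for a fixed $V$ and hence $\nu_b(f(n))\le V$ for all $n$. I would show that for $T$ large, independently of $n$, one has $\nu_b(f(n+b^T))=\nu_b(f(n))$ and $\ell_{b,d}(f(n+b^T))=\ell_{b,d}(f(n))$ for every $n$, which gives pure periodicity with period $b^T$. Fix $n$ and examine each coordinate $i$. If $f_i$ has no root, uniform continuity of $f_i$ (as in the proof of Proposition~\ref{prop:no_root}) makes $\nu_{p_i}(f_i(n+b^T)-f_i(n))$ as large as desired once $l_i T$ is large, while $\nu_{p_i}(f_i(n))$ remains bounded, so $\nu_{b_i}(f_i(n))$ and $\ell_{b_i,d}(f_i(n))$ are unaffected by the shift. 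If $f_i(x)=c_i(x-\theta_i)^{m_i}$, the argument splits on the size of $\nu_{p_i}(n-\theta_i)$: above a threshold depending only on $V,d$ and the data of $f$ (not on $T$ or $n$) one gets $\nu_{b_i}(f_i(n))>V+d$, so this coordinate is irrelevant to $\nu_b(f(n))$ and, via \eqref{eq:last_digits_factorization1}, contributes $0$ to $\ell_{b,d}(f(n))\bmod b_i^d$, and the same holds after the shift because the shifted valuation is also $>V+d$; below the threshold $\nu_{p_i}(n-\theta_i)$ is bounded and $f_i(n),f_i(n+b^T)$ agree to high $p_i$-adic precision once $T$ is large, again fixing $\nu_{b_i}(f_i(n))$ and $\ell_{b_i,d}(f_i(n))$. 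Taking the minimum over $i$ settles $\nu_b$, and reassembling the coordinates through \eqref{eq:d_last_nonzero_explicit} settles $\ell_{b,d}$; the case $d=1$ is $\textup{(iii)}\Rightarrow\textup{(iv)}$.

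For the converses $\textup{(iv)}\Rightarrow\textup{(i)}$ and $\textup{(ii)}\Rightarrow\textup{(i)}$ I would argue contrapositively: assume every $f_i$ has the form $c_i(x-\theta_i)^{m_i}$, and let $T\ge1$ be a hypothetical eventual period. Feeding Lemma~\ref{lem:valuation_change} the data $\rho_i=\theta_i$, $\sigma_i=\theta_i-T$, $d_i=c_i$, we have $(\rho_i)_i\neq(\sigma_i)_i$, $c_i(n-\sigma_i)^{m_i}=f_i(n+T)$, and the congruence hypothesis in part (ii) is automatic. Part (ii), with $C$ as required there, yields infinitely many $n$ with $\nu_b(f(n+T))\not\equiv\nu_b(f(n))\pmod{C}$, contradicting periodicity of $(\nu_b(f(n)))_n$. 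Part (i) yields an index $j$ and infinitely many $n$ with $\nu_{b_j}(f_j(n))>\nu_b(f(n))$ but $\nu_{b_j}(f_j(n+T))=\nu_b(f(n+T))$; discarding the finitely many $n$ with $f(n)=0$ or $f(n+T)=0$ and applying Corollary~\ref{cor:valuation_of_last_digits}, exactly one of $\ell_b(f(n)),\ell_b(f(n+T))$ is divisible by $p_j$, contradicting periodicity of $(\ell_b(f(n)))_n$.

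Finally, assume $f\in\PP_b$. For $\textup{(i)}\Rightarrow\textup{(v)}$: off the finitely many $n$ with $f(n)=0$ write $\LL_b(f(n))=b^{-\nu_b(f(n))}f(n)$; by the second paragraph $(\nu_b(f(n)))_n$ is periodic and, being at most $\nu_{b_j}(f_j(n))\le V$, bounded, so $(b^{-\nu_b(f(n))})_n$ is eventually periodic with finitely many rational values, hence $k$-regular for all $k$, while $(f(n))_n$ is $k$-regular for all $k$ by Corollary~\ref{cor:polynomial_regular} and Proposition~\ref{prop:function_regular}(iv); their product in the ring $\Q_b$ is $k$-regular for all $k$ by Proposition~\ref{prop:function_regular}(iii). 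For $\textup{(v)}\Rightarrow\textup{(iii)}$: Corollary~\ref{cor:regular_modulo} converts $k$-regularity of $(\LL_b(f(n)))_n$ for all $k$ into $k$-automaticity of $(\ell_{b,d}(f(n)))_n$ for all $k$; being simultaneously $2$- and $3$-automatic, this sequence is eventually periodic by Cobham's theorem (Proposition~\ref{prop:mult_dep}(ii)), for every $d$. I expect the uniform-in-$n$ bookkeeping of negligible coordinates in the second paragraph to be the main technical obstacle.
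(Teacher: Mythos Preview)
Your proposal is correct, and the overall logical architecture is sound; the differences from the paper are organizational rather than essential, but worth noting.

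For the forward direction, the paper proves the chain $\textup{(i)}\Rightarrow\textup{(ii)}\Rightarrow\textup{(iii)}$ rather than going to $\textup{(iii)}$ directly. The key point is that the paper's $\textup{(i)}\Rightarrow\textup{(ii)}$ does \emph{not} use the special form \eqref{eq:several_factors_no_root_1} at all: it simply applies uniform continuity to every $f_i$ and observes that the truncated valuations $v_i(x)=\min\{\nu_{b_i}(f_i(x)),V\}$ are periodic with period $b^T$, whence $\nu_b(f(n))=\min_i v_i(n)$ is periodic. Then $\textup{(ii)}\Rightarrow\textup{(iii)}$ is immediate: with $\nu_b(f(\cdot))$ periodic one writes $\LL_b(f(\pi n+a))=b^{-\nu_b(f(a))}f(\pi n+a)$ and reduces each component modulo $b_i^d$. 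Your direct $\textup{(i)}\Rightarrow\textup{(iii)}$ via a threshold argument on each ``monomial'' coordinate is valid (and your bookkeeping of the irrelevant coordinates through \eqref{eq:last_digits_factorization1} is correct), but it leans on the special-form hypothesis in a place where the paper shows it is unnecessary. The paper's route thus isolates more clearly which implication actually requires the structural assumption: only the converse $\textup{(iv)}\Rightarrow\textup{(i)}$.

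For the converses, you invoke Lemma~\ref{lem:valuation_change}(ii) to get $\textup{(ii)}\Rightarrow\textup{(i)}$ directly, whereas the paper avoids this and routes through $\textup{(ii)}\Rightarrow\textup{(iii)}\Rightarrow\textup{(iv)}\Rightarrow\textup{(i)}$, using only Lemma~\ref{lem:valuation_change}(i) at the last step. Your argument is legitimate (the congruence hypothesis with $d_i=c_i$ is indeed automatic), and it has the minor advantage of making the valuation and digit converses independent of each other. The polynomial part and the appeal to Cobham for $\textup{(v)}\Rightarrow\textup{(iii)}$ match the paper.
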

\begin{proof}
We will prove the chain of implications (i)$\implies$(ii)$\implies$(iii)$\implies$ (iv) $\implies$(i). In the case when $f \in \PP_b$ we additionally show that (ii)$\implies$(v)$\implies$(iii).

First, if some $f_i$ has no root in $\Z_{p_i}$, there exists $V \in \N$ such that $\nu_{b_i}(f_i(n)) < V$ for all $n \in \N$, and therefore also $\nu_b(f(n)) < V$. By uniform continuity of $f_i$ there exists $T_i \in \N$ such that for all $x,y \in \Z_{p_i}$ we have 
$$
\nu_{b_i}(f_i(x+b_i^{T_i}y)- f_i(x)) \geq V.
$$
Letting $b^T$ be a common multiple of  $b_1^{T_1}, \ldots, b_s^{T_s}$ and putting $y = b^T / b^{T_i} $, we can write
\begin{equation} \label{eq:several_factors_no_root_2}
 f_i(x+b^T) = f_i(x) + b_i^{V} h_i(x),
\end{equation}
where $h_i(x) \in \Z_{p_i}$. For each $i =1,\ldots,s$ let 
$$  v_i(x) = \min\{ \nu_{b_i}(f_i(x)), V\}.$$
We have $\nu_b(f(n)) = \min_{1 \leq i \leq s} v_i(n)$ and, at the same time, \eqref{eq:several_factors_no_root_2} implies $v_i(n+b^T) = v_i(n)$ for all $n \in \N$. Therefore, we also get
$$\nu_b(f(n+b^T)) = \nu_b(f(n)),  $$
which proves (ii). In particular, the sequence $(\nu_b(f(n)))_{n \geq 0}$ is purely periodic and a power of $b$ is a period.

Now assume that (ii) holds and assume without loss of generality that the sequence $(\nu_b(f(n)))_{n \geq 0}$ is purely periodic (by shifting the original sequence). Let $\pi$ be its period. Then for each $a=0,1,\ldots,\pi-1$ and all $n \in \N$ we have
$$
\LL_b(f(\pi n + a)) = b^{-\nu_b(f(a))} f(\pi n + a).
$$
Fix $a$ and let $g_i(x)$ denote the $i$th component of $\LL_b(f(\pi x + a))$.
Then $g_i \in \A_{p_i}$ and uniform continuity shows that the sequence $(g_i(n) \bmod{b_i^d})_{n \geq 0}$ is purely periodic, where a power of $p_i$ can be chosen as a period. Since $\ell_{b,d}(f(\pi n + a)) = (g_1(n),\ldots,g_s(n)) \bmod{b^d}$, we obtain that $(\ell_{b,d}(f(\pi n + a)))_{n \geq 0}$ is also purely periodic, where a power of $b$ can be chosen as a period. A similar claim thus holds for the whole sequence $(\ell_{b,d}(f(n)))_{n \geq 0}$ which proves (iii). Moreover, observe that if $\pi = b^T$, then a power of $b$ is also a period of this sequence.

In the case when $f \in \PP_b$, each component of $\LL_b(f(\pi n + a))$ is a polynomial in $n$ so it constitutes a sequence $k$-regular for all $k \geq 2$. Hence, (v) follows from (ii). 
At the same time, (v) immediately implies (iii) due to Cobham's theorem.

The implication from (iii) to (iv) is trivial.

Finally, we prove that (iv) implies (i).  Suppose that $(\ell_{b}(f(n)))_{n \geq 0}$ is eventually periodic with period $\pi$ but all $f_i$ are of the form \eqref{eq:several_factors_no_root_1}. We are going to arrive at a contradiction with eventual periodicity of the characteristic sequences of the set
$$\{n \in \N: \ \nu_{b_j}(f_j(n)) = \nu_{b}(f(n))\}$$
for some $j \in \{1,\ldots,s\}$.
In order to do this we apply Lemma \ref{lem:valuation_change}(i) to $\rho_i = \theta_i, \sigma_i = \theta_i - \pi$, and $d_i = c_i$. As a result, for some $j \in \{1,\ldots,s\}$ we obtain infinitely many $n \in \N$ such that 
$$ \nu_{b_j}(f_j(n)) > \nu_b(f(n))   $$
but also
$$ \nu_{b_j}(f_j(n+\pi)) = \nu_b(f(n+\pi)),   $$
thus a contradiction.
\end{proof}

In the following few results we assume that 
$$f_i(x) = c_i(x - \theta_i)^{m_i},$$ 
where $c_i \in \Q_{p_i}$, $\theta_i \in \Z_{p_i}$ and $m_i$ is a positive integer.
We first study $k$-regularity of the considered sequences ich the case $\theta_1 = \ldots = \theta_s$, where $k$ is a specific value depending on $m_1, \ldots, m_s$.

\begin{prop} \label{prop:several_factors_rational_root}
Let $\theta \in \Q \cap \Z_b$. For each $i=1,\ldots,s$ let
$$f_i(x) = c_i (x-\theta)^{m_i}.$$
Let $k = b_1^{w_1} \cdots b_s^{w_s}$ where $w_1,\ldots,w_s$ are positive integers satisfying 
$$m_1w_1 = \cdots = m_s w_s.$$ 
We have the following:
\begin{enumerate}[label={\textup{(\roman*)}}]
\item the sequence $(\nu_b(f(n)))_{n \geq 0}$ is $k$-regular; 
\item if $m_1 = \cdots = m_s$, then the sequence $(\LL_b(f(n)))_{n \geq 0}$ is $b$-regular;
\item for all $d \geq 1$ the sequence $(\ell_{b,d}(f(n)))_{n \geq 0}$ is $k$-automatic.
\end{enumerate}
\end{prop}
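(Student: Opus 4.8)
The plan is to reduce all three parts to the case $\theta=0$ and then study the relevant $k$-kernels directly, with part (ii) handled by an explicit factorisation instead. First the reductions. Write $\theta=q/r$ in lowest terms; since $\theta\in\Z_b$, the denominator $r$ is coprime to every $p_i$. Then $c_i(x-\theta)^{m_i}=(c_i/r^{m_i})(rx-q)^{m_i}$, so $f(n)=h(rn-q)$ where $h_i(y)=(c_i/r^{m_i})y^{m_i}$, and by Proposition~\ref{prop:arithmetic_prog}(i) it suffices to prove the proposition for $\theta=0$. For (i) and (iii) I will moreover assume $\gamma_i:=\nu_{p_i}(c_i)\ge 0$ for all $i$: replacing $f$ by $b^Nf$ for large $N$ adds the constant $N$ to $\nu_b(f(n))$ (irrelevant for regularity) and leaves $\LL_b(f(n))$ and $\ell_{b,d}(f(n))$ unchanged by Proposition~\ref{prop:multiplicative_properties}(i); the same reduction also applies to (ii) since it leaves $\LL_b(f(n))$ fixed. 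Henceforth $f_i(x)=c_ix^{m_i}$, $\gamma_i\ge 0$, and $M:=m_1w_1=\dots=m_sw_s$.

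For part (i), the key identity is $\nu_{b_i}(k^{jm_i})=jm_iw_i=jM$ — only the $b_i$-part of $k^{jm_i}=\prod_t b_t^{jm_iw_t}$ matters $p_i$-adically — which gives $\nu_b(f(k^jn))=jM+\nu_b(f(n))$. For $0<a<k^j$ one cannot have $p_i^{jl_iw_i}\mid a$ for all $i$ (this would force $k^j\mid a$), so $\nu_{p_{i^*}}(k^jn+a)=\nu_{p_{i^*}}(a)$ is constant in $n$ for some $i^*$, whence $\nu_b(f(k^jn+a))$ is bounded above by a constant $\kappa_{j,a}\le jM+C$ with $C:=\max_i\lfloor(\gamma_i-m_i)/l_i\rfloor$; while for each $i$ with $p_i^{jl_iw_i}\mid a$, factoring $p_i^{jl_iw_i}$ out yields $\nu_{b_i}(f_i(k^jn+a))=jM+\nu_{b_i}(c_i(n-\theta_i)^{m_i})$ for a suitable rational $\theta_i\in\Z_{p_i}$. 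Each of these last terms reaches $\kappa_{j,a}$ once $\nu_{p_i}(n-\theta_i)$ passes a threshold $N_i$ depending only on $b,(c_i),(m_i)$, and below the threshold depends only on $n\bmod p_i^{N_i}$; hence $(\nu_b(f(k^jn+a)))_n$ is periodic with period dividing $b^{N_0}$, $N_0:=\max_i N_i$. So the $k$-kernel lies in the $\Z$-module generated by $(\nu_b(f(n)))_n$, the constant sequence, and the $b^{N_0}$ indicator sequences of residue classes mod $b^{N_0}$; being a submodule of a finitely generated $\Z$-module, it is finitely generated, so $(\nu_b(f(n)))_n$ is $k$-regular.

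For part (ii) we have $m_i=m$, hence $w_i=w$, $k=b^w$, and $k$-regularity is equivalent to $b$-regularity. Since $f(n)=n^mc$ with $c:=(c_1,\dots,c_s)$, unwinding the definitions gives, for $n\ge 1$,
$$\LL_b(f(n))=b^{E(n)}\,(\LL_b(n))^m\,c,\qquad E(n):=m\nu_b(n)-\nu_b(f(n)).$$
Comparing $\nu_{b_i}(c_in^m)$ with $m\nu_{b_i}(n)$ (using $\gamma_i\ge 0$) shows $E(n)$ is bounded above and below by constants; being the difference of the $b$-regular sequences $(\nu_b(n))_n$ (Proposition~\ref{prop:last_nonzero_digits_regular}) and $(\nu_b(f(n)))_n$ (part (i)), it is bounded and $b$-regular, hence $b$-automatic, so $(b^{E(n)})_n$ is $b$-automatic. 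Also $((\LL_b(n))^m)_n$ is $b$-regular as a product of $b$-regular sequences (Propositions~\ref{prop:last_nonzero_digits_regular} and~\ref{prop:function_regular}(iii)), and $c$ is constant; a termwise product of $b$-regular $\Q_b$-valued sequences is $b$-regular, so $(\LL_b(f(n)))_n$ is $b$-regular after adjusting the single term at $n=0$.

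For part (iii) I again bound the $k$-kernel, now showing it is finite. From $k^{jm_i}=b^{jM}\prod_{t\ne i}b_t^{j(m_iw_t-M)}$ — where the $b_i$-exponent of the product vanishes precisely because $m_iw_i=M$ — one gets $f(k^jn)=b^{jM}\mathbf u^{(j)}f(n)$ with $\mathbf u^{(j)}\in\Q_b$ of $b$-valuation $0$, so Proposition~\ref{prop:multiplicative_properties} gives $\ell_{b,d}(f(k^jn))\equiv(\mathbf u^{(j)}\bmod b^d)\,\ell_{b,d}(f(n))$; as $\mathbf u^{(j)}\bmod b^d$ runs through a finite set of units, this yields only finitely many distinct subsequences. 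For $0<a<k^j$ I analyse $\ell_{b,d}(f(k^jn+a))$ modulo each $b_i^d$ via Proposition~\ref{prop:last_digits_factorization}: when $\nu_{p_i}(a)<jl_iw_i$ the relevant factor $\ell_{b_i,d}(f_i(k^jn+a))$ is periodic in $n$ with period dividing $b_i^d$; when $p_i^{jl_iw_i}\mid a$ one has $\nu_{b_i}(f_i(k^jn+a))=jM+\nu_{b_i}(c_i'(n-\theta_i)^{m_i})$ with $\nu_{p_i}(c_i')=\gamma_i$, so against the cap $\nu_b(f(k^jn+a))\le jM+C$ from part (i) the relative exponent of $b_i$ is $\ge d$ — which annihilates the only possibly non-periodic factor $\ell_{b_i,d}(c_i'(n-\theta_i)^{m_i})$ modulo $b_i^d$ — unless $\nu_{p_i}(n-\theta_i)$ is below a uniform threshold, on the corresponding finitely many short residue classes where the surviving contribution is again periodic with uniformly bounded period. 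Combined with the periodicity of $(\nu_b(f(k^jn+a)))_n$ mod $b^{N_0}$ from part (i), this shows $(\ell_{b,d}(f(k^jn+a)))_n$ is periodic with period dividing $b^{N_1}$ for some $N_1$ independent of $j$ and $a$; there are only finitely many such sequences into the finite set $\Z/b^d\Z$, so the $k$-kernel is finite and the sequence is $k$-automatic. The technical heart — and the step I expect to be most delicate — is this last absorption: the individual factors $\ell_{b_i,d}(c_i'(n-\theta_i)^{m_i})$ are only $p_i$-automatic, and $p_i$ is multiplicatively independent of $k$, so on their own they would preclude $k$-automaticity; it is exactly the hypothesis $m_iw_i=M$ (equivalently the choice of $k$) that makes the $b_i$-adic valuation of $f_i$ along $k^jn+a$ grow like $jM+O(1)$, matching the $jM+O(1)$ cap on the global valuation, forcing the bad factors either to be killed or to be confined, with all thresholds uniform in $j$ and $a$.
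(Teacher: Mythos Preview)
Your argument is essentially correct, but you take a markedly more laborious route than the paper, and the differences are instructive.

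For parts (i) and (iii) the paper works only at the \emph{first} level of the $k$-kernel. After reducing to $\theta=0$ it observes that for $a=0$ one has $\nu_b(f(kn))=D+\nu_b(f(n))$, and---after first replacing $k$ by a suitable power so that $\ell_{b_i,d}(k)=1$ and $(b/b_i)^D\equiv 1\pmod{b_i^d}$---also $\ell_{b,d}(f(kn))=\ell_{b,d}(f(n))$ exactly. For $1\le a\le k-1$, some $f_i(kx+a)$ has no root in $\Z_{p_i}$, so Proposition~\ref{prop:several_factors_no_root} gives periodicity of $(\nu_b(f(kn+a)))_n$ and $(\ell_{b,d}(f(kn+a)))_n$ outright; since there are only $k-1$ such $a$, no uniformity in the period is needed. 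The kernel then decomposes exactly as in Proposition~\ref{prop:last_nonzero_digits_regular}. This sidesteps entirely the ``technical heart'' you identify: you instead analyse \emph{all} $(k^jn+a)$ with $0<a<k^j$ and must establish periodicity with a period bounded \emph{uniformly} in $j,a$; your absorption argument (the exponent $\nu_{b_i}(f_i)-\nu_b(f)$ reaching $d$ outside a short residue class whose width depends only on $d,C,m_i,l_i,\gamma_i$) is correct, but it is exactly the content of Proposition~\ref{prop:several_factors_no_root} re-derived by hand with explicit constants. Your handling of $a=0$ in (iii) via the unit $\mathbf u^{(j)}\bmod b^d$ also works, but the paper's device of raising $k$ to kill this unit is cleaner.

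For part (ii) your approach is genuinely different and arguably more conceptual: you write $\LL_b(f(n))=b^{E(n)}(\LL_b(n))^m c$ and observe that $E(n)=m\nu_b(n)-\nu_b(f(n))$ is a bounded difference of $b$-regular sequences (using part (i) with $w_i=1$), hence $b$-automatic, so each factor is $b$-regular. The paper instead uses the single identity $\LL_b(f(bn))=\LL_b(f(n))$ (from $f(bn)=b^m f(n)$) and Proposition~\ref{prop:several_factors_no_root} for $a\neq 0$, again avoiding any computation. Both are valid; yours uses part (i) as input while the paper's argument for (ii) is independent of (i).
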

\begin{proof}
Writing $\theta = q/r$ in lowest terms with $r > 0$, we obtain
$$f_i(n) = \frac{c_i}{r^{m_i}}(rn-q)^{m_i}$$
By Proposition \ref{prop:arithmetic_prog}, without loss of generality we may thus assume that $\theta = 0$, namely
$$ f_i(x) =c_i x^{m_i}.$$

We first prove parts (i) and (iii) simultaneously. For the sake of (iii), by raising $k$ to a~suitable power, we can assume that $\ell_{b_i,d}(k) = 1$ and $(b/b_i)^D \equiv 1 \pmod{b_i^d}$ for each $i=1,\ldots,s$, where $D$ denotes the common value of $m_iw_i$. 

Consider the subsequences $(f(kn+a))_{n \geq 0}$ with $a=0,1,\ldots,k-1$. For $a=0$ have
$$f(kn) = (k^{m_1},\ldots,k^{m_s}) \cdot f(n) = b^D K f(n),$$
where $K = b^{-D}(k^{m_1},\ldots,k^{m_s}) \in \Z_b$.
Since $\nu_{p_i}(b^{-D} k^{m_i}) = 0$, 
we obtain 
$$ \nu_b(f(kn)) = D + \nu_b(f(n)).  $$
By the conditions imposed on $k$, we get
$$ \ell_{b_i,d}(b^{-D} k^{m_i}) \equiv b^{-D} k^{m_i} \equiv  \left (\frac{b_i}{b}\right)^{D} \left (\frac{k}{b_i^{w_i}}\right)^{m_i} \equiv (\ell_{b_i,d}(k))^{m_i} \equiv 1 \pmod{b_i^d}. $$ 
This implies $\ell_{b,d}(K) = 1,$ and hence Proposition \ref{prop:multiplicative_properties}(ii) yields
$$ \ell_{b,d}(f(kn)) \equiv \ell_{b,d}(K) \ell_{b,d}(f(n)) \equiv \ell_{b,d}(f(n)) \pmod{b^d}. $$

Now, for $a \neq 0$ we must have $\nu_{p_i}(a) < \nu_{p_i}(k)$ for some $i$ so the function $f_i(kx+a)$ of $x \in \Z_{p_i}$ has no root in $\Z_{p_i}$. As a consequence of Proposition \ref{prop:several_factors_no_root}, both sequences $(\nu_b(f(kn+a)))_{n \geq 0}$ and $(\ell_{b,d}(f(kn+a)))_{n \geq 0}$ are periodic, and thus $k$-automatic. An argument similar to the one in Proposition \ref{prop:last_nonzero_digits_regular} gives the assertion of (i) and (iii).

In part (ii), if we let $m$ denote the common value of $m_i$, then $f(bn) = b^m f(n)$, and consequently 
$$\LL_b(f(bn)) = \LL_b(f(n)).$$
For each $a =1,\ldots,b-1$ the sequence $(\LL_b(f(bn+a)))_{n \geq 0}$  is $b$-regular by Proposition \ref{prop:several_factors_no_root}, and again we deduce $b$-regularity of $(\LL_b(f(n)))_{n \geq 0}$.
\end{proof}

On the other hand, in the case when $\theta_1, \ldots, \theta_s$ are not all equal, we obtain nonregular sequences.

\begin{prop} \label{prop:several_factors_irrational}
For each $i=1,\ldots,s$ let
$$f_i(x) = c_i (x-\theta_i)^{m_i}. $$
If $\theta_1,\ldots,\theta_s$ are not all equal, then:
\begin{enumerate}[label={\textup{(\roman*)}}]
\item the sequence $(\nu_b( f(n)))_{n \geq 0}$ is not regular;
\item the sequence $(\LL_b(f(n)))_{n \geq 0}$ is not regular;
\item the sequence $(\ell_{b,d}(f(n)))_{n \geq 0}$ is not automatic for any $d \geq 1$.
\end{enumerate}
\end{prop}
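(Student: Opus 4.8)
The plan is to show that regularity (resp.\ automaticity) of any of the three sequences would force eventual periodicity of the characteristic sequence of the set $\{n \in \N : \nu_{b_j}(f_j(n)) = \nu_b(f(n))\}$ for every $j$, and then to contradict this using Lemma~\ref{lem:valuation_change}(i). More precisely, I would first reduce (ii) and (iii) to (i): if $(\LL_b(f(n)))_{n \geq 0}$ were $k$-regular, then by Corollary~\ref{cor:regular_modulo} its reduction $(\ell_{b,d}(f(n)))_{n \geq 0}$ modulo $b^d$ would be $k$-automatic for every $d$, so it suffices to rule out automaticity of $(\ell_{b,d}(f(n)))_{n \geq 0}$; and by Proposition~\ref{prop:function_regular}(i) this in turn would make $(\ell_b(f(n)))_{n \geq 0}$ automatic. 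So the crux is: if $(\ell_b(f(n)))_{n \geq 0}$ is $k$-automatic for some $k$, derive a contradiction. (For (i) alone, one argues in parallel that $k$-regularity of $(\nu_b(f(n)))_{n \geq 0}$ makes its reduction modulo any $u$ a $k$-automatic sequence.)

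For the automaticity contradiction, I would use Corollary~\ref{cor:valuation_of_last_digits}: for each $j$, the condition $\nu_{b_j}(\ell_b(f(n))) = 0$ is equivalent to $\nu_{b_j}(f_j(n)) = \nu_b(f(n))$. Hence, by Proposition~\ref{prop:function_regular}(i) applied to the map $\rho$ reading off whether the $j$-th $b_j$-adic component of $\ell_b(f(n))$ is a unit, $k$-automaticity of $(\ell_b(f(n)))_{n \geq 0}$ would imply $k$-automaticity — and so, if this held for all $k$, eventual periodicity (by Proposition~\ref{prop:mult_dep}(ii) one does not even need "all $k$", but it is cleanest to fix one $k$ and produce infinitely many distinct kernel elements or a direct non-periodicity argument). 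Actually I would phrase it as: if $(\ell_b(f(n)))_{n \geq 0}$ is $k$-automatic, then so is the characteristic sequence $\chi_j$ of $E_j = \{n : \nu_{b_j}(f_j(n)) > \nu_b(f(n))\}$, for each $j$. Now suppose $\theta_{j_0} \neq \theta_{i_0}$ for some indices. Apply Lemma~\ref{lem:valuation_change}(i) with $\rho_i = \theta_i$ and $\sigma_i = \theta_i - \pi$ (where $\pi$ is a purported period, or rather a period of $\chi_j$), and with $d_i = c_i$, $m_i$ as given: this yields some $j$ and infinitely many $n$ with $\nu_{b_j}(f_j(n)) > \nu_b(f(n))$ but $\nu_{b_j}(f_j(n+\pi)) = \nu_b(f(n+\pi))$, i.e.\ $n \in E_j$ and $n+\pi \notin E_j$ for infinitely many $n$, contradicting that $\pi$ is a period of $\chi_j$. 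Since $\chi_j$ is $k$-automatic (hence if it is eventually periodic for the relevant $k$, or — using that we may take $k$ through its powers and invoke Cobham only if needed — in any case we can extract an eventual period), this is the desired contradiction. The cleanest route: $k$-automaticity of $\chi_j$ for a single $k$ does not directly give periodicity, so instead I would directly show the $k$-kernel of $(\ell_b(f(n)))_{n \geq 0}$ is infinite, mirroring the structure of Lemma~\ref{lem:not_automatic_valuation} and Proposition~\ref{prop:not_automatic_digits}: fix $k$, pass to a power so that $k = p_1^{e_1 l_1}\cdots$ with convenient congruences, consider the subsequences along $n \mapsto k^j n + (\text{suitable offset})$, and use Lemma~\ref{lem:valuation_change}(i),(ii) with the shifted roots $r^{-j}\theta_i\{e_i j\}$ (which remain pairwise distinct as tuples because the $\theta_i$ are not all equal) to separate the kernel elements.

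Concretely, here is the intended skeleton of the argument for (iii) (the others following as above). Fix $d \geq 1$ and $k \geq 2$; write each $k = p_i^{e_i} r_i$ in $\Z_{p_i}$, and after replacing $k$ by a power assume $l_i C \mid e_i$ for the constant $C$ from Lemma~\ref{lem:valuation_change}(ii) and $r_i \equiv 1 \pmod{b_i^{Cd}}$ or similar. For $j \in \N$ let $a_j$ be the integer with $a_j \equiv \theta_i \pmod{p_i^{e_i j}}$ for all $i$ (Chinese Remainder Theorem), and consider $\mathbf{t}_j = (\ell_{b,d}(f(k^j n + a_j)))_{n \geq 0}$. Using the factorization $f_i(k^j n + a_j) = c_i (k^j)^{m_i} (n - r_i^{-j}\theta_i\{e_i j\})^{m_i}$ together with \eqref{eq:d_last_nonzero_explicit} and Proposition~\ref{prop:last_digits_factorization}, express $\mathbf{t}_j(n)$ in terms of the tuple $(r_i^{-j}\theta_i\{e_i j\})_i$. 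Since $\theta_{i_1} \neq \theta_{i_2}$ forces $(r_i^{-j}\theta_i\{e_i j\})_i \neq (r_i^{-j'}\theta_i\{e_i j'\})_i$ for $j \neq j'$ (the $i_1$- or $i_2$-component differs — here one uses that $\theta_{i}\{e_i j\}$ determines $\theta_i$ up to the known prefix, so two different $j$ cannot give the same tuple unless all $\theta_i$ agree), Lemma~\ref{lem:valuation_change}(ii) produces an index $n$ where $\nu_b$ of the two relevant tuples differ modulo $C$, whence $\nu_b(\ell_b(\cdots)) \bmod l_i$ differs, whence $\mathbf{t}_j(n) \neq \mathbf{t}_{j'}(n)$. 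Thus the $k$-kernel is infinite and $(\ell_{b,d}(f(n)))_{n \geq 0}$ is not $k$-automatic; as $k$ was arbitrary, it is not automatic. The main obstacle I anticipate is the bookkeeping in the last paragraph: choosing the power of $k$ and the auxiliary constant $C$ so that all the congruence normalizations ($\ell_{b_i,d}(k^{m_i}/b^{D})=1$-type identities, the valuation shifts $\nu_{p_i}(c_i)$ fitting the hypothesis $\nu_{p_i}(c_i)\equiv\nu_{p_i}(d_i)\pmod{l_iC}$, and the prefactors $q_i^d r_i^{\nu_b+d}$ in \eqref{eq:d_last_nonzero_explicit}) simultaneously collapse, exactly as in Proposition~\ref{prop:not_automatic_digits} but now with the extra complication of several primes and the mixed $b_i$-adic components — this is technical rather than conceptually hard.
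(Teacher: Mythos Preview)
Your approach is essentially the paper's: reduce (ii) to (iii), reduce (iii) to $d=1$, then for a fixed $k$ (raised to a suitable power) produce infinitely many kernel subsequences along $n\mapsto k^t n + n_t$, where $n_t$ is chosen by CRT so that $n_t\equiv\theta_i\pmod{b_i^{w_i t}}$; the shifted roots $\rho_i^{(t)}=(\theta_i-n_t)/k^t$ then play the role your $r_i^{-j}\theta_i\{e_i j\}$ are meant to play, and Lemma~\ref{lem:valuation_change} separates the kernel elements. Two points deserve comment.

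First, your argument that the shifted-root tuples are pairwise distinct is hand-wavy; the paper's is cleaner and worth adopting: if $\rho_i^{(t)}=\rho_i^{(r)}$ for all $i$, solving for $\theta_i$ gives $\theta_i=(k^t n_r-k^r n_t)/(k^t-k^r)$, a quantity \emph{independent of $i$}, contradicting the hypothesis that the $\theta_i$ are not all equal.

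Second, there is a genuine slip in your last paragraph for part (iii). You invoke Lemma~\ref{lem:valuation_change}(ii) to get $\nu_b(f(k^j n+a_j))\not\equiv\nu_b(f(k^{j'}n+a_{j'}))\pmod C$ and then write ``whence $\nu_b(\ell_b(\cdots))\bmod l_i$ differs''. That inference does not hold: $\nu_b(\ell_b(x))=0$ always, and more to the point a difference in $\nu_b(f(\cdot))$ modulo $C$ says nothing about $\ell_{b,d}(f(\cdot))$. The correct lever here is Lemma~\ref{lem:valuation_change}(i), exactly the tool you set up earlier and then abandoned: it gives some $j\in\{1,\ldots,s\}$ and $n$ with $\nu_{b_j}(f_j(k^t n+n_t))>\nu_b(f(k^t n+n_t))$ but $\nu_{b_j}(f_j(k^r n+n_r))=\nu_b(f(k^r n+n_r))$; by Corollary~\ref{cor:valuation_of_last_digits} this forces $\ell_b(f(k^t n+n_t))\neq\ell_b(f(k^r n+n_r))$, so the $k$-kernel of $(\ell_b(f(n)))_{n\ge0}$ is infinite. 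Use part (ii) of the lemma only for the $\nu_b$ sequence (your part (i)), where the mod-$C$ reduction is the automatic sequence whose kernel you want to blow up.
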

\begin{proof}
To begin, observe that (ii) follows immediately from (iii), as the reduction modulo $b^d$ of a $k$-regular sequence is $k$-regular. By the same argument, in (iii) it is sufficient to consider $d=1$.

Hence, we shall prove that the sequences $(\nu_b( f(n)))_{n \geq 0}$ and $(\ell_{b}(f(n)))_{n \geq 0}$ are not $k$-regular for any integer $k \geq 2$. We begin with some preparatory steps. Let $C \geq 3 + 2\max_{1 \leq i \leq s} (m_i/l_i)$ be an integer, as in the statement of Lemma \ref{lem:valuation_change}(ii).
By raising $k$ to a suitable power, we may assume that it is of the form
$$k = b_1^{w_1} \cdots b_s^{w_s}e,$$
where $w_1, \ldots, w_s, e$ are nonnegative integers such that $C \mid w_i$ for $i=1,\ldots,s$ and $b,e$ are coprime. 

Let $(n_t)_{t \geq 0}$ be the sequence integers given by
\begin{equation*}
\left\{
    \begin{aligned}
n_t &\equiv \theta_1 \pmod{b_1^{w_1t}}, \\
&\;\vdots \\
n_t &\equiv \theta_s \pmod{b_s^{w_st}},
 \end{aligned}
\right.
\end{equation*}
where $0 \leq n_t < (b_1^{w_1} \cdots b_s^{w_s})^t \leq k^t$. In particular, if $b,k$ are coprime, then $n_t = 0$ for all $t \in \N$. We are going to consider $\nu_b$ and $\ell_b$ evaluated at the subsequences $(f(k^tn+n_t))_{n \geq 0}$. The $i$-th component of $f(k^tn+n_t)$ can be written in the form
$$f_i(k^tn+n_t) = c_i k^{t m_i} \left(n - \frac{\theta_i - n_t}{k^t} \right)^{m_i} $$
for all $n \in \N$. Fix any distinct $t,r \in \N$ and put $\rho_i = (\theta_i - n_t)/k^t$ and $\sigma_i = (\theta_i - n_r)/k^r$. 

We argue that the assumption $(\rho_1, \ldots, \rho_s) \neq  (\sigma_1, \ldots, \sigma_s)$ of Lemma \ref{lem:valuation_change} is satisfied.
Observe that the equality $\rho_i = \sigma_i$ holds
if and only if
$$ \theta_i = \frac{k^tn_r-k^rn_t}{k^t-k^r},$$
and the expression on the right-hand side does not depend on $i$. Since $\theta_1, \ldots,\theta_s$ are not all equal, we must have $\rho_i \neq \sigma_i$ for some $i$.

We now focus on part (i). If the sequence $(\nu_b(f(n)))_{n \geq 0}$ were $k$-regular, then its reduction modulo $C$ would be $k$-automatic. Since $\nu_{p_i}(c_ik^{t m_i}) \equiv \nu_{p_i}(c_i k^{r m_i}) \pmod{C l_i}$, all the assumptions of Lemma \ref{lem:valuation_change}(ii) are satisfied (where $c_i$ is replaced with $c_ik^{t m_i}$ and $d_i = c_i k^{r m_i}$). Hence, there exists $n \in \N$ (in fact, infinitely many) such that
$$\nu_{b}(f(k^t n +n_t)) \not \equiv \nu_{b}(f(k^r n +n_r)) \pmod{C}.$$
But this means that the $k$-kernel of $(\nu_b(f(n)) \bmod{C})_{n \geq 0}$ contains infinitely many distinct subsequences, a contradiction.

Similarly, if the sequence $(\ell_b(f(n)))_{n \geq 0}$ were $k$-automatic, then so would be the characteristic sequence of the set 
$$ \{ n \in \N: \ \nu_{b_j}(f_j(n)) = \nu_b(f(n))  \}$$ 
for each fixed $j \in \{1,\ldots,s\}$.
But then again, Lemma \ref{lem:valuation_change}(i) provides some $j \in \{1,\ldots,s\}$ and infinitely many $n \in \N$ such that
$$ \nu_{b_j}(f_j(k^t n + n_t)) > \nu_b(f(k^t n + n_t))$$
and
$$ \nu_{b_j}(f_j(k^r n + n_r)) = \nu_b(f(k^r n + n_r)).$$
Therefore, the $k$-kernel of said characteristic sequence contains infinitely many distinct subsequences.
\end{proof}

The final step before the proofs of the main results is to show nonregularity of the sequence $(\LL_b(f(n)))_{n \geq 0}$ when all $f_i$ have the same, unique root in $\Z_b \cap \Q$ but its multiplicity varies with $i$. To this end we need a standard fact which essentially says that for any base $b$ the $b$-adic expansion of a rational number is eventually periodic. Since we have not been able to find a suitable reference for non-prime $b$, we provide a short proof for the sake of completeness. For $\theta \in \Z_b$ we extend the notation from the previous section by writing $\theta[t,b] = \theta \bmod{b^t}$ and $\theta\{t,b\} = (\theta - \theta[t,b])/b^t$ for each $t \in \N$. Here it is important to specify  the base $b$, as any $b'$ having identical prime factors to $b$ yields the same ring $\Z_b$ but (usually) distinct values $\theta[t,b'], \theta\{t,b'\}$. Only when there is no danger of confusion, $b$ will be suppressed from the notation. 

\begin{lem} \label{lem:periodic_approximation}
Let $b \geq 2$ be an integer and $\theta \in \Q \cap \Z_b$. Then the sequence $(\theta\{t,b\})_{t \geq 0}$ is eventually periodic.
\end{lem}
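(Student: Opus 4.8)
The plan is to reduce the claim to the familiar fact that the base-$b$ digits of a rational number are eventually periodic, and then to observe that the sequence of ``tails'' $(\theta\{t,b\})_{t\ge 0}$ is precisely the sequence of rationals obtained by stripping the first $t$ digits, which inherits this periodicity. First I would write $\theta=q/r$ in lowest terms with $r>0$. Since $\theta\in\Z_b$, the denominator $r$ is coprime to $b$ (indeed to each prime factor $p_i$ of $b$), because $\theta\in\Z_{p_i}$ forces $\nu_{p_i}(q/r)\ge 0$, and $\gcd(q,r)=1$ then gives $p_i\nmid r$. Hence $b$ is invertible in $\Z/r\Z$.

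Next I would give an explicit recursion for the tails. By definition $\theta\{t,b\}=(\theta-\theta[t,b])/b^t$, and one checks immediately that
\begin{equation*}
\theta\{t+1,b\} = \frac{\theta\{t,b\} - a_t}{b},
\end{equation*}
where $a_t\in\{0,1,\dots,b-1\}$ is the $t$-th base-$b$ digit of $\theta$, i.e.\ $a_t=\theta\{t,b\}\bmod b$ in the obvious sense (more precisely $a_t$ is the unique element of $\{0,\dots,b-1\}$ congruent to $\theta\{t,b\}$ componentwise). Writing $\theta\{t,b\}=q_t/r$ with $q_t\in\Z$ (the denominator stays $r$ throughout, since multiplying by $b^{-t}$ modulo clearing is harmless: formally $q_t = b^{-t}(q - r\,\theta[t,b]) $ is an integer because $b^t\mid q-r\theta[t,b]$ by definition of $\theta[t,b]$), the recursion becomes $q_{t+1} = (q_t - r a_t)/b$. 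Since $0\le a_t\le b-1$ and the original digit expansion of $q/r$ is bounded, the integers $q_t$ are eventually confined to a finite interval — concretely, for $t$ large enough one has $0\le q_t < r$ (or $-r<q_t\le 0$ if $\theta<0$ in a suitable sense; in the purely $p$-adic setting one argues instead directly that $q_t$ ranges over a finite set because $q_t\equiv b^{-t}q\pmod r$ and $b^{-t}q$ takes finitely many values mod $r$ as $b$ has finite multiplicative order mod $r$). Either way, $(q_t)_{t\ge 0}$, hence $(\theta\{t,b\})_{t\ge 0}$, takes only finitely many values.

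Finally, since $\theta\{t+1,b\}$ is a function of $\theta\{t,b\}$ alone (namely $x\mapsto (x-a(x))/b$ where $a(x)=x\bmod b$), the orbit of $\theta\{0,b\}=\theta$ under this map is eventually periodic as soon as it is finite, which we have just established. This yields the eventual periodicity of $(\theta\{t,b\})_{t\ge 0}$, as required. The only mildly delicate point — and the step I would be most careful about — is the bookkeeping that the denominator remains $r$ and that the numerators $q_t$ fall into a finite set; phrasing this via ``$q_t\equiv b^{-t}q\pmod r$ and $b$ has finite order modulo $r$'' avoids any sign or size issues and keeps the argument clean even though $\Z_b$ has no order. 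Everything else is a routine unwinding of the definitions of $\theta[t,b]$ and $\theta\{t,b\}$.
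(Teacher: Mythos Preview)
Your approach is essentially the same as the paper's: establish that $\theta\{t+1,b\}$ depends only on $\theta\{t,b\}$, show that the values $\theta\{t,b\}$ range over a finite set, and conclude eventual periodicity by pigeonhole. The paper phrases the first step as the more general identity $(\theta\{t\})\{r\}=\theta\{t+r\}$, and for finiteness uses the direct archimedean bound
\[
|r\,\theta\{t,b\}| \;=\; \left|\frac{q - r\,\theta[t,b]}{b^t}\right| \;<\; |q| + r,
\]
which follows from $0\le \theta[t,b]<b^t$.

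There is, however, a genuine gap in your alternative argument (b). The congruence $q_t\equiv b^{-t}q\pmod r$ shows only that the \emph{residues} $q_t\bmod r$ are eventually periodic; it does not show that the integers $q_t$ themselves lie in a finite set. Knowing an integer modulo $r$ says nothing about its size, so (b) does not ``avoid any sign or size issues'' as you claim --- a size bound is still needed. Your argument (a), made precise via the inequality above (or via the contraction estimate $|q_{t+1}|\le (|q_t|+r(b-1))/b$ coming from the recursion $q_{t+1}=(q_t-ra_t)/b$), supplies exactly that, and then the rest of your proof goes through.
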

\begin{proof}
Considering $b$ fixed, we omit it in the notation. We first prove that $(\theta\{t+r\}) =(\theta\{t\})\{r\}$ for each fixed $t$ and all $r \in \N$. This is obvious for $r=0$. For $r=1$ we
first compute
$$ (\theta\{t\})[1] \equiv \frac{\theta-\theta[t]}{b^t} \equiv \frac{\theta[t+1]-\theta[t]}{b^t} \pmod{b},  $$ 
which implies
$$ (\theta\{t\})\{1\} =  \frac{\theta\{t\} - (\theta\{t\})[1]}{b} = \frac{\frac{\theta - \theta[t]}{b^t} - \frac{\theta[t+1] - \theta[t]}{b^t}}{b}  = \theta\{t+1\}.$$
For general $r$ the claim follows by induction.

Moving on to the statement of the lemma, write $\theta = v/u$ in lowest terms. Since $u \theta[t] \equiv v \pmod{b^t}$, the number $u \theta\{t\} = (v - u\theta[t])/b^t$ is an integer for each $t$.  Moreover, we have the bound
$$ \left| \frac{v - u\theta[t]}{b^t}\right| \leq |v| + |u| \frac{\theta[t]}{b^t} < |v| + |u|.$$
Therefore, there exist indices $r > t$ such that 
$$ \frac{\theta - \theta[r]}{b^{r}}  = \frac{\theta- \theta[t]}{b^{t}}, $$ 
which yields
$\theta\{r\} = \theta\{t\}$. Consequently, $\theta\{r+w\} = \theta\{t+w\}$ for all $w \in \N$.
\end{proof}

We can now prove the aforementioned result.

\begin{lem} \label{lem:several_factors_different_multiplicities}
Let $f = (f_1,\ldots,f_s) \in \PP_b$ and assume that for each $i=1,\ldots,s$ the function $f_i$ is of the form
$$f_i(x) = (x-\theta)^{m_i} g_i(x), $$
where $m_i \geq 1$ is an integer, $\theta \in \Q \cap \Z_b$, and $g_i \in \mathcal{P}_{p_i}$ is such that the values $\nu_{p_i}(g_i(x))$, $\ell_{p_i,l_i}(g_i(x))$ are constant with respect to $x \in \Z_{p_i}$.
If the exponents $m_i$ are not all equal, then the sequence $(\LL_b(f(n)))_{n \geq 0}$ is not regular. 
\end{lem}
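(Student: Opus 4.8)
The plan is to argue by contradiction: assume $(\LL_b(f(n)))_{n\ge0}$ is $k$-regular for some $k\ge2$ and contradict finite generation of the $\Z$-module spanned by its $k$-kernel.

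\emph{Reducing the base $k$ to a preferred one.} Each $g_i$ has constant finite $p_i$-valuation, hence no root in $\Z_{p_i}$, so $\R_{f_i}=\{\theta\}$ with $m_{f_i}(\theta)=m_i$ and thus $\R_f=\{\theta\}$. Choose positive integers $w_1,\dots,w_s$ with $w_1m_1=\cdots=w_sm_s=:W$ (e.g.\ $w_i=\operatorname{lcm}(m_1,\dots,m_s)/m_i$) and put $k_0:=b_1^{w_1}\cdots b_s^{w_s}$. Theorem \ref{thm:several_factors_d_last_nonzero}(b) then says $(\ell_{b,d}(f(n)))_{n\ge0}$ is \emph{strictly} $k_0$-automatic for every $d\ge1$. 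If $(\LL_b(f(n)))_{n\ge0}$ is $k$-regular then by Corollary \ref{cor:regular_modulo} each $(\ell_{b,d}(f(n)))_{n\ge0}=(\LL_b(f(n))\bmod b^d)_{n\ge0}$ is $k$-automatic, so by strictness $k$ is not multiplicatively independent of $k_0$; hence $k^a=k_0^{a'}$ for some $a,a'\ge1$, and two applications of Proposition \ref{prop:mult_dep}(i) give that $(\LL_b(f(n)))_{n\ge0}$ is $k_0$-regular. It therefore suffices to rule out $k_0$-regularity.

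\emph{Passing to one coordinate and computing the relevant kernel elements.} Since the $m_i$ are not all equal, fix $i_0$ with $m_{i_0}<M:=\max_j m_j$ and fix $j_0$ with $m_{j_0}>m_{i_0}$. Coordinate projection $\Z_b\to\Z_{p_{i_0}}$ is a $\Z$-module homomorphism, so by Proposition \ref{prop:function_regular}(ii) the sequence $\psi:=(\psi_{i_0}(n))_{n\ge0}$, with $\psi_{i_0}(n)=b^{-\nu_b(f(n))}(n-\theta)^{m_{i_0}}g_{i_0}(n)\in\Z_{p_{i_0}}$, is $k_0$-regular; let $M$ be the finitely generated $\Z$-submodule of $\Z_{p_{i_0}}^{\N}$ spanned by its $k_0$-kernel. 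For $t\ge0$ set $j_t:=\theta\bmod k_0^t\in\{0,\dots,k_0^t-1\}$. As $\nu_{p_i}(k_0)=l_iw_i$ and $j_t\equiv\theta\pmod{k_0^t}$ in $\Z_b$, we get $k_0^tn+j_t-\theta=k_0^t(n-\rho_i^{(t)})$ with $\rho_i^{(t)}:=(\theta-j_t)/k_0^t\in\Z_{p_i}$; moreover the tuple $(\rho_i^{(t)})_i$ is exactly $\theta\{t,k_0\}$, which by Lemma \ref{lem:periodic_approximation} is eventually periodic in $t$. Fix a residue class of $t$ mod that period, past the preperiod; there $\rho_i^{(t)}=\rho_i$ is independent of $t$. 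Using $\nu_b(f(k_0^tn+j_t))=tW+\min_i\lfloor(m_i\nu_{p_i}(n-\rho_i)+\gamma_i)/l_i\rfloor$ (where $\gamma_i:=\nu_{p_i}(g_i(\cdot))$ is constant) one computes, for such $t$,
\[
 \mathbf{s}_t:=\bigl(\psi_{i_0}(k_0^tn+j_t)\bigr)_{n\ge0}=u^t\cdot\bigl(\chi(n)\,g_{i_0}(k_0^tn+j_t)\bigr)_{n\ge0}\in M,
\]
where $u:=b^{-W}k_0^{m_{i_0}}=\prod_j p_j^{\,l_jw_j(m_{i_0}-m_j)}$ and $\chi(n):=b^{-\min_i\lfloor(m_i\nu_{p_i}(n-\rho_i)+\gamma_i)/l_i\rfloor}(n-\rho_{i_0})^{m_{i_0}}$. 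The arithmetic facts that drive the proof are: $u\in\Z_{p_{i_0}}^{\times}$ (the $p_{i_0}$-exponent of $u$ is $0$ and every $p_j$ with $j\ne i_0$ is a unit in $\Z_{p_{i_0}}$), while $u\notin\Z$ because the $p_{j_0}$-exponent of $u$ is negative; write $u=a/d$ in lowest terms, so $d\ge2$ and $p_{i_0}\nmid ad$.

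\emph{Deriving the contradiction.} As $t\to\infty$ along our residue class, $k_0^tn+j_t\to\theta$ in $\Z_{p_{i_0}}$ uniformly in $n$ (indeed $\nu_{p_{i_0}}(k_0^tn+j_t-\theta)\ge l_{i_0}w_{i_0}t$), so writing $g_{i_0}(x)=g_{i_0}(\theta)+(x-\theta)q(x)$ with $q$ a polynomial bounded on $\Z_{p_{i_0}}$ gives $g_{i_0}(k_0^tn+j_t)=g_{i_0}(\theta)+\delta_t(n)$ with $\nu_{p_{i_0}}(\delta_t(n))\ge l_{i_0}w_{i_0}t-c_0$ for a constant $c_0$ independent of $t,n$. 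Hence $\mathbf{s}_t=u^t\,\mathbf{v}+\mathbf{e}_t$, where $\mathbf{v}:=(g_{i_0}(\theta)\chi(n))_{n\ge0}$ is a fixed nonzero sequence with values in $\Z_{p_{i_0}}$ and every entry of $\mathbf{e}_t$ has $p_{i_0}$-valuation $\ge l_{i_0}w_{i_0}t-c_1\to\infty$ (uniformly in $n$), while $\nu_{p_{i_0}}(\mathbf{s}_t)$ stays constant. When $g_{i_0}$ is constant the $\mathbf{e}_t$ vanish, and then $M$ contains $\{u^t\mathbf{v}:t\ge0\}$, hence the submodule $\Z[u]\mathbf{v}=\Z[1/d]\mathbf{v}$, which is isomorphic as a $\Z$-module to $\Z[1/d]$ (since $\mathbf{v}\ne0$ and $\Z_{p_{i_0}}^{\N}$ is torsion-free) and so is \emph{not} finitely generated — impossible for a submodule of the finitely generated module $M$ ($\Z$ is Noetherian). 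In general one exploits that $\mathbf{s}_{t+1}$ equals $u$ times $\mathbf{s}_t$ up to a factor congruent to $1$ modulo an unboundedly high power of $p_{i_0}$: writing $d\,\mathbf{s}_{t+1}-a\,\mathbf{s}_t\in M$, iterating, evaluating at a fixed $n^\ast$ with $\nu_{p_{i_0}}(n^\ast-\rho_{i_0})=0$, and using $g_{i_0}(k_0^tn^\ast+j_t)\to g_{i_0}(\theta)$, one reduces a hypothetical finite generation of $M$ to a single $\Z$-linear relation among $u^0,u^1,\dots,u^{T+1}$ whose denominator $d^{T+1}$ cannot cancel because $\gcd(a,d)=1$ and $d\ge2$, the desired contradiction. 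The main obstacle — and the only delicate point of the whole argument — is exactly this last passage: a finitely generated $\Z$-submodule of $\Z_{p_{i_0}}^{\N}$ may perfectly well contain sequences whose entries tend $p_{i_0}$-adically to $0$ (e.g.\ $p_{i_0}^t\mathbf{v}$), so one must genuinely use that the $\mathbf{s}_t$ stay $p_{i_0}$-adically bounded away from $0$ while being scaled by the $p_{i_0}$-\emph{unit} $u\notin\Z$, and carry the perturbation $\mathbf{e}_t$ through a limiting argument (or work in the $p_{i_0}$-adic completion of $M$) so that the denominator $d$, invisible to the $p_{i_0}$-adic metric, actually produces the contradiction.
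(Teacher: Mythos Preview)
Your overall architecture matches the paper's: reduce the base to $k_0 = b_1^{w_1}\cdots b_s^{w_s}$ via strict automaticity of $(\ell_{b,d}(f(n)))_{n\ge0}$, project to a single coordinate, examine the $k_0$-kernel elements $\mathbf{s}_t$ along the approximations $j_t = \theta \bmod k_0^t$, and derive a contradiction from the arithmetic of $u = k_0^{m_{i_0}}/b^W$. Your factorization $\mathbf{s}_t(n) = u^t\,\chi(n)\,g_{i_0}(k_0^t n + j_t)$ is correct, and the case where $g_{i_0}$ is constant is handled properly.

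The gap is exactly where you flag it. Your proposed limiting argument cannot close it: any $\Z$-linear relation among kernel elements coming from finite generation involves a \emph{fixed} finite set of indices $l$, so the convergence $g_{i_0}(k_0^t n^\ast + j_t) \to g_{i_0}(\theta)$ as $t\to\infty$ is simply not applicable. Passing to the $p_{i_0}$-adic completion does not help either, because $d$ is a $p_{i_0}$-unit and the obstruction you seek is a rational denominator bound, invisible to any $p_{i_0}$-adic limit.

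The fix is already implicit in your setup and is what the paper does. Once you cancel the common factor $\chi(n)$ from the relation $\sum_{l}\alpha_l\,\mathbf{s}_{t_0+l\pi}(n)=\mathbf{s}_{t_0+(T+1)\pi}(n)$, both sides become genuine polynomials in $n$ over $\Q_{p_{i_0}}$ --- this is where $f\in\PP_b$, i.e.\ $g_{i_0}\in\Q_{p_{i_0}}[X]$, is used in an essential way. Equality for all $n\in\N$ (except possibly one point) forces equality as polynomials, so you may substitute $n=\rho_{i_0}$. But by the very definition $\rho_{i_0}=(\theta-j_t)/k_0^t$, for every $t$ in your fixed residue class one has $k_0^t\rho_{i_0}+j_t=\theta$ \emph{exactly}, hence $g_{i_0}(k_0^t\rho_{i_0}+j_t)=g_{i_0}(\theta)$ on the nose. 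Dividing by the nonzero constant $g_{i_0}(\theta)$ and by $u^{t_0}$ yields
\[
\sum_{l=0}^{T}\alpha_l\,(u^\pi)^l=(u^\pi)^{T+1},\qquad \alpha_l\in\Z,
\]
and since $u^\pi=a^\pi/d^\pi$ in lowest terms with $d^\pi\ge2$, the right-hand side has denominator $d^{\pi(T+1)}$ while the left-hand side has denominator dividing $d^{\pi T}$: contradiction. No limit and no completion --- just polynomial interpolation and evaluation at the root. (The paper streamlines this slightly by first replacing $k_0$ by $k_0^\pi$ so that the period becomes $1$, and by choosing the coordinate with \emph{minimal} $m_i$, which makes $u$ the reciprocal of an integer; neither change is essential.)
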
 
\begin{proof}
If the sequence $(\LL_b(f(n)))_{n \geq 0}$ were $k$-regular for some $k \geq 2$, then $(\ell_b(f(n)))_{n \geq 0}$ would be $k$-automatic.
Letting $c_i = p_i^{\nu_{p_i}(g_i(n))} \ell_{p_i,l_i}(g_i(n))$ for $i=1,\ldots,s$, we get
\begin{align*}
\ell_b(f(n)) &\equiv \ell_b\left(\frac{g_i(n)}{c_i}, \ldots, \frac{g_s(n)}{c_s}  \right) \ell_b\left(c_1 (n-\theta)^{m_1}, \ldots, c_s (n-\theta)^{m_s}  \right) \\
 &\equiv \ell_b\left(c_1 (n-\theta)^{m_1}, \ldots, c_s (n-\theta)^{m_s}  \right) \pmod{b}.
\end{align*} 
From Proposition \ref{prop:several_factors_rational_root} we deduce that $k = b_1^{w_1} \cdots b_s^{w_s},$ for some $w_1,\ldots,w_s \in \N$ satisfying $m_1w_1 = \ldots = m_s w_s$. 

By Lemma \ref{lem:periodic_approximation} the sequence  
$(\theta\{l,k\})_{l \geq 0}$ is eventually periodic with period $T$. More precisely, assume that $\theta\{l+T,k\} = \theta\{l+T,k\}$ for $l \geq L$. Now, $k$-regularity of $(\LL_b(f(n)))_{n \geq 0}$ implies $k$-regularity of $(\LL_b(f(k^{L} n  + \theta[L,k])))_{n \geq 0}$, and the components of $f(k^{L} n  + \theta[L,k])$ have the same form as those of $f(n)$, namely
$$
f_i(k^{L} n  + \theta[L,k])  =    (n - \theta\{L,k\})^{m_i} k^{Lm_i} g_i(k^{L} n  + \theta[L,k])).$$
Hence, without loss of generality we can assume that $L=0$. Moreover, we have $\theta\{l,k^T\} = \theta\{lT,k\}$ for all $l \in \N$, and thus, replacing $k$ with $k^T$, we can further assume that $\theta\{l,k\} = \theta$ is constant with respect to $l$. In the sequel we suppress $k$ in the notation and simply write $\theta[l] = \theta[l,k]$.

Renumber the primes so that $m_1$ is minimal among $m_1,\ldots,m_s$.
It is sufficient to show that the first component of $(\LL_b(f(n)))_{n \geq 0}$, namely $(b^{-\nu_b(f(n))} f_1(n))_{n \geq 0}$,
is not a $k$-regular sequence. More precisely, for all $l \in \N$ we define
$$  \beta_l(n) = b^{-\nu_b(f(k^ln + \theta[l]))} f_1(k^ln + \theta[l]).$$
and claim that the $\Z$-submodule generated by the family $\{(\beta_l(n))_{n \geq 0}: l \in \N\}$ is not finitely generated.   

First, we compute the exponent of $b$ in the formula defining $\beta_l(n)$. Writing
$$ f_i(k^{l} n  + \theta[l]) = k^{lm_i} (n - \theta)^{m_i} g_i(k^{l} n  + \theta[l])$$
and letting $D$ denote the common value of $m_i w_i$, we obtain
$$
\nu_{b_i}(f_i(k^{l} n  + \theta[l])) =  lD  + \nu_{b_i}(f_i(n)),  
$$
and thus also
\begin{equation} \label{eq:not_regular_val}
 \nu_{b}(f(k^{l} n  + \theta[l])) =  lD  + \nu_{b}(f(n)).  
\end{equation}

Now, for the sake of contradiction suppose that for some $t \in \N$ the sequences $(\beta_l(n))_{n \geq 0}$ with $l=0,1,\ldots,t$ generate said $\Z$-submodule. In particular there exist integers $\alpha_0,\alpha_1, \ldots, \alpha_t$ such that for all $n \in \N$ we have
$$
\sum_{l=0}^{t} \alpha_l \beta_l(n) = \beta_{t+1}(n).
$$
Using \eqref{eq:not_regular_val}, after some simplification we obtain
$$
\sum_{l=0}^{t} \alpha_l C^l  g_1(k^ln + \theta[l]) = C^{t+1} g_1(k^{t+1}n + \theta[t+1]),
$$
where $C =k^{m_1} / b^D$. Because $\N$ is dense in $\Z_{p_1}$, we can replace $n$ with $x \in \Z_{p_1}$. In particular, for $x = \theta$ we obtain $g_1(k^l\theta + \theta[l]) = g_1(k^{t+1}\theta + \theta[t+1]) = g_1(\theta)$,
and thus 
\begin{equation} \label{eq:compare_const}
\sum_{l=0}^{t} \alpha_l C^l  = C^{t+1}.
\end{equation}
However, since not all $m_i$ are equal and $m_1$ is minimal among them, $C$ is a rational number lying in the interval $(0,1)$. After reducing both sides of \eqref{eq:compare_const} to the lowest terms, the denominator on the right-hand side remains larger, thus a contradiction.
\end{proof}

We now move on to prove Theorems \ref{thm:several_factors_valuation}, \ref{thm:several_factors_last_nonzero}, and \ref{thm:several_factors_d_last_nonzero}. As in the prime power case, we first make some preparations. Consider $f = (f_1,\ldots,f_s) \in \A_b$. If some $f_i$ has no root in $\Z_{p_i}$, then Proposition \ref{prop:several_factors_no_root} immediately implies part (a) of each theorem. Hence, in the following considerations we assume that  all the functions $f_i$ have a root in $\Z_{p_i}$, or equivalently, the set $\R_f$ is nonempty. For each $i=1,\ldots,s$ let $T_i \in \N$ be an integer obtained from Proposition \ref{prop:constant_val_digits} applied to $f_i$. Choose $T \in \N$ such that $T  \geq T_i/l_i$ for all $i=1,\ldots,s$, so that $b_i^T$ is a multiple of $p_i^{T_i}$. We will focus on the subsequences 
$(f(b^Tn+a))_{n \geq 0}$ with $a=0,1,\ldots,b^T-1$. 
If there is no $\theta = (\theta_1,\ldots,\theta_s) \in \R_f$ such that $a \equiv \theta \pmod{b^T}$, then at least one of the functions $f_i(b^Tx+a)$ of $x \in \Z_{p_i}$ has no root in $\Z_{p_i}$. For the same reason as in the prime power case, due to Proposition \ref{prop:several_factors_no_root} these subsequences $(f(b^Tn+a))_{n \geq 0}$ have no effect on $k$-regularity of $(f(n))_{n \geq 0}$. Therefore, it remains to consider $a = \theta[T,b]$ for $\theta \in \R_f$. To this end, we define the functions $f^{(\theta)} = (f^{(\theta)}_1, \ldots, f^{(\theta)}_s) \in \A_b$ (not to be confused with derivatives) by
$$f^{(\theta)}(x) = f(b^Tx + \theta[b,T]).$$
Its components may be written in the form
\begin{equation} \label{eq:f_theta_i}
f^{(\theta)}_{i}(x) = \left(x-\theta\{b,T\}_i\right)^{m_i} q^{(\theta)}_{i}(x), 
\end{equation}
where $\theta\{b,T\}_i = (\theta_i-\theta[b,T])/b^T$ is the only root of  $f^{(\theta)}_{i}$ in $\Z_{p_i}$, $m_i = m_{f_i}(\theta_i)$ is its multiplicity, and $q^{(\theta)}_{i} \in \A_{p_i}$. Moreover, the choice of $T$ also guarantees that for each $i=1,\ldots,s$ the values $\nu_{p_i}(q^{(\theta)}_{i}(x))$ and $\ell_{p_i,l_i d}(q^{(\theta)}_{i}(x))$ are constant with respect to $x \in \Z_{p_i}$.

Also, for $i=1,\ldots,s$ put
$$c^{(\theta)}_{i} = p_i^{\nu_{p_i}(q^{(\theta)}_{i}(x_1))} \ell_{p_i,l_i d}(q^{(\theta)}_{i}(x_s))$$
and for $x = (x_1,\ldots,x_s) \in \Z_b$ define
\begin{align*}
q^{(\theta)}(x) &= \left(q^{(\theta)}_{1}(x), \ldots, q^{(\theta)}_{2}(x)\right), \\
h^{(\theta)}(x) &= \left(c^{(\theta)}_{1} \left(x_1-\theta\{b,T\}_1\right)^{m_1}, \ldots, c^{(\theta)}_{s} \left(x_s-\theta\{b,T\}_{s}\right)^{m_s} \right).
\end{align*}
We can thus use Proposition \ref{prop:multiplicative_properties} to obtain for all $n \in \N$ the equalities
\begin{align*}
\nu_b(f^{(\theta)}(n)) &= \nu_b( h^{(\theta)}(n)), \\
\LL_b(f^{(\theta)}(n)) &= \LL_b( h^{(\theta)}(n)) \cdot  \frac{q^{(\theta)}(n)}{(c_1^{(\theta)},\ldots,c_s^{(\theta)})} , \\
\ell_{b,d}(f^{(\theta)}(n)) &= \ell_{b,d}( h^{(\theta)}(n)).
\end{align*}
The results proved earlier in this section can be applied to $h^{(\theta)}$ so by extension also to $f^{(\theta)}$.

\begin{proof}[Proof of Theorem \ref{thm:several_factors_valuation}]
In part (b) there exists precisely one $\theta = (\theta, \ldots, \theta) \in \R_f$, a rational number. Let $k = b^{w_1} \cdots b^{w_s}$, where $ w_1,\ldots,w_s$ are nonnegative integers satisfying $m_1 w_1 = \cdots = m_sw_s$. Proposition \ref{prop:several_factors_rational_root}(i) implies $k$-regularity of the sequence $(\nu_b(h^{(\theta)}(n)))_{n \geq 0}$, and therefore also of $(\nu_b(f(n))_{n \geq 0}$, by the above discussion. 

If this sequence were not strictly $k$-regular, then its reduction modulo any positive integer would be eventually periodic. Let $C$ be an integer as in Lemma \ref{lem:different_val_digits}(ii) and suppose that $\pi$ is a period of the subsequence $(\nu_b(h^{(\theta)}(n)) \bmod{C})_{n \geq 0}$. However, the lemma applied to $\rho_i = \theta\{b,T\}$ and $\sigma_i = \theta\{b,T\}- \pi$ for all $i=1,\ldots,s$ leads to a contradiction with eventual periodicity.

Finally, under the assumption of part (c) we can find irrational $\theta = (\theta_1, \ldots, \theta_s) \in \R_f$, namely such that not all $\theta_i$ are equal. For any such $\theta$ the sequences $(\nu_b(h^{(\theta)}(n)))_{n \geq 0}$ is not regular by Proposition \ref{prop:several_factors_irrational}(i), and the result follows.\end{proof}

\begin{proof}[Proof of Theorem \ref{thm:several_factors_last_nonzero}]
In part (b) we have $m_1 = \cdots = m_s$, and $b$-regularity of the sequence $(\LL_b(h^{(\theta)}(n)))_{n \geq 0}$ follows from Proposition \ref{prop:several_factors_rational_root}(ii).
Since each component $(q_i^{(\theta)}(n)/c^{(\theta)}_i)_{n \geq 0}$ is $k$-regular for all $k \geq 2$, the same holds for $(q^{(\theta)}(n)/(c^{(\theta)}_1,\ldots,c^{(\theta)}_s))_{n \geq 0}$. Thus, $(\LL_b(f^{(\theta)}(n)))_{n \geq 0}$ and consequently $(\LL_b(f^{(\theta)}(n)))_{n \geq 0}$ are $b$-regular.

If this sequence were not strictly $b$-regular, then the sequence $(\ell_b(f(n)))_{n \geq 0}$, and thus also $(\ell_{b,d}( h^{(\theta)}(n))_{n \geq 0}$ would be eventually periodic, but this is ruled out by Proposition \ref{prop:several_factors_no_root}.

In part (c) the set $\R_f$ contains some irrational $\theta$ or rational $\theta$  with varying multilplicities $m_{f_i}(\theta)$. In the former case nonregularity of the sequence $(\LL_b(h^{(\theta)}(n)))_{n \geq 0}$ follows from Proposition \ref{prop:several_factors_irrational}(ii). In the latter case we apply Lemma \ref{lem:several_factors_different_multiplicities} to the function $f^{(\theta)}$ with components written in the form \eqref{eq:f_theta_i}.
\end{proof}

\begin{proof}[Proof of Theorem \ref{thm:several_factors_d_last_nonzero}]
In part (b), $k$-automaticity of $(\ell_{b,d}(f(n)))_{n \geq 0}$  again follows from Proposition \ref{prop:several_factors_rational_root}(iii), while Proposition \ref{prop:several_factors_no_root} implies that this property holds in the strict sense.

Part (c) is a consequence of Proposition \ref{prop:several_factors_irrational}(iii).
\end{proof}

\section*{Acknowledgements}
The research is supported by the grant of the National Science Centre (NCN), Poland, no.\ UMO-2019/34/E/ST1/00094.

\bibliographystyle{amsplain}
\bibliography{references}
\end{document}